\def\comment#1{}
\newcommand{\R}{\mathbf{R}}
\newcommand{\N}{\mathbf{N}}
\newcommand{\A}{\mathbf{A}}
\newcommand{\OO}{\mathcal{O}}
\newcommand{\cV}{\mathcal{V}}
\newcommand{\Z}{\mathbf{Z}}
\newcommand{\Ss}{\mathbf{S}}
\newcommand{\cA}{\mathcal{A}}
\newcommand{\cW}{\mathcal{W}}
\newcommand{\cL}{\mathcal{L}}
\newcommand{\cR}{\mathcal{R}}
\newcommand{\cX}{\mathcal{X}}
\newcommand{\cY}{\mathcal{Y}}
\newcommand{\cK}{\mathcal{K}}
\newcommand{\cU}{\mathcal{U}}
\newcommand{\x}{\mathbf{x}}
\newcommand{\eps}{\varepsilon}
\newcommand{\fhi}{\varphi}
\def\to{\mathop{\rightarrow}}
\def\dans{\mathop{\subset}}
\newcommand{\moins}{\setminus}
\newcommand{\Diff}{\mathrm{Diff}}
\newcommand{\per}{\mathrm{Per}}
\newcommand{\Ind}{\mathrm{Ind}}
\newcommand{\Zero}{\mathrm{Zero}}
\newcommand{\fix}{\mathrm{Fix}}
\newcommand{\Spec}{\mathrm{Spec}}
\newcommand{\ind}{\mathrm{Ind}}
\newcommand{\proj}{\mathrm{proj}}
\newcommand{\hol}{\mathrm{hol}}
\newcommand{\dist}{\mathrm{dist}}
\newcommand{\Col}{\mathrm{Col}}
\newcommand{\Int}{\mathrm{Int}}
\newcommand{\cN}{\mathcal{N}}
\newcommand{\Kta}{K_{nl}^T}
\newcommand{\Ktr}{K_{nl}^{\perp}}
\newcommand{\vide}{\emptyset}
\def\dans{\mathop{\subset}}
\def\To{\mathop{\longrightarrow}}
\newcommand{\Map}{\longmapsto}
\newtheorem{thm}{Theorem}[section]
\newtheorem{thmA}{Theorem}
\newtheorem*{thm*}{Theorem}
\newtheorem{lem}[thm]{Lemma}
\newtheorem{prop}[thm]{Proposition}
\newtheorem{cor}[thm]{Corollary}
\theoremstyle{definition}
\newtheorem{dfn}[thm]{Definition}
\theoremstyle{remark}
\newtheorem{rem}[thm]{Remark}
\title{
Existence of common zeros for commuting vector fields on $3$-manifolds II. Solving global difficulties.
}
\date{}
\author{S{\'e}bastien Alvarez \and Christian Bonatti \and Bruno Santiago}
\begin{document}

\maketitle

\begin{abstract}
We address the following conjecture about the existence of common zeros for commuting vector fields in dimension three: if $X,Y$ are two $C^1$ commuting vector fields on a $3$-manifold $M$, and $U$ is a relatively compact open such that $X$ does not vanish on the boundary of $U$ and has a non vanishing Poincar\'e-Hopf index in $U$, then $X$ and $Y$ have a common zero inside $U$. We prove this conjecture when $X$ and $Y$ are of class $C^3$ and every periodic 
orbit of $Y$ along which $X$ and $Y$ are collinear is partially hyperbolic. We also prove the conjecture, still in the $C^3$ setting, assuming that the flow $Y$ leaves invariant a transverse plane field. These results shed new light on the $C^3$ case of the conjecture. 
\end{abstract}

{\footnotesize
	\vskip 5mm
	\noindent{\bf Keywords: }commuting vector fields, fixed points, Poincar\'e-Hopf index
	\vskip2mm
	\noindent{\bf MSC 2010 classification}: 37C25, 37C85, 57S05, 58C30.}

\section{Introduction}

It is a challenging and open problem to determine when a smooth action of the abelian group $\R^k$ on a given manifold $M$ possesses a fixed point. Such an action is determined by the data of $k$ complete vector fields $X^1,...,X^k$ on $M$ that \emph{commute}, i.e. $[X^i,X^j]=0$ for every pair $(i,j)$, where $[.\, ,.]$ denotes the usual Lie bracket of vector fields. A fixed point for the $\R^k$-action is a common zero of the corresponding vector fields $X^1,...,X^k$.

When $k=1$ the index theory developed by H. Poincar\'e and H. Hopf relates the topological properties of $M$ and the existence of fixed points. We shall define the so-called Poincar\'e-Hopf index in Section \ref{s:strategy}. Such an index theory is not available for more general $k$.

The first relation between the topological properties of a manifold $M$ and the existence of fixed points for a given action of $\R^k$ on $M$ was obtained by E.L. Lima in 1964. In \cite{Lima_geral,Lima_esfera}, he proved that \emph{an action of $\R^k$ on a closed surface with non-zero Euler characteristic has necessarily a fixed point.}

The generalization of Lima's theorem in dimension $3$ faces an immediate difficulty: the Euler characteristic of a $3$-dimensional manifold always vanishes. The relevant topological properties of $M$ are no longer \emph{global}, but rather \emph{semi-local}: this is the content of the following conjecture. It was addressed by the last two authors in \cite{BS}, and was stated in \cite{Bonatti_analiticos} as a problem (Probl\`eme 2 of that reference). It concerns the case $k=2$ and $\dim M=3$.

\paragraph{Conjecture --}  \emph{Let $X,Y$ be two commuting vector fields of class $C^1$ on a $3$-dimensional manifold $M$ and let $U\dans M$ be a relatively compact open set such that $\Zero(X)\cap\partial U=\vide$. 
	Assume moreover that the Poincar\'e-Hopf index of $X$ in $U$ does not vanish: $\Ind(X,U)\neq 0$. Then $X$ and $Y$ have a common zero inside $U$.}

This conjecture was confirmed \emph{in the case $X$ and $Y$ are analytic} by the second author in 1992 (see \cite{Bonatti_analiticos}). It remains widely open since then in the context of lower regularity. A door was recently opened by the last two authors. In \cite{BS}, they solved the conjecture when $X$ and $Y$ are of class $C^1$, but assuming that a natural dynamical object (the collinearity locus of the two vector fields) has a special geometric property: see Theorem \ref{t:BS} for the precise statement.

The main intuition behind the conjecture is the following. If $X$ and $Y$ have no common zeros, then, \emph{close to the compact set} $K=\Zero(X)\cap U$, the vector field $X$ must commute with the \emph{non-vanishing vector field} $Y$. So either $X$ is collinear to $Y$ all along a $Y_t$-orbit, or it is never so. This should prevent $X$ of turning in all directions, which would give $\Ind(X,U)=0$. Motivated by this we intend to attack the conjecture under the following form.

\paragraph{Alternative form --}\emph{Let $X,Y$ be two commuting vector fields of class $C^1$ on a $3$-dimensional manifold $M$. Let $U\dans M$ be a relatively compact open set such that $\Zero(X)\cap\partial U=\Zero(Y)\cap U=\vide$, then $\Ind(X,U)=0$.}

In order to see that the latter statement actually implies the conjecture, we use the following property of Poincar\'e-Hopf index. If $K\dans\Zero(X)$ is an isolated compact set, then $\Ind(X,U)$ is independent of the isolating neighbourhood $U$ of $K$ (where one says that $U$ is isolating if it is relatively compact, if $K=\Zero(X)\cap U$ and if $\Zero(X)\cap\partial U=\vide$).

Now, assume that the Alternative Form has been established. Let $X,Y$ be two $C^1$-commuting vector fields on a $3$-dimensional manifold $M$. Assume that $\Ind(X,U)\neq 0$, where $U$ is a relatively compact open subset of $M$, but $K=\Zero(X)\cap U$ is disjoint from $\Zero(Y)$. Thus, we can choose a smaller neighbourhood $U_1\dans U$ of $K$ whose closure is disjoint from $\Zero(Y)$. By the property stated above, $\Ind(X,U_1)=\Ind(X,U)$, and by the Alternative Form, $\Ind(X,U_1)=0$, which is impossible.

\paragraph{Main results --} When we explained above our main intuition, it was implicit that the main character of the paper is the \emph{collinearity locus} 
$$\Col_U(X,Y)=\bigcup_{c\in\R}\Zero(X-cY)\cap\overline{U},$$
of $X$ and $Y$ inside $\overline{U}$. If the collinearity locus is ``too big'' (for instance if it contains an open neighbourhood of $K$) there is not enough space for $X$ to turn and the index must vanish. If on the contrary it is ``too small'' (for example if it is reduced to a periodic orbit of $Y$) then $X$ can be approached by non-vanishing vector fields (i.e. the vector fields $X-cY$ for $|c|$ small) so it must have zero index. When it is not too big nor too small, the key lies in the detailed analysis of the dynamics of $Y$ in the neighbourhood of $\Col_U(X,Y)$. In particular the existence of stable/unstable sets for $Y$, which must be invariant by $X$ since it commutes with $Y$, prevents $X$ to turn around in all directions. This idea was successfully implemented in the paper \cite{BS}, where the following was proven, by constructing stable sets for $Y$.

\begin{thm}[Bonatti-Santiago]
	\label{t:BS}
	Let $M$ be a $3$-dimensional manifold and $X,Y$ be two commuting vector fields of class $C^1$. Let $U$ be a relatively compact open set such that $\Zero(Y)\cap U=\Zero(X)\cap\partial U=\vide$. Assume that $\Col_U(X,Y)$ is contained in a closed and boundaryless two-dimensional submanifold of $M$. Then
	$$\Ind(X,U)=0.$$
\end{thm}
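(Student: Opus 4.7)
The plan is to combine homotopy invariance of the Poincaré-Hopf index with an analysis of the surface flow $Y|_\Sigma$. Consider the one-parameter family $X_c=X-cY$. On the compact set $\partial U$, $X$ is bounded away from zero while $Y$ is bounded, so there exists $\delta>0$ with $X_c$ non-vanishing on $\partial U$ for all $|c|<\delta$; homotopy invariance of the index then gives
$$\Ind(X,U)=\Ind(X-cY,U),\qquad|c|<\delta.$$
The zeros of $X_c$ in $\overline{U}$ all lie in $\Col_U(X,Y)\subset\Sigma$, and are precisely the points where the collinearity coefficient $\lambda$, defined by $X=\lambda Y$ on $\Col_U$, takes the value $c$. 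Because $[X,Y]=0$, $\lambda$ is constant along $Y$-orbits, so every level $\{\lambda=c\}$ is a compact $Y$-invariant subset of $\Sigma$.

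The first key observation would be a tangency property: $Y$ is tangent to $\Sigma$ at every point of $\Col_U(X,Y)$. Indeed, were $Y(p)\pitchfork\Sigma$ for some $p\in\Col_U$, the $Y$-orbit of $p$ would leave $\Sigma$ instantly, contradicting the fact that $\Col_U$ is $Y$-invariant and contained in $\Sigma$. Consequently, the collinearity locus (and in particular $\Zero(X)\cap U=\{\lambda=0\}$) is governed by the non-vanishing two-dimensional flow $Y|_\Sigma$, opening the door to classical surface dynamics (Poincaré sections and first-return maps) for describing the local structure of $\Col_U\cap\overline U$.

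The main body of the proof, and the principal obstacle, is then the construction of topological ``stable sets'' for $Y$ in the ambient $3$-manifold $M$ near $\Col_U\cap\overline U$, without any hyperbolicity assumption. The technical flexibility comes from the family $\{X-cY\}_c$: for small $c\neq 0$ the field $X-cY$ vanishes only on the smaller level $\lambda^{-1}(c)$, so it serves as an almost-non-vanishing companion commuting with $Y$. Using Poincaré first-return maps on $2$-dimensional transversals to the compact invariant pieces of $\Col_U$, one constructs codimension-one stable (or unstable) sets for $Y$ in $M$. The commutation $[X,Y]=0$ then forces $X$ to preserve these sets, which restricts the local phase portrait of $X$ around each of its zeros so strongly that the local Poincaré–Hopf indices must sum to zero, yielding $\Ind(X,U)=0$. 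The technical heart of the proof is precisely this construction of stable sets; it is made possible by the hypothesis $\Col_U\subset\Sigma$, which provides the two-dimensional ambient in which classical surface-dynamical techniques can be applied.
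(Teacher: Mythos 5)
Your proposal is not a proof but an outline, and the two steps you flag as ``the main body of the proof'' and ``the technical heart'' are exactly the ones you leave unproved.  Note first that the paper you are reading does not itself prove Theorem~\ref{t:BS}: it is quoted from \cite{BS} and described only at the level of ideas (``constructing stable sets for $Y$'').  Your sketch reproduces that description but does not supply the content.

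What you do establish is genuine but preliminary: that for $|c|$ small the index is invariant under replacing $X$ by $X-cY$; that the zeros of $X-cY$ are the levels of the collinearity coefficient $\lambda$ inside $\Col_U(X,Y)$; that $\lambda$ is $Y_t$-invariant; and that $Y$ is tangent to the ambient surface $\Sigma$ along $\Col_U(X,Y)$.  These are all correct and are standard preparatory observations.  The two real gaps are the following.  First, you assert that ``using Poincar\'e first-return maps on $2$-dimensional transversals\dots one constructs codimension-one stable (or unstable) sets for $Y$ in $M$,'' but you never say how.  The whole difficulty of \cite{BS} is that no hyperbolicity is assumed, so the existence and regularity (even mere topological flatness) of such stable sets for an arbitrary $C^1$ flow near a compact invariant set in $\Sigma$ is not automatic; it has to be built by hand, level by level, using the invariance properties of $\lambda$, and this construction occupies most of that paper.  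Second, even granting the stable sets, the sentence ``the commutation then forces $X$ to preserve these sets, which restricts the local phase portrait of $X$\dots so strongly that the local Poincar\'e--Hopf indices must sum to zero'' is a statement of the conclusion, not an argument.  One needs a quantitative mechanism --- in \cite{BS} it is essentially the index formula reproduced here as Theorem~\ref{t:indexf}: the stable sets provide a surface to which both $X$ and $Y$ are tangent, so in a suitable frame the Gauss map of $X$ avoids two antipodal points of $\Ss^2$, which forces degree zero.  Nothing in your sketch shows that the Gauss map fails to be surjective, or otherwise computes the degree.  As written, the proposal identifies the correct strategy but proves neither of the two lemmas that carry it.
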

A special case of this result - but still an important step for the proof, and which we will use in our paper - was to consider a pair $X$ and $Y$ for which the collinearity locus was an annulus foliated by periodic orbits of $Y$ such that these orbits have stable (or unstable) \emph{manifolds}. In this case, the zero set of $X$ reduces to a single periodic orbit of $Y$, and $U$ is a tubular neighbourhood of this periodic orbit. The collinearity locus is not too big, but the stable manifolds, which comprise a surface foliation  to which $X$ is tangent, leave no room for $X$ to turn in all directions. {See Theorem~\ref{t.bsdenovo} for a precise statement and Figure~\ref{f.anneau}.}

The aim of the present paper is to address \emph{global} difficulties which may arise for an arbitrary configuration of the collinearity locus, but still taking advantage of the existence of invariant manifolds for periodic orbits.   

\begin{thmA}\label{t.valeurpropre} Let $X,Y$ be two commuting vector fields of class $C^3$ on a $3$-dimensional manifold $M$ and let $U\dans M$ be a relatively 
	compact open set such that $\Zero(X)\cap\partial U=\vide$. 
	Assume moreover that the properties below hold true
	\begin{itemize}
		\item $Y$ does not vanish in $U$;
		\item 
		if $\gamma\subset U$ is a periodic orbit of $Y$ contained in the collinearity locus $\Col_U(X,Y)$ then
		the Poincar\'e map of $\gamma$ has at least one eigenvalue of modulus different from $1$. 
	\end{itemize}
	Then the Poincar\'e-Hopf index of $X$ in $U$  vanishes: $\Ind(X,U)= 0$.
\end{thmA}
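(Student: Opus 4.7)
The overall strategy is to reduce Theorem \ref{t.valeurpropre} to Theorem \ref{t:BS} by constructing, using the partial hyperbolicity hypothesis, a closed boundaryless $2$-submanifold of $M$ that contains the collinearity locus $\Col_U(X,Y)$. A basic consequence of $[X,Y]=0$ is that the collinearity ratio $c$, defined locally by $X=cY$, is constant along any $Y$-orbit in $\Col_U$; in particular, every periodic orbit $\gamma \subset \Col_U \cap U$ is invariant under both flows with $X|_\gamma = c_\gamma Y|_\gamma$, and the flow of $X$ commutes with the $Y$-Poincar\'e return map on any transverse disk to $\gamma$.

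I would next consider the deformation $X_c := X - cY$ for $c \in (-\eps,\eps)$. Since $X$ does not vanish on the compact set $\partial U$ and $Y$ is bounded, $X_c$ does not vanish on $\partial U$ for $c$ small, and homotopy invariance of the index gives $\Ind(X_c,U) = \Ind(X,U)$. The zeros of $X_c$ are exactly the points of $\Col_U$ with collinearity ratio $c$, and form a compact $Y$-invariant subset of $U$. For each periodic orbit $\gamma$ contained in this zero set (or more generally in $\Col_U \cap U$), the hypothesis provides a Poincar\'e eigenvalue of modulus $\neq 1$, hence a $2$-dimensional strong-stable (or strong-unstable) submanifold $W(\gamma)$ of the $Y$-flow containing $\gamma$. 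Because $X$ commutes with $Y$ and preserves $\gamma$, the flow of $X$ preserves $W(\gamma)$ and $X$ is tangent to it; the $C^3$ hypothesis supplies the regularity of $W(\gamma)$ needed in the next step.

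The core of the argument is then to assemble the family $\{W(\gamma)\}$, where $\gamma$ ranges over the periodic orbits of $Y$ in $\Col_U \cap U$, into a single closed boundaryless $2$-submanifold $N \subset M$ containing $\Col_U$; Theorem \ref{t:BS} applied to $(X,Y)$ in a neighbourhood of $N$ would then yield $\Ind(X,U)=0$. This final step is the principal obstacle, and precisely the ``global difficulty'' of the title: distinct periodic orbits may be linked by non-periodic $Y$-orbits in $\Col_U$, their invariant manifolds may accumulate on each other in complicated ways, and the non-periodic part of $\Col_U$, in particular any minimal $Y$-invariant set that is not a periodic orbit, must also be shown to sit inside the closure of $\bigcup_\gamma W(\gamma)$. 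I expect the resolution to combine a Pesin-type analysis of the $Y$-flow restricted to the invariant set $\Col_U$, exploiting the $C^3$-regularity, with the $X$-invariance of the strong manifolds $W(\gamma)$ and the rigidity imposed by commutation, in order to rule out pathological accumulations and produce the desired $2$-manifold $N$.
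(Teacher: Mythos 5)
Your plan correctly identifies the basic ingredients — stable/unstable manifolds guaranteed by the eigenvalue hypothesis, $X$-invariance of these manifolds via commutation, and the aim of producing a surface to which $X$ is tangent — but the central step, assembling the $W(\gamma)$ into a \emph{single} closed boundaryless $2$-submanifold $N \supset \Col_U(X,Y)$ and applying Theorem \ref{t:BS} in one stroke, does not work and is not what the paper does. Such an $N$ would have to contain the entire collinearity locus, which sweeps across all level sets $\mu^{-1}(c)$ for $c$ near $0$; meanwhile, the paper's Proposition \ref{p.tangent} shows that the stable/unstable manifolds of \emph{linked} periodic orbits are tangent to $\mu^{-1}(0)$, while some non-linked periodic orbits have stable/unstable manifolds \emph{transverse} to $\mu^{-1}(0)$ (the set $\Ktr$). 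These transverse invariant manifolds cannot be fitted into a closed surface that also carries the rest of $\Col_U(X,Y)$, so the proposed surface $N$ simply need not exist. In short, reducing to Theorem \ref{t:BS} globally is the wrong strategy; the theorem's strength is precisely that its hypotheses are not implied by those of Theorem \ref{t.valeurpropre}.

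The paper's actual route is different in structure. After reduction to a ``prepared triple'' (where $\overline{U}$ is trivially foliated by level sets $\mu^{-1}(c)$ and an index formula computes $\Ind(X,U)$ as a sum of linking numbers), one decomposes $K=\Zero(X)\cap U$ as $K_{ms}\sqcup K_{nl}$, with $K_{ms}$ the linked periodic orbits together with their heteroclinic connections and $K_{nl}$ the non-linked periodic orbits, and uses additivity of the index. For $K_{ms}$ the argument is the Morse--Smale case (Theorem \ref{Th.MS}): the crucial new idea is the Glueing Lemma (Lemma \ref{l.collage}), which flows local unstable annuli along heteroclinic tubes via holonomy and pastes them with local stable annuli, producing a \emph{foliated neighbourhood} of $K_{ms}$ (not a single closed surface) tangent to both $X$ and $Y$, from which the Gauss map degree vanishes directly. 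For $K_{nl}$ one invokes the Center Manifold Theorem to isolate each non-linked orbit in a tubular neighbourhood where $\Col_U(X,Y)$ is a smooth annulus of periodic orbits, and applies Theorem \ref{t:BS} \emph{locally} to each such neighbourhood (after, in the transverse case, a further deformation $X\mapsto X-cY$ picked via Sard). Your proposal omits the decomposition into linked/non-linked orbits, the distinction between tangent and transverse invariant manifolds, the Glueing Lemma, and the need for the center manifold; the ``Pesin-type analysis'' you gesture at is not what fills this gap.
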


Theorem \ref{t.valeurpropre} combines the difficulties of two ideal cases. The first one is that of Theorem \ref{t:BS}. The second one is treated by our Theorem \ref{Th.MS} (see Section \ref{mainsectouille}). There we deal with a global dynamical configuration in $\Col_{U}(X,Y)$ and we need to introduce a new idea: a \emph{Glueing Lemma} to glue stable with unstable manifolds of heteroclinically connected $Y_t$-periodic orbits included in $\Col_U(X,Y)$ (see Lemma \ref{l.collage}). Non-trivial difficulties appear when trying to combine these two cases and we need techniques from partially hyperbolic dynamics, such as the Center Manifold Theorem, to treat them.

On the other hand, we can also prove the conjecture, for $C^3$ vector fields, with no hypothesis on the local dynamics, but imposing that the flow of vector field $Y$ preserves a transverse plane field.

\begin{thmA}\label{t.Normal} Let $X,Y$ be two commuting vector fields of class $C^3$ on a $3$-dimensional manifold $M$ and let $U\dans M$ be a relatively 
	compact open set such that $\Zero(X)\cap\partial U=\vide$. 
	Assume moreover that
	\begin{itemize}\item $Y$ does not vanish in $U$;
		\item the flow of $Y$ leaves invariant a $C^3$-plane field in $U$ transverse to $Y$.  
	\end{itemize}
	Then the Poincar\'e-Hopf index of $X$ in $U$ vanishes: $\Ind(X,U)= 0$.
\end{thmA}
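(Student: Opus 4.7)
The plan is to use the invariant transverse plane field $\cE$ to obtain the splitting $T M|_U = \R Y \oplus \cE$, and to decompose $X = f Y + Z$ with $f \in C^3(U)$ and $Z$ a $C^3$ section of $\cE$. First I would verify that $[X, Y] = 0$ forces $f$ to be $Y$-invariant (constant along $Y$-orbits) and $Z$ to be a $Y$-invariant section of $\cE$: indeed $\mathcal{L}_Y X = (Y f) Y + [Y, Z] = 0$, and the two summands live in the transverse subspaces $\R Y$ and $\cE$ (the latter using $Y$-invariance of $\cE$), so both vanish. A direct consequence is that for any $c \in \R$, the zero set of $X - c Y$ inside $\overline{U}$ is exactly $\{Z = 0\} \cap \{f = c\}$.

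The heart of the strategy would be to perturb $X$ into $X' = X + \eta V$, where $V$ is a small $Y$-invariant section of $\cE$ (so that $X' = f Y + Z'$ with $Z' := Z + \eta V$, and $X'$ still commutes with $Y$), chosen so that $Z'$ is \emph{transverse to the zero section} of $\cE$. Since $\cE$ has rank $2$ and $\dim M = 3$, transversality of $Z'$ means that $\{Z' = 0\}$ is a smooth $1$-submanifold of $U$; being $Y$-invariant, each of its connected components is a single $Y$-orbit. By choosing $\eta$ small enough, the homotopy $X + s \eta V$ has no zero on $\partial U$ for $s \in [0,1]$, so $\Ind(X, U) = \Ind(X', U)$.

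Next I would arrange (again generically) that $\{Z' = 0\}$ meets $\partial U$ transversely, so that $\{Z' = 0\} \cap \overline{U}$ is a compact $1$-manifold with boundary and hence has only finitely many connected components. Since $f$ is constant along $Y$-orbits, $f(\{Z' = 0\} \cap \overline{U})$ is a finite subset of $\R$. Choosing $c$ of arbitrarily small modulus outside this finite set, the zero set of $X' - c Y$ in $\overline{U}$ is empty, so $\Ind(X' - c Y, U) = 0$; the straight-line homotopy $X' - t c Y$, $t \in [0, 1]$, avoids $\partial U$ provided $|c|$ is smaller than the finitely many (nonzero) values of $f$ on $\{Z' = 0\} \cap \partial U$. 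Homotopy invariance of the Poincar\'e-Hopf index then gives $\Ind(X, U) = \Ind(X', U) = \Ind(X' - c Y, U) = 0$.

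The hard part will be the equivariant perturbation: one must choose $V$ simultaneously small, $Y$-invariant, and such that $Z + \eta V$ is transverse to the zero section, even though the orbit space $U/Y$ need not be a Hausdorff manifold (e.g.\ when $Y$ has recurrent or accumulating orbits). The resolution I have in mind is that transversality of $Z'$ is a purely local condition on the rank-$2$ bundle $\cE$, so one can apply Thom transversality chart by chart within a finite covering of $\overline{U}$ by flow-boxes of $Y$ (in which $Y$-invariant sections of $\cE$ are just sections of the rank-$2$ restriction over the transverse $2$-disc) and patch with a $Y$-invariant partition of unity, without ever having to form the global quotient.
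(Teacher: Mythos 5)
Your opening computations are correct and in fact coincide with the paper's Lemma~\ref{fundprop}: writing $X=fY+Z$ with $Z$ a section of the invariant plane field $\cE$, the identity $[Y,X]=0$ splits as $(Yf)Y+[Y,Z]=0$ with the two terms in the complementary invariant subbundles $\R Y$ and $\cE$, forcing $Yf=0$ and $[Y,Z]=0$. So far so good.

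The gap is exactly at the step you flag as ``the hard part,'' and your proposed resolution does not close it. A $Y$-invariant perturbation $V$ of $Z$ is, by definition, constant along $Y$-orbits. If $Y$ has a dense (or merely recurrent, non-properly-embedded) orbit in some open subset of $U$, then every continuous $Y$-invariant function on that subset is constant on the orbit closure; in particular a $Y$-invariant bump function that vanishes somewhere on such an orbit closure vanishes identically on it. Consequently a $Y$-invariant partition of unity subordinate to a nontrivial flow-box cover simply does not exist there, and the space of $Y$-invariant sections of $\cE$ over a flow box is \emph{not} the full space of sections over the transverse $2$-disc --- it is the much smaller subspace of sections compatible with every return of the orbit into the box. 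Concretely, for a linear flow $Y=\partial_1+\alpha\partial_2$ ($\alpha$ irrational) on $\T^3$ with $\cE=\langle\partial_2,\partial_3\rangle$, any $C^1$ $Y$-invariant section of $\cE$ depends only on $x_3$; its vertical derivative at a zero has rank at most $1$, so $Z'$ can never be transverse to the zero section of the rank-$2$ bundle $\cE$. Thus the set $\{Z'=0\}$ cannot in general be made a $1$-manifold, $f(\{Z'=0\}\cap\overline U)$ may be an interval rather than a finite set, and the concluding homotopy argument collapses.

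For comparison, the paper avoids perturbing $Z$ at all. It exploits $Y$-invariance of $\mu=f$ to view the level sets $\mu^{-1}(c)$ as $Y$-invariant surfaces, restricts $Y$ to them as a $C^2$ surface flow, and calls on Denjoy--Schwartz to tame the dynamics (every orbit in $K$ limits on a periodic orbit); it then computes $\Ind(X,U)$ via the index formula of Theorem~\ref{t:indexf} as a sum of linking numbers over boundary curves of a carefully chosen isolating neighbourhood, and shows each linking number vanishes because the auxiliary $Y$-invariant function $\phi=D\mu\cdot N$ forces the Gauss map to miss two antipodal points of $\Ss^1$ on each boundary curve (Lemma~\ref{endproofsusp}). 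That route never needs a genericity/transversality perturbation of the collinearity locus, which is precisely the obstacle your argument runs into.
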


The proof of this  theorem provides a clear illustration of our intuition: if $X$ commutes with the non-singular vector field $Y$, it cannot turn in all directions. As an easy consequence of the above result we obtain

\begin{cor}
	\label{suspensioncase}
	Let $\Sigma$ be a compact and boundaryless surface of class $C^3$, and $f\in\Diff^3(\Sigma)$. Let $M$ be the suspended manifold and $Y$ be the suspended flow on $M$.
	
	Assume that there exists a vector field $X$ on $M$ which is of class $C^3$ and commutes with $Y$. Then for every isolated compact set $K\dans\Zero(X)$ we have $\Ind(X,K)=0$.
\end{cor}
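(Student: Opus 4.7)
The plan is simply to apply Theorem \ref{t.Normal}, after exhibiting on the suspended manifold the natural invariant transverse plane field coming from the suspension construction, and then to reduce the statement about $\Ind(X,K)$ to an appropriate choice of isolating neighborhood.

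First, I would describe the canonical plane field on $M$. Recall that $M$ is the quotient of $\Sigma\times\R$ by the equivalence relation $(x,t+1)\sim (f(x),t)$, and that the suspended flow $Y$ is induced by the unit vector field $\partial_t$ on $\Sigma\times\R$. Let $\cE$ be the plane field on $\Sigma\times\R$ tangent to the ``horizontal" slices $\Sigma\times\{t\}$. Because the defining identification $(x,t+1)\sim (f(x),t)$ maps horizontal slices to horizontal slices (via $f\times\mathrm{id}$), the plane field $\cE$ descends to a well-defined plane field on $M$, which I still denote by $\cE$. Since $f$ is of class $C^3$ and the product structure on $\Sigma\times\R$ is smooth, $\cE$ is a $C^3$-plane field on $M$. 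It is transverse to $Y$ by construction, and because the local flow of $Y$ acts in the chart $\Sigma\times\R$ by $(x,t)\mapsto(x,t+s)$, the plane field $\cE$ is invariant under $Y_s$ for every $s\in\R$.

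Next, I would select an isolating neighborhood of $K$ on which to apply Theorem \ref{t.Normal}. Since $K$ is an isolated compact subset of $\Zero(X)$, there exists an open set $V$ in the compact manifold $M$ with $K\subset V$ and $\Zero(X)\cap V=K$. Using compactness of $K$ one finds an open set $U$ with $K\dans U\dans\overline{U}\dans V$; the set $U$ is relatively compact (as $M$ is compact), satisfies $\Zero(X)\cap U=K$ and $\Zero(X)\cap\partial U=\vide$, and moreover $Y$ does not vanish on $U$ since $Y$ is nowhere zero on $M$. The hypotheses of Theorem \ref{t.Normal} are therefore all satisfied for the pair $(X,Y)$ in this $U$ (the invariant transverse plane field being $\cE|_U$), so we conclude $\Ind(X,U)=0$.

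Finally, I would invoke the invariance of the Poincar\'e--Hopf index under change of isolating neighborhood, recalled in the introduction: since $U$ is an isolating neighborhood of $K$, the quantity $\Ind(X,U)$ depends only on $K$ and equals $\Ind(X,K)$ by definition. Thus $\Ind(X,K)=0$, as desired. There is essentially no obstacle to overcome: the only genuine content is to observe that the suspension comes equipped with a canonical $Y$-invariant transverse $C^3$-plane field, and then the corollary is a direct translation of Theorem \ref{t.Normal}.
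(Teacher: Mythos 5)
Your argument is correct and is precisely the route the paper has in mind (the paper offers no written proof, calling the corollary ``an easy consequence of'' Theorem \ref{t.Normal}): produce the canonical $Y$-invariant transverse plane field tangent to the fibers of the suspension over $\Ss^1$, choose an isolating neighborhood $U$ of $K$, and apply Theorem \ref{t.Normal}. One small caveat: the descended plane field is the kernel of $d\pi$ for the $C^3$ projection $\pi:M\to\Ss^1$, so naively it is only $C^2$ rather than $C^3$; this regularity wrinkle is shared with the paper's own statement, and the remark following the corollary already acknowledges that the result persists for $f\in\Diff^2(\Sigma)$, so it does not invalidate the approach.
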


\begin{rem}
	One can easily check from the proof of Theorem \ref{t.Normal} that Corollary~\ref{suspensioncase} is in fact true when $f$ is of class $C^2$.
\end{rem}

\begin{rem}
	Corollary~\ref{suspensioncase} gives a natural class of examples to ``test'' the conjecture. One might think that it is too simple to study the topological behaviour of a vector field invariant under a suspension flow. Nevertheless, our proof is quite delicate and we are not aware of a direct argument to prove Corollary~\ref{suspensioncase}.  
\end{rem}

%

\paragraph{Overview of the article --} 
In Section \ref{s:strategy} we show how to use the $C^3$-hypothesis to perform a reduction in the proof of the {Conjecture}, showing that if there exists a counterexample to the $C^3$ alternative form then there exists special counter-examples, that we call \emph{prepared}. The goal of this reduction is to produce a foliation of $U$ by surfaces to which $Y$ is \emph{almost} tangent. Then, projecting down $Y$ on these surfaces, we shall benefit from arguments of surface dynamics. This will allow us to describe the dynamical and geometrical structure of the collinearity locus. 

The end of the section is devoted to a discussion of the ideas of our proofs and in the remaining sections we prove our theorems by contradiction, assuming the existence of a prepared counter-example.   

In Section \ref{suspoupouille} we prove Theorem~\ref{t.Normal}. The main feature of this case is that by our construction of prepared counterexamples, the vector field $Y$ preserves the leaves of the foliation. We show how the dynamics of $Y$ on the leaves prevents $X$ from turning. 

In Section \ref{mainsectouille} we prove Theorem~\ref{Th.MS}, which is the main step towards the proof of Theorem~\ref{t.valeurpropre}, where we introduce one of our main ideas: the \emph{Glueing Lemma}. In Section \ref{s:final_section} we prove Theorem~\ref{t.valeurpropre}.

{Section \ref{s.conclusion} is the conclusion. There we give the details of a strategy to treat the general $C^3$-case of the conjecture. We hope to explain there how far we are from an answer to the general case.
}

\section{Prepared triples}\label{s:strategy}

This section is devoted to the reduction of the proof of the conjecture, in its Alternative Form, to the treatment of special vector fields, which we call \emph{prepared}. {For these vector fields the} collinearity locus enjoys nice geometrical properties. For such prepared vector fields $X,Y$, we establish a simple formula to compute the index $\Ind(X,U)$. In the final paragraph we give the ideas of proofs of our main results.

\subsection{The Poincar\'e-Hopf index}
\label{sub_PHindex}

Let $M$ be a smooth manifold of dimension $d$ and $X$ be a $C^r$ vector field on $M$, $r\geq 1$. The \emph{set of zeros} of $X$ shall be denoted by $\Zero(X)$.

\paragraph{Index at an isolated zero --} Let $x\in M$ and $\phi:U\dans M\to\R^d$ be a local coordinate around $x$. Assume that $x$ is an isolated zero of $X$, i.e. that there exists a ball $B\dans U$ centred at $x$ such that $x$ is the only zero of $X$ in $\overline{B}$. In particular $X$ does not vanish on $\partial B$ and the following \emph{Gauss map} is well defined (the norm $||.||$ is chosen to be Euclidean in the coordinates given by $\phi$)
$$\alpha:\begin{array}[t]{ccl}
\partial B &\To  & \Ss^{d-1}\\
y      &\Map & \frac{X(y)}{||X(y)||}
\end{array}.
$$

By definition the \emph{Poincaré-Hopf index} of $X$ at $x$ is the topological degree of the Gauss map $\alpha$. It is independent of the choice of a ball $B$ and shall be denoted by $\Ind(X,x)$.

\paragraph{Index in an open set --} Now assume that $X$ does not vanish on the boundary $\partial U$ of a relatively compact open set $U$.

If $X'$ is a vector field close to $X$ in the $C^0$-topology, then $X'$ does not vanish on $\partial U$. Moreover one can choose $X'$ so that it has only a finite number of zeros inside $U$. The \emph{Poincar\'e-Hopf} index of $X$ in $U$ is the integer

$$\Ind(X,U)=\sum_{x\in\Zero(X')\cap U}\Ind(X',x).$$

This number is independent of the choice of such an $X'$, and we have $\Ind(X',U)=\ind(X,U)$ whenever $X$ and $X'$ are close enough in the $C^0$-topology. In particular, we have the following:

\begin{lem}
 \label{l.zerovazio}
Let $X,Y$ be two vector fields on $M$ of class $C^r$. There exists $\delta>0$ such that if $\Zero(X-c Y)\cap U=\emptyset$ for some $|c|<\delta$ then $\Ind(X,U)=0$.
\end{lem}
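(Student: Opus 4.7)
The plan is to exploit the $C^0$-continuity of the Poincaré-Hopf index recorded in the preceding paragraph: since $\Ind(X,U)$ is defined, $X$ does not vanish on $\partial U$, and any vector field sufficiently $C^0$-close to $X$ on $\overline U$ which is also non-vanishing on $\partial U$ has the same index on $U$ as $X$.

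First I would choose $\delta > 0$ using compactness of $\overline U$. Writing $\mu := \inf_{\partial U}\|X\| > 0$ and $\nu := \sup_{\overline U}\|Y\| < \infty$, every $c$ with $|c| < \mu/(2\nu)$ yields $\|X - cY\| > \mu/2$ on $\partial U$, so $X - cY$ is non-vanishing on $\partial U$ and has a well-defined index on $U$. Shrinking $\delta$ further if necessary, I would also arrange that for every $|c| < \delta$ the vector field $X - cY$ lies inside the $C^0$-neighborhood of $X$ on which $\Ind(\,\cdot\,, U)$ is constant; this is possible since $\|X - (X - cY)\|_{C^0(\overline U)} \le |c|\,\nu$ tends to $0$ with $c$.

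Assume now the hypothesis of the lemma: there exists $c_0$ with $|c_0| < \delta$ such that $\Zero(X - c_0 Y)\cap U = \emptyset$. Then I would use $X' := X - c_0 Y$ itself as a test perturbation in the definition of $\Ind(X,U)$: it is $C^0$-close to $X$, does not vanish on $\partial U$, and has only finitely many (in fact zero) zeros in $U$. The defining sum $\sum_{x\in\Zero(X')\cap U}\Ind(X',x)$ is therefore empty, giving $\Ind(X-c_0Y, U) = 0$. By the continuity property above, $\Ind(X,U) = \Ind(X - c_0 Y, U) = 0$, as claimed.

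The argument is essentially a bookkeeping exercise on the definition of the index; the only substantive point is to uniformly control the size of the perturbation $cY$ on $\overline U$, which is exactly what compactness of $\overline U$ provides. I do not foresee a genuine obstacle.
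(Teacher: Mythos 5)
Your proof is correct and is exactly the argument the paper intends: the lemma is stated immediately after the observation that $\Ind(\cdot,U)$ is locally constant in the $C^0$-topology on vector fields nonvanishing on $\partial U$, and your two-step argument (shrink $\delta$ so that $X-cY$ stays in that $C^0$-neighborhood, then note that a nonvanishing $X-c_0Y$ has index zero) is precisely the intended "in particular." The only cosmetic remark is that your local use of $\mu$ for $\inf_{\partial U}\|X\|$ clashes with the paper's global notation for the quotient function; renaming it would avoid confusion.
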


Assume that $\partial U$ is a codimension $1$ submanifold of $M$ on which $X$ does not vanish and that there is a continuous map associating to every $x\in\overline{U}$ a basis of $T_xM$ denoted by $\beta(x)=(e_1(x),e_2(x),...,e_d(x))$. This provides $\overline{U}$ with an orientation, and allows one to define the Gauss map of $X$ in $\partial U$. One can prove that $\Ind(X,U)$ is the topological degree of the Gauss map defined in $\partial U$, which is independent of the choice of the neighbourhood $U$ and of the basis $\beta$.

\paragraph{Index at an isolated compact set --} A compact set $K\dans\Zero(X)$ is said to be \emph{isolated} if there exists a neighbourhood $U$ of $K$, called \emph{isolating neighbourhood}, such that $\Zero(X)\cap U=K$ and $\Zero(X)\cap\partial U=\vide$.

The integer $\Ind(X,U)$ is independent of the isolating neighbourhood $U$ and shall be denoted by $\Ind(X,K)$. It shall be called the \emph{Poincar\'e-Hopf index at} $K$.

\begin{rem}
\label{r.aditive} 
Note that the index is additive: if $K_i\dans\Zero(X)$, for $i=1,...,n$ are disjoint isolated compact sets, then $\Ind(X,\cup_{i=1}^nK_i)=\sum_{i=1}^n\Ind(X,K_i)$.
\end{rem}

{
\subsection{General dynamical notions for vector fields}
\label{s.general_dynamics}

Let us recall some basic dynamical concepts and fix accordingly some notations that we shall use throughout the paper. 

In the following, given a $C^1$ vector field over $M$ we denote by $X_t$ the flow it generates. 
For any $x \in M$ and any interval $I \subset \R$, we also let $X_I(x):=\{X_t(x):t \in I\}$. In particular, we denote by $\mathcal O(x):=X_\R(x)$  the orbit of the point $x$ under $X$. 

Given $x\in M$, we consider its $\omega$-limit set $\omega(x)=\{y\in M;\exists\:t_n\to+\infty;X_{t_n}(x)\to y\}$. One defines similarly the $\alpha$-limit set of $x$ by replacing $+\infty$ with $-\infty$. Whenever clarity is required we write $\mathcal O_X(x),\alpha_X(x)$ and $\omega_X(x)$ to refer to the appropriate vector field. We say that a compact set $K\subset M$ is a \emph{minimal} set for $X$ if $X_t(K)=K$, for every $t$ and $x\in K$ implies $\omega(x)=K$. The classification of minimal sets for smooth surface vector fields of Denjoy and Schwartz \cite{Denjoy,Schwartz} will be a fundamental tool for us.

A point $x\in M$ is \emph{periodic} if there exists $T>0$ such that $X_T(x) = x$. In this case, we refer to the orbit of $x$ as being a \emph{periodic orbit}. 

If $\gamma=\mathcal O(x)$ is a periodic orbit we consider its \emph{stable set} $W^s(\gamma)=\{y\in M;\omega(y)=\gamma\}.$ We also consider the \emph{unstable set} $W^u(\gamma)=\{y\in M;\alpha(y)=\gamma\}$. Observe some of these sets can be empty for a given periodic orbit. If $W^s(\gamma)$ contains a neighbourhood of $\gamma$ we say that $\gamma$ is a \emph{sink}. If it is $W^u(\gamma)$ instead we say that $\gamma$ is a \emph{source}.}

\subsection{Basic properties of commuting vector fields}
\label{sub:basic_properties}

\paragraph{Commuting vector fields --}
One says that the two vector fields $X$ and $Y$ \emph{commute} if their Lie bracket vanishes everywhere, i.e. $[X,Y]=0$.

When $X$ and $Y$ are complete (for example when $M$ is compact), this is equivalent to the following equality holding true for every $s,t\in\R$
$$X_t\circ Y_s=Y_s\circ X_t.$$

\begin{center}
\emph{Until the end of this article all vector fields are complete.}
\end{center}

\paragraph{Normal component, quotient function --} Let $X$ and $Y$ be two commuting vector fields of class $C^r$, $r\geq 1$ on a manifold $M$. Suppose $U\dans M$ is a relatively compact open set where $Y$ does not vanish. Let $\Pi$ be a $C^r$ plane field in $\overline{U}$ transverse to $Y$. We can write
$$X=N+\mu Y$$
where $N$ is a $C^r$-vector field tangent to $\Pi$, called the \emph{normal component of $X$} and $\mu:\overline{U}\to\R$ is a real function of class $C^r$, called the \emph{quotient function}.

{
\begin{rem}
\label{rem.liberte}
These objects (normal component and quotient function) are \emph{not} canonical. { For example the choice of any Riemannian metric yields a plane field $\Pi$ transverse to $Y$ (the orthogonal plane field) and therefore yields a normal component and a quotient function. Even if they are not canonical,} they are important tools in the reduction we shall perform in this section. Whenever we need to modify our choice of the plane field $\Pi$ so that $N$ and $\mu$ have some desired property we shall do so and state it accordingly. In particular, in the proof of Theorem~\ref{t.Normal}, we shall consider the plane field which is invariant under the flow of $Y$, given by the assumption of that theorem, and we shall explore what this additional property gives about the associated normal component $N$ and quotient functions $\mu$.

Moreover, the fact that we have some freedom to choose these auxiliary objects is a subtle technical detail of our arguments that we would like to emphasise. Indeed, observe that for every $c\in\R$ one has 
{\[\Zero(X-cY)\cap\overline{U}\subset\mu^{-1}(c)\:\:\:\:\textrm{and}\:\:\:\:\Zero(N)=\bigcup_{c\in\R}\Zero(X-cY)\cap\overline{U},\]}
and that these facts hold \emph{for every choice of normal component and quotient function}, whenever both sides make sense. Thus, for instance, if we deduce a dynamical result about the sets $\Zero(X-cY)$ (which are the 1 dimensional orbits of the $\R^2$ action induced by $X$ and $Y$) we can change the function $\mu$ but the dynamical property remains true. In the same spirit, the set of zeros of a normal component does not depend on a particular choice. This simple observations will be used throughout the paper.
\end{rem}
}

\paragraph{The collinearity locus --} 
The \emph{collinearity locus} plays a fundamental role in our strategy. In all the paper, we will use the following notation

\begin{equation}
\label{eq:zero_U}
K=\Zero(X)\cap U\neq\vide.
\end{equation}

The collinearity locus is defined inside $\overline{U}$ as
$$\Col_U(X,Y)=\bigcup_{c\in\R}\Zero(X-cY)\cap \overline{U}.$$
Since $X$ and $Y$ commute, the sets 
\begin{equation}
\label{eq:zero_c}
K_c=\Zero(X-cY)\cap \overline{U}
\end{equation}
are $Y_t$-invariant, and form a partition of the collinearity locus by orbits of $Y_t$.

\paragraph{Level sets --} The study of the partition of $U$ given by level sets $\mu^{-1}(c)$ will be crucial in the paper. In particular, part of our simplification will consist in coming down to the case where it is a foliation  by surfaces, which will allow us to reduce the dimension, and use arguments from surface dynamics. Note that, { as was already mentioned in Remark \ref{rem.liberte},} for every parameter $c$ we have
$$K_c\dans\mu^{-1}(c).$$

The normal component and quotient function depend on a transverse plane field $\Pi$ in particular there is no reason why $Y$ should leave them invariant. The next paragraph gives a precise analysis of this defect of invariance.

\paragraph{Holonomies --} Let $\Sigma_0,\Sigma_1\dans U$ be two cross sections of $Y$ tangent to $\Pi$ at $x_0$ and $x_1$, such that there exists a holonomy map $P:\Sigma_0\to\Sigma_1$ along $Y$ {with $P(x_0)=x_1$}.

The \emph{hitting time} is the function $\tau:\Sigma_0\to(0,\infty)$ defined by
\begin{equation}\label{eq.hitting_time}
P(x)=Y_{\tau(x)}(x).
\end{equation}

The next lemma states two fundamental consequences of the identity $[X,Y]=0$. The first one is the invariance of the normal component by holonomy. The second one relates the defect of invariance of $\mu$ with the variation of the hitting time. The proofs can be found in \cite[Corollary 5.6, Lemma 5.7]{BS} as well as in the third author's PhD thesis (see \cite{Santiague_these})

\begin{lem}
\label{calculusholonomies}
Assume that $X$ and $Y$ commute. {Let $\Sigma_0,\Sigma_1$ be two transverse sections of $Y$ such that there exists a holonomy map $P:\Sigma_0\to\Sigma_1$. Assume that $\Sigma_0$ and $\Sigma_1$ are tangent to $\Pi$ at $x$ and $P(x)$ respectively.} Then
\begin{enumerate}
\item $D_xP\,N(x)=N(P(x));$
\item $-D_x\tau\,N(x)=\mu(P(x))-\mu(x).$
\end{enumerate}
\end{lem}

\subsection{Prepared triples}
\label{sub:prepared_ce}

\paragraph{Simplification of triples --} Consider a $C^r$-\emph{triple} $(U,X,Y)$ (with $r\geq 1$). It is the data of $U$, a relatively compact open set of a $3$-dimensional Riemannian manifold $M$, and of $X,Y$, two commuting vector fields of class $C^r$ satisfying
$$\Zero(Y)\cap U=\Zero(X)\cap\partial U=\vide.$$

Let $\Pi$ be a plane field {transverse} to $Y$. Consider the normal component $N$ and the quotient function $\mu$ of $X$ so the following equality holds in $\overline{U}$
$$X=N+\mu Y.$$

Such a triple is a \emph{counterexample to the Conjecture} if we have
$$\Ind(X,U)\neq 0.$$

{Our goal now is, \emph{assuming the existence of a counterexample to the conjecture}, to show that there exists a counterexample to the conjecture having some additional properties which will give us a more detailed description of the collinearity locus. The properties we are seeking are summarized in the definition below.}    

\begin{dfn}[Prepared triples]
\label{d:prepared_triple}
 Let $(U,X,Y)$ be a $C^r$-triple, and $N,\mu$ be respectively the normal component and the quotient function. We say that $(U,X,Y)$ is a ($C^r$)-\emph{prepared triple} if
\begin{enumerate}
\item $\overline{U}$ is trivially foliated by level sets $\mu^{-1}(c)$, $c\in[-\eps,\eps]$, which are diffeomorphic to the same compact and connected surface $S$, possibly with boundary.
\item $Y$ is nowhere orthogonal to the level sets $\mu^{-1}(c)$.
\item $0$ is a continuity point of the map $Z:c\mapsto K_c=\Zero(X-cY)\cap\overline{U}$ in the Hausdorff topology.
\item For every $c\in[-\eps,\eps]$, $K_c\cap\partial\mu^{-1}(c)=\vide$
\end{enumerate}
We say that a $C^r$-triple $(U,X,Y)$ is a $C^r$-\emph{prepared counterexample} to the conjecture if this is a prepared triple satisfying
$$\Ind(X,U)\neq 0.$$
\end{dfn}

{Since this definition is somewhat technical, let us give an informal explanation of what it means. If $(U,X,Y)$ is a $C^r$-prepared counterexample then the level sets $\mu^{-1}(c)$ of the quotient function are compact surfaces almost tangent to $Y$, {which trivially foliate $U$}. Moreover, the ``level $c$ of the collinearity locus'', i.e. the compact set $K_c=\Zero(X-cY)\cap U$, is included in the level $\mu^{-1}(c)$ and does not touch its boundary. {This means that the partition of $\Col_U(X,Y)$ by sets $K_c$ can be extended to a foliation of $U$ by surfaces}. One important issue is Item 3, which concerns the dependence of the sets $K_c$ with respect to $c$. Although in general it will not be continuous, for a prepared counterexample, { Item 3 provides some control on the configuration of $K_c$ for $c$ close to $0$}. In particular, all $K_c$ for $c$ close to $0$ must be included in a ``tubular'' neighbourhood of $K$. The simplest yet non trivial image to have in mind is depicted below in Figure 1. 

The main reason to consider prepared triples is that we can project $Y$ {orthogonally} to the level sets of the quotient function. The dynamics of this projected vector field will allow us to give a very precise dynamical description of the sets $K_c$ (see \S~\ref{sub:simplif_col_loc} below).}

\begin{rem}
The last item in the definition above is of course empty if level sets of $\mu$ are boundaryless. We will show in Corollary  \ref{c.index_zero} that there is no prepared counterexample satisfying that level sets are boundaryless.
\end{rem}

\paragraph{Prepared counterexamples --} The first step of our strategy is,  assuming that there is a counterexample to the Conjecture, to construct a prepared counterexample. The remainder (and also the majority) of the work will then consist in proving that such a prepared counterexample  does not exist.

\begin{thm}[Simplification of counterexamples]
\label{t:prepared}
Assume the existence of a $C^3$-counterexample $(U,X,Y)$ to the Conjecture. Then there exists $\widetilde{U}\dans U$ and a $C^3$-vector field $\widetilde{X}$ commuting with $Y$ such that 
$$\Col_{\widetilde{U}}(\widetilde{X},Y)=\Col_{U}(X,Y)\cap\widetilde{U}$$
and $(\widetilde{U},\widetilde{X},Y)$ is a $C^3$-prepared counterexample to the conjecture.
\end{thm}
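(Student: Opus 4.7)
My plan is to produce $\widetilde X$ as $X - c_0 Y$ for a carefully chosen $c_0$ close to $0$, and $\widetilde U$ as a thin tubular neighborhood, parametrized by level sets of the quotient function, of a compact connected cross-section cut out of a well-chosen component of a regular level set of $\mu$ that carries nonzero Poincar\'e--Hopf index.

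\textbf{Step 1 (Adjusting the constant).} Since $\mu$ is $C^3$, Sard's theorem gives that the set of regular values of $\mu$ has full Lebesgue measure. Separately, the set-valued map $c \mapsto K_c$ is upper semicontinuous in the Hausdorff topology on the compact $\overline U$, so its continuity points form a dense $G_\delta$ subset of $\R$. Lemma \ref{l.zerovazio} prevents $K_c$ from being empty when $\Ind(X,U) \neq 0$ and $|c|$ is small. Picking $c_0$ arbitrarily close to $0$ in the intersection of these two sets, and using $C^0$-stability of the index, we get $\Ind(X - c_0 Y, U) = \Ind(X, U) \neq 0$. I replace $X$ with $X - c_0 Y$: this still commutes with $Y$, and its collinearity locus coincides with $\Col_U(X,Y)$ up to the relabeling $K'_c = K_{c+c_0}$, so I may and do assume from the start that $0$ itself is a regular value of $\mu$ and a continuity point of $c \mapsto K_c$.

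\textbf{Step 2 (Cross-section and tube).} The compact set $K_0 = \Zero(X) \cap U$ lies in the smooth surface $\mu^{-1}(0) \cap U$ and has $\Ind(X, K_0) \neq 0$. Splitting $K_0$ into disjoint compact pieces with disjoint isolating open neighborhoods and applying Remark \ref{r.aditive}, I extract a compact $K^\star \subset K_0$ contained in a single connected component $S_0$ of $\mu^{-1}(0) \cap U$ with $\Ind(X, K^\star) \neq 0$; then I choose a compact connected surface-with-smooth-boundary $\Sigma \subset S_0$ containing $K^\star$ in its interior. Since $d\mu \neq 0$ on $\Sigma$, there is a $C^3$ vector field $V$ defined near $\Sigma$ with $V \cdot \mu \equiv 1$; for $\eps > 0$ small its flow $\psi_t$ yields a diffeomorphism $\Sigma \times [-\eps,\eps] \to \overline{\widetilde U}$ satisfying $\mu^{-1}(t) \cap \overline{\widetilde U} = \psi_t(\Sigma)$. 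Setting $\widetilde U := \bigcup_{|t|<\eps} \psi_t(\Int \Sigma)$ and $\widetilde X := X$, condition (1) of Definition \ref{d:prepared_triple} holds by construction. Conditions (3) and (4) follow by shrinking $\eps$ using continuity of $c \mapsto K_c$ at $0$, which forces $K_c \cap \overline{\widetilde U}$ to remain close to $K^\star \subset \Int \Sigma$ and hence disjoint from $\partial \psi_c(\Sigma)$. Finally $\Ind(\widetilde X, \widetilde U) = \Ind(X, K^\star) \neq 0$, and the collinearity identity $\Col_{\widetilde U}(\widetilde X, Y) = \Col_U(X, Y) \cap \widetilde U$ is automatic since $\widetilde X - cY = X - (c + c_0) Y$.

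\textbf{Main obstacle: condition (2).} The delicate point is arranging that $Y$ is nowhere orthogonal to the level sets of $\mu$. I would exploit the freedom in the choice of the plane field $\Pi$ used to decompose $X = N + \mu Y$: varying $\Pi$ modifies $\mu$ only off the collinearity locus, because on $\Col_U(X,Y)$ the value of $\mu$ is the intrinsic ratio $X/Y$. Inside the thin tube $\widetilde U$, where $Y$ does not vanish, one can build a $Y$-invariant $C^3$ plane field $\Pi$ transverse to $Y$ by propagating the tangent plane of a cross-section along the $Y$-flow; the associated new $\mu$ is then $Y$-invariant by Lemma \ref{calculusholonomies}(2) (the hitting time between $\Sigma_0$ and $Y_t(\Sigma_0)$ is constant, so $D\tau \equiv 0$). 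Hence $Y \cdot \mu \equiv 0$, which means $Y$ is tangent to every level set of $\mu$ and in particular nowhere orthogonal to them. The technical work consists in performing this $\Pi$-construction globally and $C^3$-smoothly on $\widetilde U$ while checking that the outputs of Steps 1--2 survive the change of $\mu$; this is essentially routine since the values of $\mu$ along $K^\star$, and along the collinearity locus more generally, are unaffected.
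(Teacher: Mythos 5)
Steps 1 and 2 of your plan are essentially the paper's: using Sard plus upper semicontinuity of $c\mapsto K_c$ to replace $X$ by $X-c_0Y$ for a generic small $c_0$, then cutting out a tube around a connected piece of a regular level set. Your ordering (do all the genericity at once, then build the tube) differs slightly from the paper's iterative reductions, but that is only organizational, and your observation that lower semicontinuity at $0$ survives the shrinkage because $K^\star$ sits in the interior of $\Sigma$ is correct.

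The gap is in your treatment of condition (2). You propose to choose the plane field $\Pi$ to be $Y$-invariant so that $\mu$ becomes a first integral of $Y$ and $Y$ is tangent to every level set. But such a $\Pi$ does not exist in general: $\widetilde U$ is a tube around the zero set of $X$ and will typically contain periodic orbits of $Y$ whose linearized return maps fix no transverse $2$-plane, so the plane field obtained by pushing a cross-section forward does not close up around such an orbit. (If one \emph{could} always produce a $Y$-invariant transverse plane field on an isolating tube, the hypothesis of Theorem~\ref{t.Normal} would be vacuous and Theorem~\ref{t.valeurpropre} would follow from it immediately; the whole point of the paper is that this hypothesis is a genuine restriction.) Moreover, Definition~\ref{d:prepared_triple} is stated for the specific $\Pi$ normal to $Y$, and changing $\Pi$ would also change $\mu$, undoing the regular-value and continuity-point choices you made in Step~1. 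What the paper actually does (Lemma~\ref{l.anglouille}) is much softer and asks for much less: since each $K_c$ is $Y_t$-invariant and contained in the smooth surface $\mu^{-1}(c)$, the vector $Y$ is exactly tangent to $\mu^{-1}(c)$ at points of $\Col_U(X,Y)$, hence by continuity the angle between $Y$ and the level sets is $\le\pi/4$ on some neighbourhood $W$ of the collinearity locus. Shrinking $\widetilde U$ to lie in $W$ — which does not disturb conditions (1), (3), (4) — gives ``nowhere orthogonal'', which is all that Definition~\ref{d:prepared_triple}(2) requires. Replacing your ``Main obstacle'' paragraph by this continuity argument would make the proof work.
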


\begin{rem}
It will become clear in the next paragraphs that the $C^3$-hypothesis made in Theorem \ref{t:prepared} is fundamental.
\end{rem}

\begin{rem}
\label{rem_prep_ex}
In the theorem above we modify the open set $U$ and the vector field $X$ (in the direction of $Y$) but we keep $Y$ and the collinearity locus unchanged. Hence if we assume the existence of a counterexample to the conjecture, adding some additional hypothesis on $Y$ or on $\Col_U(X,Y)$, then there exists a prepared counterexample satisfying the same hypothesis. In particular, if there exists a counterexample to Theorem \ref{t.valeurpropre} or \ref{t.Normal}, then there exists a prepared counterexample to that theorem.
\end{rem}

\paragraph{Orientability issues --} We show below how to get down to the case where our prepared counterexamples, if they exist, may be chosen with good orientability properties.

\begin{prop}
\label{p:orientable}
Let $(U,X,Y)$ be a prepared counterexample to Theorem \ref{t.valeurpropre} (resp. Theorem \ref{t.Normal}). Then there exists a prepared counterexample $(\widetilde{U},\widetilde{X},\widetilde{Y})$ to that theorem such that
\begin{enumerate}
\item The vector field $\widetilde{Y}$ is transversally oriented.
\item Level sets $\widetilde{\mu}^{-1}(c)$ are oriented, where $\widetilde{\mu}$ denotes the quotient function.
\end{enumerate}
\end{prop}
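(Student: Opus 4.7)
The plan is to realize $(\widetilde{U}, \widetilde{X}, \widetilde{Y})$ as (a connected component of) the lift of $(U, X, Y)$ to the orientation double cover of the ambient $3$-manifold $M$. My key observation is that both conditions in the statement of the proposition are equivalent to orientability of $U$, so a single covering construction will handle them simultaneously. Indeed, on $\overline{U}$ we have the splitting $TM|_{\overline{U}} = \R Y \oplus \Pi$, and since $\R Y$ is trivialized by the nowhere-vanishing section $Y$, the plane bundle $\Pi$ is orientable iff $TM|_{\overline{U}}$ is, iff $U$ is orientable. Similarly, condition~(1) of Definition~\ref{d:prepared_triple} forces $\mu$ to be a submersion on $\overline{U}$, so the splitting $TM|_{\overline{U}} = \R \nabla\mu \oplus \ker(d\mu)$ gives by the same logic that the tangent bundle to the foliation by level sets of $\mu$ is orientable iff $U$ is orientable.

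If $U$ is already orientable I take $(\widetilde{U},\widetilde{X},\widetilde{Y}) = (U,X,Y)$ and there is nothing to do. Otherwise, let $\pi : \widehat{M} \to M$ denote the orientation double cover of $M$ and let $\widetilde{U}$ be a connected component of $\pi^{-1}(U)$. Since $U$ is non-orientable, $\pi : \widetilde{U} \to U$ is the non-trivial orientation double cover of $U$, in particular a $2$-to-$1$ local diffeomorphism, and $\widetilde{U}$ is relatively compact in $\widehat{M}$. Lifting $X$ and $Y$ along $\pi$ gives commuting vector fields $\widetilde{X}, \widetilde{Y}$ on $\widetilde{U}$ with $\Zero(\widetilde{Y}) \cap \widetilde{U} = \Zero(\widetilde{X}) \cap \partial\widetilde{U} = \vide$, and $\widetilde{U}$ now sits inside an oriented $3$-manifold, so both conditions of the proposition hold automatically.

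What remains is bookkeeping: checking that the lifted triple is still a prepared counterexample and still satisfies the hypotheses of Theorem~\ref{t.valeurpropre} or~\ref{t.Normal}. The trivial foliation transfers because a connected double cover of the product $S \times [-\eps,\eps]$ has the form $\widetilde{S} \times [-\eps,\eps]$ for an appropriate double cover $\widetilde{S}$ of $S$, with $\widetilde{\mu}^{-1}(c) = \widetilde{S} \times \{c\}$; transversality of $\widetilde{Y}$ to leaves, disjointness of $\widetilde{K}_c$ from $\partial \widetilde{\mu}^{-1}(c)$, and Hausdorff continuity at $0$ of $c \mapsto \widetilde{K}_c = \pi^{-1}(K_c)$ all transfer directly because $\pi$ is a proper local diffeomorphism onto $\overline{U}$. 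The Poincar\'e--Hopf index multiplies under covers, so $\Ind(\widetilde{X},\widetilde{U}) = 2\,\Ind(X,U) \neq 0$, and the lifted triple is indeed a counterexample.

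For the additional hypotheses of the two theorems: the hypothesis of Theorem~\ref{t.Normal} survives because an invariant transverse $C^3$-plane field pulls back via $\pi$ to an invariant transverse $C^3$-plane field. For Theorem~\ref{t.valeurpropre}, each periodic orbit $\widetilde{\gamma}$ of $\widetilde{Y}$ contained in the collinearity locus of $(\widetilde{X},\widetilde{Y})$ projects to a periodic orbit $\gamma \dans \Col_U(X,Y)$, and the Poincar\'e first-return map of $\widetilde{\gamma}$ is either conjugate to that of $\gamma$ (if $\pi^{-1}(\gamma)$ splits into two orbits) or conjugate to its square (if $\pi^{-1}(\gamma) = \widetilde{\gamma}$ has double the period), so the existence of an eigenvalue of modulus $\neq 1$ is preserved in either case. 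I expect the only mildly delicate step in this plan to be the identification of the lifted foliation as a product $\widetilde{S} \times [-\eps,\eps]$; the other verifications are essentially automatic since $\pi$ is a proper local diffeomorphism.
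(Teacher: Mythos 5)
Your proof is correct and follows essentially the same idea as the paper's: pass to a double cover, check that the lifted triple is still a prepared counterexample, and verify the extra hypotheses of Theorem~\ref{t.valeurpropre} (eigenvalues of Poincar\'e maps square under a $2{:}1$ cover) or of Theorem~\ref{t.Normal} (the invariant plane field pulls back) survive. The one genuine refinement you make is the observation that, because $TM|_{\overline U}=\R Y\oplus\Pi$ with the line bundle trivialized by $Y$, and similarly $TM|_{\overline U}=\R\nabla\mu\oplus\ker d\mu$, orientability of $Y^\perp$ and orientability of the tangent bundle to the level-set foliation are \emph{both} equivalent to orientability of $U$ itself. This means a single orientation double cover of $U$ accomplishes both, whereas the paper's phrasing (``consider successively a double orientation cover for $Y^\perp$ and a double orientation cover for the foliation by level sets'') invokes two potentially separate covers. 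Your version is cleaner; the underlying mechanism and all the verification steps are the same.
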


\begin{proof}
Take $\Pi:\widetilde{U}\to U$, a double cover. The lifted vector fields $\widetilde{X}$ and $\widetilde{Y}$ still commute and the index of $\widetilde{X}$ in $\widetilde{U}$ is multiplied by $2$.

Let $x\in\per(Y)$. Every $\widetilde{x}\in\Pi^{-1}(x)$ is a periodic point of $\widetilde{Y}$. Let $P$ and $\widetilde{P}$ denote the corresponding Poincar\'e maps. 
Then the eigenvalues of $D_{\widetilde{x}}\widetilde{P}$ coincide with those of $D_xP$ 
(if its orbit projects down $1:1$) or with their squares (if it projects down
 $2:1$). Moreover the pull-back of a $Y_t$-invariant plane field, if it exists, is a $\widetilde{Y}_t$-invariant plane field.

Therefore, if needed, we can consider successively a double orientation cover for $Y^{\perp}$ and a double orientation cover for the foliation by level sets.
\end{proof}

\subsection{Simplifying $\Zero(X)$}
\label{sub:simplif_col_loc}

From now on $(X,Y,U)$ will be a triple in the sense of \S \ref{sub:prepared_ce}. As we have already mentioned $\Col_U(X,Y)$ is partitioned by sets $K_c$ which are saturated by $Y_t$ and form therefore a lamination. The first idea is to modify $X$ without changing $\Ind(X,U)$ so that this lamination may be extended to a foliation of $U$ by surfaces (possibly with boundary). For this we will use Sard's theorem.

\paragraph{A foliation containing $\Col_U(X,Y)$ --} Here, we must assume that $X$ and $Y$ are of class $C^3$. Endow $M$ with a Riemannian metric $g$ and let $\Pi$ be the $C^3$ plane field normal to $Y$ ({or use any plane field transverse to $Y$ given a priori}). We write
$$X=N+\mu Y$$
where $N$ is the normal component corresponding to $\Pi$ and $\mu:\overline{U}\to\R$ is the corresponding quotient function. These objects are of class $C^3$.

\begin{figure}[h!]
\label{f.satanou}
\centering
\includegraphics[scale=0.6]{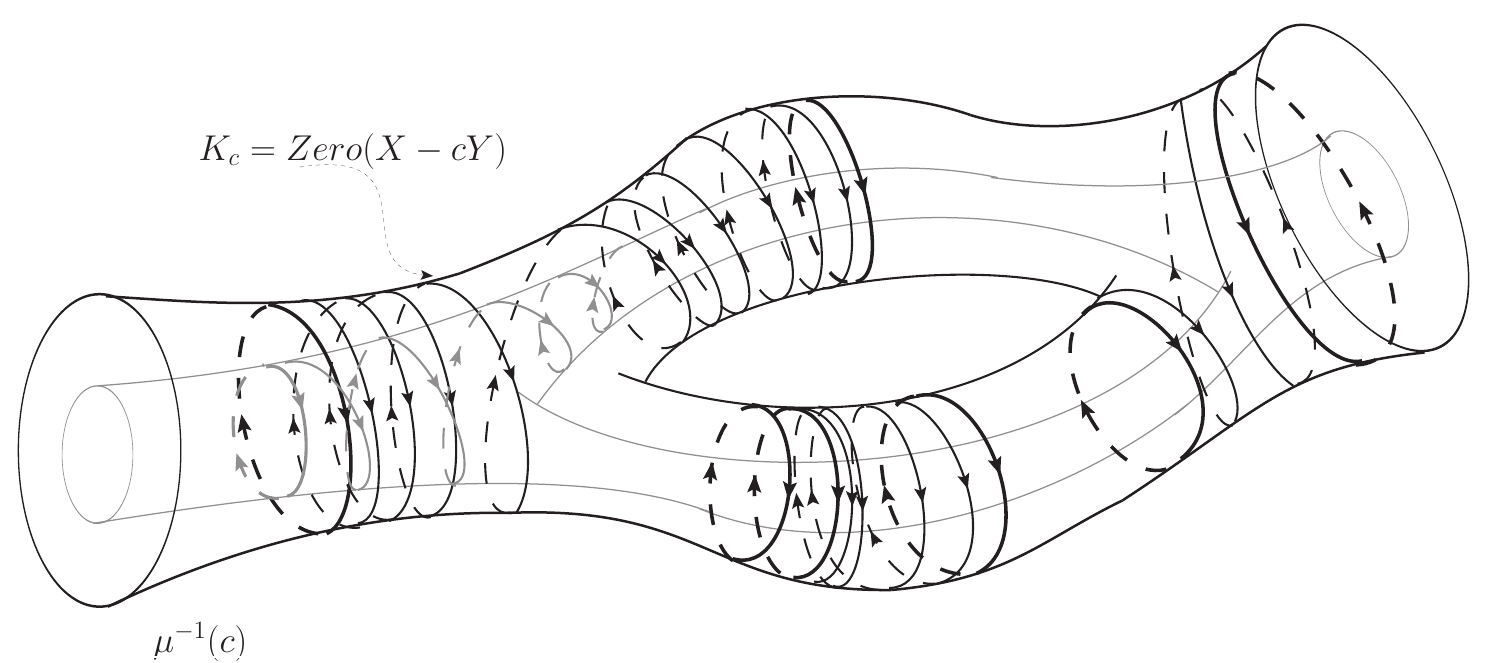}
\caption{The collinearity locus is tangent to a surface foliation}
\end{figure}

\begin{lem}
	\label{l:munonzero}
	There exists $\delta>0$ such that if $\mu^{-1}(c)=\emptyset$, for some $|c|<\delta$, then $\Ind(X,U)=0$ 
\end{lem}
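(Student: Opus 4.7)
The plan is to combine the elementary inclusion $K_c\subseteq \mu^{-1}(c)$ with the stability Lemma~\ref{l.zerovazio} applied to the family $X-cY$. This turns the emptiness of a level set of $\mu$ into the emptiness of a zero set, which directly yields $\Ind(X,U)=0$.

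First I would unpack the decomposition. By definition, on $\overline{U}$ one has $X=N+\mu Y$ with $N$ a section of the plane field $\Pi$ transverse to $Y$. If $x\in K_c$, i.e.\ $X(x)=cY(x)$, then $N(x)=(c-\mu(x))Y(x)$; since $N(x)\in \Pi_x$ while $Y(x)\notin \Pi_x$, both sides must vanish. This forces $N(x)=0$ and $\mu(x)=c$, whence the inclusion
\[
K_c\;\subseteq\;\mu^{-1}(c)\qquad\text{for every }c\in\R.
\]

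Next, I would choose $\delta>0$ provided by Lemma~\ref{l.zerovazio} applied to the pair $(X,Y)$ on $U$. Now suppose $\mu^{-1}(c)=\emptyset$ for some $c$ with $|c|<\delta$. By the inclusion just established, $K_c=\Zero(X-cY)\cap\overline{U}=\emptyset$, and in particular $\Zero(X-cY)\cap U=\emptyset$. Lemma~\ref{l.zerovazio} then directly gives $\Ind(X,U)=0$.

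There is no real obstacle; the only subtle point worth double-checking is that the constant $\delta$ can be chosen independently of $c$, which is exactly the content of Lemma~\ref{l.zerovazio} (it provides a single $\delta>0$ coming from $C^0$-stability of the Poincar\'e--Hopf index on a compact neighborhood of $\partial U$ where $X$ does not vanish).
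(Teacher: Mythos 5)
Your proof is correct and follows essentially the same route as the paper's: apply Lemma~\ref{l.zerovazio} to obtain $\delta$, then use the inclusion $K_c\subset\mu^{-1}(c)$ to deduce $\Zero(X-cY)\cap U=\emptyset$ and hence $\Ind(X,U)=0$. The only difference is that you spell out the elementary derivation of $K_c\subset\mu^{-1}(c)$, which the paper records earlier in the ``Level sets'' paragraph and simply cites.
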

\begin{proof}
	Apply Lemma~\ref{l.zerovazio} to $X$ in order to find the number $\delta>0$. The result now follows from the inclusion $K_c=\Zero(X-cY)\cap\overline{U}\subset\mu^{-1}(c)$. 
\end{proof}
In particular we may rule out the case where $\mu$ is constant in some neighbourhood of $K$.

\begin{lem}
	\label{l:sardouille}
	For every $\eps_0>0$ there exists $0<\eps_1<\eps_2<\eps_0$ such that $  [\eps_1,\eps_2]$ is an interval containing only regular values of $\mu$.
\end{lem}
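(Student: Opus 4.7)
The plan is a direct appeal to Sard's theorem, using in an essential way that the triple is of class $C^3$ (exactly the regularity required for Sard when the domain has dimension $3$ and the codomain has dimension $1$, since $n-m+1 = 3$).

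First I would observe that the quotient function $\mu \colon \overline{U} \to \R$ is of class $C^3$ on the relatively compact set $\overline{U} \subset M$. Let $\mathrm{Crit}(\mu) \subset \overline{U}$ denote the set of critical points of $\mu$, i.e.\ the points where $d\mu$ vanishes. Since the condition $d\mu = 0$ is closed and $\overline{U}$ is compact, $\mathrm{Crit}(\mu)$ is a compact subset of $\overline{U}$. By continuity of $\mu$, its image $\mu(\mathrm{Crit}(\mu))$, the set of critical values, is a compact (hence closed) subset of $\R$. Consequently the set of regular values of $\mu$ is open in $\R$.

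Next, by Sard's theorem applied to the $C^3$ map $\mu$ (the hypothesis $C^3$ is exactly what is needed, as $\dim(\overline U) - \dim(\R) + 1 = 3$), the set $\mu(\mathrm{Crit}(\mu))$ has Lebesgue measure zero in $\R$. Therefore its complement intersected with $(0,\eps_0)$ is an open subset of $(0,\eps_0)$ of full Lebesgue measure, and in particular nonempty. Any nonempty open subset of $\R$ of positive measure contains a nontrivial open interval, so I can choose $0 < \eps_1' < \eps_2' < \eps_0$ with $(\eps_1',\eps_2')$ consisting entirely of regular values of $\mu$. Shrinking slightly to $\eps_1'<\eps_1<\eps_2<\eps_2'$ produces a closed interval $[\eps_1,\eps_2] \subset (\eps_1',\eps_2')$ of regular values, as desired.

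There is no real obstacle here: the only subtle points are (i) to invoke compactness of $\overline{U}$ so that the set of critical values is closed and not merely of measure zero (this is what upgrades ``a dense set of regular values'' to ``an open interval of regular values''), and (ii) to check that the regularity hypothesis $C^3$ in the definition of a prepared triple is precisely the threshold at which Sard's theorem applies for a real-valued function on a $3$-manifold. Both are immediate.
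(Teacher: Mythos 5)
Your argument is correct and follows the same route as the paper: apply Sard's theorem to the $C^3$ map $\mu$ (noting that $C^3$ is exactly the threshold for a map from a $3$-manifold to $\R$), then use compactness of $\overline U$ to see that the set of critical values is closed, so that the co-null set of regular values is also open and hence contains a nontrivial interval in $(0,\eps_0)$. Your write-up is slightly more explicit about the openness step; the paper's proof also invokes Lemma~\ref{l:munonzero} (that $\mu$ is nonconstant) at this point, but as you implicitly observe this is not needed for the lemma itself --- it is only used in the subsequent Corollary~\ref{l:modifyX} to guarantee $\mu^{-1}(\tau)\neq\vide$.
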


\begin{proof}
	Consider the function $\mu:U\to\R$. This is a function of class $C^3$ between a $3$-dimensional manifold and a $1$-dimensional one, which by assumption vanishes in $U$ and is not constant on $U$, after Lemma~\ref{l:munonzero}.
	By Sard's theorem there exist regular values, which can be made positive (up to changing $X$ by $-X$ which still commutes with $Y$ and doesn't vanish on $\partial U$) and arbitrarily small. Since $\overline{U}$ is compact, the set of regular values is an open set, the lemma follows.
\end{proof}

\begin{cor}
	\label{l:modifyX}
	There exist $0<\eps<\xi$, such that the following properties hold true:
	\begin{enumerate}
		\item $\Zero(X-\xi Y)\cap\partial U=\emptyset$;
		\item $\Ind(X-\xi Y,U)=\Ind(X,U)$;
		\item $[\xi-\eps,\xi+\eps]$ contains only regular values of $\mu$. In particular, each connected component of $\mu^{-1}([\xi-\eps,\xi+\eps])$ is diffeomorphic to a product $[\xi-\eps,\xi+\eps]\times S$, where $S$ is a compact surface, possibly with boundary.
	\end{enumerate}
\end{cor}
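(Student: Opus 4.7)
The plan is to choose $\tau$ as a small positive regular value of $\mu$ lying in the interior of an interval of regular values produced by Lemma~\ref{l:sardouille}, and to invoke the $C^0$-stability of the Poincar\'e--Hopf index to secure items (1) and (2).

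First I would fix a smallness tolerance $\delta>0$ ensuring (1) and (2). Since $X$ does not vanish on the compact set $\partial U$ and $Y$ is bounded on $\overline{U}$, there is $\delta_1>0$ such that $X-cY$ is non-vanishing on $\partial U$ whenever $|c|<\delta_1$. By the $C^0$-stability of the index recalled in Section~\ref{sub_PHindex} -- namely $\Ind(X',U)=\Ind(X,U)$ as soon as $X'$ is $C^0$-close enough to $X$ -- there is $\delta_2>0$ such that $\Ind(X-cY,U)=\Ind(X,U)$ for $|c|<\delta_2$. I then set $\delta:=\min(\delta_1,\delta_2)$.

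Now I would apply Lemma~\ref{l:sardouille} with $\eps_0=\delta$ to obtain $0<\eps_1<\eps_2<\delta$ such that $[\eps_1,\eps_2]$ consists only of regular values of $\mu$, and define $\tau:=(\eps_1+\eps_2)/2$ and $\eps:=(\eps_2-\eps_1)/2$. Then $\tau<\delta$ forces items (1) and (2), while the identity $[\tau-\eps,\tau+\eps]=[\eps_1,\eps_2]$ gives the regular-value statement in (3). For the product-structure part of (3), let $A:=\mu^{-1}([\tau-\eps,\tau+\eps])$; this is compact as a closed subset of $\overline{U}$, and $\mu|_A$ is a submersion onto a contractible interval. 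Integrating a gradient-like vector field for $\mu$ on $\overline{U}$, normalized so that $d\mu$ applied to it is identically $1$, trivializes each connected component of $A$ as a product $[\tau-\eps,\tau+\eps]\times S$, with $S$ the corresponding connected component of $\mu^{-1}(\tau)$, a compact surface possibly with boundary in $\partial U$.

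The only real technical point I anticipate is this last trivialization in the presence of $\partial U$: the gradient-like vector field must be chosen tangent to $\partial U\cap A$ so that its flow preserves $\overline{U}$, which requires that every value in $[\tau-\eps,\tau+\eps]$ be regular also for the restriction $\mu|_{\partial U}$. This is handled by applying Sard's theorem to $\mu|_{\partial U}$ in parallel with $\mu$ (either inside the proof of Lemma~\ref{l:sardouille} or by a further shrinking of the interval), after which the construction is routine. The remainder of the argument is simply the combination of Lemma~\ref{l:sardouille} with the $C^0$-robustness of the Poincar\'e--Hopf index.
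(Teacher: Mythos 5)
Your proof follows the same outline as the paper's: use $C^0$-stability of the Poincar\'e--Hopf index to produce a threshold below which items (1) and (2) hold for $X-cY$, then feed that threshold into Lemma~\ref{l:sardouille} to extract an interval $[\eps_1,\eps_2]$ of regular values, and take $\tau=(\eps_1+\eps_2)/2$, $\eps=(\eps_2-\eps_1)/2$. (The paper writes $\eps=(\eps_1-\eps_2)/2$, which is a sign typo; your formula is what is meant.)

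Where you go beyond the printed argument is in the justification of the product structure in item (3). The paper simply asserts that, since $[\eps_1,\eps_2]$ consists of regular values of $\mu$, the set $\mu^{-1}([\eps_1,\eps_2])$ ``is diffeomorphic to'' a finite union of products; the proof of Lemma~\ref{l:sardouille} likewise applies Sard only to $\mu$ on $U$. You correctly point out that, because $\overline{U}$ is a manifold with boundary, the Ehresmann-type trivialization along the gradient-like vector field for $\mu$ requires a vector field tangent to $\partial U$, which in turn requires $[\tau-\eps,\tau+\eps]$ to avoid the critical values of $\mu|_{\partial U}$ as well, and that this is secured by running Sard's theorem on $\mu|_{\partial U}$ in parallel. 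This is a genuine extra step that the paper leaves implicit, and your handling of it is sound (the set of common regular values of $\mu$ and $\mu|_{\partial U}$ is open and of full measure, hence contains small positive intervals). In short: correct, same strategy, with a boundary regularity detail filled in that the paper glosses over.
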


\begin{proof}
	The index of vector fields in $U$ being locally constant there exists $\eps_0>0$ such that the first two properties hold for every $\xi\in [0,\eps_0]$. 
	Using Lemma~\ref{l:sardouille}, one obtains that $[0,\eps_0]$ contains an interval $[\eps_1,\eps_2]$ of regular values. 
	Moreover, by Lemma~\ref{l:munonzero} if $\eps_0$ is small enough, $\mu$ is surjective on this interval. We take $\xi=(\eps_1+\eps_2)/2$ and $\eps=(\eps_1-\eps_2)/2$.
	It follows that $\mu^{-1}([\eps_1,\eps_2])$ is diffeomorphic to a finite union of the form $\cup_i[\eps_1,\eps_2]\times S_i$, where each $S_i$ is a connected component of $\mu^{-1}(\xi)$ and is a compact surface, which might have boundary components. 
\end{proof}

Note moreover that $X^{(1)}=X-\xi Y$ commutes with $Y$. Hence if $(U,X,Y)$ was a counterexample, and $U^{(1)}=\mu^{-1}(\eps_1,\eps_2)\dans U$ then $(U^{(1)},X^{(1)},Y)$ is still a counterexample. Note that we did not change $Y$ nor $\Col_U(X,Y)$.

For that reason \emph{we may assume that the number $\xi$ we just found out is in fact equal to $0$ and that $U=\mu^{-1}(\eps,\eps)$ is trivially foliated by level sets $\mu^{-1}(c)$}.

Let $U_1,...,U_n$ be the connected components of $U$. Each $U_i$ is foliated by connected compact surfaces, which are the connected components of level sets $\mu^{-1}(c)$. The additive property of the index implies $\Ind(X,U)=\sum_{i=1}^n\Ind(X,U_i)$. So if $(U,X,Y)$ is a counterexample, there exists $i$ such that $(U_i,X,Y)$ is a counterexample satisfying the first item of Definition \ref{d:prepared_triple}, i.e. we are down to the case where \emph{$\overline{U}$ is trivially foliated by compact and connected surfaces, the level sets $\mu^{-1}(c)$, such that for every $c$, $K_c=\Zero(X-cY)\cap\overline{U}\dans\mu^{-1}(c)$.}

\paragraph{Projecting $Y$ on level sets --} As we mentioned before $Y$ needs not be tangent to level sets. However we saw that $\Col_U(X,Y)$ is saturated by the orbits of $Y$. So a continuity argument shows that in a neighbourhood of $\Col_U(X,Y)$, $Y$ is quasi-tangent to the level sets.

\begin{lem}
\label{l.anglouille}
There exists $W\dans \overline{U}$, a neighbourhood of $\Col_U(X,Y)$, such that for every $c\in(-\eps,\eps)$ if $x\in W\cap\mu^{-1}(c)$ then
$$\left|\sphericalangle(Y(x),T_x[\mu^{-1}(c)])\right|\leq\frac{\pi}{4}.$$
\end{lem}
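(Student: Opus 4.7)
The plan is to show that $Y$ is in fact tangent to the level sets of $\mu$ at every point of $\Col_U(X,Y)$, then conclude by a continuity and compactness argument. Two preliminary observations underlie everything. First, at any point $x\in\Col_U(X,Y)$, say $x\in K_c$, the equation $X(x)=cY(x)$ rewrites as $N(x)=(c-\mu(x))Y(x)$; since $N(x)\in\Pi(x)$ is orthogonal to $Y(x)\neq 0$, this forces $N(x)=0$ and $\mu(x)=c$, so in particular $\Col_U(X,Y)\subset N^{-1}(0)$. Second, after the reductions of Corollary~\ref{l:modifyX}, $\overline{U}$ consists only of regular values of $\mu$, whence $|\nabla\mu|$ admits a positive lower bound on the compact set $\overline{U}$; similarly $|Y|$ is bounded below since $Y$ does not vanish on $\overline{U}$.

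The key computation is a Lie-bracket identity. Writing $X=N+\mu Y$ and using the standard formula $[\mu Y,Y]=-(Y\cdot\mu)\,Y$, the hypothesis $[X,Y]=0$ yields
\[[N,Y]=(Y\cdot\mu)\,Y.\]
Now fix $x\in\Col_U(X,Y)$. Since the collinearity locus is saturated by orbits of $Y$ and is contained in $N^{-1}(0)$, the function $t\mapsto N(Y_t(x))$ vanishes for every $t$ such that $Y_t(x)\in U$; differentiating at $t=0$ yields $D_x N\cdot Y(x)=0$. Combined with $N(x)=0$, this gives $[N,Y](x)=0$, and since $Y(x)\neq 0$ the displayed identity forces $(Y\cdot\mu)(x)=0$. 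In other words, $Y(x)\in\ker d_x\mu = T_x\mu^{-1}(\mu(x))$ at every point of the collinearity locus.

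To finish, consider the continuous function
\[\Theta:\overline{U}\longrightarrow[0,1],\qquad \Theta(x)=\frac{|(Y\cdot\mu)(x)|}{|\nabla\mu(x)|\cdot|Y(x)|},\]
well-defined thanks to the lower bounds above, which equals $\sin|\sphericalangle(Y(x),T_x[\mu^{-1}(\mu(x))])|$. By the previous paragraph, $\Theta$ vanishes identically on $\Col_U(X,Y)$, so $W=\Theta^{-1}([0,\sqrt{2}/2))$ is an open neighborhood of $\Col_U(X,Y)$ in $\overline{U}$ on which $|\sphericalangle(Y(x),T_x[\mu^{-1}(c)])|<\pi/4$, as required. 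No step looks seriously delicate: the heart of the matter is the bracket identity $[N,Y]=(Y\cdot\mu)Y$, and once one observes that $N$ is identically zero along each $Y$-orbit inside the collinearity locus, its derivative along $Y$ at such points is automatic.
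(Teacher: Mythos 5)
Your proof is correct, and it arrives at the same key intermediate fact as the paper — namely that $Y(x)\in T_x\mu^{-1}(\mu(x))$ at every point $x$ of the collinearity locus — followed by the same continuity-plus-compactness conclusion, which you usefully make explicit via the normalized angle function $\Theta$. The mechanism you use to obtain the tangency, however, is genuinely different from the paper's. The paper argues directly: $K_c$ is $Y_t$-invariant (by $[X,Y]=0$) and $K_c\subset\mu^{-1}(c)$ (since $N\perp Y$ forces $N=0$ and $\mu=c$ at a collinear point), so for $x\in K_c$ the curve $t\mapsto Y_t(x)$ lies in $\mu^{-1}(c)$ for small $t$, and differentiating $\mu(Y_t(x))\equiv c$ gives $(Y\cdot\mu)(x)=0$ in one line. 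You instead differentiate $N$ along the $Y$-orbit to get $D_xN\cdot Y(x)=0$, conclude $[N,Y](x)=0$, and then pass through the bracket identity $[N,Y]=(Y\cdot\mu)\,Y$ (which is equivalent to $[X,Y]=0$ after writing $X=N+\mu Y$) to extract $(Y\cdot\mu)(x)=0$. Your route is longer and detours through the bracket identity, but that identity is a clean, coordinate-free restatement of the commutation relation in terms of $(N,\mu)$ and could serve elsewhere; the paper's route is shorter because it exploits that $K_c$ sits in a single level set, whereas you only use that $\Col_U(X,Y)\subset N^{-1}(0)$.
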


\begin{proof}
Since $[X,Y]=0$ the collinearity locus $\Col_U(X,Y)$ is $Y_t$-invariant. We deduce that $Y$ is tangent to $\mu^{-1}(c)$ at every point of $K_c$. The lemma clearly follows from the continuity of the vector field.
\end{proof}

Such a neighbourhood of $\Col_U(X,Y)$ is a neighbourhood of $K$ and $X$ does not vanish on its boundary. This allow us to construct from a counterexample $(U,X,Y)$ such as constructed in the previous paragraph a new counterexample $(U^{(2)},X^{(2)},Y)$ satisfying the first and the second item of Definition \ref{d:prepared_triple}. Here again, we did not change $Y$ nor the collinearity locus.

\subsection{Semi-continuity for the Hausdorff topology}
\label{sub:semi_cty} 

We will also need a genericity argument to reduce our study to the case where $0$ is a continuity point of $Z:c\mapsto K_c=\Zero(X-cY)\cap\overline{U}$ for the so-called \emph{Hausdorff topology} that we introduce below.

\paragraph{Definition and semi-continuity lemma --} Recall that the set $\cK$ of compact subsets of a compact metric space $(\cX,\dist)$ is in itself a compact metric space when endowed with the \emph{Hausdorff distance} $\dist_H$ defined as follows. If $K,L$ are compact subsets of $\cX$, $\dist_H(K,L)$ is the maximum of the two numbers $\dist_K(L)$ and $\dist_L(K)$ where by definition
$$\dist_K(L)=\sup_{x\in L}\dist(x,K).$$

\begin{dfn}[Lower semi-continuity]
\label{d.lcs}
Let $(\cY,d)$ be a metric space. Say a function $Z:\cY\to\cK$ is \emph{lower semi-continuous} at $y_0\in\cY$ if for every open set $V\dans\cX$ intersecting $Z(y_0)$ there exists $\delta>0$ such that if $y\in\cY$ 
satisfies $d(y_0,y)<\delta$, then
$$Z(y)\cap V\neq\vide.$$
\end{dfn}

\begin{dfn}[Upper semi-continuity]
\label{d.ucs}
Let $(\cY,d)$ be a metric space. Say a function $Z:\cY\to\cK$ is \emph{upper semi-continuous} at $y_0\in\cY$ if for every neighbourhood $V\dans\cX$ of $Z(y_0)$ there exists $\delta>0$ such that if $y\in\cY$ satisfies $d(y_0,y)<\delta$, then
$$Z(y)\dans V.$$
\end{dfn}

Note that this notion is coherent: a function $Z:\cY\to\cK$ which is both lower and upper semi-continuous at a point is continuous at this point (with respect to the Hausdorff distance). For the next result, we refer to \cite[pp. 70-71]{Kurouille}. See also \cite{Santiague_note}, which is an unpublished note of the third author, for a proof in a slightly more general context (i.e. assuming only the separability of $\cY$). Recall that a subset of a topological space is called \emph{residual} if it can be written as a countable intersection of dense open sets. 

\begin{thm}[Semi-continuity Lemma]
\label{t.sclemma}
Let $(\cX,\dist)$ and $(\cY,d)$ be two compact metric spaces, and $\cK$ be the space of compact subsets of $\cX$ endowed with the Hausdorff distance. Let $Z:\cY\to\cK$ be upper (resp. lower) semi-continuous at all points $y\in\cY$. Then the set of continuity points of $Z$ is residual, and hence dense by Baire's theorem.
\end{thm}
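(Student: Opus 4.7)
My plan is to prove that the set of discontinuity points of $Z$ is meager by exhibiting it as a countable union of nowhere dense closed sets; the conclusion will then follow from Baire's theorem applied to the compact (hence complete) metric space $(\cY,d)$. Since $Z$ is upper semi-continuous everywhere by hypothesis, the only possible source of discontinuity is failure of lower semi-continuity, so I need only control the set of non-LSC points.

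First, I would exploit second countability of $\cX$ to reduce lower semi-continuity at $y_0$ to a countable collection of ``test'' conditions. Fix a countable basis $\{V_n\}_{n\geq 1}$ of $\cX$, and for each $n$ set
\begin{equation*}
H_n = \{y\in\cY : Z(y)\cap V_n \neq \vide\}.
\end{equation*}
A direct argument (using that any open $V\dans\cX$ meeting $Z(y_0)$ contains a basic $V_n$ still meeting $Z(y_0)$) shows that $Z$ is LSC at $y_0$ if and only if, for every $n$ with $y_0\in H_n$, one has $y_0\in\Int(H_n)$. Equivalently, the set of points where $Z$ fails LSC is $\bigcup_n \bigl(H_n\setminus\Int(H_n)\bigr)$.

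Next, I would use upper semi-continuity to show that each $H_n$ is an $F_\sigma$ subset of $\cY$. The key trick is to represent the closed set $\cX\setminus V_n$ as a decreasing intersection of its open $1/k$-neighborhoods
\begin{equation*}
W_{n,k}=\{x\in\cX : \dist(x,\cX\setminus V_n)<1/k\},
\end{equation*}
so that
\begin{equation*}
\cY\setminus H_n \;=\; \{y : Z(y)\dans \cX\setminus V_n\} \;=\; \bigcap_{k\geq 1}\{y : Z(y)\dans W_{n,k}\}.
\end{equation*}
Each set in this intersection is open in $\cY$ by the upper semi-continuity of $Z$ (applied to the open set $W_{n,k}\supset Z(y)$). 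Hence $\cY\setminus H_n$ is $G_\delta$ and $H_n = \bigcup_{k\geq 1} F_{n,k}$ is $F_\sigma$, where each $F_{n,k}$ is closed.

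Finally, I would observe that if $y_0\in H_n\setminus\Int(H_n)$ then $y_0\in F_{n,k}$ for some $k$ while $y_0\notin\Int(F_{n,k})$, so $y_0$ lies on the boundary $\partial F_{n,k}=F_{n,k}\setminus\Int(F_{n,k})$ of a closed set; an elementary argument (any open set contained in $F_{n,k}\setminus\Int(F_{n,k})$ would lie in $\Int(F_{n,k})$) shows that this boundary is closed with empty interior, hence nowhere dense. Thus the set of discontinuity points of $Z$ is contained in the countable union $\bigcup_{n,k}\partial F_{n,k}$, which is meager, and its complement---the set of continuity points---is residual and dense by Baire's theorem. The main obstacle is the $F_\sigma$-description of $H_n$: it relies crucially on the metric structure of $\cX$ (to write the closed set $\cX\setminus V_n$ as a countable intersection of opens), which upgrades the direct consequence of upper semi-continuity, namely that $\cY\setminus H_n$ is $G_\delta$, into useful information about $H_n$ itself via a single countable operation.
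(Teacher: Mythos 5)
Your proof is correct and follows the classical argument (essentially Fort's theorem, as in the cited Kuratowski reference): the reduction of lower semi-continuity to the countable family of conditions indexed by a basis $\{V_n\}$ of $\cX$, the use of upper semi-continuity to show each $H_n$ is $F_\sigma$, and the observation that the boundary of a closed set is nowhere dense. Since the paper simply cites Kuratowski (and Santiago's note) for this result rather than giving its own proof, your argument is the expected one and there is nothing further to compare.
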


\paragraph{Regularity of $\Zero(X-cY)$ --}
Let us study how the compact set consisting of elements $x\in\overline{U}\dans M$ such that $X(x)=c Y(x)$ varies with the parameter $c$.

\begin{lem}
\label{l.scinferiously}
The map
$$Z:c\in\R\mapsto K_c=\Zero(X-cY)\cap \overline{U}$$
is upper semi-continuous in the Hausdorff topology.
\end{lem}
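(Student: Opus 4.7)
The plan is a direct contradiction argument based on the compactness of $\overline{U}$ and the joint continuity of the map $(x,c)\mapsto X(x)-cY(x)$.

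First, I would fix $c_0\in\R$ and an open neighbourhood $V\dans \overline{U}$ of $K_{c_0}$. The goal, in view of Definition \ref{d.ucs}, is to produce $\delta>0$ such that $K_c\dans V$ as soon as $|c-c_0|<\delta$. Assume, for contradiction, that no such $\delta$ exists. Then one can extract a sequence $c_n\to c_0$ and points $x_n\in K_{c_n}\setminus V$; in particular
$$x_n\in \overline{U}\setminus V\quad\text{and}\quad X(x_n)-c_nY(x_n)=0.$$

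Second, since $\overline{U}\setminus V$ is closed inside the compact set $\overline{U}$, it is itself compact, so up to passing to a subsequence one has $x_n\to x_\infty\in \overline{U}\setminus V$. Passing to the limit in the identity $X(x_n)-c_nY(x_n)=0$, using the continuity of $X$ and $Y$ and the convergence $c_n\to c_0$, yields $X(x_\infty)-c_0Y(x_\infty)=0$, so $x_\infty\in K_{c_0}$. But by assumption $K_{c_0}\dans V$, which contradicts $x_\infty\notin V$. This proves upper semi-continuity of $Z$ at $c_0$, and since $c_0$ was arbitrary, the lemma follows.

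I do not expect any genuine obstacle: this is a standard closed-graph/compactness argument for the zero set of a continuously varying family of sections of $TM$. The only mild subtlety is the edge case $K_{c_0}=\vide$, which is consistent with Definition \ref{d.ucs} (any open set, including $\vide$, qualifies as a neighbourhood of $\vide$); the same contradiction then shows that $K_c=\vide$ for $c$ close enough to $c_0$.
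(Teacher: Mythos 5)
Your proof is correct and follows essentially the same route as the paper: a contradiction argument using compactness of $\overline{U}$ and continuity of $(x,c)\mapsto X(x)-cY(x)$ to show any accumulation point of $x_n\in K_{c_n}$ lies in $K_{c_0}$. The only differences are cosmetic (you are slightly more explicit about extracting a convergent subsequence and about the case $K_{c_0}=\vide$), so there is nothing to flag.
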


\begin{proof}
Let $c\in(-\eps,\eps)$ and $c_n\in(-\eps,\eps)$ be a sequence converging to $c$. Consider a sequence $x_n\in Z(c_n)$, in such a way that $X(x_n)=c_nY(x_n)$. Any accumulation point $x$ of $x_n$ must belong to $\overline{U}$ and satisfy $X(x)=cY(x)$ and so, must belong to $Z(c)$. 

Now, assume by contradiction that $Z$ is not upper semi-continuous at $c\in(-\eps,\eps)$. Then, there exists $V$ a neighbourhood of $Z(c)$ and $(c_n)_{n\in\N}$, a sequence converging to $c$, such that there exists a sequence of points $x_n\in Z(c_n)\setminus\overline{V}$. However, since all accumulation points of $x_n$ must belong to $Z(c)$, we have $x_n\in V$ for $n$ large enough, which is absurd.
\end{proof}

\begin{cor}
\label{c.ctinuitypointsgeneric}
The set of continuity points of $Z$ is a residual subset of {some interval in} $\R$.
\end{cor}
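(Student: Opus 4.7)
The plan is to invoke the Semi-continuity Lemma (Theorem \ref{t.sclemma}) directly, since essentially all the work has already been done. First I would identify the ambient compact metric space as $\cX=\overline{U}$, which is compact because $U$ is relatively compact in $M$; then $\cK$ is the space of compact subsets of $\overline{U}$ equipped with the Hausdorff distance. Lemma \ref{l.scinferiously} already supplies the upper semi-continuity of
$$Z\colon \R \longrightarrow \cK, \qquad c \longmapsto K_c = \Zero(X-cY)\cap\overline{U}.$$

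The only mismatch with Theorem \ref{t.sclemma} is that the parameter space $\R$ is not compact, whereas the cited theorem is stated for a compact $\cY$. I would handle this by a standard exhaustion: for each integer $n\geq 1$, the restriction $Z|_{[-n,n]}$ satisfies the hypotheses of Theorem \ref{t.sclemma}, yielding a residual -- hence dense -- subset of continuity points inside $[-n,n]$. Since continuity at a point is a local notion, and since the set of continuity points of any map into a metric space is automatically a $G_\delta$ (namely $\bigcap_{k\geq 1}\{c:\mathrm{osc}(Z,c)<1/k\}$, each such set being open by the upper semi-continuity of $Z$), density of the continuity set in every bounded interval upgrades to density in all of $\R$. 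Thus the set of continuity points of $Z$ is a dense $G_\delta$ subset of $\R$, i.e.\ residual. I do not expect any genuine obstacle here: everything of substance is already contained in Lemma \ref{l.scinferiously} and Theorem \ref{t.sclemma}, and the passage from compact intervals to $\R$ is purely routine.
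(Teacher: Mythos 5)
Your proof is correct, and you have correctly identified the one point where the paper is a bit loose: the Semi-continuity Lemma (Theorem~\ref{t.sclemma}) is stated with $\cY$ compact, whereas the corollary is about $\cY=\R$. The paper does not give an explicit proof of Corollary~\ref{c.ctinuitypointsgeneric}; it implicitly relies on the generalization of the Semi-continuity Lemma to separable $\cY$, which it only mentions in passing by citing an unpublished note. Your exhaustion argument -- restrict to $[-n,n]$ to get density there, observe that the continuity set of any map into a metric space is a $G_\delta$, and conclude residuality in $\R$ -- is a perfectly sound, self-contained way to close that gap, and one could argue it fits the paper's logic more cleanly than invoking an external reference. (One could even note that the paper only ever uses density of continuity points in a small interval $(-\eps,\eps)$, so the compact case of the Semi-continuity Lemma already suffices for the applications.)

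One small inaccuracy, which does not affect the argument: the openness of the oscillation sets $\{c:\mathrm{osc}(Z,c)<1/k\}$ is a general fact about oscillation -- the map $c\mapsto\mathrm{osc}(Z,c)$ is upper semi-continuous as a real-valued function for \emph{any} map $Z$ into a metric space -- and does not depend on the upper semi-continuity of $Z$ established in Lemma~\ref{l.scinferiously}. The upper semi-continuity of $Z$ is what delivers density of the continuity set (via Theorem~\ref{t.sclemma}), not its $G_\delta$ structure. The attribution in your parenthetical should therefore be removed or corrected, but the conclusion stands as written.
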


\begin{rem}
\label{r.othercpcts}
For every compact subset $F\dans\overline{U}$ the function $c\in\R\mapsto\Zero(X-cY)\cap F$ is upper semi-continuous and the set of its continuity points is residual inside $\R$. This map will always be denoted by $Z$.
\end{rem}

As we shall see, the continuity properties of the function $Z$ have some important dynamical consequences. Let us state right now a simple one.

\begin{lem}[No sinks/sources]
 \label{l.nosinks}
Let $X,Y$ be two commuting vector fields. Assume that there exists a periodic orbit for $Y$, included in $\Zero(X-cY)\cap U$, which is a sink or a source. Then $c$ is a discontinuity point of $Z$. As a consequence, the set of $c\in\R$ such that $\Zero(X-cY)\cap U$ does not contain such a sink or source is residual.
\end{lem}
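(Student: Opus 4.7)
My plan is to prove the contrapositive of the main assertion: if $Z$ is continuous at $c$, then $K_c=\Zero(X-cY)\cap\overline{U}$ contains no periodic orbit of $Y$ that is a sink or a source. The residuality statement will then follow immediately from Corollary~\ref{c.ctinuitypointsgeneric}.

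So I will assume that $\gamma\subset \Zero(X-cY)\cap U$ is a periodic orbit of $Y$ which is a sink (the source case being symmetric, using backward invariance in place of forward invariance), and I will show that $Z$ fails to be lower semi-continuous at $c$. The preliminary ingredient is an open neighborhood $W$ of $\gamma$ with $\overline{W}\subset U$ which is forward $Y_t$-invariant and contained in the basin of attraction of $\gamma$. Standard stable manifold theory produces such a $W$: pick a transverse section $\Sigma$ through some $p\in\gamma$ on which the Poincar\'e map is a strict contraction sending a small disk $D$ into its interior, and let $W$ be the $Y$-saturation of $\mathrm{int}(D)$; by choosing $D$ small enough, $\overline{W}$ is compact and contained in $U$.

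The core argument is then by contradiction. Suppose $Z$ is lower semi-continuous at $c$, and fix a small open ball $V\subset W$ centered at $p$, so that $V\cap K_c\ni p$. Lower semi-continuity, applied to any sequence $c_n\to c$ with $c_n\neq c$, produces $x_n\in K_{c_n}\cap V$ for all large $n$. Since $X-c_nY$ commutes with $Y$, its zero set is $Y_t$-invariant; forward invariance of $W$ then ensures $Y_t(x_n)\in W\subset\overline{U}$ for all $t\geq 0$, so the entire forward $Y$-orbit of $x_n$ lies in $K_{c_n}$. That orbit accumulates on $\gamma$ because $W$ lies in the basin of the sink, and $K_{c_n}$ is closed in $\overline{U}$, hence $\gamma\subset K_{c_n}$. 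In particular $X(p)=c_nY(p)$, which combined with $X(p)=cY(p)$ and $Y(p)\neq 0$ forces $c_n=c$, contradicting the choice of the sequence.

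The only mildly technical point is the construction of $W$, but this is routine from the hyperbolicity of the Poincar\'e map at the sink $\gamma$; no deeper tools are required. The residuality claim is then formal: the argument above shows that the set of parameters $c$ for which $K_c$ contains a sink or source periodic orbit is included in the set of discontinuity points of $Z$, which is meager in $\R$ by Corollary~\ref{c.ctinuitypointsgeneric} (applied on a countable exhaustion of $\R$ by compact intervals). Its complement is therefore residual.
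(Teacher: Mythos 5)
Your argument is correct and follows essentially the same route as the paper's: take an attracting neighbourhood of the sink $\gamma$, use forward $Y_t$-invariance of $\Zero(X-c'Y)$ to push any nearby collinearity point onto $\gamma$, and then observe that $Y\neq 0$ on $\gamma$ forces $c'=c$, which defeats lower semi-continuity of $Z$ at $c$. The only difference is that you spell out the construction of the forward-invariant neighbourhood $W\subset U$ more explicitly and phrase the argument as a contrapositive, but the underlying mechanism is identical.
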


\begin{proof}
Suppose there exists (say) a sink $\gamma$ of $Y$ contained in $K_c=\Zero(X-cY)\cap U$, for some $c\in\R$. Let $\cU\dans U$ be a neighbourhood of $\gamma$ contained in $W_Y^s(\gamma)$. In particular, for every $x\in\cU$ we have $\omega_Y(x)=\gamma$.

Supppose the existence of $x\in\Zero(X-c'Y)\cap\cU$. On the one hand we have $\omega_Y(x)=\gamma\dans\Zero(X-cY)$. On the other hand $\Zero(X-c'Y)$ is $Y_t$-invariant since $X$ and $Y$ commute, so we have $\omega_Y(x)\dans\Zero(X-c'Y)$. Since $Y$ does not vanish in $\cU\dans U$ we have $\Zero(X-cY)\cap\Zero(X-c'Y)\cap \cU=\vide$ unless $c=c'$.

We deduce that there is no $c\neq c'$ satisfying $\Zero(X-c'Y)\cap\cU\neq\vide$. This implies that $Z$ is not lower semi-continuous at $c$. This proves the first part of the lemma.

Since continuity points of $Z$ are residual, the second part of the lemma follows.
\end{proof}

\paragraph{Construction of prepared counterexamples --} We are now in position to prove Theorem \ref{t:prepared}. Given a $C^3$-counterexample, we know how to construct a counterexample $(U,X,Y)$ satisfying the first two properties of Definition \ref{d:prepared_triple} without modifying $Y$ nor the collinearity locus.

Corollary \ref{c.ctinuitypointsgeneric} says that continuity points of $Z$ are dense in $(-\eps,\eps)$. Hence we can perform, as we have already done in \S \ref{sub:simplif_col_loc}, a small modification of $X$ in the direction of $Y$ without changing the index, $Y$ or the collinearity locus, so that $0$ is a continuity point. Shrinking $U$ we obtain a new counterexample $(U^{(3)},X^{(3)},Y)$ satisfying the first three properties of the definition.

Now we just have to see how to deduce the fourth property from the others. This is a continuity argument.
 Since $Z$ is upper semi-continuous at $0$ and since $K=Z(0)$ is disjoint from $\partial U$ we must have $Z(c)\cap\partial U=\vide$ for $|c|$ smaller than some positive number $\eps'$. 
 Let $\widetilde{U}$ denote $\mu^{-1}(-\eps',\eps')$ and $\widetilde{X}=X^{(3)}$. The triple $(U,X,Y)$ is a prepared counterexample and Theorem \ref{t:prepared} is proven.

\subsection{An index formula}

We will now prove a general index formula which simplifies the computation of $\Ind(X,U)$ for prepared triples. It generalizes a formula which was used in \cite{BS} in a crucial way (see  \cite[Proposition 5.9]{BS}).

We assume that $(U,X,Y)$ is a prepared triple (see Definition \ref{d:prepared_triple}). Let us use coordinates $\x=(x,c)\in S\times[-\eps,\eps]$ to describe a point of $\overline{U}$, $S$ being a connected compact surface possibly with boundary. We will also require the vector field $Y$ to be transversally oriented and $S$ to be oriented in $U$ (up to multiplying $\Ind(X,U)$ by an integer: see the proof of Proposition \ref{p:orientable}).

The second item of Definition \ref{d:prepared_triple} implies that $Y$ is never orthogonal to the surface $S\times\{c\}$. Note that $\Zero(N)=\Col_U(X,Y)$ so our last reduction implies
\begin{equation}
\label{eq:ZeroNbord}
\Zero(N)\cap(\partial S\times[-\eps,\eps])=\vide.
\end{equation}

In coordinates $\x=(x,c)$ one can write
$$X(\x)=N(\x)+cY(\x).$$

\paragraph{Choice of a basis --} We will set $e_3=Y$. The vector field $e_3$ is never orthogonal to $S$.  Using the orientation of the surface implies that there exists a vector field $e_2$ tangent to $S$ which is orthogonal to $Y$. Finally define $e_1=e_2\wedge e_3$. This define a continuous basis $\x=(x,c)\mapsto\beta(\x)=(e_1(\x),e_2(\x),e_3(\x))$ of $T_\x\overline{U}$. Since $(e_1,e_2)$ is a basis of $Y^\perp$ we may write
$$X(\x)=\alpha_1(\x) e_1(\x)+\alpha_2(\x) e_2(\x)+ce_3(\x),\,\,\,\,\,N(\x)=\alpha_1(\x)e_1(\x)+\alpha_2(\x)e_2(\x).$$

Consider the Gauss maps in $\partial U$ given by

$$\alpha:\begin{array}[t]{ccl}
\partial U &\To  & \Ss^2\\
\x=(x,c)      &\Map & \frac{(\alpha_1(\x),\alpha_2(\x),c)}{\sqrt{\alpha_1(\x)^2+\alpha_2(\x)^2+c^2}}
\end{array}.
$$
and, recalling that $N\neq 0$ on $\partial S\times \{0\}$,
$$\nu:\begin{array}[t]{ccl}
\partial S\times\{0\} &\To  & \Ss^1\\
\x=(x,0)      &\Map & \frac{(\alpha_1(\x),\alpha_2(\x))}{\sqrt{\alpha_1(\x)^2+\alpha_2(\x)^2}}
\end{array}.
$$
Here we see the oriented circle $\Ss^1$ embedded in $\Ss^2$ as the equator $\{c=0\}$ oriented as the boundary of the northern hemisphere $\{c>0\}$.

{\begin{rem}\label{r.deg_easier}
Note that with our choice of coordinates, the vector fields $e_1$, $e_2$ and $e_3$ are at least $C^1$ so the Gauss map $\alpha$ itself is $C^1$. This makes the computation of $\deg(\alpha)$ easier.
\end{rem}
}
\paragraph{The index formula --} Let $\gamma$ be a connected component of $\partial S$ oriented with the boundary orientation.

\begin{dfn}[Linking number]\label{d:linknumb}
The topological degree of the restriction of $\nu$ to $\gamma$ is called the \emph{linking number} of $N$ along $\gamma$. We denote it by $l(\gamma)$.
\end{dfn}

\begin{figure}[hbtp]

\centering
\includegraphics[scale=0.55]{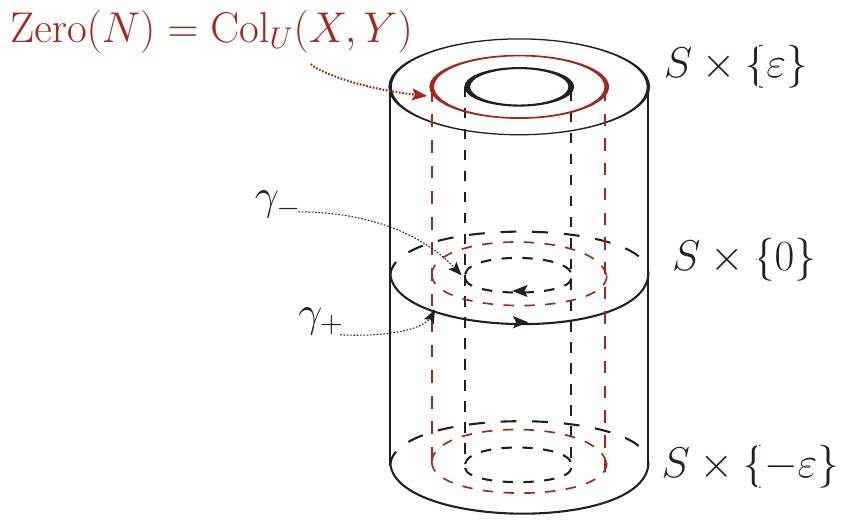}
\caption{Case where $S$ is an annulus. Here $\partial S$ is a disjoint union of two curves $\gamma_+$ and $\gamma_-$, oriented with the boundary orientation. The formula becomes $\Ind(X,U)=l(\gamma^+)+l(\gamma_-)$.}
\label{des_indice}
\end{figure}

The principal theorem of this section is

\begin{thm}[The Index Formula]\label{t:indexf}
$$\Ind(X,U)=\sum_{\gamma} l(\gamma)$$
where the sum is taken on all the boundary components $\gamma$ of $S$, oriented with the boundary orientation.
\end{thm}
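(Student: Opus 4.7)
The plan is to compute $\Ind(X,U)$ directly as the topological degree of the Gauss map $\alpha\colon \partial U \to \Ss^2$ introduced above, by counting preimages of a well-chosen regular value on the equator $\{c=0\}\dans\Ss^2$. First I would decompose $\partial\overline U = A_+ \cup A_- \cup B$, where $A_\pm = S\times\{\pm\eps\}$ are the horizontal faces and $B = \partial S\times[-\eps,\eps]$ is the lateral face; by \eqref{eq:ZeroNbord} the normal component $N$, and hence $X$, does not vanish on $B$. The third coordinate of $\alpha(\x)$ is $c/\sqrt{\alpha_1^2+\alpha_2^2+c^2}$, whose sign on $A_\pm$ is $\pm 1$, so $\alpha(A_+)$ lies in the open northern hemisphere of $\Ss^2$ and $\alpha(A_-)$ in the open southern hemisphere.

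Next, Sard's theorem applied to $\alpha|_B$ produces a regular value $p=(p_1,p_2,0)$ of $\alpha$ on the equator. By the above, $\alpha^{-1}(p)\dans B$, and in fact $\alpha^{-1}(p)\dans\partial S \times\{0\}$, since the third coordinate of $\alpha$ vanishes only when $c=0$. A point $(x_0,0)\in \partial S\times\{0\}$ lies in $\alpha^{-1}(p)$ iff $\nu(x_0)=(p_1,p_2)\in\Ss^1$, so $\alpha^{-1}(p)$ is in canonical bijection with $\nu^{-1}(p_1,p_2)\dans\partial S = \bigsqcup_\gamma \gamma$.

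The main (and only really delicate) step is to check that corresponding preimages contribute the same sign to $\deg(\alpha)$ and to $\deg(\nu)$. Working in the basis $(v_\gamma,\partial_c)$ of $T_{(x_0,0)}B$, with $v_\gamma$ the boundary orientation of $\gamma\dans S$, and in a positively oriented basis $(\xi_1,\xi_2)$ of $T_p\Ss^2$ with $\xi_1$ tangent to the equator (oriented as $\partial\{c>0\}\dans\Ss^2$) and $\xi_2=\partial_c$, a direct quotient-rule computation shows that the matrix of $d\alpha$ at $(x_0,0)$ is upper triangular, with positive lower-right entry and upper-left entry proportional, by a positive constant, to the coefficient of $\xi_1$ in $d\nu_{x_0}(v_\gamma)$. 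Hence the local sign at $(x_0,0)$ in $\deg(\alpha)$ equals the local sign at $x_0$ in $\deg(\nu)$; summing over all preimages yields
$$\Ind(X,U) \;=\; \deg(\alpha) \;=\; \deg(\nu) \;=\; \sum_{\gamma} l(\gamma).$$

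The hardest part will not be the Jacobian computation itself but rather the orientation bookkeeping: matching the product orientation of $\overline U$ with the frame $(e_1,e_2,e_3)=\beta$ used to define $\alpha$, and comparing the boundary orientation of $B\dans\partial\overline U$ with those of $\gamma\dans S$ and of the equator viewed as $\partial\{c>0\}\dans\Ss^2$. Once these conventions are pinned down, the residual degenerate cases (regular value not achieved, or empty $\partial S$) are handled automatically: in the latter case both sides of the formula are zero, confirming the remark following Definition~\ref{d:prepared_triple}.
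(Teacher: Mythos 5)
Your proposal is correct in spirit and reaches the same conclusion, but it replaces the paper's key step with a different, more computational one. Both arguments share the same decomposition of $\partial U$ into the two caps $S\times\{\pm\eps\}$ and the lateral faces $\partial S\times[-\eps,\eps]$, and both use the observation that the Gauss map sends the caps into the open hemispheres. The difference is in how the lateral contribution is evaluated: the paper constructs an explicit homotopy $\alpha_s$ retracting $\alpha|_{\gamma\times[-\eps,\eps]}$ to $\nu$ (using that $N\neq 0$ on $\gamma\times[-\eps,\eps]$, i.e.\ \eqref{eq:ZeroNbord}) and invokes homotopy invariance of degree, whereas you compute the degree as a signed count of preimages of a regular value on the equator and match signs via an upper-triangular Jacobian. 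Your route is more hands-on and shows concretely why each linking number surfaces; the paper's route avoids Sard, the Jacobian, and most of the orientation bookkeeping that you rightly flag as the delicate part.

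One small imprecision to fix: Sard's theorem applied to $\alpha|_B\colon B\to\Ss^2$ yields a regular value somewhere in $\Ss^2$, but it does not by itself produce one on the equator, which has measure zero. The clean version of your argument applies Sard to $\nu\colon\partial S\to\Ss^1$ to get a regular value $q\in\Ss^1$; your own Jacobian computation (upper triangular with nonvanishing lower-right entry $1/\|N\|$, upper-left entry the $\xi_1$-component of $d\nu(v_\gamma)$) then shows that the critical points of $\alpha|_B$ lying over the equator are exactly the critical points of $\nu$, so $p=(q,0)$ is automatically a regular value of $\alpha$. With that rewording, and with the promised orientation conventions pinned down, the argument is complete.
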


\begin{rem}
In \cite[Proposition 5.9]{BS} the authors prove this formula \emph{when $S$ is an annulus} (see figure \ref{des_indice}). The presentation of the proof we give here simplifies the one given in that reference.
\end{rem}

\begin{proof}[Proof of Theorem \ref{t:indexf}]
{We decompose the boundary of $U$ as
$$\partial U=\left(\bigcup_{\gamma} C_\gamma \right)\cup S_\eps\cup S_{-\eps},$$
where the union is taken on all boundary components of $S$ and where we have set $C_\gamma=\gamma\times[-\eps,\eps]$ and $S_{\pm\eps}=S\times\{\pm\eps\}$. These surfaces have disjoint interiors and if two of them have nonempty intersection, it must be one of the $\gamma\times\{\pm\eps\}$ where $\gamma$ is a boundary component of $S$. We claim that

\begin{equation}
\label{eq.additouille_degrouille}\deg(\alpha)=\sum_{\gamma}\deg\left(\alpha|_{C_\gamma}\right)+\deg\left(\alpha|_{S_\eps}\right)+\deg\left(\alpha|_{S_{-\eps}}\right).
\end{equation}
Recall that $\alpha$ is $C^1$ (see Remark \ref{r.deg_easier}) so $\deg(\alpha)$ can be computed as the number of preimages of any regular value of $\alpha$ counted with multiplicity: \cite[Chapter 5]{Milnor}. Note also that there exists a regular value of $\alpha$ without a preimage in $\bigcup_\gamma \gamma\times\{\pm\eps\}$. This uses Sard's theorem, and the fact that $\alpha(\bigcup_\gamma \gamma\times\{\pm\eps\})$ has zero Lebesgue measure {(recall that the image of a compact $1$-dimensional manifold by a $C^1$ map between two surfaces has zero Lebesgue measure)}. So its preimages are in $\bigsqcup_\gamma\Int(C_\gamma)\sqcup \Int(S_\eps)\sqcup \Int(S_{-\eps})$. Using the definition of degree given above and this regular value we deduce that Formula \eqref{eq.additouille_degrouille} holds.

The Gauss map $\alpha$ sends $S_\eps$ and $S_{-\eps}$ inside the northern and southern hemisphere respectively (this is because $\mu>0$ on $S_\eps$ and $\mu<0$ on $S_{-\eps}$). In particular its topological degree is zero in restriction to these surfaces.

Now let us choose a component $\gamma$ and compute $\deg(\alpha|_{C_\gamma})$. Let us consider
$$\alpha_s(x,c)=\frac{(\alpha_1(x,(1-s)c),\alpha_2(x,(1-s)c),(1-s)c)}{\sqrt{\alpha_1(x,(1-s)c)^2+\alpha_2(x,(1-s)c)^2+(1-s)^2c^2}}.$$
We use here that $N$ does not vanish on $C_\gamma\dans(\partial S\times[-\eps,\eps]$. This defines a retraction of $\alpha$ to $\nu$. We deduce that
$$\deg\left(\alpha|_{\gamma\times(-\eps,\eps)}\right)=l(\gamma).$$
This ends the proof.
}
\end{proof}

\begin{cor}
\label{c.index_zero}
With the previous hypotheses, assume that $S$ is a boundaryless surface. Then
$$\Ind(X,U)=0.$$
\end{cor}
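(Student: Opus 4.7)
The plan is to observe that the statement follows essentially immediately from the Index Formula (Theorem \ref{t:indexf}), once one verifies that the hypotheses of the formula are in force. The key observation is that when $S$ is boundaryless, the boundary of $S$ is empty and so the sum indexed over connected components of $\partial S$ is an empty sum.

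First I would record that, since $(U,X,Y)$ is a prepared triple with boundaryless model surface $S$, items (1)--(3) of Definition \ref{d:prepared_triple} hold by assumption, and item (4), which requires $K_c\cap\partial\mu^{-1}(c)=\emptyset$, is vacuously true because $\partial\mu^{-1}(c)=\emptyset$ for every $c\in[-\eps,\eps]$. Hence Theorem \ref{t:indexf} applies and gives
\[
\Ind(X,U)=\sum_{\gamma\subset\partial S}l(\gamma).
\]
Since $\partial S=\emptyset$, the right-hand side is an empty sum and equals $0$.

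For the reader who wants to see this concretely, I would briefly note the geometric content: the boundary of $U$ is now simply the disjoint union of the two ``caps'' $S\times\{\eps\}$ and $S\times\{-\eps\}$, with no lateral part $\partial S\times[-\eps,\eps]$. The Gauss map $\alpha$ constructed in the proof of Theorem \ref{t:indexf} sends the top cap entirely into the open northern hemisphere $\{c>0\}\subset\Ss^2$ and the bottom cap into the open southern hemisphere. Each hemisphere being contractible, the degree of $\alpha$ on each cap vanishes, which by additivity of the degree yields $\Ind(X,U)=0$.

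There is no real obstacle here: the corollary is a direct specialization of Theorem \ref{t:indexf} to the case $\partial S=\emptyset$, and the only point worth mentioning is that orientability issues (needed to even define $e_1,e_2,e_3$ and hence the Gauss map) can be arranged, if necessary, via Proposition \ref{p:orientable}, at the cost of multiplying the index by a nonzero integer, which does not affect the conclusion $\Ind(X,U)=0$.
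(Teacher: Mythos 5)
Your proof is correct and follows exactly the intended route: the corollary is an immediate consequence of the Index Formula (Theorem \ref{t:indexf}), since $\partial S=\emptyset$ makes the sum over boundary components empty, and the paper gives no separate proof for this reason. Your additional observations (vacuity of item (4) in Definition \ref{d:prepared_triple}, handling of orientability via Proposition \ref{p:orientable}, and the geometric picture of the two caps mapping into opposite hemispheres) are all accurate and consistent with the paper's setup.
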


\subsection{Dynamics of the projected vector field}

Let $(U,X,Y)$ be a $C^3$-prepared counterexample. It will be very useful to define a new non-vanishing vector field of $U$, denoted by $Y'$, which is tangent to the level sets by setting
$$Y'(x)=\proj_{T_x[\mu^{-1}(c)]}(Y(x)),$$
$\proj_{T_x[\mu^{-1}(c)]}$ denoting the orthogonal projection on $T_x[\mu^{-1}(c)]$.

\begin{rem}
Since at each point of $\Col_U(X,Y)$, $Y$ is tangent to the corresponding level set, we have $Y=Y'$ on $\Col_U(X,Y)$.
\end{rem}

The vector field $Y'$ is of class $C^3$ on level sets $\mu^{-1}(c)$. Introducing the vector field $Y'$ has the following interest. The dynamics of a non-vanishing vector field of class $C^r$, $r\geq 2$, on a surface is rather simple thanks to the \emph{Denjoy-Schwartz theorem}, which we shall use here in the following version (see the Corollary in page 457 of \cite{Schwartz}) 
{
\begin{thm}[Denjoy-Schwartz \cite{Denjoy,Schwartz}]
	\label{t.diegosalgado}
Let $S$ be an oriented surface and $V$ a $C^2$ non-vanishing vector field on $S$. Assume that $S$ is not a minimal set for $V$. Then, for every $x\in S$ its $\omega$ and $\alpha$-limit sets are periodic orbits.
\end{thm}

The main consequence of this result for us is collected below. Recall our notation for $K=\Zero(X)\cap U\subset\mu^{-1}(0)$.}

\begin{lem}
\label{l:Denjouill_Scwhartzouille}
Let $(U,X,Y)$ be a $C^3$-prepared triple. There exists a $C^3$-prepared triple $(\widetilde{U},X,Y)$ with $\widetilde{U}\dans U$ (we denote by $\widetilde{\mu}$ the restriction of $\mu$ to $\widetilde{U}$) such that the following dichotomy holds true
\begin{enumerate}
\item either $K=\widetilde{\mu}^{-1}(0)$, in which case $\widetilde{\mu}^{-1}(0)$ is boundaryless;
\item or $\partial\widetilde{\mu}^{-1}(0)\neq\vide$ and for every $x\in K$, $\alpha_Y(x)$ and $\omega_Y(x)$ are periodic orbits of $Y$.
\end{enumerate}
\end{lem}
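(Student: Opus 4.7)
The plan is to construct $\widetilde{U}$ by a shrinking argument, and then to derive the dichotomy by applying the Denjoy--Schwartz version of Poincar\'e--Bendixson to the projected vector field $Y'$ on the leaf $\mu^{-1}(0)$.

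First I would record the key invariance: $K$ is $Y_t$-invariant. Indeed $\Zero(X)$ is $Y_t$-invariant by commutation, and a $Y_t$-orbit starting inside $K$ cannot cross $\partial U$ (which meets $\Zero(X)$ in the empty set), so it stays in $K$. Moreover $K\dans\Col_U(X,Y)$, so $Y=Y'$ on $K$; by uniqueness of ODE solutions, the $Y_t$- and $Y'_t$-orbits of any $x\in K$ coincide, and therefore $\omega_Y(x)=\omega_{Y'}(x)$ and $\alpha_Y(x)=\alpha_{Y'}(x)$ are compact subsets of $K\dans\mu^{-1}(0)$.

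Next I would split according to the topology of $\mu^{-1}(0)$ and the position of $K$. If $\mu^{-1}(0)$ is boundaryless and $K=\mu^{-1}(0)$, I take $\widetilde{U}=U$; alternative (1) is immediate. Otherwise one has $K\subsetneq\mu^{-1}(0)$ (in particular whenever $\partial \mu^{-1}(0)\neq\vide$, by Definition \ref{d:prepared_triple}(4)). If $\partial \mu^{-1}(0)\neq\vide$ I still take $\widetilde{U}=U$. Otherwise I would pick, via a smooth Urysohn-type function and Sard's theorem, a compact connected $C^\infty$ subsurface with boundary $T\dans\mu^{-1}(0)$ such that $K\dans\Int(T)$ and $T\neq\mu^{-1}(0)$. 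Using the product structure $\overline{U}\simeq S\times[-\eps,\eps]$ given by Definition \ref{d:prepared_triple}(1), I would then set $\widetilde{U}=\Int(T)\times(-\widetilde{\eps},\widetilde{\eps})$ for some $\widetilde{\eps}\in(0,\eps)$ to be chosen.

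Third, I would check that $(\widetilde{U},X,Y)$ is a prepared triple. Items (1) and (2) of Definition \ref{d:prepared_triple} are inherited from the product construction and the fact that $Y$ is nowhere orthogonal to the level sets. Items (3) and (4) follow from the upper semi-continuity of $c\mapsto K_c$ at $0$ (Lemma \ref{l.scinferiously}) applied to the open neighborhood $V=\Int(T)$ of $K=Z(0)$: by choosing $\widetilde{\eps}$ small one obtains $K_c\dans V\times\{c\}$ for $|c|\leq\widetilde{\eps}$, which simultaneously keeps $K_c$ away from $\partial\widetilde{\mu}^{-1}(c)=\partial T\times\{c\}$ (item (4)) and shows $\Zero(X-cY)\cap\overline{\widetilde{U}}=K_c$ in the same range, so that continuity of $Z$ at $0$ transfers from $\overline{U}$ to $\overline{\widetilde{U}}$ (item (3)).

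Finally, whenever $\partial\widetilde{\mu}^{-1}(0)\neq\vide$ I always have $K\subsetneq\mu^{-1}(0)$, and for every $x\in K$ the set $\omega_{Y'}(x)\dans K$ is a compact, connected, $Y'$-invariant proper subset of $\mu^{-1}(0)$. Since $Y'$ is a $C^3$ vector field on the compact surface $\mu^{-1}(0)$ with no zeros (Definition \ref{d:prepared_triple}(2)), Denjoy--Schwartz leaves only two possibilities for $\omega_{Y'}(x)$: a single periodic orbit, or the whole surface $\mu^{-1}(0)$; the latter is excluded because $\omega_{Y'}(x)\dans K\subsetneq\mu^{-1}(0)$. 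Hence $\omega_Y(x)=\omega_{Y'}(x)$ is a periodic orbit of $Y$; applying the same argument to the reverse flow handles $\alpha_Y(x)$. This gives alternative (2).

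The hard part will be the invocation of Denjoy--Schwartz on a surface that may carry a boundary. One has to observe that the relevant $\omega$-limit set sits in the interior of $\mu^{-1}(0)$, so that the classical dichotomy ``periodic orbit vs. whole surface'' still applies (for instance by extending $Y'$ to the double of $\mu^{-1}(0)$, or simply by restricting attention to an invariant compact subset of the interior). Everything else is routine: continuity and upper semi-continuity arguments, plus a standard smooth neighborhood construction.
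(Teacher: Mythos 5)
Your argument is correct and follows essentially the same route as the paper: the dichotomy comes from Denjoy--Schwartz applied to the projected vector field $Y'$ on $\mu^{-1}(0)$, using that $K$ is $Y=Y'$-invariant, compact, and (in the non-minimal case) a proper invariant subset, which forces limit sets to be periodic orbits. You supply a more explicit construction of $\widetilde{U}$ (a Sard/Urysohn subsurface $T$ and the product neighbourhood, with the semi-continuity check for Definition~\ref{d:prepared_triple}(3)--(4)), a step the paper leaves implicit, but the logical structure is the same.
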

\begin{proof}
{Let us consider the dynamics of the non-vanishing vector field $Y'$ on the surface $\mu^{-1}(0)$. Distinguish two cases. 

\emph{Case 1. $K=\mu^{-1}(0)$}. Since $(U,X,Y)$ is prepared we have $K\cap\partial\mu^{-1}(0)=\vide$ (see Item 4. of Definition \ref{rem.liberte}) so and $\partial\mu^{-1}(0)=\vide$.

\emph{Case 2. $K\subsetneq\mu^{-1}(0)$}. Since $K$ is closed and invariant, $\mu^{-1}(0)$ is not a minimal set of $Y'|_{\mu^{-1}(0)}$. In particular Theorem \ref{t.diegosalgado} implies that that for every $x\in K$, $\alpha_Y(x)=\alpha_{Y'}(x)$ and $\omega_Y(x)=\omega_{Y'}(x)$ are periodic orbits of $Y'$ and $Y$ (note that $Y'=Y$ when restricted to $K$). A priori $\mu^{-1}(0)$ could be boundaryless (if it has boundary we are done). In that case, since $K$ is strictly contained in the level set, we can take a small neighbourhood $W$ of $K$ inside $\mu^{-1}(0)$ (with boundary) and  consider $\widetilde{U}=W\times[-\eps,\eps]\dans U$. Since $\partial W\neq\emptyset$, by Theorem~\ref{t.diegosalgado}, Item 2 is satisfied as well as the conditions of Definition \ref{d:prepared_triple}. This ends the proof.}
\end{proof}

\begin{rem} In the first case we know by Corollary \ref{c.index_zero} that $\Ind(X,U)$ must be zero. {So if there is a prepared counterexample, we are placed in the second case of Lemma~\ref{l:Denjouill_Scwhartzouille}. By continuity of the foliation by levels, we know that $\partial\widetilde{\mu}^{-1}(c)\neq\emptyset$ for every $c$ small and applying Theorem~\ref{t.diegosalgado} once more, and arguing identically as in the proof above, we deduce that} the sets $K_c$ consist of orbits whose $\alpha$ and $\omega$-limit sets are periodic orbits. This simplifies the structure of the collinearity locus.
\end{rem}

\subsection{Strategies of proof of the main results}

\paragraph{Strategy of the proof of Theorem \ref{t.Normal} --} If there exists a counterexample to Theorem B, we have seen that there exists also a prepared counterexample. The first step is to show that, {when we consider the normal component $N$ and quotient function $\mu$ associated to the transverse invariant plane field, given by assumption,} then the level sets of the quotient function are invariant under the flow of $Y$ {(see item (2) of Lemma~\ref{fundprop})}. This implies that $Y$ is tangent to the level sets of $\mu$. After that, the main idea is to perform a suitable modification of the isolating neighbourhood so that we can compute the linking numbers of $N$ along curves which are included in the union of finitely many paths which are either periodic orbits of $Y$ or contained in the {stable/unstable sets} of periodic orbits of $Y$ included in $K$. {This is achieved in the Key Lemma~\ref{keylemouille}}. 

{Then, the effective implementation of this idea uses in a crucial way the fact that} $Y$ is tangent to level sets of $\mu$. Indeed, to prove that all the linking numbers vanish, the important step is to show that the $C^2$ function which measures the derivative of $\mu$ along the normal direction $N$ is $Y_t$-invariant {(which is proven in Lemma~\ref{PhiY})}. This will imply that along one boundary component (with the above dynamical property) the vector field $N$ is either tangent to the level set or it is never tangent. Using a basis containing a vector field tangent to the level set, one then deduces that the corresponding linking number vanishes {(see Lemma~\ref{endproofsusp})}.

\paragraph{Strategy of the proof of Theorem \ref{t.valeurpropre} --} {The proof of Theorem~\ref{t.valeurpropre} is much more involved, it will occupy both sections \ref{mainsectouille} and \ref{s:final_section}}. As in the previous theorem, we assume by contradiction the existence of a prepared counterexample to Theorem \ref{t.valeurpropre}. Since our isolated compact set $K$ of zeros is formed by periodic orbits of $Y$ and heteroclinic connections between them, we can distinguish two types of periodic orbits in the level 0. Those that are the alpha/omega limit set of an element in $K$, which we call \emph{linked periodic orbits}, and those that are not, which we call \emph{non-linked} periodic orbits. 

We show that every linked periodic orbit has a stable (or unstable) manifold which is tangent to the level set {(which is done in Proposition~\ref{p.tangent})} and that we can decompose $K$ in a disjoint union { into compact sets} $K_{ms}\sqcup K_{nl}$, where $K_{ms}$ is formed by finitely many linked periodic orbits and heteroclinic connections, while $K_{nl}$ is the union of non-linked periodic orbits. Using the additivity of Poincaré-Hopf index, we deduce that we are reduced to show that $\ind(X,K_{ms})=\ind(X,K_{nl})=0$. {A key role in the proof of this decomposition is played by the fact (Lemma~\ref{l.lesorbitesliessonfini}) that there is only a finite number of linked periodic orbits.} 

{To show that $\ind(X,K_{nl})=0$ we further decompose the set $K_{nl}$ according to the position of the stable/unstable manifolds of the periodic orbits, proving (in Lemma~\ref{l.structurenonlinked}) that $K_{nl}=K_{nl}^T\sqcup K_{nl}^{\perp}$, where $K_{nl}^T$ is a finite union of periodic orbits having a stable/unstable manifold tangent to the level set while $K_{nl}^{\perp}$ is a compact set of periodic orbits having a stable/unstable manifold transverse to the level. In both cases, the strategy to prove the vanishing of the index is to use the Center Manifold Theorem to organize the periodic orbits of the collinearity locus and then apply a technical version of Bonatti-Santiago's Theorem \ref{t:BS}, stated in Theorem~\ref{t.bsdenovo}. However, in the transverse case we need a specific compactness argument (Lemma~\ref{structurethm}) to nicely cover $K_{nl}^{\perp}$ and a difficulty to apply Theorem~\ref{t.bsdenovo} is that the center manifold of a periodic orbit could be tangent to the level. For overcoming this a new application of Sard's Theorem is required (see Lemma~\ref{transversecase}). }   

Our main idea to prove that $\ind(X,K_{ms})=0$ is the content of Theorem \ref{Th.MS}. We show that whenever an unstable periodic orbit is linked with a stable one by an orbit in $K$, we can flow the local unstable manifold along a heteroclinic connection and glue it with the stable manifold of the second periodic orbit (see The Gluing Lemma~\ref{l.collage}). This allows us to define a new open neighbourhood of $K_{ms}$ endowed with a trivial foliation by surfaces, such that both $X$ and $Y$ are tangent to the leaves. By an appropriate choice of basis (see Lemma~\ref{l.base} for details), we deduce that the Gauss map is not surjective, which proves that the index is zero.

\section{The case of a flow with a transverse invariant plane field}
\label{suspoupouille}

This section is devoted to proving Theorem \ref{t.Normal}. {Recalling the notations of \S~\ref{sub:prepared_ce}, the assumptions of Theorem~\ref{t.Normal} give a $C^3$-triple $(U,X,Y)$ with the additional property that there exists a $C^3$ plane field $\Pi$ in $U$ everywhere transverse to $Y$, {and invariant by the flow of $Y$}. We assume by contradiction that Theorem~\ref{t.Normal} is not true. Thus, the triple $(U,X,Y)$ also satisfies $\ind(X,U)\neq 0$.  

Applying Theorem \ref{t:prepared}, we find a new vector field $\tilde{X}$ and a smaller isolating neighbourhood $\tilde{U}\subset U$ so that the $C^3$-triple $(\tilde{U},\tilde{X},Y)$ is a $C^3$-prepared counterexample to the conjecture. 

Therefore}, it is enough to prove that if $(U,X,Y)$ is a prepared triple  such that $Y$ has a $C^3$ invariant plane field $\Pi$ then $\Ind(X,U)=0$. {We shall then consider the normal component $N$ and the quotient function $\mu$ associated with this $Y_t$-invariant plane field $\Pi$. Recall that they are related with the vector fields $X$ and $Y$ by the equation 
\[X=N+\mu Y,\]
and $N(x)\in\Pi(x)$ for every $x\in U$.

} We will also assume the orientability properties of Proposition \ref{p:orientable}. The idea is  to use our index formula and take advantage of the $Y$-invariance of $\Pi$.

Let $(U,X,Y)$ be such a prepared triple and $\Pi$ be a transverse $Y_t$-invariant plane field. We define
\begin{equation}\label{notationtheta}
	\theta(x)=\sup\{t>0;Y_s(x)\in U\,\,\text{for}\,\,|s|\leq t\}.
\end{equation}
so that $D_xY_t(\Pi(x))=\Pi(Y_t(x))$ when $|t|\leq\theta(x)$.

\subsection{Invariant foliation}
\label{Invariouille}

\paragraph{Invariance properties --}  The normal component and the quotient function have the following fundamental property.

\begin{lem}
	\label{fundprop}
	The following properties hold true for every $x\in U$ and $|t|\leq\theta(x)$ (see \eqref{notationtheta})
	\begin{enumerate}
		\item $D_xY_t\,N(x)=N(Y_t(x))$;
		\item $\mu(Y_t(x))=\mu(x)$.
	\end{enumerate}
\end{lem}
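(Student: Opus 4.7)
The plan is to use directly the decomposition $X = N + \mu Y$ together with the two invariance properties of the flow $Y_t$: the invariance of $\Pi$ (hypothesis) and the invariance of $X$ up to the derivative of $Y_t$ (commutativity). The proof boils down to one computation, followed by a splitting argument based on the transversality of $\Pi$ to $Y$.

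First I would start from the identity $X(x) = N(x) + \mu(x) Y(x)$ in $T_xM$ and push it forward by $D_xY_t$. On the one hand, since $[X,Y]=0$, the relation $Y_t \circ X_s = X_s \circ Y_t$ (valid for $|t|\leq\theta(x)$) differentiated in $s$ at $s=0$ gives $D_xY_t\,X(x) = X(Y_t(x))$. On the other hand, $Y$ is trivially mapped to itself by its own flow: $D_xY_t\,Y(x) = Y(Y_t(x))$. Combining these yields
\[
N(Y_t(x)) + \mu(Y_t(x))\,Y(Y_t(x)) \;=\; D_xY_t\,N(x) + \mu(x)\,Y(Y_t(x)),
\]
which I rewrite as
\[
N(Y_t(x)) - D_xY_t\,N(x) \;=\; \bigl(\mu(x) - \mu(Y_t(x))\bigr)\,Y(Y_t(x)).
\]

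The second step is the splitting. The left-hand side lies in $\Pi(Y_t(x))$: indeed $N(Y_t(x)) \in \Pi(Y_t(x))$ by definition of the normal component, and $D_xY_t\,N(x) \in \Pi(Y_t(x))$ because $N(x) \in \Pi(x)$ and $\Pi$ is $Y_t$-invariant in the range $|t|\leq\theta(x)$. The right-hand side is a scalar multiple of $Y(Y_t(x))$. Since $\Pi$ is transverse to $Y$, the intersection $\Pi(Y_t(x)) \cap \R\,Y(Y_t(x))$ is $\{0\}$, so both sides must vanish. The vanishing of the right-hand side gives $\mu(Y_t(x)) = \mu(x)$ (using that $Y(Y_t(x)) \neq 0$, which is guaranteed since $Y_t(x) \in U$ and $Y$ does not vanish in $U$). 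The vanishing of the left-hand side then gives $D_xY_t\,N(x) = N(Y_t(x))$.

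There is no real obstacle here: once one writes down the commutation-plus-invariance identity, the transversality of $\Pi$ to $Y$ does all the work by forcing the unique decomposition into a $\Pi$-component and a $Y$-component. The only point requiring mild care is checking that the invariance $D_xY_t(\Pi(x)) = \Pi(Y_t(x))$ and the commutation relation $D_xY_t\,X(x) = X(Y_t(x))$ both hold for all times in $[-\theta(x),\theta(x)]$, which is exactly the purpose of introducing $\theta(x)$ in \eqref{notationtheta}.
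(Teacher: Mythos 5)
Your proof is correct. It takes a genuinely different, and arguably more elementary, route than the paper. The paper deduces Lemma~\ref{fundprop} from Lemma~\ref{calculusholonomies} on holonomy maps: it picks sections $\Sigma_0$, $\Sigma_t$ tangent to $\Pi(x)$, $\Pi(Y_t(x))$, observes that $Y_t$-invariance of $\Pi$ forces $D_x\tau=0$ and $D_xP = D_xY_t$ on $\Pi(x)$, and then reads off the two conclusions from items~1 and~2 of that lemma. You instead push the decomposition $X = N + \mu Y$ forward directly by $D_xY_t$, use $[X,Y]=0$ to get $D_xY_t\,X(x)=X(Y_t(x))$, and split the resulting identity along the transverse direct sum $\Pi(Y_t(x)) \oplus \R\,Y(Y_t(x))$; the $\Pi$-invariance guarantees $D_xY_t\,N(x)$ lands in $\Pi(Y_t(x))$, so both sides of the identity must vanish. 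Your argument avoids invoking the holonomy machinery entirely, which makes it more self-contained; the paper's route has the advantage of reusing Lemma~\ref{calculusholonomies}, which it needs elsewhere (e.g., in the Glueing Lemma). Both are complete and correct, and both hinge on the same two facts (commutation plus $\Pi$-invariance), so the difference is essentially one of presentation.
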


\begin{proof}
	Let $x\in U$ and $t\in(0,\theta(x))$. {Let $\Sigma_0$ be a transverse section to $Y$ tangent to $\Pi$ at $x$. Let $\Sigma_t=Y_t(\Sigma_0)$. This is a transverse section to $Y$ tangent to $\Pi$ at $Y_t(x)$, and the time-$t$ map of $Y$ provides a holonomy map $P:\Sigma_0\to\Sigma_t$. In particular, by invariance, $\Sigma_t$ is tangent to $\Pi$ at $Y_t(x)$ and the hitting time $\tau:\Sigma_0\to(0,\infty)$ (defined by \eqref{eq.hitting_time}) is constant equal to $t$. We deduce that $D_x\tau=0$ and $D_xP=D_xY_t$ on $\Pi(x)$.}
	
	Applying Item 1. of Lemma \ref{calculusholonomies}, one sees that $D_x Y_t\,N(x)=D_xP\, N(x)=N(P(x))=N(Y_t(x))$.
	
	Applying Item 2. of Lemma \ref{calculusholonomies}, one sees that $\mu(x)=\mu(P(x))=\mu(Y_t(x))$. This is enough to conclude the proof.
\end{proof}

Let us now consider the variation of $\mu$ in the normal direction, i.e. define the $C^2$-map $\phi:U\to\R$ by the formula
$$\phi(x)=D_x\mu\,N(x).$$

\begin{lem}
	\label{PhiY}
	For every $x\in U$ and $t\leq\theta(x)$ we have $\phi(Y_t(x))=\phi(x)$
\end{lem}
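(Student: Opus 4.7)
The plan is to deduce this identity directly from Lemma \ref{fundprop} by differentiating the second relation in the direction of $N(x)$ and using the first relation to identify the result.

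First I would fix $x\in U$ and $t$ with $|t|\leq\theta(x)$, and consider the $C^2$ function $\mu\circ Y_t$ defined on a neighborhood of $x$ in $U$. Item 2 of Lemma \ref{fundprop} asserts $\mu\circ Y_t = \mu$ on that neighborhood, so in particular the two functions have the same differential at $x$. Evaluating both differentials on the tangent vector $N(x)\in T_xM$ gives
\[
D_{Y_t(x)}\mu \,\bigl(D_xY_t\, N(x)\bigr) \;=\; D_x\mu\, N(x) \;=\; \phi(x),
\]
where the left-hand side uses the chain rule.

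Then I would invoke Item 1 of Lemma \ref{fundprop}, which says $D_xY_t\, N(x)=N(Y_t(x))$. Substituting this into the previous display yields
\[
D_{Y_t(x)}\mu\, N(Y_t(x)) \;=\; \phi(x),
\]
and the left-hand side is by definition $\phi(Y_t(x))$. This gives the desired equality $\phi(Y_t(x))=\phi(x)$.

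There is no real obstacle here: the whole content of the lemma is packaged inside Lemma \ref{fundprop}, which is the nontrivial statement, and the present lemma is just a differential consequence of it. The only thing worth being careful about is the domain of validity: one needs $|t|\leq\theta(x)$ so that the entire orbit segment stays in $U$, allowing both $\mu\circ Y_t=\mu$ and $D_xY_t\,N(x)=N(Y_t(x))$ to apply, and $\phi$ to be evaluated at $Y_t(x)\in U$.
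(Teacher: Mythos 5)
Your proof is correct, and it is essentially the same argument as the paper's: differentiate the identity $\mu\circ Y_t=\mu$ from Lemma~\ref{fundprop} and combine with a pushforward invariance to compute $\phi$. The only (minor) organizational difference is that you apply Item~1 of Lemma~\ref{fundprop} directly to push $N(x)$ forward, whereas the paper first observes $D_x\mu\,Y(x)=0$, rewrites $\phi(x)=D_x\mu\,X(x)$, and then uses the commutation relation $D_xY_t\,X(x)=X(Y_t(x))$ to push $X$ forward; your route is a bit more economical since it leans on both items of Lemma~\ref{fundprop} as packaged, while the paper re-derives the needed pushforward from $[X,Y]=0$.
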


\begin{proof}
	By taking derivatives on both sides in Item 2. of Lemma \ref{fundprop} with respect to $t$, we find $D_x\mu\,Y(x)=0$ for every $x\in U$. We deduce that
	$$D_x\mu\,X(x)=D_x\mu\,N(x)+\mu(x)\,D_x\mu\,\,Y(x)=\phi(x).$$
	
	By taking derivatives on both sides in Item 2. of Lemma \ref{fundprop} with respect to $x$, we find $D_x\mu=D_{Y_t(x)}\mu\circ D_xY_t$ when $|t|\leq\theta(x)$. Since $X$ and $Y$ commute we have that $D_xY_t(x)X(x)=X(Y_t(x))$, for every $|t|\leq\theta(x)$ and $x\in U$.
	Thus, combining these two facts we obtain
	$$\phi(Y_t(x))=D_{Y_t(x)}\mu\,X(Y_t(x))=\left(D_{Y_t(x)}\mu\circ D_xY_t(x)\right)\,X(x)=D_x\mu\, X(x)=\phi(x),$$
	which ends the proof.
\end{proof}

\subsection{Computation of the index}
\label{sub:compouopouille}

\paragraph{Appropriate isolating neighbourhood --}  According to Lemma \ref{fundprop}, $Y$ is tangent to the foliation of $\overline{U}$ by level sets $\mu^{-1}(c)$. We know that if $(U,X,Y)$ is a $C^3$-prepared counterexample then the second item of Lemma \ref{l:Denjouill_Scwhartzouille} holds. So we will now suppose that $\partial\mu^{-1}(0)\neq \vide$ and that  for every $x\in K$, $\alpha_Y(x)$ and $\omega_Y(x)$ are periodic orbits of $Y_t$. We will compute the index at $K$ by choosing a more appropriate isolating neighbourhood.

{ Recall from \S \ref{s.general_dynamics}  that by definition the stable and unstable sets of a periodic orbit $\gamma$ of a vector field $Y$ on a manifold $M$
$$W^s(\gamma)=\left\{x\in M;\,\omega_Y(x)=\gamma\right\}\,\,\,\,\,\,\,\text{and}\,\,\,\,\,\,\,W^u(\gamma)=\left\{x\in M;\,\alpha_Y(x)=\gamma\right\}.$$
}

The following key lemma will provide us with the desired isolating neighbourhood. The proof will be postponed until the end of the section.

\begin{lem}[Key lemma]
	\label{keylemouille}
	Let $Y$ be a $C^1$ vector field of a compact surface $S$, and $K\dans S$ be an invariant compact set with the following property: for every $x\in K$, $\alpha_Y(x)$ and  $\omega_Y(x)$ are periodic orbits of $Y$.
	
	Then there exists a neighbourhood $W$ of $K$ whose boundary is a finite union of simple closed curves, which are contained in a finite union of curves $(C_i)_{i=1}^k$ such that for every $i\in\{1,\ldots,k\}$ one of the two following alternatives holds
	\begin{enumerate}
		\item either $C_i$ is a periodic curve for $Y$;
		\item or we have
		{$$C_i\dans\bigcup_{y\in\per(Y)\cap K}W^s(\OO_Y(y))\cup  W^u(\OO_Y(y)).$$
		}
	\end{enumerate}
\end{lem}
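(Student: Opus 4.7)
The strategy is to construct $W$ as a finite union of tubular neighborhoods around the periodic orbits of $K$ and of flow boxes along the non-periodic orbits of $K$, then to argue, via a careful local analysis, that the resulting boundary lies in the prescribed basins and periodic orbits.

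First I observe that $Y$ is non-singular on $K$ --- every point of $K$ lies on an orbit with periodic $\alpha_Y$- and $\omega_Y$-limits, so $Y$ cannot vanish there --- hence on a neighborhood of $K$ as well. Writing $P = \per(Y) \cap K$, the hypothesis gives $K = P \cup H$, where $H$ consists of non-periodic orbits whose $\alpha_Y$- and $\omega_Y$-limits both lie in $P$. Two-dimensional components of $K$ (annular regions entirely foliated by periodic orbits) have boundary in $S$ consisting of two periodic orbits, directly satisfying alternative (1); I handle these separately. For the one-dimensional part of $K$, I reduce by a compactness argument to finitely many representative periodic orbits $\gamma_1, \dots, \gamma_p \subset K$ and finitely many connecting orbits $\theta_1, \dots, \theta_q \subset K$: after fixing tubular neighborhoods $V_{\gamma_j}$, the set $K \setminus \bigcup_j V_{\gamma_j}$ is a compact subset of $H$ whose orbits spend only finite time outside the tubular neighborhoods of their limits, so it is covered by finitely many flow boxes.

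Around each $\gamma_j$ I take $V_{\gamma_j}$ bounded by two smooth circles transverse to $Y$. A case analysis of the one-dimensional Poincar\'e return map $\pi_{\gamma_j}$ at its fixed point --- treating in turn the two-sided attractor, two-sided repeller, and semi-stable cases --- shows, after shrinking $V_{\gamma_j}$ if necessary, that each boundary circle is contained in $\cB^s(\gamma_j) \cup \cB^u(\gamma_j)$ (alternative (2)): in each case $\pi_{\gamma_j}(x) - x$ has constant sign on each small one-sided interval, yielding convergence to $\gamma_j$ in forward or backward time. Along each $\theta_k$, connecting $\gamma_\ell = \alpha_Y(\theta_k)$ to $\gamma_m = \omega_Y(\theta_k)$, I build a flow box $B_{\theta_k}$ joining $V_{\gamma_\ell}$ to $V_{\gamma_m}$; its two lateral boundary components are arcs of $Y$-orbits close to $\theta_k$, and choosing them on the correct side of $\gamma_\ell$ and $\gamma_m$ (the sides on which $\theta_k$ itself sits) together with continuity of the flow ensures that they are parts of $Y$-orbits sharing the $\alpha_Y$- and $\omega_Y$-limits of $\theta_k$, hence in $\cB^u(\gamma_\ell) \cap \cB^s(\gamma_m)$. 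Setting $W = \bigcup_j V_{\gamma_j} \cup \bigcup_k B_{\theta_k}$ and smoothing the corners where flow boxes meet tubular neighborhoods, $\partial W$ becomes a finite disjoint union of simple closed curves, each assembled from arcs drawn from the transverse circles of the $V_{\gamma_j}$ and the lateral arcs of the $B_{\theta_k}$; taking the $C_i$'s to be these circles and lateral arcs completes the construction.

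The principal obstacle I foresee is the case --- allowed by $C^1$ regularity --- of periodic orbits of $K$ accumulating on some $\gamma_j$: then $\pi_{\gamma_j}$ has accumulating fixed points and the contracting/expanding case analysis of the transverse boundary breaks down, because every neighborhood of $\gamma_j$ contains further periodic orbits of $K$. My plan to resolve this is to shrink $V_{\gamma_j}$ until its two boundary circles are themselves periodic orbits of $K$ chosen from the accumulating family, which directly satisfies alternative (1) of the lemma. A secondary technical difficulty is the careful positioning of the flow-box faces inside the tubular neighborhoods' interiors, and the smoothing of corners, so that $\partial W$ is genuinely a disjoint union of simple closed curves rather than a union of curves with transverse crossings; this is routine but somewhat tedious surface-topology bookkeeping.
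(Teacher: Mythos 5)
Your construction is correct, and the core ideas coincide with the paper's: cover each periodic orbit of $K$ by an annular neighbourhood whose two boundary circles are either nearby periodic orbits (when fixed points of the Poincar\'e return map accumulate on $\gamma$ -- the obstacle you correctly anticipated and resolved exactly as the paper does) or lie in a one-sided basin of $\gamma$ (by the constant sign of $\pi(x)-x$ on a short one-sided interval); then cover the connecting orbits, and take a finite subcover. The main packaging difference is that the paper covers the heteroclinic part of $K$ directly by \emph{discs}: for $x\in K\setminus\per(Y)$ one has $x\in\cB^{u}(\gamma)$ for some periodic $\gamma\subset K$, and since basins are open (Lemma~\ref{l:bassin_open}, extracted from the proof of Poincar\'e--Bendixson), a small disc $D_x\subset\cB^{u}(\gamma)$ around $x$ does the job with no further work. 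You instead build flow boxes along representative connecting orbits $\theta_k$ joining the tubular neighbourhoods $V_{\gamma_\ell}$ to $V_{\gamma_m}$; this works, but it forces you to control where the lateral boundary arcs land (which you do), whereas the disc approach delegates everything to the openness of basins and a standard transversality-of-boundaries adjustment. One small slip: the two boundary circles of a tubular neighbourhood of the periodic orbit $\gamma_j$ cannot be everywhere transverse to $Y$ -- they are closed curves parallel to $\gamma_j$, hence roughly tangent to the flow -- but nothing in your argument actually uses transversality there: what matters is only that each boundary circle lies entirely on one side of $\gamma_j$ within the annulus where $\pi(x)-x$ has constant sign, so the conclusion is unaffected.
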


Apply the key lemma with $K$ being our isolated compact subset of $\Zero(X)$, which is $Y_t$-invariant, and $S$ being the level set $\mu^{-1}(0)$ containing $K$, endowed with the vector field $Y_{|\mu^{-1}(0)}$. 
{Since the second case of Lemma~\ref{l:Denjouill_Scwhartzouille} holds, as we mentioned above,} it is clear that the assumptions of the key lemma are satisfied. So,
consider 
$W\dans U\cap S$ such as in Lemma~\ref{keylemouille}.

Define a tubular neighbourhood $V\dans U$ of $W$, that we identify with $W\times(-\delta,\delta)$ for a sufficiently small $\delta>0$, and which has the property that $\mu=c$ on the component $W\times\{c\}$. Recall that if $\gamma$ is a boundary component of $W$ (with the boundary orientation) $l(\gamma)$ denotes the linking number of $N$ along $\gamma$ (see Definition \ref{d:linknumb}). Our index formula (Theorem \ref{t:indexf}) gives
$$\Ind(X,U)=\Ind(X,K)=\Ind(X,V)=\sum_{\gamma\dans\partial W}l(\gamma).$$
Our proof will consist in showing that for every boundary component $\gamma$, the Gauss map $\nu|_\gamma:\gamma\to\Ss^1$ is not surjective, which will imply that $l(\gamma)=0$ and the result will follow.

\paragraph{End of the proof of Theorem \ref{t.Normal} --} We associate continuously to $x\in V$ an orthonormal  basis $\beta(x)=(e_1(x),e_2(x),e_3(x))$ of $T_xM$ where $e_3=Y$ and $e_2$ is tangent to the level set of $\mu$ containing $x$. Note that $e_3$ is also tangent to level sets of $\mu$ so $e_1$ is orthogonal to the level sets.

We use coordinates $(x_1,x_2,x_3)$ on the sphere $\Ss^2$ and identify the oriented circle $\Ss^1$ with the equator $\{x_3=0\}$ oriented as the boundary of the north hemisphere $\{x_3>0\}$. Let $\gamma$ be a boundary component of $W$ and $N(x)=\alpha_1(x)e_1(x)+\alpha_2(x)e_2(x)$. The Gauss map of $N$ is defined at $x\in\gamma$ as
$$\nu(x)=\frac{(\alpha_1(x),\alpha_2(x))}{\sqrt{\alpha_1(x)^2+\alpha_2(x)^2}}.$$

Theorem \ref{t.Normal} is a consequence of the following result. 

\begin{lem}
	\label{endproofsusp}
	Let $W$, $V$, $\beta$ and $\nu$ be the objects previously constructed. Then for every boundary component $\gamma$, the Gauss map $\nu|_\gamma:\gamma\to\Ss^1$ is not surjective. In particular $l(\gamma)=0$ for every such $\gamma$ and $\Ind(X,K)=0$.

\end{lem}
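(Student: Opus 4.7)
The plan is to exploit the $Y_t$-invariance of the function $\phi(x) = D_x\mu \cdot N(x)$ (Lemma \ref{PhiY}) to force $\alpha_1$ to have constant sign along each boundary component, and then observe that a map $\gamma \to \Ss^1$ whose image misses an open arc must have degree zero.

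First I would unpack the relation between $\phi$ and the coefficient $\alpha_1$. Since $e_2$ is tangent to the level set $\mu^{-1}(0)$, we have $D_x\mu \cdot e_2(x) = 0$, while $e_1$ is orthogonal to the level set and so $D_x\mu \cdot e_1(x) =: h(x)$ is a continuous nonvanishing function on $V$ (the level $0$ is regular). Writing $N = \alpha_1 e_1 + \alpha_2 e_2$ one obtains the pointwise identity $\phi(x) = \alpha_1(x)\, h(x)$, so $\alpha_1$ and $\phi$ have the same sign (and the same zero set) along any curve in $V$.

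Next I would go through the two alternatives provided by the Key Lemma \ref{keylemouille} for a boundary component $\gamma$ of $W$. If $\gamma$ is a periodic orbit of $Y$, then by Lemma \ref{PhiY} the function $\phi$ is constant along $\gamma$; consequently $\alpha_1|_\gamma$ has constant sign, or is identically zero. If instead $\gamma$ is contained in $\cB^s(\gamma') \cup \cB^u(\gamma')$ for some periodic orbit $\gamma' \subset K$, then for any $x \in \gamma$ the $\omega$- or $\alpha$-limit set is $\gamma'$; since $X$ (and therefore $N$) vanishes on $K$, we have $\phi \equiv 0$ on $\gamma'$, and by $Y_t$-invariance $\phi(x) = \lim_{t \to \pm\infty} \phi(Y_t(x)) = 0$. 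Hence $\phi \equiv 0$ along $\gamma$, and therefore $\alpha_1 \equiv 0$ on $\gamma$.

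In both cases the image of $\nu|_\gamma$ is contained in a closed half-circle of $\Ss^1$ (either $\{x_1 \geq 0\}$, $\{x_1 \leq 0\}$, or the two-point set $\{(0,\pm 1)\}$). In particular $\nu|_\gamma$ is not surjective, so its topological degree $l(\gamma)$ vanishes. Summing over boundary components and invoking the Index Formula (Theorem \ref{t:indexf}) yields $\Ind(X,V) = \sum_\gamma l(\gamma) = 0$, and since $V$ is an isolating neighborhood of $K$ this gives $\Ind(X,K) = 0$, completing the proof of Theorem \ref{t.Normal}. The only substantive step, and the one I would treat most carefully, is the passage from the limit-set condition on $\gamma$ to the identity $\phi|_\gamma \equiv 0$: it requires both the $Y_t$-invariance of $\phi$ and the vanishing of $N$ on the limit periodic orbit in $K$.
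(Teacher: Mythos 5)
Your approach — using the $Y_t$-invariance of $\phi = D\mu\cdot N$, identifying $\phi$ with the first Gauss coordinate via $\phi = \alpha_1\,h$ with $h = D\mu\cdot e_1$ nonvanishing, and then controlling the sign of $\alpha_1$ on each boundary curve — is exactly the strategy of the paper. The two case analyses (periodic orbit, basin orbit) are also carried out identically. However, your final step contains a genuine gap. You assert that, in either case, the image of $\nu|_\gamma$ lies in a closed half-circle $\{x_1\geq 0\}$, $\{x_1\leq 0\}$, or the two-point set $\{A,B\}$. This conclusion presupposes that each boundary component $\gamma$ of $W$ is entirely one curve $C_i$ from the Key Lemma \ref{keylemouille}. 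But that lemma only guarantees that each $\gamma$ is \emph{contained in} $\bigcup_i C_i$; in the construction of $W$ as a finite union of annuli and discs with transversal boundary intersections, a single $\gamma$ can pass through several different $C_i$'s — some periodic (where $\phi$ is a nonzero constant of either sign) and some in basins (where $\phi\equiv 0$). On such a mixed boundary component, $\alpha_1$ can be positive on one arc, zero on a basin arc, and negative on another arc, so the image of $\nu|_\gamma$ is not contained in any half-circle.

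The non-surjectivity still holds, but a different closing observation is needed, which is what the paper uses: the image $\nu(\gamma)$ is contained in $\{A,B\}$ together with a compact set $L\subset \Ss^1\setminus\{A,B\}$ (the union of the images of the periodic $C_i$'s with $\phi_i\neq 0$). Since $L$ is compact and disjoint from $\{A,B\}$, there are open arcs adjacent to $A$ and $B$ that miss $L$ entirely, and hence (minus $A$, $B$ themselves) are not covered by $\nu(\gamma)$. Thus $\nu|_\gamma$ is not surjective and $l(\gamma)=0$. To fix your proof you only need to replace the half-circle argument by this compactness argument; all of the preceding analysis of $\phi$ and $\alpha_1$ is correct and sufficient.
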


\begin{proof}
By Lemma \ref{keylemouille} there are finitely many curves $C_i$, which may be periodic orbits of $Y$ or contained inside the {stable/unstable sets} of periodic orbits of $K$, such that each boundary component of $W$ is included inside $\bigcup C_i$.

	Distinguish the following two points of the equator $A=(0,1,0)$ and $B=(0,-1,0)$. Pick a point $x\in C_i$. 
	
	\emph{Case 1. There exists $y\in\per(Y)\cap K$ such that {$C_i\dans W^{s}(\OO_Y(y))\cup W^{u}(\OO_Y(y))$.}}
	
	Recall the function $\phi(x)=D_x\mu N(x)$. By Lemma~\ref{PhiY} we have that $\phi(Y_t(x))=\phi(x)$, for every $t\in\R$. By continuity of $\phi$ it follows that $\phi(x)=\phi(p)$, for every $p\in\omega_Y(x)\cup\alpha_Y(x)$. 
	Since for every $y\in(\omega_Y(x)\cup\alpha_Y(x))\cap K$ we have $N(y)=0$ and we conclude that $\phi(x)=\phi(y)=0$. 
	
	Therefore, $N(x)\in\Pi(x)=\langle e_1(x),e_2(x)\rangle$ is tangent to $\mu^{-1}(0)$. Hence $N(x)$ belongs to the intersection $\Pi(x)\cap T_x\mu^{-1}(0)$ which is generated by $e_2(x)$. This implies that
\begin{equation}
\label{e.imagem}
\nu(x)\in\{A,B\}.
\end{equation}

	\emph{Case 2. $C_i$ is a periodic orbit of $Y_t$.} 
	
	In this case, we use again the invariance of $\phi$ which implies that there exists $\fhi_i\in\R$ such that $\phi(y)=\fhi_i$ for every $y\in C_i$.
	
	If $\fhi_i=0$, we obtain once more $D_x\mu\,N(x)=0$ and we conclude again that (\ref{e.imagem}) holds. 
	
	Suppose now that $\fhi_i\neq 0$ so that $D_y\mu\,N(y)=\fhi_i\neq 0$ for every $y\in C_i$. In other words, $N(y)$ is \emph{never} tangent to the level set $\mu^{-1}(0)$. In particular, it is not collinear with $e_2$ over $C_i$.  
	This implies that $\nu(C_i)$ is a compact subset of the equator $\{x_3=0\}$ which contains neither $A$ nor $B$.
	
We conclude that $\nu(\bigcup C_i)$ is included in the union of $\{A,B\}$ with a compact set disjoint from $\{A,B\}$. In particular $\nu$ is not surjective when restricted to any boundary component of $W$.
\end{proof}

\subsection{Proof of the key lemma}

In order to construct the open set $W$ we first construct a neighbourhood of every periodic orbit in $K$ by annuli whose boundary components are either periodic orbits or contained in its {stable/unstable set}. Then we cover every point of heteroclinic connection by a disc of the {stable/unstable set} crossing each other transversally. A compactness argument will show that the boundary of the union of these neighbourhoods satisfies the desired property.

Before starting the proof let us state a consequence of the proof of Poincar\'e-Bendixson's theorem.

\begin{lem}
\label{l:bassin_open}
Let $Y$ be a vector field on the surface $S$. For every periodic orbit $\gamma$ of $Y$ the sets {$W^s(\gamma)\moins\gamma$ and $W^u(\gamma)\moins\gamma$} are open sets (which may be empty).
\end{lem}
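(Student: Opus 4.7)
The plan is to prove openness by a Poincar\'e-Bendixson-style analysis of the first-return map on a transverse arc. Fix a point $p\in\gamma$, a small transverse arc $\Sigma\dans S$ at $p$ parametrized by an interval $(-\delta,\delta)$ with $p$ corresponding to $0$, and a small tubular neighborhood $V$ of $\gamma$ in which the Poincar\'e return map $P$ of $\Sigma$ is well defined on a neighborhood of $0$ and fixes $0$. Since $\Sigma$ is one-dimensional and $P$ is a local homeomorphism, $P$ is monotone; replacing $P$ by $P^2$ if needed (to handle the M\"obius/orientation-reversing case), I may assume $P$ is orientation-preserving.

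Now take $x\in\cB^s(\gamma)$ with $x\notin\gamma$. The assumption $\omega_Y(x)=\gamma$ forces the forward orbit of $x$ to enter $V$ and eventually meet $\Sigma$, say at a point $x_0>0$, and the successive returns then form a sequence $x_n=P^{n-1}(x_0)$ converging to $0$. By monotonicity of $P$ the sequence $(x_n)$ is strictly monotone; hence $P$ can have no fixed point in the half-open interval $(0,x_0]$ (otherwise the monotone sequence $(x_n)$ would be trapped above it), so $P(t)<t$ for every $t\in(0,x_0]$. It follows that $(0,x_0]$ is forward-invariant under $P$ and that $P^n(q)\searrow 0$ for every $q\in(0,x_0]$.

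This upgrades to openness as follows. For every such $q$ the orbit of $q$ remains in $V$, returns to $\Sigma$ infinitely often with returns tending to $p$, so $p\in\omega_Y(q)$; by $Y_t$-invariance of $\omega$-limit sets we obtain $\gamma\dans\omega_Y(q)$, and confinement to $V$ forces equality $\omega_Y(q)=\gamma$. Thus the flow-saturation $\bigcup_{0\le t\le T} Y_t\bigl((0,x_0]\bigr)$ (with $T$ slightly larger than the period of $\gamma$) is contained in $\cB^s(\gamma)$, and by continuous dependence on initial conditions it contains an open neighborhood of $x$ in $S$. The statement for $\cB^u(\gamma)$ follows by reversing time.

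The main obstacle will be invoking genuine two-dimensionality of $S$ at the monotonicity step, which is the actual Poincar\'e-Bendixson input, and correctly handling orientation-reversing return maps: passing to $P^2$ absorbs the sign, but one must then recover full-orbit convergence to $p$ (not merely along every other return), which is done by sandwiching the odd iterates $P^{2n+1}(x_0)$ between two consecutive even iterates. The only delicate point in the statement itself concerns points of $\gamma$: as soon as any orbit off $\gamma$ is attracted to $\gamma$, the one-sided annular neighborhood of $\gamma$ constructed above lies entirely in $\cB^s(\gamma)$, giving openness also at points of $\gamma$; the remaining trivial case, where $\cB^s(\gamma)$ contains no point off $\gamma$, matches the parenthetical \emph{which may be empty} in the statement.
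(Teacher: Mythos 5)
The paper does not actually prove this lemma: it states it as ``a consequence of Poincar\'e-Bendixson's proof'' and moves on. Your argument supplies the standard Poincar\'e-Bendixson mechanism (monotone sequence of returns to a transverse arc, no fixed point of $P$ between $0$ and $x_0$, hence a forward-invariant half-interval whose $P$-orbits decrease to $0$), and the resulting openness of $\cB^s(\gamma)$ at points \emph{off} $\gamma$ is correct and is precisely what the paper uses later (covering heteroclinic points by discs inside a basin).

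However, the final step is wrong as stated, and it points to a real subtlety. You claim: ``as soon as any orbit off $\gamma$ is attracted to $\gamma$, the one-sided annular neighbourhood of $\gamma$ constructed above lies entirely in $\cB^s(\gamma)$, giving openness also at points of $\gamma$.'' A one-sided half-annulus is \emph{not} a neighbourhood of a point on $\gamma$, so this does not give openness on $\gamma$. Indeed, if $\gamma$ is semi-stable (say $\dot r=(r-1)^2$ near $r=1$ in an annular chart), then $\cB^s(\gamma)$ is a closed half-annulus $\{r\le 1\}$ and $\cB^u(\gamma)$ is $\{r\ge 1\}$; neither is open. Even for a hyperbolic attracting $\gamma$, one has $\cB^u(\gamma)=\gamma$, which is not open. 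So the lemma, read literally with the paper's definition $\cB^s(\gamma)=\{x:\omega_Y(x)=\gamma\}$ (which always contains $\gamma$), is false at points of $\gamma$, and your attempt to patch it with the ``may be empty'' parenthetical does not save it, since $\gamma\subset\cB^s(\gamma)$ always. What is true, and what your argument actually proves, is that $\cB^s(\gamma)\setminus\gamma$ and $\cB^u(\gamma)\setminus\gamma$ are open; this is exactly what is used in the proof of the key lemma. So: your core argument is right and captures the Poincar\'e-Bendixson content; you should drop the claim of openness along $\gamma$ and instead observe explicitly that openness away from $\gamma$ is what the application requires.
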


Pick a point $x\in K$ assume that $x$ belongs to a periodic orbit $\gamma\dans K$. Take a small transverse arc $I$ containing $x$, and let $P:J\to I$ denote the first return map of the flow $Y_t$ to the section where $J$ is a subarc of $I$ containing $x$. Let $J^{+}$ and $J^{-}$ denote the connected components of $J\setminus\{x\}$. If $J$ is small  enough we have three possibilities

\begin{enumerate}
	\item\label{i.um} {$J^+\dans W^s(\gamma)$,}
	\item\label{i.dois} {$J^+\dans W^u(\gamma)$,}
	\item\label{i.tres} $J^+$ contains a fixed point of $P$.
\end{enumerate}

If case~\ref{i.um} or case~\ref{i.dois} holds, there exists a simple closed curve $C^+$ crossing $J^+$, disjoint from $\gamma$, and included inside {$W^s(\gamma)$ and $W^u(\gamma)$} respectively. If case~\ref{i.tres} holds there exists a periodic orbit $C^+$ of $Y$ crossing $J^+$, which must also be disjoint from $\gamma$. 

With an analogous reasoning we can build a simple closed curve $C^{-}$ disjoint from $\gamma$, which is either a periodic orbit of $Y$ or it is contained in {$W^{\sigma}(\gamma)$}, $\sigma=s,u$ and crosses $J^-$. Hence every periodic orbit $\gamma$ is contained in an annulus $A_\gamma$ whose boundary components are either periodic orbits or contained inside {$W^s(\gamma)\cup W^u(\gamma)$}.

Every other point $x\in K$ is a heteroclinic connection between periodic orbits of $K$ and thus belongs to {$W^u(\gamma)\moins\gamma$} for some periodic orbit $\gamma\dans K$. By Lemma \ref{l:bassin_open} there exists a disc {$D_x\dans W^u(\gamma)$} centred at $x$.

Using the compactness of $K$ we deduce that it is covered by an open set $W$ which is a finite union of annuli $A_{\gamma_i}$ and discs $D_{x_j}$. Since {stable/unstable sets} are open, we can suppose that intersections between boundaries of these sets are empty or transverse. Hence $\partial W$ is a finite union of simple closed curves included in the union of finitely many periodic orbits of $Y$ and {stable/unstable sets} of periodic orbits of $Y$ included in $K$, as desired. \qed

\section{The Morse-Smale case}
\label{mainsectouille}
\subsection{The Morse-Smale hypothesis}
\label{Morse-Smouille}

As we saw in Theorem \ref{t:prepared} and Remark \ref{rem_prep_ex} in order to prove Theorem \ref{t.valeurpropre} it is enough to prove that there is no $C^3$-prepared counterexample to this statement ({the plane field, normal component and quotient functions can be supposed here to come from some Riemannian metric}). Using Proposition \ref{p:orientable} it is enough to prove that there is no $C^3$-prepared counterexample with $Y$ being transversally oriented and with level sets $\mu^{-1}(c)$ being oriented.

Given a $C^3$-prepared triple with the right orientability conditions we introduce now a stronger hypothesis than that of Theorem \ref{t.valeurpropre} that we call the \emph{Morse-Smale hypothesis}, and concerns \emph{the projected vector field} $Y'$ (see \S \ref{sub:simplif_col_loc}). We will then prove that $\Ind(X,U)=0$ under this assumption.

So let $Y'$ be the projection of $Y$ on level sets $\mu^{-1}(c)$. We say that $Y$ satisfies (MS), the \emph{Morse-Smale hypothesis close to $K$}\ if

\begin{enumerate}
\item[(MS)] 
\emph{The vector field $Y'|_{\mu^{-1}(0)}$ is Morse-Smale.}
\end{enumerate}

We are now ready to state the main result of this section.
\begin{thmA}
\label{Th.MS}
Let $(X,Y,U)$ be a $C^3$-prepared triple such that $Y$ satisfies the Morse-Smale hypothesis close to $K$. Then
$$\Ind(X,K)=0.$$
\end{thmA}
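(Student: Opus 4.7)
\emph{Overall strategy.} I argue by contradiction, assuming $(U,X,Y)$ is a $C^3$-prepared triple satisfying (MS) with $\Ind(X,K)\neq 0$. Lemma~\ref{l:Denjouill_Scwhartzouille} together with Corollary~\ref{c.index_zero} rules out the minimal case, so I may assume $\partial\mu^{-1}(0)\neq\vide$ and that for every $x\in K$ the sets $\alpha_Y(x),\omega_Y(x)$ are periodic orbits of $Y$. Since $Y=Y'$ on the collinearity locus, the Morse--Smale hypothesis on $Y'|_{\mu^{-1}(0)}$ forces $K$ to be a finite union of hyperbolic periodic orbits of $Y$ together with finitely many heteroclinic connexions between them contained in $K$. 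Call a periodic orbit of $Y$ in $K$ \emph{linked} if it is the $\alpha$- or $\omega$-limit set of some point of $K\setminus\per(Y)$, and \emph{non-linked} otherwise, and set
$$K_{ms}=\Bigl(\bigcup_{\gamma\,\text{linked}}\gamma\Bigr)\cup\bigl(K\setminus\per(Y)\bigr),\qquad K_{nl}=\bigcup_{\gamma\,\text{non-linked}}\gamma.$$
These are disjoint, compact, $Y_t$-invariant, isolated in $\Zero(X)$, and by the additivity of the Poincar\'e--Hopf index (Remark~\ref{r.aditive})
$$\Ind(X,K)=\Ind(X,K_{ms})+\Ind(X,K_{nl}),$$
so it suffices to prove that both summands vanish.

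\emph{Treatment of $K_{nl}$.} Each non-linked periodic orbit $\gamma\dans K_{nl}$ is an isolated component of $K$, so it admits a small tubular neighbourhood $U_\gamma$ in $M$ in which the collinearity locus is reduced to $\gamma$ itself. The Morse--Smale hypothesis makes the eigenvalue of the Poincar\'e map of $\gamma$ tangent to the level set hyperbolic, and the Center Manifold Theorem supplies a $C^2$ two-dimensional locally $Y_t$-invariant submanifold through $\gamma$ containing the local collinearity locus. After a standard extension argument one obtains a closed boundaryless two-dimensional submanifold of $U_\gamma$ containing $\Col_{U_\gamma}(X,Y)$, and Theorem~\ref{t:BS} applies to give $\Ind(X,\gamma)=0$. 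Summing over the finitely many non-linked orbits yields $\Ind(X,K_{nl})=0$.

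\emph{Treatment of $K_{ms}$ via the Glueing Lemma.} This is the heart of the argument. Each linked periodic orbit $\gamma$, being hyperbolic for $Y'|_{\mu^{-1}(0)}$, admits a two-dimensional local stable or unstable manifold in $M$, tangent to $\mu^{-1}(0)$ along $\gamma$ because its center direction lies inside the level set. For each heteroclinic orbit in $K$ joining an unstable linked orbit $\gamma_u$ to a stable linked orbit $\gamma_s$, I apply the Glueing Lemma (Lemma~\ref{l.collage}): flow the local unstable manifold of $\gamma_u$ along the heteroclinic connexion and paste it onto the local stable manifold of $\gamma_s$. Carrying this out simultaneously along the finitely many heteroclinic connexions of $K_{ms}$ yields an open neighbourhood $\cW$ of $K_{ms}$ together with a trivial foliation of $\cW$ by $C^1$ surfaces such that both $X$ and $Y$ are tangent to every leaf. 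Choosing a continuous orthonormal basis $(e_1,e_2,e_3)$ on $\cW$ with $e_3$ proportional to $Y$ and $e_2$ tangent to the leaves, the vector field $X$ lies in the plane spanned by $e_2$ and $e_3$ everywhere on $\cW$; the Gauss map computing $\Ind(X,\cW)$ via the index formula (Theorem~\ref{t:indexf}) therefore misses the two antipodal open caps of $\Ss^2$ where the $e_1$-coordinate is nonzero, hence has degree zero, and we conclude that $\Ind(X,K_{ms})=\Ind(X,\cW)=0$.

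\emph{Main obstacle.} The delicate point is to carry out the gluing so that it produces a genuine foliation (not merely a single invariant surface) of regularity sufficient for Theorem~\ref{t:indexf} to apply, and so that the tangency of $X$ to the leaves is preserved after gluing along heteroclinic connexions. The commutation relation $[X,Y]=0$ enters through the holonomy identities of Lemma~\ref{calculusholonomies}, which guarantee that the normal component $N$ is transported coherently by the $Y$-holonomy along a heteroclinic, ensuring that leaves glued from opposite sides fit together with matching tangent planes. The $C^3$ hypothesis is used both to place us in the Denjoy--Schwartz regime for $Y'$ on the level set and to obtain enough regularity of the stable/unstable manifolds of linked periodic orbits to glue them in a $C^1$-consistent family.
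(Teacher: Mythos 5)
Your overall architecture (split $K$ into a ``Morse--Smale'' component built from linked periodic orbits and heteroclinic connexions and a ``non-linked'' component, use additivity of the index, and dispose of the first by the Glueing Lemma) matches the paper's strategy in Section~\ref{mainsectouille}, and your treatment of $K_{ms}$ is essentially correct: it is the paper's Lemma~\ref{l.specpoincouille}, Lemma~\ref{l.invstableXN}, Lemma~\ref{l.collage}, Lemma~\ref{l.base} and the closing degree computation (the Gauss map lands in a great circle, so its degree vanishes --- note the paper computes the degree directly at this stage rather than via Theorem~\ref{t:indexf}).

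However, your treatment of the non-linked orbits has a genuine gap. You invoke the Center Manifold Theorem for the Poincar\'e map $P$ of $Y$ at a non-linked orbit $\gamma$, claiming that ``the Morse--Smale hypothesis makes the eigenvalue of the Poincar\'e map of $\gamma$ tangent to the level set hyperbolic.'' But the (MS) hypothesis concerns the \emph{projected} field $Y'|_{\mu^{-1}(0)}$, not $Y$. The mechanism the paper has for transferring hyperbolicity from the Poincar\'e map of $Y'$ to that of $Y$ is Lemma~\ref{l.specpoincouille}, whose proof hinges on the existence of a heteroclinic orbit of $K$ accumulating on $\gamma$ (via item~2 of Lemma~\ref{invariantspace}); this is exactly what is unavailable for a non-linked $\gamma$. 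Without that identification you do not know that $D_zP$ admits a partially hyperbolic splitting adapted to the level sets, so the Center Manifold Theorem is not applicable as stated, and the surface you seek is not produced. The paper avoids this difficulty entirely: in the Morse--Smale setting the continuation argument of Lemma~\ref{l.continuation} (a Picard fixed-point construction for the contracting Poincar\'e maps of $Y'$, requiring no knowledge of the Poincar\'e map of $Y$) produces a $C^3$ annulus of $Y'$-periodic orbits, Lemma~\ref{l.orbitsofY} together with the non-linked case in the proof of Lemma~\ref{l.memecombinato} shows that this annulus \emph{is} the local collinearity locus, and then Theorem~\ref{t:BS} (more precisely the normally hyperbolic annulus case of \cite{BS}) gives $\Ind(X,\gamma)=0$ directly (Lemma~\ref{l.isolated}). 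The Center Manifold Theorem is indeed used by the authors, but only later, in Section~\ref{s:final_section}, to prove the weaker Theorem~\ref{t.valeurpropre} under Hypothesis~$(\ast)$, where one explicitly \emph{assumes} a hyperbolic eigenvalue for the Poincar\'e map of $Y$ itself; importing it here without that hypothesis is the step that fails.
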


\begin{rem}
\label{rem:relYYprima}
At first sight, Theorem \ref{Th.MS} and Hypothesis (MS) concerns only the projection $Y'$. Note however that $Y=Y'$ when restricted to $K$. The heart of the argument is actually to prove the result when $Y$ satisfies (MS$'$), a stronger condition, under which it will be proven that the Poincar\'e maps for $Y$ and $Y'$ at a periodic point of $K$ have the same derivative (Proposition \ref{l.specpoincouille}).
\end{rem}

\subsection{The topology of the collinearity locus}\label{s.topo_col_locus}
Until the end of the section we assume that $(U,X,Y)$ is a $C^3$-prepared triple  with orientability hypotheses of Proposition \ref{p:orientable} such that $Y$ satisfies (MS). Our goal is to prove that $\Ind(X,K)=0$.

\paragraph{Combinatorics of the zero set --}  By hypothesis, for every $x\in K\dans\mu^{-1}(0)$, $\alpha_{Y'}(x)$ (resp. $\omega_{Y'}(x)$) is an unstable (resp. stable) periodic orbit of $Y'|_{\mu^{-1}(0)}$. 
Since moreover $[X,Y]=0$ and $Y=Y'$ in restriction to $K$, we see that they are periodic orbits of $Y$ contained in $\Zero(X)$. 

Thus $K$ is formed by stable and unstable periodic orbits of $Y'|_{\mu^{-1}(0)}$ and by orbits of $Y'|_{\mu^{-1}(0)}$ that have an unstable periodic orbit as alpha limit set and a stable periodic orbit as omega limit set. 
As explained above, these orbits are orbits of $Y$.

We shall denote the stable periodic orbits of $Y'|_{\mu^{-1}(0)}$ contained in $K$ by $\gamma^s_1,\ldots,\gamma^s_{k_s}$ and the unstable ones, by $\gamma^u_1,\ldots,\gamma^u_{k_u}$.

\begin{dfn}
\label{deflink}
We say that an unstable periodic orbit $\gamma_i^u$ is \emph{linked} to a stable orbit $\gamma_j^s$ if there exists $x\in K$ such that $\alpha_Y(x)=\gamma_i^u$ and $\omega_Y(x)=\gamma_j^s$. In that case we say that the orbit of $x$ \emph{links} $\gamma_i^u$ and $\gamma_j^s$.
\end{dfn}

The \emph{combinatorics of the set $K$} is given by the oriented graph whose vertices are periodic orbits $\gamma^u_i,\gamma^s_j$ such that there is an oriented arrow from $\gamma_i^u$ to $\gamma_j^s$ if the two orbits are linked.

\paragraph{Continuation of periodic orbits --} We want now to understand the topology of the collinearity locus. The next lemma uses the partial hyperbolicity gained with the Morse-Smale hypothesis in order to obtain the continuation of periodic orbits inside $K$.

\begin{lem}
\label{l.continuation}

There exists $\eps>0$, such that for every $j\in\{1,\ldots,k_s\}$ there exists $\cA_j^s\dans\mu^{-1}[-\eps,\eps]$ and a $C^3$-diffeomorphism $\Phi_j^s:[-\eps,\eps]\times\Ss^1\to\cA_j^s$ satisfying the following properties

\begin{enumerate}
\item for every $c\in[-\eps,\eps]$, we have $\gamma^{s,c}_j\dans\mu^{-1}(c)$ where we set $\gamma^{s,c}_j=\Phi_j^s(c,\Ss^1)$;
\item we have $\gamma^{s,0}_j=\gamma^s_j$;
\item for every $c\in[-\eps,\eps]$, $\gamma^{s,c}_j$ is a stable periodic orbit of $Y'_{|\mu^{-1}(c)}$.
\end{enumerate}
\end{lem}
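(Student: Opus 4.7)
The plan is to apply the Implicit Function Theorem to a two-parameter family of Poincar\'e first-return maps, exploiting that under (MS) the orbit $\gamma^s_j$ is hyperbolic attracting for $Y'|_{\mu^{-1}(0)}$, and then to parametrize the resulting family of continuations as a smooth annulus.

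First I would fix a point $p_j\in\gamma^s_j$ and choose a small two-dimensional disc $\Sigma\subset\overline{U}$ transverse to $Y$ at $p_j$. Since $Y$ and $Y'$ coincide along $K$, $\Sigma$ is also transverse to $Y'$ on a neighbourhood of $p_j$. The decisive observation is that $Y'$ is by construction tangent to every level set $\mu^{-1}(c)$, so the flow of $Y'$ preserves the trivial foliation $\{\mu^{-1}(c)\}$ given by item (1) of Definition \ref{d:prepared_triple}. Using coordinates $(s,c)$ on $\Sigma$ in which $c$ is the restriction of $\mu$ to $\Sigma$ and $s$ parametrizes the arc $\Sigma\cap\mu^{-1}(c)$, the first-return map of $Y'$ to $\Sigma$ near $p_j$ therefore has the foliated form $\Pi(s,c)=(P_c(s),c)$, where $P_c$ is the one-dimensional Poincar\'e return map of $Y'|_{\mu^{-1}(c)}$ on the transverse arc $\Sigma\cap\mu^{-1}(c)$.

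Next, by hypothesis (MS) the orbit $\gamma^s_j$ is hyperbolic attracting, so $P_0(0)=0$ with $|P_0'(0)|<1$; in particular $P_0'(0)-1\neq 0$. Applying the Implicit Function Theorem to the equation $F(s,c):=P_c(s)-s$ at $(s,c)=(0,0)$ produces $\eps_j>0$ and a map $c\mapsto q_j(c)$ with $q_j(0)=0$ and $P_c(q_j(c))=q_j(c)$ for every $c\in[-\eps_j,\eps_j]$, with regularity inherited from the family $(P_c)$. Shrinking $\eps_j$ if necessary, the inequality $|P_c'(q_j(c))|<1$ persists by continuity, so $q_j(c)$ is an attracting hyperbolic fixed point of $P_c$; hence its $Y'|_{\mu^{-1}(c)}$-orbit $\gamma^{s,c}_j$ is a stable periodic orbit of $Y'|_{\mu^{-1}(c)}$ contained in $\mu^{-1}(c)$, establishing items (1) and (3), while $\gamma^{s,0}_j=\gamma^s_j$ by construction, giving item (2).

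Finally, I would define $\Phi^s_j(c,e^{2\pi i\theta}):=Y'_{\theta\,T_c}(q_j(c))$, where $T_c$ denotes the period of $\gamma^{s,c}_j$, read off as the smoothly-varying first-return time function attached to $\Pi$. This yields the required diffeomorphism onto the annulus $\cA^s_j:=\bigcup_{c\in[-\eps_j,\eps_j]}\gamma^{s,c}_j$. Taking $\eps:=\min_{j}\eps_j$ over the finitely many indices $j\in\{1,\ldots,k_s\}$ produces the uniform $\eps$ demanded in the statement. The only delicate point is regularity bookkeeping: one must verify that the orthogonal projection $Y'$, and hence the family $P_c$, is sufficiently differentiable in $(s,c)$ for the Implicit Function Theorem to return the claimed $C^3$ regularity of $q_j$ and of $\Phi^s_j$. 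This is the main obstacle, but it follows from the $C^3$ regularity of $Y$, of $\mu$, and of the trivializing diffeomorphism of the foliation in item (1) of Definition \ref{d:prepared_triple}; everything else is a standard application of hyperbolic persistence.
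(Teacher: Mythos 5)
Your proof is correct and follows essentially the same route as the paper's: a foliated Poincar\'e first-return map $P_c$ on a transverse section, persistence of the hyperbolic attracting fixed point at $c=0$ to nearby levels, and saturation by the $Y'$-flow to produce the annulus $\cA^s_j$. The only cosmetic difference is that the paper first invokes Picard's fixed-point theorem for existence (uniform contraction of $P_c$) and then the Implicit Function Theorem for regularity of $c\mapsto x_c$, while you apply the Implicit Function Theorem directly using $P_0'(0)-1\neq 0$, which packages both steps at once.
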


\begin{proof}
We could invoke Hirsch-Pugh-Shub's theory and the center manifold theorem. In our context however the proof is quite elementary.

Let $V_j^s\dans\mu^{-1}(0)$ be an annular neighbourhood of $\gamma_j^s$ (recall that $\mu^{-1}(0)$ is oriented). Let $\cN$ be a tubular neighbourhood of $V_j^s$ whose intersection with level sets $\mu^{-1}(c)$, $|c|\leq\eta$, are annuli which trivially foliate it. Hence there is $C^3$-system of coordinates $\Psi:[-\eta,\eta]\times\A\to\cN$, where $\A=(-1,1)\times\Ss^1$ is an annulus, such that $\Psi(0,\A)=V_j^s$ and $V^{s,c}_j:=\Psi(c,\A)\dans\mu^{-1}(c)$.

Recall that $Y'$ is tangent to the level sets $\mu^{-1}(c)$. Pulling back $Y'$ by $\Psi$ we get a smooth (of class $C^3$) $1$-parameter family $\xi_c$ of vector fields on $\A$ indexed by $c\in[-\eta,\eta]$. Take a simple arc $I$ in $\A$ cutting the stable periodic orbit of $\xi_0$. The corresponding Poincar\'e map $P_0$ lies inside a smooth $1$-parameter family of maps $P_c$ which must be uniformly contracting (up to shrinking $I$ and taking $|c|\leq\eps$). By Picard's fixed point theorem every map $P_c$ has a unique fixed point $x_c$ in $I$. A fairly direct application of the implicit function theorem implies that the variation of $x_c$ with the parameter $c$ is $C^3$.

Saturating by the flow gives a smooth family of embedding $\phi_c:\Ss^1\to\A$ with $|c|\leq\eps$, such that for every $|c|\leq\eps$, $\phi_c(\Ss^1)$ is a stable periodic orbit of $\xi_c$.

The embedding $\Phi^s_j(c,z)=\Psi(c,\phi_c(z))$ for $(c,z)\in[-\eps,\eps]\times\Ss^1$ is the desired one.
\end{proof}

Now, in order to get that the continuations obtained above are contained in $K_c$, we need to use the continuity of the map $c\mapsto K_c$ at $0$.

\begin{lem}
\label{l.orbitsofY}
The number $\eps>0$ obtained in Lemma \ref{l.continuation} may be chosen small enough so that for every $j\in\{1,...,k_s\}$, and $c\in[-\eps,\eps]$
$$\gamma^{s,c}_j\dans\Zero(X-cY).$$
\end{lem}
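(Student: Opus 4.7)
The plan is to exploit the continuity of the map $Z$ at $0$, which is part of the definition of a prepared triple, in order to produce points of $K_c$ accumulating on $\gamma^s_j$ as $c\to 0$, and then to show that the forward $Y$-orbit of such a point asymptotes to the continuation $\gamma^{s,c}_j$. Since $K_c$ is closed and $Y_t$-invariant, this will force $\gamma^{s,c}_j \dans K_c = \Zero(X-cY) \cap \overline{U}$, which is precisely the claim.

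Concretely, I would fix a point $x_0 \in \gamma^s_j \dans K = Z(0)$ and apply lower semi-continuity of $Z$ at $0$ (Definition~\ref{d.lcs}): for every neighbourhood $V$ of $x_0$ there exists $\delta_V > 0$ with $K_c \cap V \neq \vide$ whenever $|c| < \delta_V$. Letting $V$ shrink yields a family of points $x_c \in K_c$ with $x_c \to x_0$ as $c \to 0$. Because $K_c \dans \Col_U(X,Y)$, and because $Y$ coincides with the projected vector field $Y'$ on $\Col_U(X,Y)$ (the remark following the definition of $Y'$), the forward $Y$-orbit of $x_c$ stays in $\mu^{-1}(c)$ and equals the $Y'|_{\mu^{-1}(c)}$-orbit of $x_c$; in particular $\omega_Y(x_c) = \omega_{Y'|_{\mu^{-1}(c)}}(x_c)$.

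The remaining step is to identify this $\omega$-limit with $\gamma^{s,c}_j$. The construction in Lemma~\ref{l.continuation} provides a family of Poincar\'e maps $P_c$ on a fixed transverse arc $I$, depending $C^3$ on $c$ and uniformly contracting for $|c| \leq \eps$, whose unique attracting fixed points correspond to $\gamma^{s,c}_j$. This yields a definite neighbourhood $\cV$ of $\gamma^s_j$ in $M$ such that, for all $|c| \leq \eps$, the slice $\cV \cap \mu^{-1}(c)$ lies in the stable basin of $\gamma^{s,c}_j$ inside $\mu^{-1}(c)$. Since $x_c \to x_0 \in \gamma^s_j$, for $c$ small enough one has $x_c \in \cV$, so $\omega_{Y'|_{\mu^{-1}(c)}}(x_c) = \gamma^{s,c}_j$. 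Combining with the previous paragraph, $\gamma^{s,c}_j = \omega_Y(x_c) \dans K_c$, because $K_c$ is closed and $Y_t$-invariant. A common $\eps$ works for all the finitely many $j \in \{1,\ldots,k_s\}$.

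The only slightly delicate point is the uniformity of the stable basins of $\gamma^{s,c}_j$ in $c$: one needs a \emph{single} neighbourhood of $\gamma^s_j$ whose intersection with each leaf $\mu^{-1}(c)$, for $|c|$ small, lies in the basin of the continuation. This is immediate from the uniform contraction of the family $\{P_c\}$, inherited from $P_0$ by $C^3$-dependence on $c$ together with the compactness of $\gamma^s_j$. All other ingredients, namely continuity of $Z$ at $0$, closedness and $Y_t$-invariance of $K_c$, and the identity $Y = Y'$ on $\Col_U(X,Y)$, are already available in the paper.
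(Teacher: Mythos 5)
Your proposal is correct and takes essentially the same approach as the paper: both use lower semi-continuity of $Z$ at $0$ to produce a point of $K_c$ inside a uniformly attracting tubular neighbourhood of $\gamma^s_j$, and then let the $Y$-invariance of $K_c$ together with $Y=Y'$ on $\Col_U(X,Y)$ force the limit orbit $\gamma^{s,c}_j$ into $K_c$. The only cosmetic difference is that you phrase the semi-continuity step via a shrinking family of neighbourhoods of a single point $x_0\in\gamma^s_j$, whereas the paper applies it once on a fixed tubular neighbourhood; the substance is identical.
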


\begin{figure}[hbtp]
\centering
\includegraphics[scale=0.6]{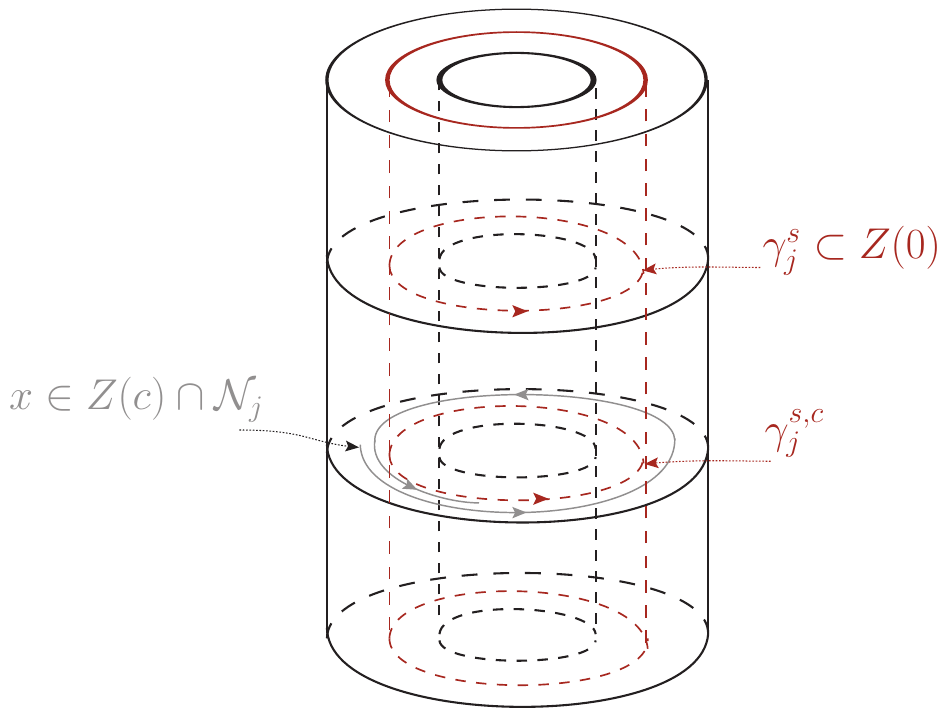}
\caption{The orbits $\gamma^{s,c}_j$ are zeros of $X-cY$}
\label{des_continuation}
\end{figure}

\begin{proof}
As in the proof of the previous lemma, consider an annular neighbourhood of $\gamma^s_j$ denoted by $V^s_j\dans\mu^{-1}(0)$. Consider also a tubular neighbourhood of this annulus having the form $\cN_j=\bigcup_{|c|<\eps} V^{s,c}_j$ where $V^{s,c}_j\dans\mu^{-1}(c)$ is an annular neighbourhood of $\gamma^{s,c}_j$. We can assume that these annuli $V^{s,c}_j$ are \emph{attracting neighbourhoods} in the sense that for every $x\in V^{s,c}_j$, we have $\omega_{Y'}(x)=\gamma^{s,c}_j$.

Recall that $\gamma^s_j\dans K\dans\Zero(X)$. We will use the \emph{lower semi-continuity} at $0$ of $c\mapsto K_c=\Zero(X-cY)\cap\overline{U}$. Indeed it implies that, if $\eps$ is chosen small enough, for every $|c|\leq\eps$, $K_c\cap\cN_j\neq\vide$.

Now let $c\in[-\eps,\eps]$ and $x\in K_c\cap\cN_j$: in particular $x\in V^{s,c}_j$ which is an attracting region. Since $X$ and $Y$ commute, the whole $Y_t$-forward orbit of $x$ belongs to $\Zero(X-cY)$, and accumulates to a subset of $\Zero(X-cY)$. Since $Y'=Y$ in restriction to $\Zero(X-cY)$, this implies in particular that $\gamma^{s,c}_j\dans\Zero(X-cY)$, concluding.
\end{proof}

Similarly one can perform the continuation of the unstable periodic orbits $\gamma^u_i$, thus obtaining a $C^3$ family of unstable periodic orbits of $Y'_{|\mu^{-1}(c)}$, $|c|\leq\eps$. These unstable periodic orbits are denoted by $\gamma^{u,c}_i$ and are subsets of $\Zero(X-cY)$. Finally for a fixed $i$ we set $\cA_i^u=\bigcup_c\gamma^{u,c}_i$, which is a $C^3$-embedded annulus.

\begin{rem}
The orbits $\gamma^{u,c}_i$ and $\gamma^{s,c}_j$ are periodic orbits of $Y$.
\end{rem}

Below, we consider the open set $\mu^{-1}(-\eps,\eps)$ and still denote it by $U$.

\paragraph{Combinatorics of neighbouring levels --}
We shall apply the fact that $0$ is a continuity point of the map $Z:c\mapsto K_c$ to show that we can define a combinatorics for the neighbouring levels and that such combinatorics is the same as that of $K$.

\begin{lem}
\label{l.memecombinato} 
There exists $\eps>0$ small enough such that if $|c|<\eps$ then
$K_c\cap\per(Y)$ is formed by the stable and unstable periodic  orbits $\gamma_i^{u,c}$ and $\gamma_j^{s,c}$ of $Y'|_{\mu^{-1}(c)}$. Moreover, there exists $x\in K_c$ such that $\alpha_Y(x)=\gamma^{u,c}_i$ and 
$\omega_Y(x)=\gamma^{s,c}_j$ if, and only if, $\gamma^u_i$ and $\gamma^s_j$ are linked.
\end{lem}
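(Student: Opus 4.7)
The plan is to exploit structural stability of Morse-Smale surface flows to match the periodic-orbit structure and basin decomposition of $Y'|_{\mu^{-1}(c)}$ with that of $Y'|_{\mu^{-1}(0)}$, and to combine this with the continuity of $c\mapsto K_c$ at $0$ provided by Definition~\ref{d:prepared_triple}(3). Via the trivialization of Definition~\ref{d:prepared_triple}(1), I would regard the restrictions $Y'|_{\mu^{-1}(c)}$ as a continuous family of $C^3$ vector fields on the fixed surface $S$. Hypothesis (MS) together with Morse-Smale structural stability then furnish $\eps>0$ such that each $Y'|_{\mu^{-1}(c)}$, $|c|\le\eps$, is Morse-Smale, together with a continuous family of topological orbit equivalences $h_c:\mu^{-1}(0)\to\mu^{-1}(c)$ with $h_0=\mathrm{id}$. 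In particular every periodic orbit of $Y'|_{\mu^{-1}(c)}$ is a continuation $\Gamma^c=h_c(\Gamma)$ of a unique $\Gamma\in\per(Y'|_{\mu^{-1}(0)})$, and the stable/unstable basin decomposition is faithfully transported by $h_c$.

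For the first statement, observe that at any $x\in K_c$ the identities $X(x)=cY(x)$ and $\mu(x)=c$ force $N(x)=0$, so $Y(x)=Y'(x)$ is tangent to $\mu^{-1}(c)$; hence every $Y$-periodic orbit contained in $K_c$ is a $Y'|_{\mu^{-1}(c)}$-periodic orbit, necessarily of the form $\Gamma^c$. Lemma~\ref{l.orbitsofY} and its unstable-orbit counterpart give $\gamma_i^{u,c},\gamma_j^{s,c}\subset K_c$. If, along a sequence $c_n\to 0$, some other continuation $\Gamma^{c_n}\subset K_{c_n}$ with $\Gamma\notin K$, then (finiteness of $\per(Y'|_{\mu^{-1}(0)})$ allows us to pass to a constant subsequence) $\Gamma^{c_n}\to\Gamma$ in the Hausdorff topology, and upper semi-continuity of $Z$ (Lemma~\ref{l.scinferiously}) would force $\Gamma\subset K$, a contradiction.

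For the ``if'' direction of the combinatorial correspondence, given $x\in K$ with $\alpha_Y(x)=\gamma_i^u$ and $\omega_Y(x)=\gamma_j^s$, the set $W^u(\gamma_i^u)\cap W^s(\gamma_j^s)$ is open in $\mu^{-1}(0)$ (basins of sources and of sinks on surfaces are open). Using continuity of $c\mapsto h_c$, I would choose a small open neighborhood $V\dans\overline{U}$ of $x$ with $V\cap\mu^{-1}(c)\dans W^u(\gamma_i^{u,c})\cap W^s(\gamma_j^{s,c})$ for every $|c|\le\eps$. Lower semi-continuity of $Z$ at $0$ then supplies $y^c\in K_c\cap V$ for small $c$; since $Y=Y'$ along the $Y$-orbit of $y^c\subset K_c$, this yields $\alpha_Y(y^c)=\gamma_i^{u,c}$ and $\omega_Y(y^c)=\gamma_j^{s,c}$.

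The ``only if'' direction is the main obstacle, because a naive Hausdorff limit of $x^{c_n}$ could land either on a periodic orbit, on an orbit through a saddle, or more generally on the boundary of $W^u(\gamma_i^u)\cap W^s(\gamma_j^s)$. Suppose $c_n\to 0$ and $x^{c_n}\in K_{c_n}$ links $\gamma_i^{u,c_n}$ to $\gamma_j^{s,c_n}$. I would fix small $3$-dimensional tubular neighborhoods $\mathcal{T}^u,\mathcal{T}^s$ of the continuation annuli $\cA_i^u,\cA_j^s$ and replace $x^{c_n}$ by a ``middle point'' on its $Y$-orbit lying in $K_{c_n}\setminus(\mathcal{T}^u\cup\mathcal{T}^s)$; such a point exists because the heteroclinic must exit the neighborhood of $\gamma_i^{u,c_n}$ before entering that of $\gamma_j^{s,c_n}$. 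Up to extracting, $x^{c_n}\to x\in K$ (upper semi-continuity) with $x\notin\gamma_i^u\cup\gamma_j^s$. The companion points $y_n:=h_{c_n}^{-1}(x^{c_n})$ lie in $W^u(\gamma_i^u)\cap W^s(\gamma_j^s)$ and converge to $x$ by uniform continuity of $c\mapsto h_c$, so $x\in\overline{W^u(\gamma_i^u)\cap W^s(\gamma_j^s)}$. Since $x\in K$, the very structure of $K$ recalled before the lemma forces $\alpha_Y(x)$ and $\omega_Y(x)$ to be sources and sinks from the lists $\{\gamma_k^u\},\{\gamma_l^s\}$ (no saddle orbits, no heteroclinics with saddle limits). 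The pairwise disjointness of the open basins of distinct sources (resp. sinks), combined with $x\in\overline{W^u(\gamma_i^u)}\cap\overline{W^s(\gamma_j^s)}$, then rules out any $\gamma_{i'}^u\neq\gamma_i^u$ or $\gamma_{j'}^s\neq\gamma_j^s$, and we conclude $\alpha_Y(x)=\gamma_i^u$, $\omega_Y(x)=\gamma_j^s$. The $Y$-orbit of $x$ is thus the desired linking orbit in $K$.
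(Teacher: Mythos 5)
Your proposal takes a genuinely different route from the paper. The paper never invokes structural stability: it covers each connected component $K^l$ of $K$ by an explicit neighbourhood built from attracting/repelling tubular neighbourhoods $\cN^s_j, \cN^u_i$ of the continuation annuli (from Lemma~\ref{l.continuation}) together with long tubular flow boxes $\cL_{j,i}$ around heteroclinic segments, and then reads the combinatorics off upper/lower semi-continuity of $Z$ directly. That argument is local: it uses nothing about the global dynamics of $Y'|_{\mu^{-1}(c)}$ away from $K$, and all the continuation it needs is the elementary contraction/implicit-function construction of Lemma~\ref{l.continuation}. Your approach instead packages everything into a family of topological orbit equivalences $h_c$, which is conceptually clean, and your ``only if'' argument (passing to a ``middle'' iterate off $\mathcal{T}^u\cup\mathcal{T}^s$, using upper semi-continuity of $Z$ to extract a limit $x\in K$, then the pairwise disjointness of basins of distinct sources/sinks to pin down $\alpha_Y(x)$ and $\omega_Y(x)$) is correct and structurally parallel to the paper's use of $\cL_{j,i}\setminus(\cN^s_j\cup\cN^u_i)$.

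However, the structural-stability step has a genuine gap. The level set $\mu^{-1}(0)$ is a compact surface \emph{with boundary}: by Corollary~\ref{c.index_zero} the boundaryless case gives $\Ind(X,U)=0$ trivially, so this is the case of interest. Hypothesis (MS) asserts only that $Y'|_{\mu^{-1}(0)}$ is Morse-Smale, with no transversality requirement of $Y'$ along $\partial\mu^{-1}(0)$. Peixoto-type structural stability of Morse-Smale flows is established for closed surfaces (or for surfaces with boundary under boundary transversality); without such a condition you have no guarantee that nearby $Y'|_{\mu^{-1}(c)}$ are orbit-equivalent to $Y'|_{\mu^{-1}(0)}$, let alone with a conjugacy $h_c$ depending continuously on $c$ and with $h_0=\mathrm{id}$. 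This last refinement is essential to your argument, precisely where you need $h_{c_n}^{-1}(x^{c_n})\to x$ in order to place the limit point in $\overline{W^u(\gamma_i^u)\cap W^s(\gamma_j^s)}$, yet it is asserted without justification. One could plausibly repair this by first cutting down to a compact sub-surface transverse to $Y'$ containing $K$ in its interior (essentially the content of Lemma~\ref{keylemouille}), but doing that explicitly is exactly the local construction the paper carries out via the tubular flow boxes, and once you do it you no longer need structural stability at all.
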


\begin{figure}[!h]
\centering
\includegraphics[scale=0.55]{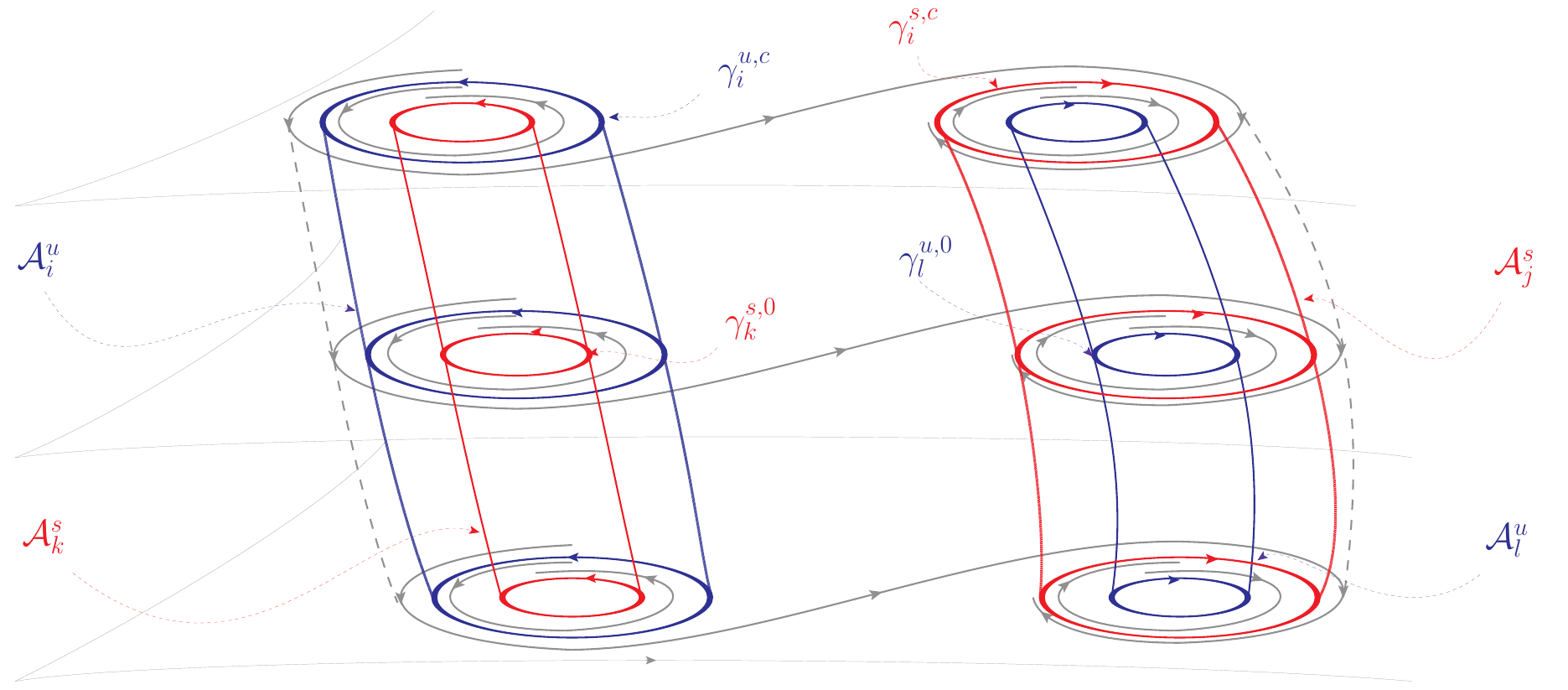}
\caption{The combinatorics of periodic orbits}
\label{combinouille}
\end{figure}

\begin{proof}
Notice that $K_c$ is invariant under $Y'$, since it is invariant under $Y$ and the two vector fields coincide over $K_c$.
{Moreover, applying Theorem~\ref{t.diegosalgado} to $Y'|_{\mu^{-1}(c)}$ it comes that for every $x\in K_c$, $\omega_Y(x)=\omega_{Y'}(x)$ and $\alpha_Y(x)=\alpha_{Y'}(x)$ are periodic orbits, still included in $K_c$.} 

Let us denote by $K^1,...,K^m$ the connected components of $K$. Consider $U^1,...,U^m$, disjoint open neighbourhoods of $K^1,...,K^m$, respectively. 
By upper-semicontinuity of $Z$ there exists $\eps>0$ small enough such that if $|c|<\eps$ then the union of all the $U^l$ contain $K_c$. Also, we can choose the neighbourhoods $U^l$ small enough so that
they are formed by tubular neighbourhoods of attracting annuli $\cN^s_j=\bigcup_{|c|<\eps} V^{s,c}_j$, as in the proof of Lemma~\ref{l.orbitsofY}, and tubular neighbourhoods $\cL_{j,i}$ of orbit segments linking $\gamma^u_j$ with $\gamma^s_i$. 
By the long tubular flow theorem \cite{PdM} one can choose these $\cL_{j,i}$ small enough so that for every $x\in\cL_{j,i}$, its forward $Y_t'$-orbit hits $\cN_j^s$ and its backward orbit hits $\cN^u_i$. With this description of open sets $U^l$ is it clear that their union doesn't contain any periodic orbit of $Y'$ other than the $\gamma_i^{u,c}$ and $\gamma_j^{s,c}$ given by Lemma~\ref{l.continuation}.

\emph{Non-linked case. }We first consider the case of an isolated stable periodic orbit i.e. a periodic orbit $\gamma^s_j$ which is not linked to any other $\gamma^u_i$. In that case we must have $K\cap V^s_j=\gamma^s_j$ since if there existed $x\in K\cap V^s_j\moins\gamma^s_j$, it couldn't be a periodic point (since $\per(Y')\cap V^s_j=\gamma^s_j$) and its backward orbit would accumulate to a periodic orbit contained inside $K$, contradicting the hypothesis.

By continuity, we may assume that $\eps$ is small enough so that $K_c\cap\partial\cN^s_j=K_c\cap\partial V^{s,c}_j=\vide$ when $|c|<\eps$. Now, for $|c|<\eps$, if $\gamma^{s,c}_j$ is linked to some periodic orbit $\gamma^{u,c}_i$, which must be outside of $V^{s,c}$, there exists $x\in K_c\cap V^{s,c}_j$ whose backward orbit meets $\partial V^{s,c}_j$. This is absurd and $\gamma^{s,c}_j$ is not linked to any orbit $\gamma^{u,c}_i$. The case of an isolated unstable periodic orbit follows from a symmetric argument.

\emph{Linked case. } Consider now two periodic orbits of $Y'$ $\gamma^u_i$ and $\gamma^s_j$ inside $K$ which are linked. By lower semicontinuity of $Z$ there exists a point $x\in K_c\cap\cL_{j,i}$. 
The forward $Y_t'$-orbit of $x$ hits $\cN^s_j$, in particular $\omega_{Y'}(x)=\gamma^{s,c}_j$.  With a similar argument we prove that $\alpha_{Y'}(x)=\gamma^{u,c}_i$. In particular this proves that $\gamma^{u,c}_i$ and $\gamma^{s,c}_j$ are linked.

\emph{Reverse implication. } Let |c| be small enough. By our choice of the $U^l$, if  $x\in K_c$ is such that $\alpha_{Y'}(x)=\gamma^{u,c}_i$ and $\omega_{Y'}(x)=\gamma^{s,c}_j$
then iterating forward or backward we find a point $y=Y_t'(x)\in\cL_{j,i}\setminus(\cN^s_j\cup\cN^u_i)$. This shows that $\cL_{j,i}\setminus(\cN^s_j\cup\cN^u_i)\neq\emptyset$, and so $\gamma^u_i$ and $\gamma^s_j$ are linked.
\end{proof}

\paragraph{The (MS$'$)-hypothesis --} { We shall prove below that the non-linked periodic orbits inside $K$ contribute with zero index, therefore we can discard them in our arguments. The proof is an automatic consequence of a technical version of Theorem~\ref{t:BS}, which is also proven in \cite{BS}. Since this technical statement needs some definitions that have no place in this section, we refer the reader to Theorem~\ref{t.bsdenovo} below.}


\begin{lem}
\label{l.isolated}
If there exists $j\in\{1,...,k_s\}$ such that $\gamma^s_j$ is linked with no $\gamma^u_i$, $i=1,...,k_u$. Then $\gamma^s_j$ is isolated in $K=\Zero(X)$ and 
$$\ind(X,\gamma^s_j)=0.$$
\end{lem}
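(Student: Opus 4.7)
The plan is to isolate $\gamma_j^s$ inside $K$, to identify the collinearity locus in a small tubular neighborhood as a smooth annulus, and then to apply Bonatti--Santiago's Theorem~\ref{t:BS}.

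First I would show that $\gamma_j^s$ is isolated in $K$ by repeating the non-linked argument from the proof of Lemma~\ref{l.memecombinato}. Let $V_j^s\subset\mu^{-1}(0)$ be the attracting annular neighborhood of $\gamma_j^s$. Any point $x\in K\cap V_j^s\setminus\gamma_j^s$ would satisfy $\omega_{Y'}(x)=\gamma_j^s$ (since $V_j^s$ is attracting), and since $Y=Y'$ on $K$ and $K_0=K$ is $Y'_t$-invariant, its $\alpha_{Y'}$-limit would be a periodic orbit in $K$. By the (MS) hypothesis, it must be some $\gamma_i^u$, so the orbit of $x$ links $\gamma_i^u$ with $\gamma_j^s$, contradicting the hypothesis. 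Hence $K\cap V_j^s=\gamma_j^s$.

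Next I would construct a $3$-dimensional isolating neighborhood with a clean collinearity locus. Take $\eps>0$ small and set $W=\bigcup_{|c|<\eps}V_j^{s,c}$, where $V_j^{s,c}\subset\mu^{-1}(c)$ is the attracting annular continuation of $V_j^s$ provided by Lemma~\ref{l.continuation}. By Lemma~\ref{l.memecombinato}, each $\gamma_j^{s,c}$ remains non-linked for small $|c|$, so the argument of the first step applied at level $c$ gives $K_c\cap V_j^{s,c}=\gamma_j^{s,c}$. After further shrinking $\eps$ so that $K_c\cap\partial V_j^{s,c}=\vide$ for all $|c|\leq\eps$, the set $W$ is a relatively compact tubular neighborhood of $\gamma_j^s$ with $\Zero(Y)\cap W=\vide$, $\Zero(X)\cap\partial W=\vide$, and $\Col_W(X,Y)=\bigcup_{|c|\leq\eps}\gamma_j^{s,c}=\cA_j^s$, the $C^3$-embedded annulus of Lemma~\ref{l.continuation}.

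Finally I would extend $\cA_j^s$ into a closed, boundaryless $2$-submanifold $\Sigma\subset M$ and invoke Theorem~\ref{t:BS}. Since $\cA_j^s$ is a $C^3$-embedded annulus with core $\gamma_j^s$, inside a tubular neighborhood of $\gamma_j^s$ in $M$ one can smoothly attach an external annulus lying outside $\overline{W}$ that connects the boundary circles $\gamma_j^{s,\pm\eps}$, producing a smoothly embedded $2$-torus $\Sigma\supset\cA_j^s$. As $\Sigma$ is a compact, boundaryless $2$-submanifold of $M$ containing $\Col_W(X,Y)$, Theorem~\ref{t:BS} applies to $(W,X,Y)$ and yields $\Ind(X,W)=0$. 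Since $W$ is an isolating neighborhood of $\gamma_j^s$ in $\Zero(X)$, we conclude $\ind(X,\gamma_j^s)=\Ind(X,W)=0$. The main obstacle is the differential-topology step of extending $\cA_j^s$ into $\Sigma$ outside $\overline{W}$ with enough smoothness, but this is routine given the tubular neighborhood structure.
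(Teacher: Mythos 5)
Your proof is correct and takes essentially the same approach as the paper: isolate $\gamma_j^s$ via the non-linked argument of Lemma~\ref{l.memecombinato}, identify $\Col_W(X,Y)$ as the smooth annulus $\cA_j^s$ from Lemma~\ref{l.continuation}, and conclude by Theorem~\ref{t:BS}. The only difference is that you explicitly extend the annulus to a closed boundaryless torus so that the hypotheses of Theorem~\ref{t:BS} are satisfied verbatim; the paper skips this routine topological step (and elsewhere notes that one can equivalently invoke a more specialized result from \cite{BS} for normally hyperbolic annuli), but both readings lead to the same conclusion.
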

\begin{proof}
Let $\gamma^s_j$ be such an orbit. It follows from the proof of the previous lemma (the non-linked case) that there exists a neighbourhood $\cN^s_j$ of $\gamma^s_j$ such that $\Col_U(X,Y)\cap\cN^s_j$ is a smooth (see Lemma \ref{l.continuation}) annulus foliated by periodic orbits for $Y'$ (thus of $Y$), $\gamma^{s,c}_j$. Using Bonatti-Santiago's Theorem~\ref{t.bsdenovo} (proven in \cite{BS}) the lemma follows.
\end{proof}

\begin{rem}
A symmetric argument deals with the case of unstable periodic orbits in $K$ not linked with any stable orbit in $K$.
\end{rem}

From now on, we shall assume that there are no isolated components for the combinatorics of $K$. In particular, we can assume the following hypothesis 
\begin{enumerate}
	\item[(MS$'$)]  
	\emph{$Y$ satisfies (MS) and for every $\gamma\dans\per(Y')\cap K$ there exists $x\in K\moins\per(Y')$ such that $\omega_{Y'}(x)=\gamma$ or $\alpha_{Y'}(x)=\gamma$.}
\end{enumerate}

As a consequence, in what remains of the section we will assume that $(U,X,Y)$ is a prepared triple with $U=\mu^{-1}(\eps,\eps)$ such that for every $|c|<\eps$, $K_c\cap U$ consists of stable, unstable periodic orbits of $Y$ and of heteroclinic connections. Furthermore, the combinatorics of all $K_c$, i.e. the graphs of heteroclinic connections, is independent of $c$.

\subsection{Derivatives of first return maps}\label{s.der_first_return}
We now show that all orbits $\gamma^{u,c}_j$ and $\gamma^{s,c}_j$ possess unstable and stable manifolds respectively for $Y_t$. This is done by identifying the derivatives of first return maps of $Y,Y'$ at those periodic orbits.
 
\paragraph{Invariant subspaces for the derivative of holonomy maps --}
 
 We start by an elementary lemma that will be useful in the sequel.

\begin{lem}
\label{invariantspace}
Let $\Sigma_1,\Sigma_2$ be two open sets of $\R^2$ and $P:\Sigma_1\to\Sigma_2$ be a $C^1$-diffeomorphism on its image. Let $i=1,2$, $z_i\in\Sigma_i$ satisfying $P(z_1)=z_2$, and $\alpha_i\dans\Sigma_i$ be a $C^1$-arc passing through $z_i$. Let $(x_n)_{n\in\N}\in\alpha_1^{\N}$ with $x_n\neq z_1$ for every $n\in\N$. Assume that $x_n\to z_1$ as $n\to\infty$.
\begin{enumerate}
\item Assume that for every $n$, $P(x_n)\in\alpha_2$. Then $D_{z_1}P(T_{z_1}\alpha_1)\dans T_{z_2}\alpha_2$.
\item Assume moreover that $\Sigma_1=\Sigma_2=\Sigma$, $\alpha_1=\alpha_2=\alpha$, $z_1=z_2=z$. Then there exists $\lambda\in\R$ {(which depends on $\alpha$ and the sequence $x_n$ only)} such that for every $P:\Sigma\to\Sigma$ satisfying for every $n\in\N$, $P(x_n)=x_{n+1}$, we have that $\lambda$ is an eigenvalue of $D_zP$ in the direction $T_z\alpha$.
\item With the hypothesis of the item above, if we have for every $n\in\N$, $P(x_n)=x_n$ then $D_zP$ is the identity on $T_z\alpha$.
\end{enumerate}
\end{lem}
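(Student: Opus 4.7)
The result compares the action of a $C^1$ diffeomorphism on an accumulating sequence inside a $C^1$ arc, and all three items will follow from a single first-order Taylor expansion. The plan is to parametrize the arcs $\alpha_i$ by $C^1$ regular immersions $\gamma_i:(-\delta_i,\delta_i)\to\Sigma_i$ with $\gamma_i(0)=z_i$ and $v_i:=\gamma_i'(0)\neq 0$. Writing $x_n=\gamma_1(t_n)$ with $t_n\neq 0$ and $t_n\to 0$, the continuity of $P$ at $z_1$ together with the local injectivity of $\gamma_2$ near $0$ allow me to write, for $n$ large enough, $P(x_n)=\gamma_2(s_n)$ with a uniquely determined $s_n\to 0$.

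The central identity is the $C^1$ Taylor expansion of $P\circ\gamma_1$ and $\gamma_2$ at the origin, which at the sequence points yields
\[
t_n\,D_{z_1}P\cdot v_1\;+\;o(t_n)\;=\;s_n\,v_2\;+\;o(s_n).
\]
Since $D_{z_1}P$ is invertible (so $D_{z_1}P\cdot v_1\neq 0$) and $v_2\neq 0$, taking norms on both sides shows that $|s_n/t_n|$ is bounded. Dividing the identity by $t_n$ and letting $n\to\infty$ along any subsequence making $s_n/t_n$ converge to some $\mu\in\mathbf{R}$, I obtain $D_{z_1}P\cdot v_1=\mu v_2$. Because the left-hand side is fixed and $v_2\neq 0$, $\mu$ is uniquely determined; hence the whole sequence $s_n/t_n$ converges, and
\[
D_{z_1}P\cdot v_1\;=\;\mu\,v_2\;\in\;T_{z_2}\alpha_2,
\]
which is Item~1 together with the explicit formula $\mu=\lim s_n/t_n$.

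For Item~2 I specialize to $\Sigma_1=\Sigma_2=\Sigma$, $\alpha_1=\alpha_2=\alpha$, and $P(x_n)=x_{n+1}$. The local injectivity of $\gamma$ near $0$ gives $s_n=t_{n+1}$, so
\[
\lambda:=\lim_{n\to\infty}\frac{t_{n+1}}{t_n}
\]
exists as soon as one such $P$ exists, and only depends on the sequence $(t_n)$. A change of parametrization of $\alpha$ multiplies the $t_n$ asymptotically by a single nonzero constant, which cancels in the ratio, so $\lambda$ is truly an invariant of the pair $(x_n,\alpha)$. Applying Item~1 then gives $D_zP\cdot v=\lambda v$ for every admissible $P$. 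Item~3 is the specialization $P(x_n)=x_n$, in which case $s_n=t_n$, hence $\lambda=1$ and $D_zP$ is the identity on $T_z\alpha$.

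The only nontrivial point is the boundedness of $|s_n/t_n|$, which is what allows the limit to be passed through the expansion; this is precisely where both nondegeneracy hypotheses—invertibility of $DP$ and regularity $v_2\neq 0$ of the arc $\alpha_2$—are used.
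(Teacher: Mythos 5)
Your proof is correct and follows essentially the same route as the paper's: parametrize both arcs, express the hypothesis $P(x_n)\in\alpha_2$ as $P\circ\gamma_1(t_n)=\gamma_2(s_n)$, and pass to the limit in the resulting difference quotient; Items 2 and 3 then follow by taking $s_n=t_{n+1}$ and $s_n=t_n$ respectively. If anything, you are slightly more careful than the paper in explicitly verifying the boundedness (hence convergence) of the ratio $s_n/t_n$ and the independence of $\lambda$ from the choice of parametrization, two points the paper leaves implicit.
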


\begin{proof}
For $i=1,2$, we  consider a parametrization $\alpha_i:(-1,1)\to\Sigma_i$ with $\alpha_i(0)=z_i$. Let $y_n=P(x_n)$. By injectivity of $P$ we have $y_n\neq z_2$ for every $n$.

Let $s_n,t_n\in(-1,1)\moins\{0\}$ be such that $\alpha(s_n)=x_n$ and $\alpha_2(t_n)=y_n$. We must have $s_n,t_n\to 0$ as $n\to\infty$. Note that
\begin{equation}
\label{petiteequation}
\frac{P\circ\alpha_1(t_n)-P\circ\alpha_1(0)}{t_n}=\frac{s_n}{t_n}\frac{\alpha_2(s_n)-\alpha_2(0)}{s_n}.
\end{equation}

The left-hand side converges to $(D_{z_1}P)\dot{\alpha}_1(0)$. The second factor of the right-hand side converges to $\dot{\alpha}_2(0)$. This proves the first item.

Assume now that  $\Sigma_1=\Sigma_2=\Sigma$, $\alpha=\alpha_1$, $z=z_1=P(z_1)=z_2$ and for every $n\in\N$, $P(x_n)=x_{n+1}$. Then, $s_n=t_{n+1}$ and \eqref{petiteequation} becomes $(P\circ\alpha(t_n)-P\circ\alpha(0))/t_n=(t_{n+1}/t_n)(\alpha(t_{n+1})-\alpha(0))/t_{n+1}$.

 Letting $n\to\infty$ we find $(D_zP)\dot{\alpha}(0)=\lambda\dot{\alpha}(0)$ where $\lambda=\lim(t_{n+1}/t_n)$ depends only on $\alpha$ and $x_n$. The second item follows. The exact same argument proves the last item.
\end{proof}

\paragraph{Coincidence of the derivatives --} Let us come back to our context. Let $\cA^s_j\dans U$ be as constructed in Lemma \ref{l.continuation}: this is the union of stable periodic orbits $\gamma^{s,c}_j$ of $Y'$. Let $\Sigma^s_j\dans U$ be a small { two dimensional} section transverse to this annulus which is \emph{everywhere} {transverse} to $Y$.

The section $\Sigma^s_j$ is transverse to { all $\gamma_j^{s,c}$, which are} stable periodic orbits of $Y'$.
Note that since these orbits are periodic orbits of $Y$ there is a well defined first return map $P^s_j:S^s_j\to\Sigma^s_j$ for $Y$ where $S^s_j\dans \Sigma^s_j$ is an open neighbourhood of $\Sigma^s_j\cap\cA^s_j$.
For  such a $j$ set $x_j^{s,c}=\Sigma^s_j\cap\gamma^{s,c}_j$. Since $Y$ and $Y'$ are $C^0$-close in the neighbourhood  of $\cA^s_j$ we can assume that $Q^s_j$, the first return map to $\Sigma^s_j$ of $Y'$, is defined in the same neighbourhood.

\begin{lem}
 \label{l.specpoincouille}
 For every such $j$ we have
 $$D_{x_j^{s,c}}P^s_j=D_{x_j^{s,c}}Q^s_j.$$
In particular there exists $\lambda^{s,c}_j\in (0,1)$ such that
 $$\Spec\left(D_{x_j^{s,c}}P^s_j\right)=\{1,\lambda_j^{s,c}\}.$$
\end{lem}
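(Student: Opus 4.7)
The plan is to identify the derivatives of $P^s_j$ and $Q^s_j$ at $x_j^{s,c}$ by exhibiting two transverse directions in $T_{x_j^{s,c}}\Sigma^s_j$ on which both maps act identically, invoking the two items of Lemma~\ref{invariantspace}. A preliminary observation makes this possible: because $K_c\dans\mu^{-1}(c)$ is $Y_t$-invariant, $Y$ is tangent to the level set along $K_c$, so $Y=Y'$ at every point of $K_c$. Consequently a $Y$-orbit sitting in $K_c$ is also a $Y'$-orbit (as a set), and the two Poincar\'e maps $P^s_j,Q^s_j$ see the very same data on such orbits.

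\textbf{First direction.} Consider the arc $\alpha^s_j=\Sigma^s_j\cap\cA^s_j$. The annulus $\cA^s_j$ is a $C^3$ embedded surface tangent to $Y$, and $\Sigma^s_j$ is orthogonal to $Y$, so $\alpha^s_j$ is a smooth arc crossing each $\gamma^{s,c'}_j$, $|c'|\leq\eps$, in a single point. Every point of $\alpha^s_j$ is a $Y$-periodic point by Lemma~\ref{l.orbitsofY}, hence fixed by $P^s_j$, and simultaneously a $Y'$-periodic point by Lemma~\ref{l.continuation}, hence fixed by $Q^s_j$. Item~3 of Lemma~\ref{invariantspace}, applied along any sequence on $\alpha^s_j$ tending to $x_j^{s,c}$, then yields $D_{x_j^{s,c}}P^s_j|_{T\alpha^s_j}=D_{x_j^{s,c}}Q^s_j|_{T\alpha^s_j}=\mathrm{id}$.

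\textbf{Second direction.} Under (MS$'$), $\gamma^s_j$ is linked to some $\gamma^u_i$, and Lemma~\ref{l.memecombinato} then gives an orbit $y\in K_c\moins\per(Y)$ with $\omega_Y(y)=\gamma^{s,c}_j$. The $Y$-orbit of $y$ meets $\Sigma^s_j$ in a sequence $y_n\to x_j^{s,c}$ with $P^s_j(y_n)=y_{n+1}$; by the preliminary observation, $Q^s_j(y_n)=y_{n+1}$ as well. Since $\gamma^{s,c}_j$ is a hyperbolic attracting periodic orbit of the $C^3$ Morse-Smale field $Y'|_{\mu^{-1}(c)}$ on the $C^3$ surface $\mu^{-1}(c)$, its stable manifold $W^s(\gamma^{s,c}_j)$ is a smooth $Y'$-invariant surface containing the $Y'$-orbit of $y$; its intersection $\beta^s_j$ with $\Sigma^s_j$ is a smooth arc through $x_j^{s,c}$, lying in $\mu^{-1}(c)\cap\Sigma^s_j$ and eventually containing the $y_n$. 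Item~2 of Lemma~\ref{invariantspace} then supplies a common eigenvector $T_{x_j^{s,c}}\beta^s_j$ for both $D_{x_j^{s,c}}P^s_j$ and $D_{x_j^{s,c}}Q^s_j$, with the same eigenvalue $\lambda_j^{s,c}=\lim t_{n+1}/t_n$.

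\textbf{Conclusion and main obstacle.} The directions $T_{x_j^{s,c}}\alpha^s_j$ and $T_{x_j^{s,c}}\beta^s_j$ are not collinear: the first crosses every level set of $\mu|_{\Sigma^s_j}$, while the second is tangent to $\mu^{-1}(c)\cap\Sigma^s_j$, and these level-set traces are honest $1$-dimensional curves in $\Sigma^s_j$ because $\Sigma^s_j\perp Y$ and $Y$ is nowhere orthogonal to the level sets (second item of Definition~\ref{d:prepared_triple}). The two eigendirections therefore span $T_{x_j^{s,c}}\Sigma^s_j$, forcing $D_{x_j^{s,c}}P^s_j=D_{x_j^{s,c}}Q^s_j$ with spectrum $\{1,\lambda_j^{s,c}\}$. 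Hyperbolic attractivity of $\gamma^{s,c}_j$ gives $|\lambda_j^{s,c}|<1$, and the orientations of the flow of $Y'$ and of the level sets (Proposition~\ref{p:orientable}) force $\lambda_j^{s,c}>0$. I expect the main obstacle to be the smoothness of $\beta^s_j$: one must invoke the stable manifold theorem for a hyperbolic periodic orbit of a $C^3$ Morse-Smale flow on a surface, and then argue that $\Sigma^s_j$, being orthogonal to $Y$ and hence nearly orthogonal to $Y'$ close to $\gamma^{s,c}_j$, meets $W^s(\gamma^{s,c}_j)$ transversally.
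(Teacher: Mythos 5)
Your proof is essentially correct and follows the same overall strategy as the paper: exhibit two transverse directions in $T_{x_j^{s,c}}\Sigma^s_j$ on which $D P^s_j$ and $D Q^s_j$ are forced to agree by Lemma~\ref{invariantspace}, one from the arc of continuations $\cA^s_j\cap\Sigma^s_j$ (eigenvalue $1$, via Item~3) and one from the $K_c$-orbit accumulating on $\gamma^{s,c}_j$ (common eigenvalue $\lambda^{s,c}_j$, via Item~2). The one place where you took a needlessly long route, and where you flagged a difficulty that is actually not there, is the second direction. You pass through the stable manifold of $\gamma^{s,c}_j$ for $Y'|_{\mu^{-1}(c)}$ and invoke the stable manifold theorem to justify that $\beta^s_j=W^s(\gamma^{s,c}_j)\cap\Sigma^s_j$ is a smooth arc. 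This is unnecessary: the sequence $y_n$ already lies on the arc $\Sigma^s_j\cap\mu^{-1}(c)$ (because $y_n\in K_c\subset\mu^{-1}(c)$), and that arc is automatically $C^3$, being the transverse intersection of the $C^3$ section $\Sigma^s_j$ with the regular level set $\mu^{-1}(c)$ of the $C^3$ function $\mu$. That is exactly the arc the paper uses, and with it, Item~2 of Lemma~\ref{invariantspace} applies directly without any further smoothness discussion; your ``main obstacle'' therefore dissolves. The rest of your argument --- transversality of the two eigendirections, $|\lambda^{s,c}_j|<1$ from attractivity, and positivity from orientability --- matches the paper.
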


\begin{proof}
Consider $\alpha=\cA^s_j\cap\Sigma^{s}_j$. This is an embedded arc which by Lemma \ref{l.continuation} consists of fixed points of both Poincar\'e maps. Hence their derivatives both have eigenvalue $1$ in the direction of $\alpha$.

Since $Y$ satisfies (MS$'$) and by Lemma \ref{l.memecombinato} there exists $x\in\Sigma^s_j\cap K_c$ which is not periodic under any of the flows and such that $\omega(x)=\gamma^{s,c}_j$. It cuts the arc $\Sigma^{s}_j\cap\mu^{-1}(c)$ at a monotone sequence of points $x_0=x,x_1,x_2,....,x_n,...$. Since $x\in K_c$ these points are on the same orbit of $P^s_j$ and $Q^s_j$. Using Item 2 of Lemma \ref{invariantspace} we deduce that the derivative $P^s_j$ and $Q^s_j$ have the same eigenvalue in the direction of the level set $\mu^{-1}(c)$. Note finally that since $\mu^{-1}(c)$ is orientable the eigenvalue $\lambda^{s,c}_j$ is positive. This concludes the proof of the lemma.
\end{proof}

\begin{rem}
An analogous statement for unstable periodic orbits may be proven by a symmetric argument
\end{rem}

{
\begin{rem}\label{r.change_metric}
A priori the plane field $\Pi$, the normal component $N$, the quotient function $\mu$ and the vector field $Y'$ depend on the choice of a Riemannian metric. However, after Lemma \ref{l.specpoincouille}, we see that the (MS')-hypothesis does not depend 
on this choice: the differential of the first return maps at periodic orbits of all $Y'$ are equal. As a consequence we have the freedom to change the Riemannian metric if needed.
\end{rem}
}

\subsection{Computation of the index}\label{s.calc-index}

The set $K$ is a disjoint union of finitely many compact connected components, which are unions of periodic orbits and of heteroclinic connections between them:
$$K=K^1\sqcup...\sqcup K^m.$$

We know that 

$$\ind(X,K)=\sum_{l=1}^m\ind(X,K^l).$$

Thus, we may restrict ourselves to the case where $K$ is connected. We can therefore consider a prepared triple $(U,X,Y)$ where the properties listed above hold and furthermore

\begin{center}
\emph{The sets $K_c$, $c\in(-\eps,\eps)$ are connected.}
\end{center}

\begin{rem}
\label{r.bs}
Recall that, by (MS$'$), for every $c$, $K_c$ contains more than one periodic orbit and that every stable periodic orbit is linked with some unstable one, and vice-versa.
\end{rem}

\paragraph{Stable manifolds --} Lemma \ref{l.specpoincouille} implies that every orbit $\gamma^{s,c}_j$ possesses a local stable manifold for $Y$ which is an embedded open annulus. 

The union for all $|c|<\eps$ of these local stable manifolds is an open neighbourhood of $\cA^s_j$, included in the open set $U$, foliated by surfaces: we denote by $\cW^s_j$ this foliation and by $W^{s,c}_j$ its leaves.

The dynamical meaning of these surfaces is the following. A point $x\in U$ belongs to $W^{s,c}_j$ if and only if $\omega_Y(x)=\gamma^{s,c}_j$. The next lemma states that these stable manifolds are invariant by the flows $X_t$ and $N_t$. 

{ Let $\Sigma^s_j$ be a small 2-dimensional transverse section of $Y$ cutting the annulus $\cA^s_j$ transversally. We also have transverse sections $\Sigma^u_i$ of $Y$ cutting $\cA^u_i$ transversally. We can choose them so they are trivially foliated by local stable and unstable manifolds $W^{s,c}_j$, $W^{u,c}_i$ respectively. We can modify the metric close to these sections so they are \emph{everywhere} orthogonal to $Y$: {by Remarks~\ref{rem.liberte} and \ref{r.change_metric} even if $N$ and $\mu$ are modified, this does not change relevant dynamical properties (like the combinatorics of $K_c$) nor the assumption (MS').} This implies in particular that we can take $N$ to be tangent to all sections $\Sigma^s_j$ and $\Sigma^u_i$.
}

\begin{lem}
 \label{l.invstableXN}
 For every $j$ the following assertions hold true.
 
\begin{enumerate}
\item For every $x\in W^{s,c}_j$, we have
$$X(x)\in T_xW^{s,c}_j.$$
\item For every $x\in\Sigma^s_j\cap W^{s,c}_j$ we have
 $$N(x)\in T_x \left(\Sigma^s_j\cap W^{s,c}_j\right).$$
\end{enumerate} 
\end{lem}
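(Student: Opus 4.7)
The plan is to deduce both items from the commutativity of $X$ and $Y$, exploiting the dynamical characterization of $W^{s,c}_j$ as the set of points whose $\omega_Y$-limit is $\gamma^{s,c}_j$.

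First I would prove item 1 by showing that the local stable manifold $W^{s,c}_j$ is $X_t$-invariant. Since $\gamma^{s,c}_j \subset \Zero(X-cY)$, along this orbit one has $X = cY$, so $X_t$ restricted to $\gamma^{s,c}_j$ equals $Y_{ct}$ restricted to it, and in particular $X_t(\gamma^{s,c}_j) = \gamma^{s,c}_j$. Now take $x \in W^{s,c}_j$. Using commutativity $X_t \circ Y_s = Y_s \circ X_t$, we get
$$\omega_Y(X_t(x)) = \lim_{s\to +\infty} Y_s(X_t(x)) = \lim_{s\to +\infty} X_t(Y_s(x)) = X_t(\gamma^{s,c}_j) = \gamma^{s,c}_j,$$
so $X_t(x)$ lies in the global stable set of $\gamma^{s,c}_j$. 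Since $\cW^s_j$ is a continuous foliation by local stable manifolds on an open neighborhood of $\cA^s_j$, and since $X_t(x)$ depends continuously on $t$ and stays in this neighborhood for $t$ small, the leaf through $X_t(x)$ must be $W^{s,c}_j$ itself. Differentiating at $t=0$ yields $X(x) \in T_x W^{s,c}_j$.

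For item 2, I would combine item 1 with two tangency observations about $N$. First, since the foliation $\cW^s_j$ is $Y_t$-invariant by definition (the $\omega_Y$-limit of $Y_s(x)$ is again $\gamma^{s,c}_j$), the vector field $Y$ is tangent to each leaf $W^{s,c}_j$. Writing $N = X - \mu Y$ and using item 1 shows $N(x) \in T_x W^{s,c}_j$ for every $x \in W^{s,c}_j$. Second, the section $\Sigma^s_j$ was chosen everywhere orthogonal to $Y$, hence $T_x\Sigma^s_j = \Pi(x)$ for $x \in \Sigma^s_j$; since $N$ is by construction a section of $\Pi$, one has $N(x) \in T_x\Sigma^s_j$. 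Finally, because $Y$ is tangent to $W^{s,c}_j$ but transverse to $\Sigma^s_j$, the two surfaces meet transversally along the curve $\Sigma^s_j \cap W^{s,c}_j$, whose tangent line at $x$ is exactly $T_x W^{s,c}_j \cap T_x\Sigma^s_j$. Since $N(x)$ lies in both of these planes, it lies in their intersection, giving the desired conclusion.

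The only delicate point is the first step: the passage from "$X_t(x)$ belongs to the global stable set" to "$X_t(x)$ belongs to the local leaf $W^{s,c}_j$". This is where I need that $\cW^s_j$ is an honest continuous foliation of a neighborhood of $\cA^s_j$ (provided by Lemma \ref{l.specpoincouille} together with the stable manifold theorem) and a short continuity argument: for small $t$ the point $X_t(x)$ remains in the foliated neighborhood, and a point of the global stable set lying there must be on the local leaf of $\gamma^{s,c}_j$. All remaining manipulations are linear-algebraic and follow directly from the definitions of $N$, $\Pi$ and $\Sigma^s_j$.
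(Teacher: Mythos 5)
Your proof is correct and follows essentially the same route as the paper's. For item~1 the paper uses the commutativity $X_h\circ Y_t = Y_t\circ X_h$ together with a Lipschitz bound on $X_h$ to show directly that $\dist(Y_t(X_h(x)),Y_t(X_h(x_0)))\to 0$, whereas you phrase the same observation via the $\omega_Y$-limit set; both deduce $X_h(W^{s,c}_j)\subset W^{s,c}_j$ for $|h|$ small and differentiate at $h=0$. For item~2 the paper first restricts to $N(x)\neq 0$, then argues that $X(x)$ and $Y(x)$ are linearly independent and must span the $2$-plane $T_xW^{s,c}_j$, from which $N=X-\mu Y$ is tangent; your version uses instead the $Y_t$-invariance of the stable foliation (so $Y\in T_xW^{s,c}_j$ directly), giving $N\in T_xW^{s,c}_j$ without the case split, and you also make explicit the final transversality step ($T_x(\Sigma^s_j\cap W^{s,c}_j)=T_xW^{s,c}_j\cap T_x\Sigma^s_j$) that the paper leaves implicit. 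These are cosmetic simplifications of the same argument rather than a genuinely different proof.
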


\begin{figure}[!h]
\centering
\includegraphics[scale=0.5]{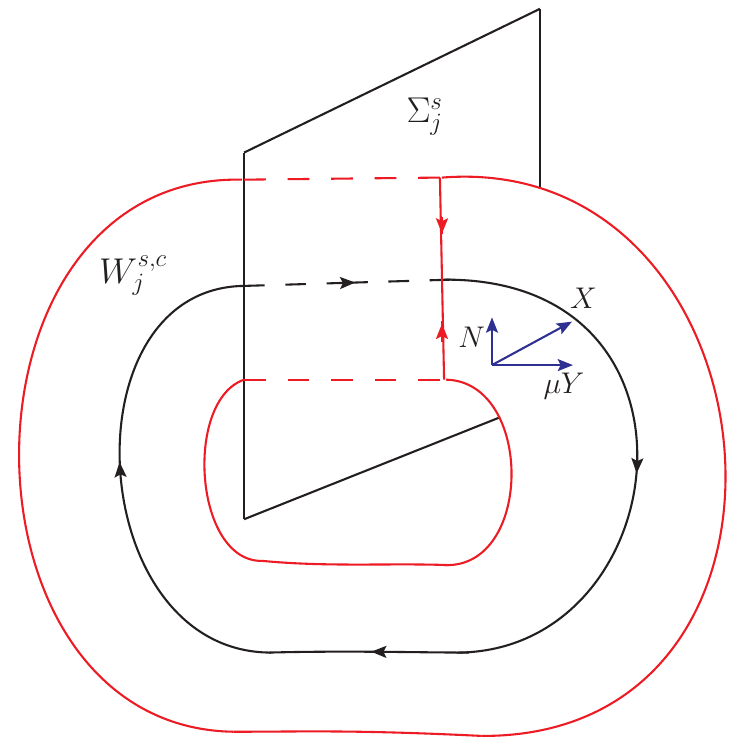}
\caption{$N$ is tangent to the stable manifolds}
\label{Nouille}
\end{figure}

\begin{proof}
Assume that $x\in W^{s,c}_j$: there exists $x_0\in\gamma_j^{s,c}$ such that $\dist(Y_t(x),Y_t(x_0))\to 0$ as $t\to\infty$. The flows of $X$ and $Y$ commute so, if $h$ is small enough so that $X_h(x)\in U$, we have
\begin{eqnarray*}
\dist(Y_t(X_h(x)),Y_t(X_h(x_0)))&=&\dist(X_h(Y_t(x)),X_h(Y_t(x_0)))\\
  &\leq&\sup_{x\in\overline{U}} ||D_x X_h||\,\,\dist(Y_t(x),Y_t(x_0))\to_{t\to\infty} 0.
\end{eqnarray*}
Since $x_0\in\gamma^{c,s}_j\dans\Col_U(X,Y)$ we have $X_h(x_0)\in\gamma_j^{s,c}$ which proves that $X_h(x)\in W^{s,c}_j$, and the first item follows.

In order to prove the second item, recall that $N$ is tangent to $\Sigma^s_j$. Clearly, it is enough to treat the case where $N(x)\neq 0$.
Since the set of zeros of $N$ is precisely $\Col_U(X,Y)$ this implies that the subspace of $T_xM$ spanned by $X(x)$ and $Y(x)$ has dimension $2$. By the first item, this subspace is contained in $T_xW^{s,c}_j$. 
Since $W^{s,c}_j$ has dimension $2$ this proves that $T_xW^{s,c}_j$ is the subspace spanned by $X(x)$ and $Y(x)$. Thus,
$$N(x)=X(x)-\mu(x)Y(x)\in T_xW^{s,c}_j.$$
This completes the proof. 
\end{proof}

\begin{rem}
\label{r.decompositionstablemfds}
 The intersection  $\Sigma^s_j\cap W^{s,c}_j$ is then a union of regular orbits and zeros of the vector field $N$. The zeros of $N$ correspond to intersections of $\Sigma^s_j$ with the collinearity locus $\Col_U(X,Y)$.
\end{rem}

\begin{rem}
Lemma \ref{l.invstableXN} is purely topological and the proof does not need the Morse-Smale hypothesis. If the {stable or unstable sets} of a periodic orbit exists (in the sense of \S \ref{s.general_dynamics}) then it must be invariant by the flow of $X$ as an immediate consequence of the commutation. If furthermore one knows that this set is a manifold, it follows that $X$ must be tangent to it.
\end{rem}

The same argument shows that orbits $\gamma^{u,c}_i$ also possess unstable manifolds $W^{u,c}_i$, which foliate an open subset of $V$: $\cW^u_i$ denotes this foliation. Moreover given a section $\Sigma^u_i$ of $\cA^u_i$ chosen everywhere orthogonal to $Y$, the intersection $W^u_i\cap\Sigma^u_i$ also writes as a union of regular orbits and zeros of $N$.

\begin{figure}[!h]
\centering
\includegraphics[scale=0.6]{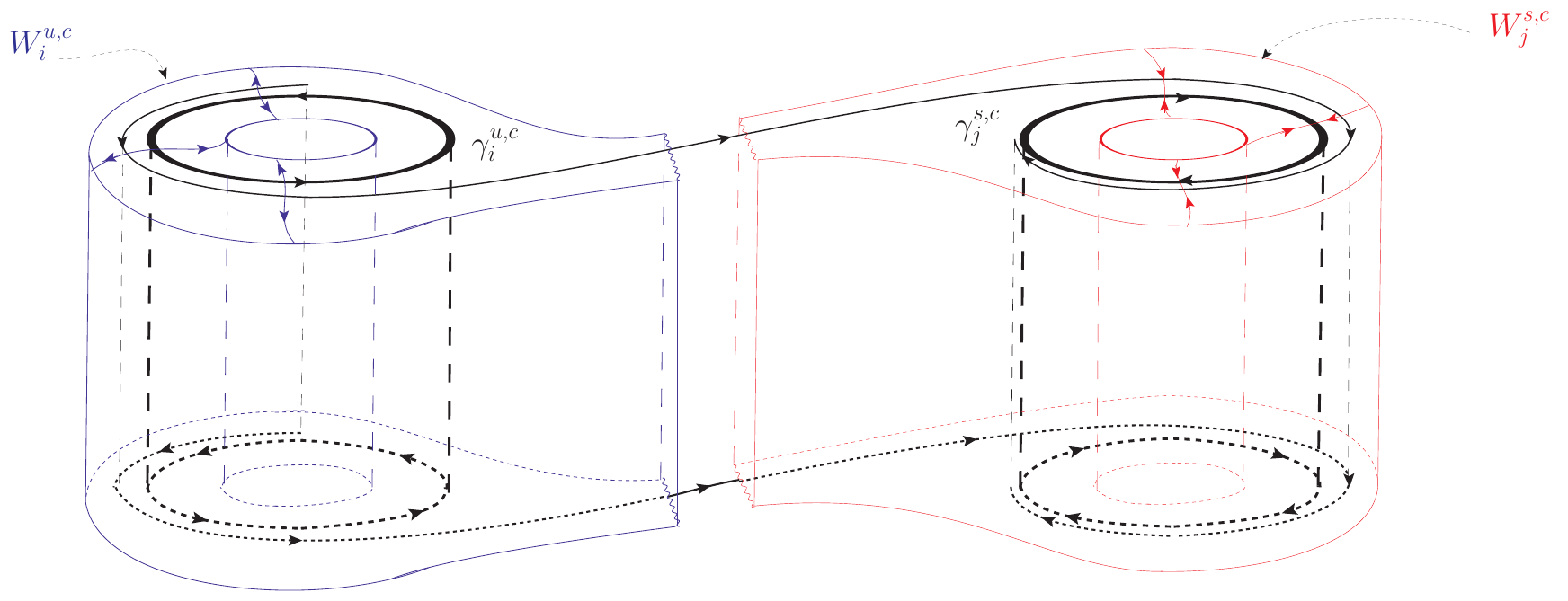}
\caption{Glueing stable and unstable manifolds}
\label{glouglouille}
\end{figure}

\paragraph{Glueing stable and unstable manifolds --} Our goal from now on is to use stable and unstable manifolds in order to build a neighbourhood of $K$ which is foliated by certain surfaces to which $X$ and $Y$ are tangent.

We can assume that every section $\Sigma^s_j$ is foliated by local stable manifolds $W^{s,c}_j$ of periodic orbits $\gamma^{s,c}_j$, and similarly every section $\Sigma^u_i$ is foliated
by unstable manifolds $W^{u,c}_i$ of periodic orbits $\gamma^{u,c}_i$, for $c\in(-\eps,\eps)$. Hence there are open neighbourhoods of the annuli $\cA^s_j$ and $\cA^u_i$, denoted by $\cV^s_j$ and $\cV^u_i$, 
which are foliated by annuli which are respectively local stable and unstable manifolds of $Y$. By Lemma \ref{l.invstableXN} $X$ is tangent to these annuli. In order to achieve our goal, it remains to foliate in a coherent
way neighbourhoods of heteroclinic connections by surfaces to which $X$ is tangent.

For every $i$ the manifold $(\Sigma^u_i\cap W^{u,0}_i)\moins\per(Y)$ has two connected components. There are exactly two possibilities.

\begin{enumerate}
\item Either only one of these components contains a point $x\in K\moins\per(Y)$.
\item Either the two connected components contain one.
\end{enumerate}

Fix such an $i$, and assume that the first property holds for $\Sigma^u_i\cap W^{u,0}_i$. Pick a point $x_i\in K\cap\Sigma^u_i$ whose orbit is a heteroclinic connection between the unstable periodic orbit $\gamma_i^{u,0}$ and some stable periodic orbit for $Y$. We consider the fundamental domain $J_i:=(P^u_i(x_i),x_i]\subset \Sigma^u_i\cap W^{u,0}_i$.

If the second property holds for $i$, we will consider two zeros of $X$, $x_i^+$ and $x_i^-$, that belong to different components of $(\Sigma^u_i\cap W^{u,0}_i)\moins\per(Y)$ as well as the two different corresponding fundamental domains $J_i^+,J_i^-$. In that case we set $J_i=J_i^+\cup J_i^-$.

\begin{lem}
 \label{l.domain}
If $x\in K\setminus\per(Y)$ then there exists some $i$ such that the $Y_t$-orbit of $x$ intersects $J_i$.  
\end{lem}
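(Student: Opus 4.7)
I would first use hypothesis (MS$'$) together with $Y=Y'$ on $K$ to conclude that $\alpha_Y(x)$ is a periodic orbit, and by Morse-Smale it must be one of the unstable orbits $\gamma_i^u$. Hence $x\in W_i^{u,0}$. Since $\Sigma_i^u$ is a cross-section to $\gamma_i^u$ and the backward orbit of $x$ accumulates to $\gamma_i^u$, this orbit crosses $\Sigma_i^u$ at an infinite sequence of points $\{y_m\}$, ordered in forward time, with $y_m\to\gamma_i^u\cap\Sigma_i^u$ as $m\to-\infty$. Since $\omega_Y(x)$ is a stable orbit $\gamma_j^s$, the forward orbit eventually leaves $\Sigma_i^u$ forever, giving a last one $y_k$. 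By $Y_t$-invariance all $y_m$ lie in $(K\moins\per(Y))\cap\Sigma_i^u\cap W_i^{u,0}$, so by the dichotomy stated just before the lemma they all lie in the component containing $x_i$ in Case~1, or in that of $x_i^+$ or $x_i^-$ in Case~2; in the latter case I relabel $x_i$ accordingly so that the argument below yields an intersection with $J_i^+$ or $J_i^-$, both included in $J_i$.

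Next, I would parameterize that component by $s\in(0,L)$ with $s\to 0^+$ corresponding to $\gamma_i^u$, and set $f=P_i^u$. Using the orientation of $\mu^{-1}(0)$ and the expansion $f'(0)=\lambda^u>1$ given by (the unstable analogue of) Lemma~\ref{l.specpoincouille}, $f$ is an orientation-preserving diffeomorphism from its domain into the arc, satisfying $f(s)>s$ wherever defined; moreover it has no fixed point in $(0,L)$, since a periodic orbit of $Y'$ inside $W_i^{u,0}\moins\gamma_i^u$ would be its own $\alpha$-limit, not $\gamma_i^u$. Writing $s_m=s(y_m)$, one has $s_{m+1}=f(s_m)$, so $\{s_m\}$ is strictly increasing in $m$ and tends to $0$ as $m\to-\infty$. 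Let $s^*=\sup\{s>0:f\text{ is defined on }(0,s)\}$. Since $f(s_k)$ is not defined we get $s_k\geq s^*$, whereas $P_i^u(x_i)$ is defined by construction so $s_{x_i}<s^*$; in particular $s_{x_i}<s_k$.

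The conclusion follows by a monotonicity/pigeonhole argument. Set $m^*=\min\{m\leq k:s_m\geq s_{x_i}\}$; this is well-defined since $s_k\geq s_{x_i}$ and $s_m\to 0$ backward. By minimality $s_{m^*-1}<s_{x_i}$, and since $f$ is strictly increasing on the interval $(0,s^*)$ containing both $s_{m^*-1}$ and $s_{x_i}$, one has
$$s_{m^*}=f(s_{m^*-1})<f(s_{x_i}).$$
Combined with $s_{m^*}\geq s_{x_i}$, this yields $s_{m^*}\in[s_{x_i},f(s_{x_i}))$, i.e.\ $y_{m^*}\in J_i$, as desired.

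The main obstacle is establishing $s_{x_i}<s_k$, the geometric assertion that the orbit of $x$ actually reaches past $x_i$ on $\Sigma_i^u$. It rests on carefully distinguishing the two facts that $P_i^u(x_i)$ is defined (forcing $s_{x_i}$ strictly inside the domain of $f$) and that $P_i^u(y_k)$ is not (forcing $s_k$ at or past the boundary of that domain); their combination pins $s_k$ strictly above $s_{x_i}$ and makes the pigeonhole step go through.
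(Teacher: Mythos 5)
Your proof takes the same route as the paper's: from (MS$'$) get $\alpha_Y(x)=\gamma_i^{u,0}$, so the orbit of $x$ meets $\Sigma_i^u\cap W_i^{u,0}$ in a monotone sequence accumulating on the fixed point, and then argue that an iterate must fall in the fundamental domain $J_i$ — you simply make explicit the pigeonhole step that the paper disposes of with ``by definition the iterates of $J_i$ cover\ldots''. The one thing you take for granted (as the paper also implicitly does) is that the domain of $P_i^u$ restricted to the arc is a single interval $(0,s^*)$; without that connectedness, ``$P_i^u(x_i)$ is defined'' does not by itself yield $s_{x_i}<s^*$, which is what you need to pin $s_k$ past $s_{x_i}$.
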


\begin{proof}
By our hypothesis if $x\in K$ there exists $i$ such that $\alpha_Y(x)=\gamma^{u,0}_i$. In particular its $Y_t$-orbit must intersect the section $\Sigma^u_i$ at some point $y$.

By definition the iterates of $J_i$ by the first return map $P_i^u$ cover the union of the connected components of $(\Sigma^u_i\cap W^{u,0}_i)\moins\per(Y)$ that contain an element of $K$. In particular an iterate of $y$ must lie inside $J_i$. Since this iterate belongs to the $Y_t$-orbit of $x$, the proof of the lemma is over.
\end{proof}

Using Lemma~\ref{l.domain} as well as the compactness of $\overline{J_i}$, we deduce the existence of a finite open cover $\{U^l_i\}_{l=1}^k$ of $J_i\cap K$ in $\Sigma^u_i$ having the following property

\begin{itemize}
 \item there exists a holonomy map for the flow of $Y$, $\hol^l_i:U_i^l\dans\Sigma^u_i\to\Sigma^s_j$ where $j=j(l)$ (Figure \ref{holtube}).
\end{itemize}

By reducing $\eps$ and the open sets $U_i^l$, if necessary, we can require further that

\begin{itemize}
 \item The sets $W^{u,c}_i\cap U^l_i$ form a codimension one foliation of each open set $U^l_i$.
 Then each $U^l_i$ is foliated by segments $L^{u,c}_i$ of local unstable manifolds $W^{u,c}_i$, for $c\in(-\eps,\eps)$. 
\end{itemize}

The following result is a key one and shows how one can glue together stable and unstable manifolds.

\begin{figure}[!h]
\centering
\includegraphics[scale=0.4]{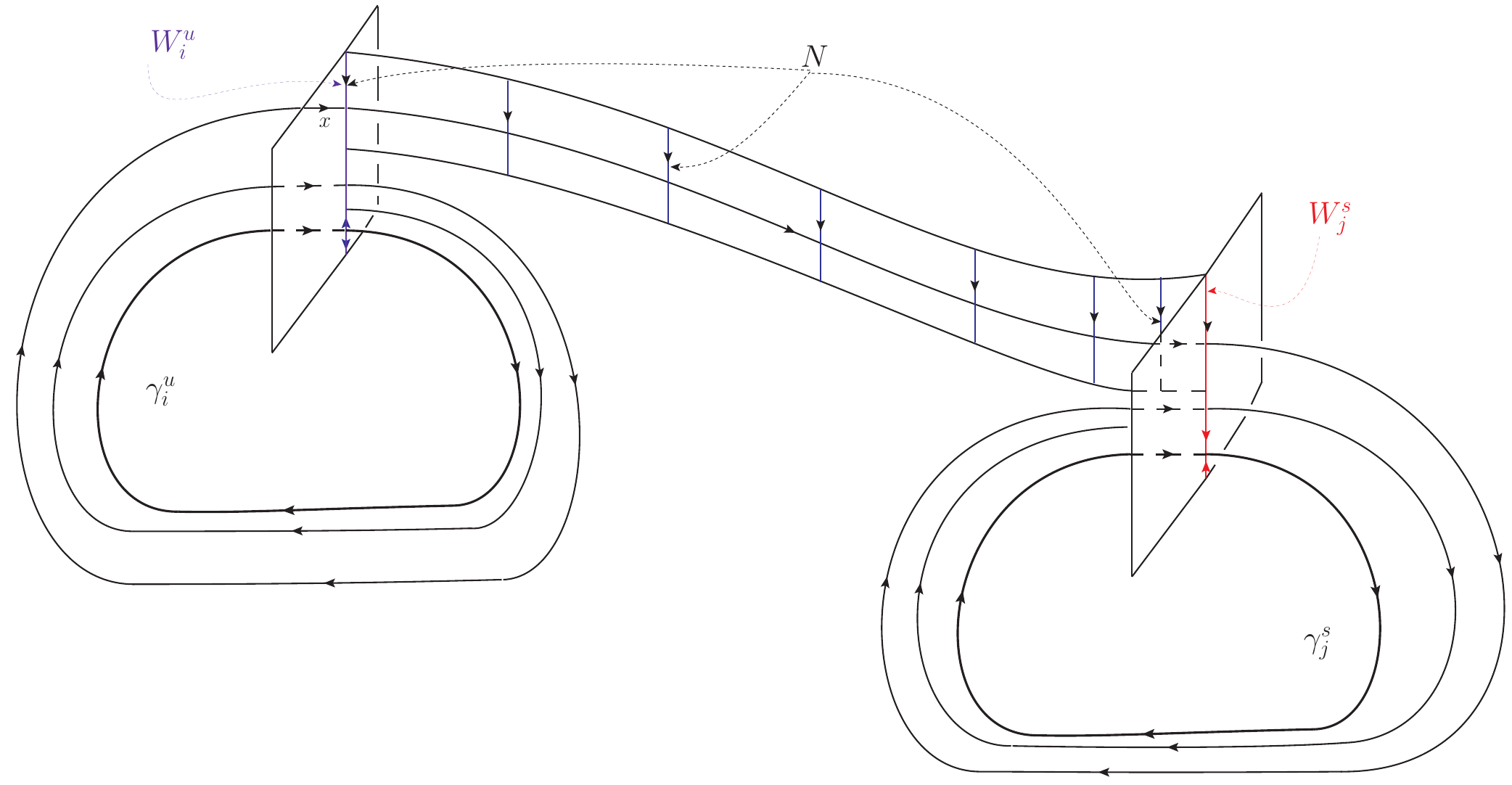}
\caption{The glueing lemma}
\label{glouglouglouille}
\end{figure}

\begin{lem}[Glueing Lemma]
 \label{l.collage}
Every holonomy map $\hol^l_i:U^l_i\dans\Sigma^u_i\to\Sigma^s_j$ carries the leaves $L^{u,c}_i$ onto segments $F^{s,c}_j$ of local stable manifolds $W^{s,c}_j$.    
\end{lem}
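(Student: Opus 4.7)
The plan is to combine three ingredients already at our disposal: the tangency of the normal component $N$ to both unstable and stable manifolds (Lemma~\ref{l.invstableXN} and its symmetric analogue), the holonomy invariance of $N$ stated as item~1 of Lemma~\ref{calculusholonomies}, and the Hausdorff continuity of $c\mapsto K_c$ at $0$ combined with the persistence of heteroclinic connections from Lemma~\ref{l.memecombinato}.

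First I would observe that on the section $\Sigma^u_i$, which is chosen orthogonal to $Y$, the vector field $N$ is tangent to $\Sigma^u_i$ and, by the unstable version of item~2 of Lemma~\ref{l.invstableXN}, is tangent to each leaf $L^{u,c}_i=\Sigma^u_i\cap W^{u,c}_i$ of the smooth $1$-dimensional foliation on $U^l_i$. The analogous statement on $\Sigma^s_j$ makes $N$ tangent to every leaf $F^{s,c'}_j=\Sigma^s_j\cap W^{s,c'}_j$. Because $\hol^l_i$ is a $C^3$-diffeomorphism satisfying $D\hol^l_i\cdot N=N\circ\hol^l_i$, the image $\hol^l_i(L^{u,c}_i)$ is a smooth curve in $\Sigma^s_j$ tangent to $N$ everywhere. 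At points where $N\neq 0$ the tangency to $N$ determines the direction uniquely, so $\hol^l_i(L^{u,c}_i)$ and the leaves of the stable foliation $\{F^{s,c'}_j\}_{c'}$ share the same tangent line field on a dense open set. Two smooth $1$-dimensional foliations on a surface that agree on a dense open set coincide, and hence $\hol^l_i(L^{u,c}_i)$ is contained in a single leaf $F^{s,c'(c)}_j$ for some continuous function $c\mapsto c'(c)$.

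It remains to identify $c'(c)=c$. For $c=0$, the heteroclinic point $x_i\in K\cap J_i\subset L^{u,0}_i$ satisfies $\omega_Y(x_i)=\gamma^{s,0}_j$, so $\hol^l_i(x_i)\in W^{s,0}_j$ and $c'(0)=0$. For $c\neq 0$ small, Lemma~\ref{l.memecombinato} produces an orbit in $K_c$ heteroclinically connecting $\gamma^{u,c}_i$ to $\gamma^{s,c}_j$; continuity of $c\mapsto K_c$ at $0$ together with the fact that $U^l_i$ is an open neighbourhood of $x_i$ guarantees, after shrinking $\varepsilon$ if necessary, the existence of a point $z_c\in K_c\cap U^l_i$ lying on such an orbit. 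Since $z_c\in W^{u,c}_i\cap U^l_i=L^{u,c}_i$ and $\hol^l_i(z_c)$ lies on the same $Y$-orbit as $z_c$, we have $\omega_Y(\hol^l_i(z_c))=\gamma^{s,c}_j$, hence $\hol^l_i(z_c)\in W^{s,c}_j\cap\Sigma^s_j=F^{s,c}_j$. This point belongs simultaneously to $F^{s,c'(c)}_j$ and $F^{s,c}_j$, forcing $c'(c)=c$ and completing the proof.

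The main obstacle I anticipate is the parameter-matching step. The foliations $\{L^{u,c}_i\}$ and $\{F^{s,c}_j\}$ are indexed by the value of $\mu$ on the respective periodic orbits $\gamma^{u,c}_i$ and $\gamma^{s,c}_j$, not by values of $\mu$ on the whole leaves (which are $Y$-invariant but $\mu$ is not $Y$-invariant in this setting), so no tautological equality of parameters is available. The argument must route through the existence of an actual heteroclinic orbit in $K_c\cap U^l_i$, and that in turn relies crucially on both the hypothesis (MS$'$) (which guarantees linking at every nearby level) and the Hausdorff continuity of $c\mapsto K_c$ at $0$ encoded in the definition of a prepared triple; without either, the linking orbit for parameter $c$ might escape the fundamental domain $J_i$ and the matching $c'(c)=c$ could fail.
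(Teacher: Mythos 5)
Your proposal is correct and rests on the same two ingredients the paper uses: the holonomy invariance of the normal component $N$ (item~1 of Lemma~\ref{calculusholonomies}) and the tangency of $N$ to the stable/unstable intersections with the sections (Lemma~\ref{l.invstableXN} and Remark~\ref{r.decompositionstablemfds}). The paper's written proof is very terse --- three sentences reducing everything to ``$N$-orbits go to $N$-orbits, zeros of $N$ go to zeros of $N$, therefore segments of unstable manifolds go to segments of stable manifolds'' --- and it handles the identification of the parameter silently, via the observation that a zero of $N$ on $L^{u,c}_i$ lies in $K_c$, hence so does its holonomy image, which forces the target leaf to be $F^{s,c}_j$. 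You correctly flagged that this only pins down the parameter when $L^{u,c}_i$ actually meets $K_c$, and you supplied the missing justification: lower semi-continuity of $c\mapsto K_c$ at $0$, applied to the open set $U^l_i$, yields (after shrinking $\eps$) a point $z_c\in K_c\cap U^l_i\subset L^{u,c}_i$, whose image along the flow lands in $W^{s,c}_j$. That step is a genuine clarification of the argument rather than a different route; the rest of your reasoning (the tangent line fields of the image curve and of the stable foliation agree off the isolated zeros of $N$, hence the image lies in a single leaf) is a slightly more geometric rephrasing of the paper's ``union of regular orbits and zeros of $N$'' decomposition, and it is sound.
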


\begin{proof}
This key lemma is a simple consequence of Lemma \ref{calculusholonomies} and of Remark \ref{r.decompositionstablemfds}.

Indeed, the latter remark implies that segments $L^{u,c}_i$ are unions of regular orbits and zeros of $N$. Then Lemma \ref{calculusholonomies} implies that the holonomy map $\hol^l_i$ carries respectively regular orbits and zeros of $N$ onto regular orbits and zeros of $N$. A last application of Remark \ref{r.decompositionstablemfds} proves that these form segments of local stable manifolds: {see Figure \ref{glouglouglouille}}. The lemma follows.
\end{proof}

Using our Glueing Lemma, we now show how to glue stable and unstable manifolds. For each $i,l$ let $O^l_i$ denote the \emph{holonomy tube} of the transverse section $U^l_i$ {(see Figure \ref{holtube})}. 
More precisely, for each $x\in U^l_i$, there exists a smallest positive time $\tau(x)$ such that
$Y_{\tau(x)}(x)\in\Sigma^s_j$. Then, we put 

$$O^l_i:=\{Y_t(x);x\in U^l_i,0\leq t\leq\tau(x)\}.$$

For each leaf $L^{u,c}_i$ its holonomy tube is a surface which may be pasted smoothly with the stable manifold. By Lemma \ref{l.invstableXN} these surfaces are tangent to $X$ and $Y$.

Recall that the annuli $\cA_i^u$ and $\cA_j^s$ possess neighbourhoods with disjoint closures, that we denoted by $\cV^u_i$ and $\cV^s_j$, which are respectively foliated by unstable and stable annuli.

Finally using our Glueing Lemma \ref{l.collage} we obtain the following

\begin{cor}
The set 
$$\cV=\bigcup_{i,j,l}\cV^u_i\cup\cV^s_j\cup O_i^l$$
is an open neighbourhood of $K$, satisfying $\Zero(X)\cap\partial\cV=\vide$, which admits a foliation by surfaces $S_c$, $c\in(-\eps,\eps)$ such that, for every $x\in S_c$ 
$\langle X(x),Y(x)\rangle \dans T_xS_c.$
\end{cor}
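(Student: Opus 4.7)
The plan is to assemble the three families of open sets and verify each claimed property by combining Lemma~\ref{l.invstableXN} with the Glueing Lemma~\ref{l.collage}. Openness is clear: each $\cV^u_i$, $\cV^s_j$ and each holonomy tube $O^l_i$ is open by construction. The inclusion $K\dans\cV$ splits into two cases. For a periodic orbit in $K\cap\per(Y)$, Lemma~\ref{l.memecombinato} under (MS$'$) tells us that it is one of the $\gamma^s_j\subset\cA^s_j\subset\cV^s_j$ or $\gamma^u_i\subset\cA^u_i\subset\cV^u_i$. For a heteroclinic orbit $x\in K\setminus\per(Y)$, Lemma~\ref{l.domain} furnishes an $i$ and a $t_0\in\R$ with $Y_{t_0}(x)\in J_i\cap K$, hence $Y_{t_0}(x)\in U^l_i$ for some $l$. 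The backward $Y$-orbit of $x$ stays in $\cV^u_i$ for $t$ sufficiently negative, while the forward orbit of $Y_{t_0}(x)$ traverses the tube $O^l_i$ in finite time and enters $\cV^s_{j(l)}$, so the entire orbit of $x$ lies in $\cV$. I read the condition $\Zero(X)\cap\cV=\vide$ as an isolating condition in disguise: the only zeros of $X$ meeting $\cV$ are the points of $K$, and shrinking $\eps$ and the open pieces slightly one arranges $\Zero(X)\cap\partial\cV=\vide$, which is what is actually used in the ensuing index computation.

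Next I build the leaves $S_c$ for $c\in(-\eps,\eps)$ by gluing three families of pieces. The unstable annuli $W^{u,c}_i\dans\cV^u_i$ and the stable annuli $W^{s,c}_j\dans\cV^s_j$ are provided directly by the stable manifold construction at the periodic orbits $\gamma^{u,c}_i,\gamma^{s,c}_j$. Over each tube $O^l_i$, define the intermediate surface $T^{l,c}_i$ as the $Y$-saturation of the arc $L^{u,c}_i\dans U^l_i$ until it hits $\Sigma^s_{j(l)}$. The Glueing Lemma~\ref{l.collage} identifies the trace of $T^{l,c}_i$ on $\Sigma^s_{j(l)}$ with the segment $F^{s,c}_{j(l)}$ of the local stable manifold $W^{s,c}_{j(l)}$, so $T^{l,c}_i$ matches onto $W^{s,c}_{j(l)}$ along this segment and onto $W^{u,c}_i$ along $L^{u,c}_i$. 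Setting
\[
S_c=\bigcup_i W^{u,c}_i\cup\bigcup_j W^{s,c}_j\cup\bigcup_{i,l}T^{l,c}_i,
\]
one obtains a surface contained in $\cV$; the $L^{u,c}_i$ foliate each $U^l_i$, so the $T^{l,c}_i$ foliate $O^l_i$, and together with the obvious foliations of the $\cV^u_i$ and $\cV^s_j$ the family $(S_c)_{|c|<\eps}$ covers $\cV$ by pairwise disjoint surfaces.

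Finally, the tangency $\langle X(x),Y(x)\rangle\dans T_xS_c$ is verified piecewise. Inside $\cV^u_i$ and $\cV^s_j$, the first item of Lemma~\ref{l.invstableXN} gives $X(x)\in T_xS_c$, and the $Y$-invariance of stable and unstable manifolds gives $Y(x)\in T_xS_c$. Inside $O^l_i$, the tangency of $Y$ is automatic from the definition of $T^{l,c}_i$ as a $Y$-saturation; for $X$, the identity $X_s\circ Y_t=Y_t\circ X_s$ shows that the flow of $X$ preserves each $T^{l,c}_i$ once we know that at the entrance section $X$ is tangent to $L^{u,c}_i\dans W^{u,c}_i$, which is again Lemma~\ref{l.invstableXN}. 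The main delicate point, and really the only nontrivial step, is the consistency of the glueing along the sections $\Sigma^u_i$ and $\Sigma^s_{j(l)}$: the Glueing Lemma is doing all the work there, because it tells us that the $Y$-holonomy from $\Sigma^u_i$ to $\Sigma^s_{j(l)}$ sends the local unstable leaf at parameter $c$ to the local stable leaf at the same parameter $c$, which is exactly what is needed for $S_c$ to be a single well-defined surface rather than a multiply-labelled patchwork of fragments.
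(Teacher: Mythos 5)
Your proposal correctly fills in the assembly argument that the paper leaves implicit (the Corollary is stated with only the remark ``using our Glueing Lemma''): defining $S_c$ as the union of the unstable annuli $W^{u,c}_i$, the stable annuli $W^{s,c}_j$, and the $Y$-saturations $T^{l,c}_i$ of the arcs $L^{u,c}_i$, with coherence of the patching guaranteed by Lemma~\ref{l.collage} and tangency of $X$ obtained from Lemma~\ref{l.invstableXN} together with $[X,Y]=0$, is exactly the intended construction. You are also right that the condition $\Zero(X)\cap\cV=\vide$ must be read as $\Zero(X)\cap\partial\cV=\vide$: since $\cV$ is a neighbourhood of $K\dans\Zero(X)$ the set $\Zero(X)\cap\cV$ cannot be empty, and what the ensuing index computation actually uses is that the Gauss map is well-defined on $\partial\cV$.
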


\begin{figure}[!h]
\centering
\includegraphics[scale=0.5]{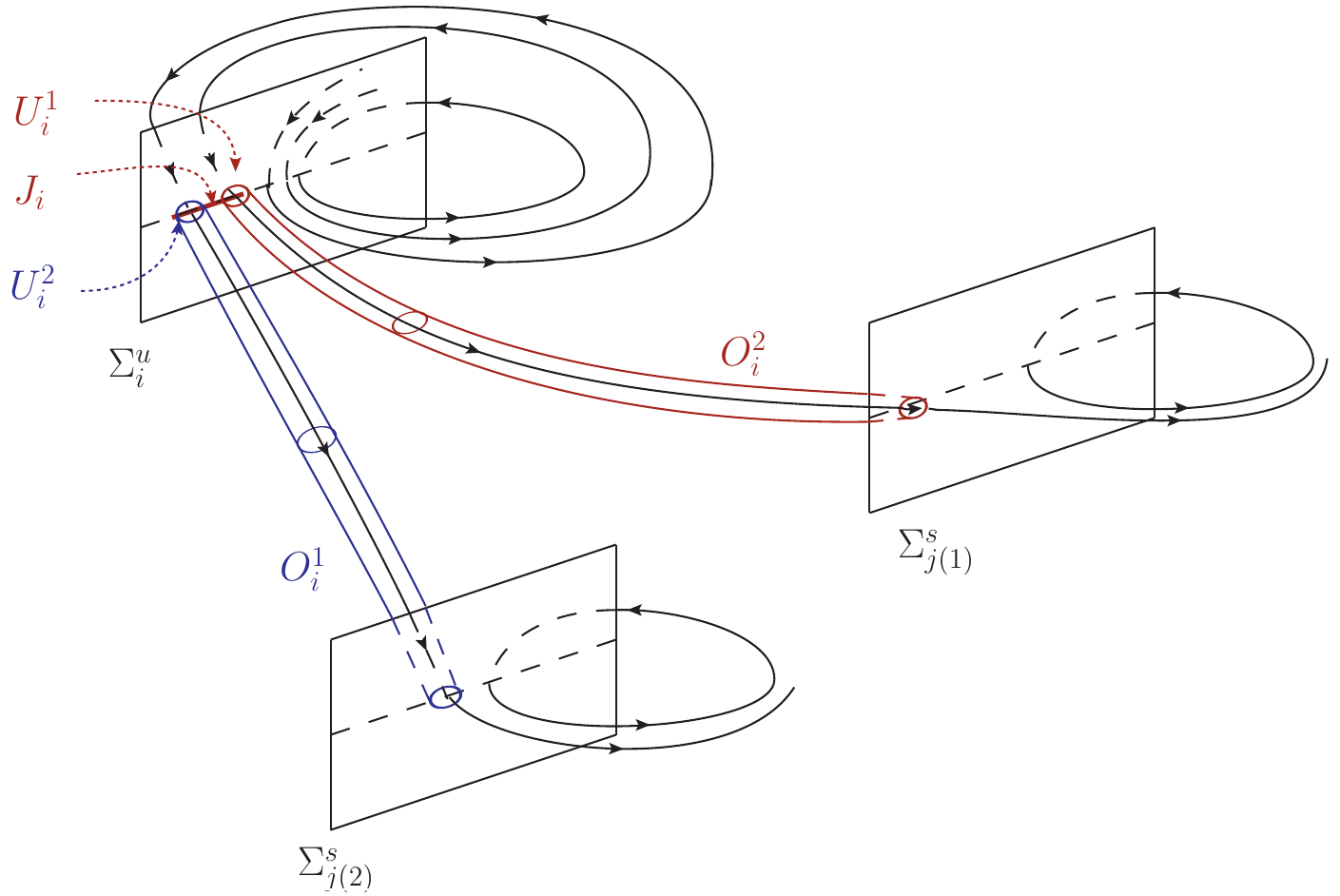}
\caption{Holonomy tubes}
\label{holtube}
\end{figure}

\paragraph{End of the proof of Theorem \ref{Th.MS} --}
In the next lemma, whose proof is automatic from the previous section, we build a $C^0$ basis for the tangent bundle over of $M$ restricted to $\overline{\cV}$.

\begin{lem}
\label{l.base}
There exists $C^0$ vector fields $\{e_1,e_2,e_3\}$ over $\overline{\cV}$ with the following properties
\begin{enumerate}
 \item $e_1=Y$ everywhere
 \item for every $x\in S_c$ one has $e_2(x)\in T_xS_c$ and $\langle e_1(x),e_2(x)\rangle=T_xS_c$.
 \item for every $x\in S_c$ one has $e_3(x)\neq 0$ and $\langle e_3(x)\rangle\cap T_xS_c=\{0\}$.
\end{enumerate}

\end{lem}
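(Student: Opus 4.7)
The plan is to write down the three fields explicitly, using the nonvanishing of $Y$ on $U$, the trivial product structure of $\overline{\cV}$ coming from the foliation $\{S_c\}_{c\in(-\eps,\eps)}$, and the orientations guaranteed by Proposition~\ref{p:orientable}. The author's remark that the proof is automatic matches this: each of the three properties is obtained by a one-line construction.

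First I would set $e_1:=Y$. This field is everywhere nonzero on $\overline{\cV}\subset U$ by the prepared-triple hypothesis, and the preceding corollary already records that $\langle X(x),Y(x)\rangle\subset T_xS_c$ for every $x\in S_c$, so in particular $Y(x)\in T_xS_c$. This yields (1) and supplies one of the tangent vectors needed in (2).

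Next I would produce $e_3$ transverse to the foliation. Since $\overline{\cV}$ is trivially foliated by the $S_c$, there is a continuous function $\pi\colon\overline{\cV}\to(-\eps,\eps)$ whose level sets are exactly the leaves $S_c$. Fixing an auxiliary Riemannian metric $g$ on $M$ and normalizing $\nabla_g\pi$ produces a nonvanishing continuous vector field $e_3$ that is $g$-orthogonal to $T_xS_c$ at every point; in particular $\langle e_3(x)\rangle\cap T_xS_c=\{0\}$, which is (3).

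Finally I would construct $e_2$ by exploiting orientability. Proposition~\ref{p:orientable} supplies a transverse orientation for $Y$, which together with $Y$ itself orients $TM$ on $U$. Combining this ambient orientation with the orientation of the normal line to $S_c$ furnished by $e_3$ induces a continuous orientation on each $T_xS_c$. I then define $e_2(x)$ to be the unique unit vector in $T_xS_c$ that is $g$-orthogonal to $Y(x)$ and such that $\bigl(Y(x)/|Y(x)|,\,e_2(x)\bigr)$ is a positive basis of $T_xS_c$; continuity, nonvanishing, tangency to $S_c$, and linear independence from $e_1$ are then immediate, yielding (2). I do not anticipate any real obstacle: the lemma merely packages the orientations delivered by Proposition~\ref{p:orientable} with the product foliation produced by the Glueing Lemma.
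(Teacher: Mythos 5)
Your proposal is correct and matches the paper's (unstated) argument: the paper declares the proof ``automatic from the previous section,'' which is precisely the packaging you carry out. Setting $e_1=Y$ uses the corollary's inclusion $\langle X(x),Y(x)\rangle\subset T_xS_c$ to place $Y$ in $T_xS_c$; taking $e_3$ to be the (normalized) positively-oriented unit normal to the foliation $\{S_c\}$ works because the $S_c$ are parameterized by $c\in(-\eps,\eps)$, which trivializes the normal bundle; and your construction of $e_2$ correctly derives an orientation of $T_xS_c$ from the ambient orientation of $TM|_U$ (coming from $Y$ together with its transverse orientation via Proposition~\ref{p:orientable}) and the coorientation furnished by $e_3$, then picks the unit vector in $T_xS_c\cap Y(x)^{\perp}$ compatible with it. One small remark: rather than appealing to $\nabla_g\pi$ (which implicitly assumes $\pi$ is $C^1$, true here since the leaves are built from $C^3$ stable/unstable manifolds and holonomy tubes), it is slightly cleaner and needs no differentiability of $\pi$ to define $e_3(x)$ directly as the $g$-unit normal to $T_xS_c$ pointing in the direction of increasing $c$; this gives $C^0$ continuity as soon as the tangent planes $T_xS_c$ vary continuously, which is all the lemma requires.
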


We can write, for each $x\in\overline{\cV}$, $X(x)=\sum_{l=1}^3\alpha_l(x)e_l(x)$. Notice that, since $X(x)$ is tangent to $S_c$, $\alpha_3(x)$ vanishes everywhere. As a consequence the Gauss map 
$$\alpha:x\in\partial \cV\mapsto\frac{(\alpha_1(x),\alpha_2(x),\alpha_3(x))}{\sqrt{\alpha_1(x)^2+\alpha_2(x)^2+\alpha_3(x)^2}}\in\mathbf{S}^2$$
takes its values in the equator $x_3=0$ and therefore has zero topological degree. We deduce that
$$\Ind(X,K)=0,$$
allowing us to conclude the proof of Theorem \ref{Th.MS}.

\section{Proof of Theorem \ref{t.valeurpropre}}
\label{s:final_section}

\subsection{Plan of the proof}

Until the end of the section $(U,X,Y)$ is a $C^3$-prepared triple such that
\begin{center}
$(\ast)\,\,\,\,$ \emph{The Poincar\'e map of every periodic orbit of $Y$ included in the collinearity locus $\Col_U(X,Y)$ has at least one eigenvalue of modulus different from $1$.}
\end{center}
{
Recall the function $Z:c\mapsto K_c=\Zero(X-cY)\cap\overline{U}$, which maps each small parameter $c$ to the compact set $K_c\subset\mu^{-1}(c)$, and that we consider the Hausdorff topology in the space of compact subsets of $M$ (see \S~\ref{sub:semi_cty}).} The set
\begin{equation}
\label{eq.residual}
\cR=\{\textrm{regular}\,\,\,\textrm{values}\,\,\,\textrm{of}\,\,\,\mu\}\cup\{\textrm{continuity}\,\,\,\textrm{points}\,\,\,\textrm{of}\,\,\,Z\}
\end{equation}
is a residual subset of a small interval centred at $0$. As $(U,X,Y)$ is prepared, we have $0\in\cR$. Here again we will assume that $Y$ is transversally oriented and that level sets $\mu^{-1}(c)$ are oriented.

\paragraph{Linked and non-linked periodic orbits --} The difference between Hypotheses (MS) and ($\ast$) is the following. Under hypothesis (MS), periodic orbits lying in $K$ are linked by non-periodic orbits, or are isolated. Under Hypothesis ($\ast$) we can have simultaneously isolated periodic orbits and accumulation of
non-linked periodic orbits and the dynamics of $Y'$ on $K_c$ might be wilder: we must study this phenomenon.

Recall that a $Y_t$-periodic orbit $\gamma\dans K_c$ is said to be \emph{linked} if there exists $x\in K_c\moins\per(Y)$ such that $\gamma=\omega(x)$ or $\gamma=\alpha(x)$. A periodic orbit which is not linked is called \emph{non-linked}.

We shall adopt the following notations.

\begin{itemize}
\item $K_l$ denotes the union of \emph{linked} periodic orbits of $Y$ included in $K$.
\item $K_{nl}$ denotes the union of \emph{non-linked} periodic orbits of $Y$ included in $K$.
\item $K_{ms}$ denotes $K\moins K_{nl}$. 
\end{itemize}

{Applying Theorem~\ref{t.diegosalgado} we see} that $K_{ms}$ is formed by \emph{linked} periodic orbits and by non-periodic orbits in $K$ whose $\alpha$ and $\omega$-limit sets are periodic orbits included in $K$.

\paragraph{Plan of the proof --} The proof of Theorem \ref{t.valeurpropre} goes along the following lines.
\begin{enumerate}
\item We prove that $K_{ms}$ and $K_{nl}$ are disjoint compact sets so that $K=K_{ms}\sqcup K_{nl}$ and
$$\Ind(X,K)=\Ind(X,K_{ms})+\Ind(X,K_{nl}).$$
\item We prove that $Y$ satisfies the hypothesis (MS) close to $K_{ms}$ and deduce that $\Ind(X,K_{ms})=0$.
\item We prove that there exist finitely many open sets $\cU_1,...,\cU_m\dans U$ which cover $K_{nl}$ enjoying the following properties.
      \begin{itemize}
              \item For $i=1,\ldots,m$, $\Zero(X)\cap\partial\cU_i=\vide$.
              \item $\Ind(X,K_{nl})=\sum_{i=1}^m\Ind(X,\cU_i)$.
              \item $\Col_U(X,Y)\cap\cU_i$ is an open annulus consisting of periodic orbits of $Y$.
      \end{itemize}
\item Using \cite{BS} we deduce that $\Ind(X,\cU_i)=0$ for every $i=1,...,m$ and, consequently, that $\Ind(X,K_{nl})=0$.
\end{enumerate}

\subsection{Position of stable and unstable manifolds with respect to level sets of $\mu$}

Until the end of the paper we assume that Hypothesis ($\ast$) holds. It asserts that every periodic orbit of $\Col_U(X,Y)$ possesses a local stable or unstable manifold. Under Hypothesis (MS$'$), all these stable and unstable manifolds are \emph{tangent} to the level set $\mu^{-1}(0)$. We will first establish this property for every periodic orbit of $K_l$.

\begin{prop}
\label{p.tangent}
Let $\gamma$ be a $Y_t$-periodic orbit included in $K_l$ and $z\in\gamma$. Then the local stable (resp. unstable) manifold at $z$ is tangent to $\mu^{-1}(0)$.
\end{prop}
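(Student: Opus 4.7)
The plan is to apply Lemma \ref{invariantspace} to the Poincar\'e map of $Y$ at $z$, using iterates of a linking orbit inside $K$, and then to combine the outcome with Hypothesis $(\ast)$ to identify the level-set direction as the hyperbolic eigendirection. Assume first that $\gamma$ is linked as a stable orbit, i.e. that there exists $x\in K_l\setminus\per(Y)$ with $\omega_Y(x)=\gamma$; the case $\alpha_Y(x)=\gamma$ is symmetric by time reversal. Choose a small transverse section $\Sigma$ to $Y$ at $z$ with $\gamma\cap\Sigma=\{z\}$, and let $P$ denote the Poincar\'e map. Because $0$ is a regular value of $\mu$ on a prepared triple, the intersection $\alpha:=\Sigma\cap\mu^{-1}(0)$ is a $C^3$-embedded arc through $z$. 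Since $K$ is $Y_t$-invariant and $K\subset\mu^{-1}(0)$, the successive returns of the $Y$-orbit of $x$ to $\Sigma$ form a sequence $x_n=P^n(x_0)$ lying entirely on $\alpha$, and $x_n\to z$ because $\omega_Y(x)=\gamma$.

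Applying item (2) of Lemma \ref{invariantspace} to $(x_n,\alpha)$ yields a real number $\lambda=\lim t_{n+1}/t_n$ (where $t_n$ parametrizes the $x_n$ along $\alpha$) such that $T_z\alpha$ is a $\lambda$-eigendirection of $D_zP$. Since $t_n\to 0$ we automatically have $|\lambda|\leq 1$. The crucial --- and hardest --- step will be to upgrade this to the strict inequality $|\lambda|<1$ by combining Hypothesis $(\ast)$ with the prepared-triple assumption that $0$ is a continuity point of $Z\colon c\mapsto K_c$. The plan is to rule out the partial-sink and partial-source configurations in which $|\lambda|=1$ while the other eigenvalue of $D_zP$ has modulus different from $1$: in such a configuration the strong-stable (resp. strong-unstable) direction of $D_zP$ is transverse to $T_z\alpha$, hence to $T_z\mu^{-1}(0)$, and one exploits the lower semi-continuity of $Z$ at $0$ (which forces $K_c\neq\emptyset$ arbitrarily close to $\gamma$ for all small $c$) together with the inclusion $K_c\subset\mu^{-1}(c)$ and the $Y_t$-invariance of $K_c$ to derive a contradiction in the spirit of Lemma \ref{l.nosinks}. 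The $\omega$-limit of such a nearby point in $K_c$ would have to sit inside an invariant center-type submanifold while simultaneously remaining on the disjoint level set $\mu^{-1}(c)$, which is impossible.

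Once $|\lambda|<1$ is established, $T_z\alpha$ is exactly the (strong) stable direction of $D_zP$. The standard stable manifold theorem then gives a $C^1$-arc in $\Sigma$ through $z$, tangent to $T_z\alpha$, whose $Y_t$-saturation is the local stable manifold $W^s_{\mathrm{loc}}(\gamma)$; its tangent plane at $z$ is $T_z\alpha\oplus\langle Y(z)\rangle$. Since $z\in\gamma\subset\Col_U(X,Y)$, the vector $Y(z)$ is tangent to the level set $\mu^{-1}(0)$ at $z$ (as already used in Lemma \ref{l.anglouille}), so this plane is contained in $T_z\mu^{-1}(0)$; both being two-dimensional, they coincide, giving the desired tangency. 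The symmetric argument with $\alpha_Y(x)=\gamma$ and backward iterates gives the analogous statement for the unstable manifold.

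The anticipated main obstacle is therefore the hyperbolicity step: the prepared setting delivers $|\lambda|\leq 1$ for free via Lemma \ref{invariantspace}, but promoting this to $|\lambda|<1$ requires a careful argument using all three of Hypothesis $(\ast)$, the continuity of $Z$ at $0$, and the invariance $K_c\subset\mu^{-1}(c)$. This is in sharp contrast with the Morse-Smale case of Lemma \ref{l.specpoincouille}, where hypothesis (MS$'$) provided full hyperbolicity of the periodic orbit directly and the eigenvalue computation reduced to a one-line application of item (3) of Lemma \ref{invariantspace} in the parameter direction.
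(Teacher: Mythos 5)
Your starting point is a genuinely different (and in part useful) angle of attack from the paper's, but the core of the argument is missing and the step that would replace it is based on a false claim. You begin by applying item (2) of Lemma \ref{invariantspace} to the monotone sequence of returns of the linking orbit along $\alpha=\Sigma\cap\mu^{-1}(0)$ to conclude that $T_z\alpha$ is a $\lambda$-eigendirection of $D_zP$ with $|\lambda|\leq 1$, and you correctly isolate the remaining obstacle as ruling out $|\lambda|=1$. The paper does not argue this way: it assumes, for contradiction, that the stable manifold is transverse to $\mu^{-1}(0)$, and uses Lemma \ref{simtransv} to propagate transversality everywhere, so that $W^s_{loc}(\gamma)\cap\cU_c$ is an essential closed curve $\alpha_c$ in each level-annulus. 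So far your route is a fair alternative reformulation.

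The gap is in the paragraph that is supposed to rule out $|\lambda|=1$. You write that the $\omega$-limit of a nearby point of $K_c$ "would have to sit inside an invariant center-type submanifold while simultaneously remaining on the disjoint level set $\mu^{-1}(c)$, which is impossible." This is not an impossibility; it is precisely what happens. When $|\lambda|=1$ and the other eigenvalue is (say) contracting, the center direction of $D_zP$ is tangent to $\mu^{-1}(0)$, the center manifold of $P$ meets every nearby level arc $I_c$, and the paper's Lemma \ref{confined} shows that the forward $Y'$-orbit of a point $y\in K_c$ near $\gamma$ is confined between $\alpha_c$ and its own trajectory and therefore accumulates on a periodic orbit $\gamma_c\subset K_c\cap\mu^{-1}(c)$ — an object sitting simultaneously in the center-type locus of $P$ and on $\mu^{-1}(c)$. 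No contradiction arises at this stage. The actual contradiction in the paper is a further, nontrivial topological step: by continuity of the (strong) stable manifold, $\gamma_c$ also has a stable annulus transverse to the levels; the curve $\beta=W^s_{loc}(\gamma_c)\cap\mu^{-1}(0)$ is essential in $\cU_0$, is disjoint from $\gamma$, and separates the linking point $x$ from $\gamma$ inside the tubular annulus, so that the forward orbit of $x$ would have to cross $\beta$ and hence satisfy $\omega_Y(x)=\gamma_c\neq\gamma$. This confinement-and-separation argument is the heart of the paper's proof, and your sketch contains nothing that could replace it. As written, your argument stops exactly where the difficulty starts, and the sentence you offer in its place asserts something that is false.

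A small additional remark: when $|\lambda|<1$ you need $\gamma$ not to be a sink in order for the stable set in the section to be the one-dimensional arc tangent to $T_z\alpha$; this is guaranteed by Lemma \ref{l.nosinks} because $0$ is a continuity point of $Z$, but you should say so explicitly rather than quote the stable manifold theorem as if it applied unconditionally.
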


\subsubsection{Nice tubular neighbourhoods} 
{In order to perform some of our arguments below (which are somewhat based on a careful account of the dimension of the objects we are looking for) we need to foliate in a very specific way small neighbourhoods of collinearity periodic orbits. }

\begin{lem}
\label{l.nicetub}
Let $c_0\in\cR$ and $\gamma\dans K_{c_0}$ be a periodic orbit of $Y_t$. Then, there exists a neighbourhood $\cU$ of $\gamma$ satisfying the following properties.
\begin{enumerate}
	\item $\cU$ fibers over $\gamma$ and the fiber $\Sigma(z)$ over any $z\in\gamma$ is an embedded disc transverse to $Y$ and $Y'$.
	\item The foliation by level sets of $\mu$ induces a trivial foliation of $\cU$ by annuli, denoted by $\cU_c=\mu^{-1}(c)\cap\cU$. These annuli are transverse to the sections $\Sigma(z)$.
	\item There exists $\eta>0$ such that for every $z\in\gamma$, the first return map $P$ to $\Sigma(z)$ is well-defined on $\Sigma_{\eta}(z)$, the $\eta$-neighbourhood of $z$ in $\Sigma(z)$. Moreover for every $y\in\Sigma_{\eta}(z)$, the forward $Y'$-orbit of $y$ does not leave $\cU$ before hitting $\Sigma(z)$ again.
\end{enumerate}
\end{lem}  

\begin{dfn}
	\label{nicenbd}
	A tubular neighbourhood of a periodic orbit of $Y$ satisfying the properties above will be called a \emph{nice tubular neighbourhood}.
\end{dfn} 
 
{Let us explain now how to construct nice tubular neighbourhoods. The argument is classical and elementary so we only give a sketch. 
 
\begin{proof}
The foliation by level sets $\mu^{-1}(c)$ is trivial in $U$, in particular the curve $\gamma$ is holonomy-free inside $\mu^{-1}(c_0)$. This foliation is sub-foliated by integral curves of $Y'$, so there exist charts trivializing both foliations simultaneously. Therefore using the compactness of $\gamma$, the fact that $Y$ and $Y'$ are close in a neighbourhood of $\Col_U(X,Y)$ for the $C^1$-topology and adapting the proof of the Long Tubular Flow Theorem (see \cite[Chapter 3, Proposition 1.1]{PdM}) we can construct the fibration $\Sigma$ over $\gamma$. The remaining properties follows from the same consideration.  	
\end{proof} 
}

\paragraph{Notations --} Until the end of the article we will adopt the following notations.

\begin{itemize}
\item For every $z\in\gamma$ the fiber $\Sigma(z)$ will be denoted by $\Sigma$ if there is no ambiguity.
\item $\cU_c$ will denote the annulus $\mu^{-1}(c)\cap\cU$.
\item For $z\in\gamma$, $I_c(z)$ will denote the embedded interval $\Sigma(z)\cap\cU_c=\Sigma(z)\cap\mu^{-1}(c)$. These intervals foliate $\Sigma(z)$. When there is no ambiguity we will write $I_c=I_c(z)$.
\end{itemize}

\subsubsection{Invariant subspaces for the derivatives of holonomy maps}

We state now a consequence of Lemma \ref{invariantspace}.  

\begin{lem}
\label{simtransv}
Let $\gamma\dans K$ be a periodic orbit of $Y$ such that there exists $x\in K\moins\per(Y)$ with $\omega_Y(x)=\gamma$. The following properties hold true. 
\begin{enumerate}
\item Let $\Sigma_1,\Sigma_2$ be two transverse sections of $Y_t$ cutting $\gamma$ at $z_1$ and $z_2$ respectively, such that there exists a holonomy map along $\gamma$ denoted by $P:(\Sigma_1,z_1)\to(\Sigma_2,z_2)$. Then the derivative $D_{z_1}P$ sends $T_{z_1}[\Sigma_1\cap\mu^{-1}(0)]$ to $T_{z_2}[\Sigma_2\cap\mu^{-1}(0)]$.
\item Let $\Sigma$ be a transverse section of $Y$ at $z\in\gamma$, and $P$ be the first return map to $\Sigma$. Then the eigenvalues of $D_zP$ are real.
\item If $W_{loc}^s(z)$ (resp. $W^u_{loc}(z)$) is transverse to $\mu^{-1}(0)$ at the point $z$, then for every $y\in\gamma$, $W^s_{loc}(y)$ (resp. $W^u_{loc}(y)$) is transverse to $\mu^{-1}(0)$.
\end{enumerate}
\end{lem}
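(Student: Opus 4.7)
All three items will follow from one construction. The hypothesis furnishes some $x \in K \moins \per(Y)$ with $\omega_Y(x) = \gamma$. Since $K \dans \mu^{-1}(0)$ and $K$ is $Y_t$-invariant (because $[X,Y]=0$), the whole forward orbit of $x$ is trapped in $\mu^{-1}(0) \cap K$. Its successive intersections with any cross-section $\Sigma$ cutting $\gamma$ therefore form a sequence $x_n$ converging to $\gamma \cap \Sigma$ and entirely contained in the arc $\alpha_\Sigma := \Sigma \cap \mu^{-1}(0)$. The plan is to feed this sequence into the three items of Lemma \ref{invariantspace}.

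\textbf{Items 1 and 2.} For Item 1, I apply the construction in $\Sigma_1$; the images $P(x_n)$ remain on the orbit of $x$, hence in $K \cap \Sigma_2 \dans \alpha_{\Sigma_2}$, and Item 1 of Lemma \ref{invariantspace} immediately gives $D_{z_1}P(T_{z_1}\alpha_{\Sigma_1}) \dans T_{z_2}\alpha_{\Sigma_2}$. For Item 2 I specialize to $\Sigma_1 = \Sigma_2 = \Sigma$ with $P$ the first-return map; once $n$ is large enough, the long tubular flow box around $\gamma$ (as in the nice tubular neighbourhood of Definition \ref{nicenbd}) forces $P(x_n) = x_{n+1}$. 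Item 2 of Lemma \ref{invariantspace} then yields a real eigenvalue of $D_zP$ with eigenvector along $T_z\alpha_\Sigma$. Since $D_zP$ is a $2 \times 2$ real matrix whose complex eigenvalues would have to come in a conjugate pair, the remaining eigenvalue is automatically real.

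\textbf{Item 3.} Working inside $\Sigma(z)$, transversality of $W^s_{loc}(z)$ and $\mu^{-1}(0)$ at $z$ is exactly the transversality of the two arcs $W^s_{loc}(z) \cap \Sigma(z)$ and $\alpha_{\Sigma(z)}$. Given $y \in \gamma$ and a holonomy map $P : (\Sigma(z),z) \to (\Sigma(y),y)$, the $Y_t$-invariance of the local stable manifold carries the first arc onto $W^s_{loc}(y) \cap \Sigma(y)$, while Item 1 ensures $D_zP$ sends $T_z\alpha_{\Sigma(z)}$ into $T_y\alpha_{\Sigma(y)}$. Invertibility of $D_zP$ preserves the transversality at $y$. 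The case of $W^u_{loc}$ is identical: only Item 1 is invoked and Item 1 does not distinguish between stable and unstable.

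\textbf{Main obstacle.} There is essentially no genuine difficulty beyond identifying the right sequences; the combinatorial core of the argument is already packaged in Lemma \ref{invariantspace}. The only point that deserves care is making sure that in Item 2 the first-return map really realizes $P(x_n) = x_{n+1}$, which is exactly what the long tubular flow structure of a nice tubular neighbourhood was designed to guarantee.
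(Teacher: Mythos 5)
Your proposal is correct and takes essentially the same route as the paper: both reduce each item to Lemma \ref{invariantspace} by observing that the forward $Y_t$-orbit of $x$ is trapped in $K\dans\mu^{-1}(0)$, so its successive hits on any section lie on the arc $\Sigma\cap\mu^{-1}(0)$. The only cosmetic difference is in Item 2, where the paper invokes only Item 1 of Lemma \ref{invariantspace} (an invariant $1$-dimensional real subspace already rules out complex eigenvalues), whereas you go through Item 2 of that lemma to produce the real eigenvalue explicitly; both variants are valid.
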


\begin{proof}

Let $\alpha_1$ and $\alpha_2$ denote respectively $\Sigma_1\cap\mu^{-1}(0)$ and $\Sigma_2\cap\mu^{-1}(0)$. These are two curves passing through $z_1$ and $z_2$ respectively.

By hypothesis there exist points $x_0,y_0,x_1,y_1,x_2,y_2...$ such that for every $n\in\N$, $x_n\in\Sigma_1$, $y_n=P(x_n)\in\Sigma_2$, and $x_n\to z_1$, $y_n\to z_2$ as $n\to\infty$. These correspond to points of the forward orbit of $x$ meeting successively $\Sigma_1$ and $\Sigma_2$. Since by hypothesis $x\in K$ we must have $x_n,y_n\in K$ for every $n$. In particular, for every $n$ we have $x_n\in \alpha_1$ and $y_n\in \alpha_2$. We deduce the first item by applying Lemma \ref{invariantspace}.

Assume now the hypotheses of the second item. The first item applied to $P$ shows that $D_zP$ preserves a $1$-dimensional subspace of the $2$-dimensional space $T_z\Sigma$. Hence it does not possess a complex eigenvalue, and the second item follows.

The last item clearly follows from the first item and from the invariance of $E^s$ under the flow.
\end{proof}

\begin{rem}
The conclusion of Lemma \ref{simtransv} remain unchanged if $\gamma=\alpha_Y(x)$ for $x\in K\moins\per(Y)$.
\end{rem}

\subsubsection{Confined periodic orbits}

In this section we establish Proposition~\ref{p.tangent}. We will argue by contradiction assuming the existence of $\gamma\dans K$ accumulated by the orbit of some $x\in K$ and whose stable/unstable manifold is transverse to the level set $\mu^{-1}(0)$ at some point $z\in\gamma$. Using a Poincar\'e-Bendixson like 
argument we will prove that nearby levels contain periodic orbits \emph{confined} between the stable/unstable manifold of $\gamma$ and regular non-periodic orbits of collinearity 
between $X$ and $Y$ (see Figure \ref{confouille}), which accumulate to $\gamma$. We will then prove that $x$ belongs to the stable/unstable manifold of every such confined periodic orbit (see Figure \ref{finprouille}), contradicting the fact that these orbits lie on different level sets.

\paragraph{Hypotheses --} Let $\gamma\dans K_l$ be a periodic orbit for $Y$. Without loss of generality we assume that there exists $x\in K$ with $\omega_Y(x)=\gamma$. 

As mentioned before, we argue by contradiction. We  will assume that $\gamma$ possesses a local stable manifold, 
which must be a $2$-dimensional annulus since $\gamma$ is not a sink (see Lemma \ref{l.nosinks}).

By Lemma \ref{simtransv}, since the local stable manifold of $\gamma$ is not everywhere tangent to $\mu^{-1}(0)$ it must be \emph{everywhere} transverse to $\mu^{-1}(0)$.

We consider $\cU$, a nice tubular neighbourhood of $\gamma$. As we mentioned before, $\cU$ is foliated by annuli $\cU_c=\cU\cap\mu^{-1}(c)$ and for every $z\in\gamma$, the fiber $\Sigma(z)$ is trivially foliated by arcs $I_c(z)=\Sigma(z)\cap\mu^{-1}(c)$. 

Since the local stable manifold $W^s_{loc}(z)$ is transverse to $\cU_0$, for every $z\in\gamma$, it must be transverse to $\cU_c$ for $|c|$ small enough, and the intersection $\cU_c\cap W^s_{loc}(\gamma)$ must be a simple essential closed curve. We denote it by $\alpha_c$.

Note that the arcs $I_c(z)$, are simple, connect the two boundary components of $\cU_c$ and that the projected vector field $Y'$, which is tangent to the level sets $\mu^{-1}(c)$, is transverse to each of these arcs.

\paragraph{Monotone sequences and confined periodic orbits --} 

In this paragraph we obtain the main ingredient of the proof of Proposition~\ref{p.tangent}: the existence of confined periodic orbits.

Let us start by observing that the orientation inside $\cU$ provides each $I_c$ with an order $<$. This order is \emph{coherent}: if $y_1<y_2\in I_c$ and $y_1',y_2'\in I_{c'}$ are close enough to $y_1$ and $y_2$ respectively, then $y_1'<y_2'$.

We fix $z_0\in\gamma$, and consider a point $x\in I_{0}(z_0)\cap K$ such that $\OO^+_Y(x)\dans\cU$ and $\omega_Y(x)=\gamma$. We can assume that $z_0<x$, the case
$x<z_0$ is entirely analogous. On $I_0(z_0)$ the sequence $x_0=x,x_1,x_2,...,x_n,...$ given by the successive intersection points $\OO^+_{Y}(x)\cap I_0(z_0)=\OO^+_{Y'}(x)\cap I_0(z_0)$ converges to $z_0$ and is monotone, i.e.
$$z_0<...<x_n<...<x_2<x_1<x_0=x$$
(this is the principal ingredient of the proof of Poincar\'e-Bendixson's theorem: see \cite[Chapter 11, Proposition 1]{HirschSmale}) .

Now since the order $<$ is coherent (in the sense defined above) and since $Y'$ is transverse to the arcs $I_c$, the following holds. If $y\in\Sigma_{\eta}(z_0)\cap \cU_c$ is close enough to $x$ we also must have a monotone sequence
$$...<y_n<...<y_2<y_1<y_0=y$$
of intersection points $\OO_{Y'}^+(y)\cap I_c(z_0)$ (these points might a priori leave $\cU_c$).

When $|c|$ is small enough, the closed curve $\alpha_c$ meets $I_c(z_0)$ at a unique point that we will denote by $z_c$.

The next lemma shows how when $|c|$ is small enough, there exists a periodic orbit $\gamma_c\dans\cU_c$ of $Y'$ included in $K_c$ which is confined between $\alpha_c$ and the forward orbit of a point $y\in K_c\cap \cU_c$.

\begin{lem}[Confined periodic orbits]
\label{confined}
When $|c|$ is small enough there exists $y\in K_c\cap\cU_c$ satisfying the following properties.

\begin{enumerate}
\item The forward orbit $\OO^+_{Y'}(y)$ is included inside $\cU_c$ and $\gamma_c=\omega_{Y'}(y)$ is a periodic orbit for $Y'$ included inside $K_c$.
\item The intersection $\gamma_c\cap I_c(z_0)$ is reduced to a unique point $x_c$ satisfying $y'>x_c>z_c$ for every $y'\in\OO^+_{Y^{\prime}}(y)\cap I_c(z_0)$.
\item The periodic orbit $\gamma_c$ is simple, essential in $\cU_c$ and disjoint from $\alpha_c$.
\end{enumerate}
\end{lem}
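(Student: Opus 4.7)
The plan is to find $y \in K_c \cap I_c(z_0)$ via the lower semi-continuity of $Z$ at $0$, and then to run a Poincar\'e--Bendixson argument for the 2-dimensional non-vanishing vector field $Y'$ on the annulus $\cU_c$, using the simple essential closed curve $\alpha_c$ as a confining barrier. The central observation is that for $c\neq 0$ the set $K_c$ is disjoint from $W^s_{loc}(\gamma)$: indeed, if $p\in K_c\cap W^s_{loc}(\gamma)$ then $\OO_Y(p)\subset K_c$ (closed and $Y_t$-invariant) and $\omega_Y(p)=\gamma$, so $\gamma\subset K_c\cap K_0=\vide$, which is absurd because $K_c$ and $K_0$ lie in distinct level sets of $\mu$. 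Since $Y=Y'$ on $\Col_U(X,Y)\supset K_c$, this translates into the dynamical statement that no $Y'$-orbit starting in $K_c$ can meet $\alpha_c$.

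First I would fix $N$ large enough that $x_N\in\Sigma_{\eta/2}(z_0)$, which is possible since $x_n\to z_0$; note that $x_N\in K_0$. By lower semi-continuity of $Z$ at $0$ (which holds because $0\in\cR$), for $|c|$ small there is a point of $K_c$ arbitrarily close to $x_N$; sliding along the flow of $Y$, which preserves $K_c$, one may place this point inside $\Sigma(z_0)$, producing $y\in K_c\cap I_c(z_0)\cap\Sigma_\eta(z_0)$ close to $x_N$. The point $z_c=\alpha_c\cap I_c(z_0)$ depends continuously on $c$ with $z_c\to z_0$ as $c\to 0$, and $x_N>z_0$ in the coherent order; hence $z_c<y$ for $|c|$ small enough. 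The disjointness observation above then gives $\OO_{Y'}(y)\cap\alpha_c=\vide$, so the $Y'$-orbit of $y$ is trapped in the component of $\cU_c\setminus\alpha_c$ containing $y$.

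Next I would define the first return map $P'_c$ of $Y'$ to $I_c(z_0)$; it is well-defined near $z_c$ by continuity in $c$ together with the nice tubular neighbourhood property. The sequence $y_n:=(P'_c)^n(y)$ starts with $y_1<y_0=y$, by continuity from $x_{N+1}<x_N$, and is strictly decreasing by the standard Poincar\'e--Bendixson monotonicity argument (transversality of $Y'$ to the arcs $I_c(z_0)$). The barrier gives $y_n>z_c$ for every $n$, so the sequence remains in the compact interval $[z_c,y_0]\subset I_c(z_0)\cap\Sigma_\eta(z_0)$ and is therefore well-defined for all $n$. It converges monotonically to some $x_c:=\lim y_n\geq z_c$ with $P'_c(x_c)=x_c$ by continuity of $P'_c$. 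Since every $y_n$ lies in $K_c$ and $K_c$ is closed, $x_c\in K_c$; but $z_c\notin K_c$ by the key disjointness, so $x_c>z_c$ strictly. Setting $\gamma_c:=\OO_{Y'}(x_c)$ gives the desired periodic orbit in $K_c$; it meets $I_c(z_0)$ only at $x_c$ because $P'_c(x_c)=x_c$ (one full turn around $\cU_c$), hence it is simple and essential in $\cU_c$; and its disjointness from $\alpha_c$ is $\gamma_c\subset K_c$ together with $K_c\cap W^s_{loc}(\gamma)=\vide$. The strict inequalities $y'>x_c>z_c$ in item 2 are just the strict monotonicity $y_n>y_{n+1}>x_c$ combined with $x_c>z_c$.

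The principal technical obstacle is keeping the iterates $y_n$ inside the domain of $P'_c$; this is precisely why one first replaces $x$ by a far enough iterate $x_N$ close to $z_0$, so that the starting point $y$ is close to $z_c$ and the monotonic sequence cannot escape before the barrier forces convergence.
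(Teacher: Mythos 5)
Your proof is correct and follows essentially the same route as the paper: lower semi-continuity of $Z$ at $0$ to produce $y\in K_c$ near the orbit of $x$, the curve $\alpha_c=W^s_{loc}(\gamma)\cap\cU_c$ as a separating barrier in the annulus (disjoint from $K_c$ since the two sets lie in different level sets of $\mu$), and Poincar\'e--Bendixson monotonicity of the crossings $y_n$ to force convergence to a fixed point $x_c$ of the return map. Your explicit replacement of $x$ by a deep iterate $x_N$ close to $z_0$ before invoking semi-continuity is a clean way of handling a technical point that the paper treats more implicitly.
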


\begin{figure}[!h]
\centering
\includegraphics[scale=0.6]{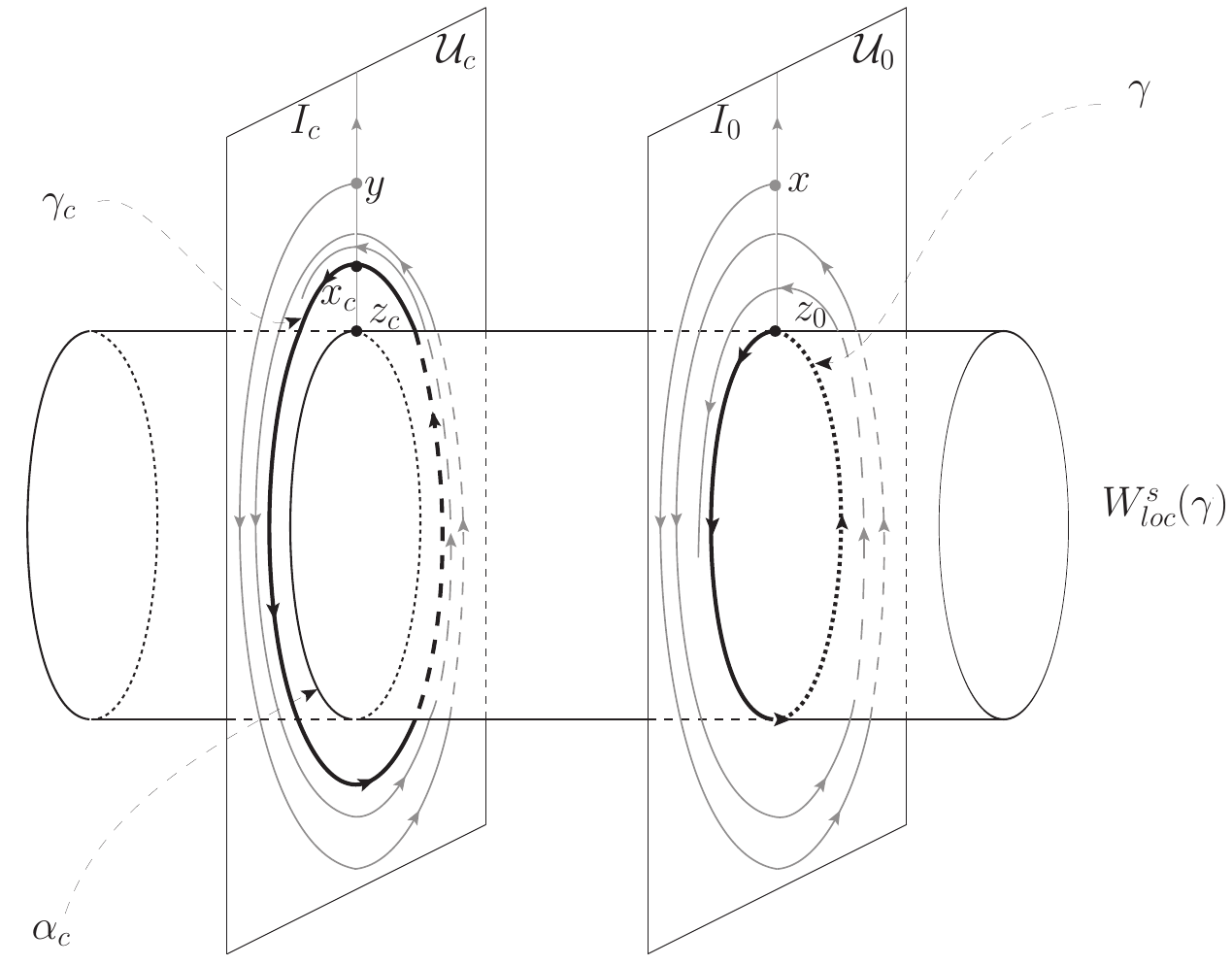}
\caption{Confined periodic orbits}
\label{confouille}
\end{figure}

\begin{proof}
We assumed that $z_0<x$ so by lower-semicontinuity of the function $Z:c\mapsto K_c\cap\cU$, there exists $y\in K_c\cap\cU$ (in particular $y\in\cU_c$) such that $y\in\Sigma_{\eta}(z_c)$ and $z_c<y$. As we explained in a previous paragraph if $y$ is chosen close enough to $x$ the successive points of $\OO^+_{Y'}(y)\cap I_c(z_0)$ form a decreasing sequence 
$$...<y_n<...<y_2<y_1<y_0=y$$
which might a priori be finite, if it were to leave $\cU$.

On the one hand $\OO_{Y'}(y)=\OO_Y(y)$ does not intersect $W^s_{loc}(\gamma)$, since otherwise, the forward $Y_t$-orbit of $y$ would accumulate on $\gamma$. This is impossible because these two orbits are included in different level sets of $\mu$.

On the other hand, $\alpha_c=W^s_{loc}(\gamma)\cap\cU_c$ is closed, simple and essential inside the annulus $\cU_c$. So it must disconnect this annulus. In other words, the forward $Y'_t$ orbit of $y$, which is decreasing and above $\alpha_c$, cannot leave $\cU_c$ without intersecting $\alpha_c$, which is included inside $W^s_{loc}(\gamma)$.

As a consequence the sequence $(y_n)_{n\in\N}$ defined above is infinite, decreasing and satisfies $y_n> z_c$ for every $n\in\N$. Therefore it must have a limit $x_c\in I_c(z_0)$. Since it is true that for every $n\in\N$, $y_{n+1}=P(y_n)$, we must have $P(x_c)=x_c$ and $\gamma_c=\OO_Y(x_c)$ is a periodic orbit of $Y'$.

Since $\OO_{Y^{\prime}}(y)=\OO_Y(y)\dans K_c$ we must have $\gamma_c\dans K_c$, proving the first item. The orbit $\gamma_c$ meets $I_c(z_0)$ at a unique point because it is transverse to the fibers $I_c$ and the intersection with $I_c(z_0)$ can't be monotone (this is another step in Poincar\'e-Bendixson's theorem). This prove the second item. It is simple because it is an orbit of $Y'$, and essential because it is transverse to the fibers $I_c$. Finally, it must clearly be disjoint from $W^s_{loc}(\gamma)$ since it is disjoint from $\gamma$ (the two periodic orbits of $Y$ lie on different level sets of $\mu$). This proves the last item.
\end{proof}

\begin{rem}
\label{convergingorbouilles}
The curves $\gamma_c$ and $\alpha_c$ are both essential, simple inside the annulus $\cU_c$, and they are disjoint. As a consequence they bound an annulus inside $\cU_c$.

The point $y$ obtained in Lemma \ref{confined} can be obtained close to a point of the forward orbit of $x$, which is arbitrarily close to $\gamma$. Hence the orbit $\gamma_c$ may be chosen arbitrarily close to $\gamma$ in the $C^3$-topology.
\end{rem}

\begin{figure}[!h]
\centering
\includegraphics[scale=0.6]{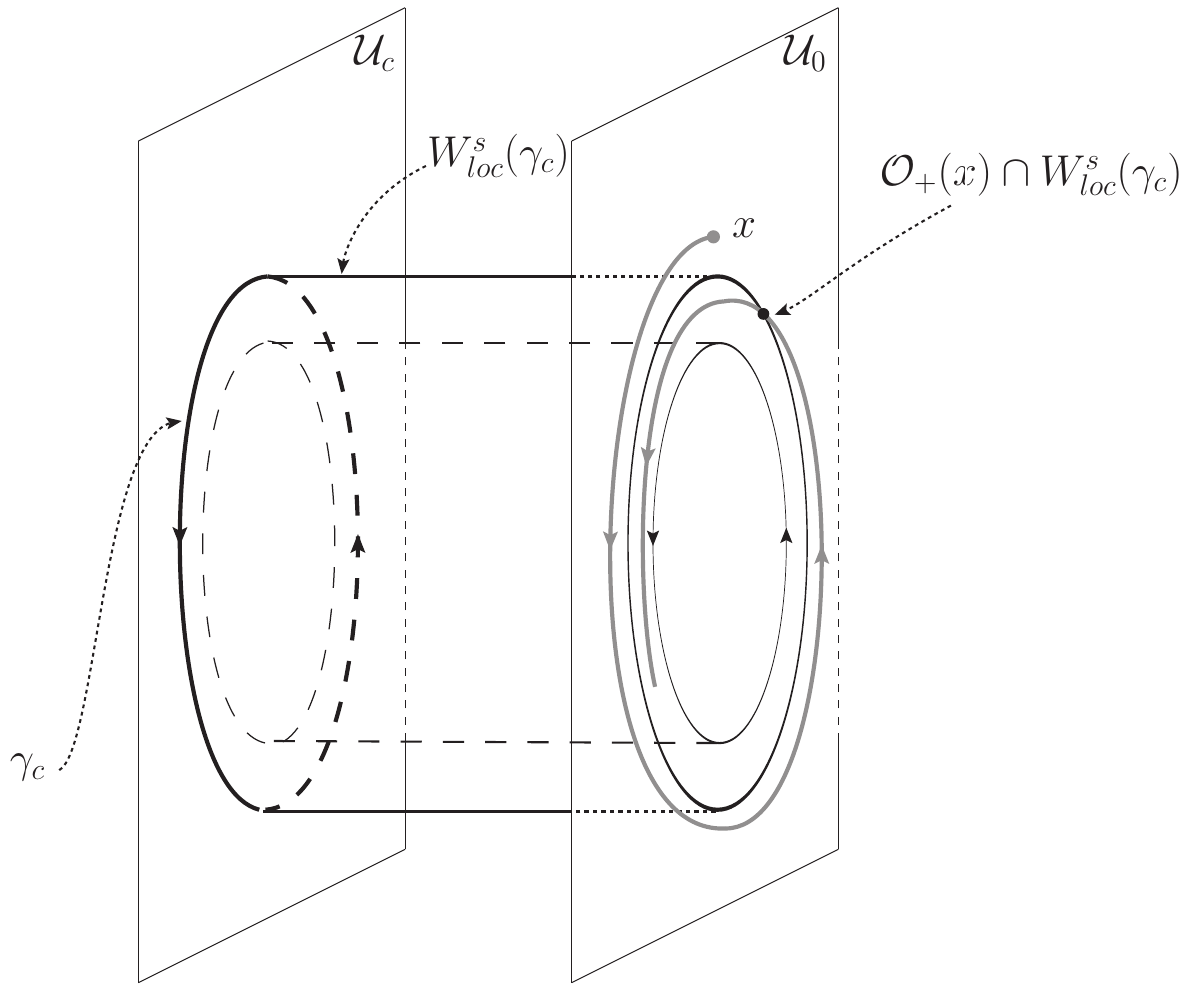}
\caption{Proof of Proposition \ref{p.tangent}}
\label{finprouille}
\end{figure}

\paragraph{End of the proof of Proposition \ref{p.tangent} --} By Remark \ref{convergingorbouilles} the periodic orbit $\gamma_c$ obtained in Lemma \ref{confined} can be chosen arbitrarily close to $\gamma$. In particular,
by continuity of the stable manifold, it is possible to assume that its local strong stable manifold $W^s_{loc}(\gamma_c)$ is transverse to $\cU_c$ and intersects transversally $\cU_0$ (see the stable manifold theorem for partial hyperbolicity \cite{HPS}).

The intersection $W^s_{loc}(\gamma_c)\cap\cU_0$, denoted by $\beta$ must be a simple closed curve, transverse to the fibers $I_0$ (if $c$ is small enough). So this simple closed curve must be essential. In particular it disconnects $\cU_0$. Also it must be disjoint from $\gamma$, because, as we saw in the proof of Lemma \ref{confined}, $\gamma$ cannot meet the stable manifold of $\gamma_c$.

By coherence of the order $<$, the point $x$ lies inside a connected component of $\cU_0\moins\beta$, and $\gamma$ is included inside the other one. We deduce that the forward orbit of $x$, which accumulates on $\gamma$, must intersect $\beta$. This is absurd because then we would have $\omega_Y(x)=\gamma_c$. Proposition \ref{p.tangent} follows.

\subsection{Structure of the linked and non-linked components of the zero set}

In this section, our main goal will be to show that there are only finitely many periodic orbits in $\Col_{U}(X,Y)$ with a stable (resp. unstable) manifold tangent to the level set of $\mu$ (see the Finiteness lemma \ref{l.lesorbitesliessonfini} below). { This will allow us to \textit{isolate} the disjoint sets $K_{nl}$ and $K_{ms}$. This step is fundamental in our strategy because it will permit us to calculate the Poincaré-Hopf index separately. }

\subsubsection{No saddle}

 The first step will be establishing that for a generic $c_0$, $Y_t$ has no hyperbolic periodic orbit included in $K_{c_0}$ (recall that by Lemma \ref{l.nosinks}, we know that for such a $c_0$, there is no attracting nor repelling periodic orbit included in $K_{c_0}$). This is a consequence of Proposition \ref{p.tangent}.
\begin{prop}[No saddle]
\label{p.nosaddle}
Let $c_0\in\cR$, $\gamma\dans K_{c_0}$ be a $Y_t$-periodic orbit and $P$ be the first return map at $z\in\gamma$. Then $D_zP$ has an eigenvalue of modulus $1$.
\end{prop}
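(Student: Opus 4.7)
I argue by contradiction, assuming that both eigenvalues of $D_zP$ have modulus different from $1$. If they are a pair of complex conjugates, they share a common modulus $\neq 1$, so $\gamma$ is an attracting (spiral sink, $|\lambda|<1$) or repelling (spiral source, $|\lambda|>1$) periodic orbit of $Y$; this contradicts Lemma~\ref{l.nosinks}, since $c_0\in\cR$ is a continuity point of $Z$. The same lemma rules out the real cases $|\lambda_1|,|\lambda_2|<1$ (sink) and $|\lambda_1|,|\lambda_2|>1$ (source). Hence the eigenvalues are real with $|\lambda_1|<1<|\lambda_2|$ and $\gamma$ is a hyperbolic saddle: the local stable and unstable manifolds $W^s_{\mathrm{loc}}(\gamma)$ and $W^u_{\mathrm{loc}}(\gamma)$ are $2$-dimensional, contain $\gamma$, and intersect transversely along it, so at $z$ the two $2$-planes $T_zW^s_{\mathrm{loc}}(\gamma)$ and $T_zW^u_{\mathrm{loc}}(\gamma)$ meet exactly in $T_z\gamma$ and, in particular, are distinct.

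\textbf{Forcing $\gamma$ to be both $\omega$- and $\alpha$-linked in $K_{c_0}$.} Suppose, towards a contradiction, that $\gamma$ is not $\omega$-linked. Then $W^s_{\mathrm{loc}}(\gamma)\cap K_{c_0}=\gamma$, because any point $y\in W^s_{\mathrm{loc}}(\gamma)\cap K_{c_0}\setminus\gamma$ satisfies $\omega_Y(y)=\gamma$ and would link $\gamma$. By the lower semi-continuity of $Z$ at $c_0$ applied along a shrinking basis of neighbourhoods of $\gamma$, I can choose $c_n\to c_0$ with $c_n\neq c_0$ and points $y_n\in K_{c_n}$ such that $y_n\to y_\infty\in\gamma$ after extraction. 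Since $c_n\neq c_0$, the orbit of $y_n$ meets neither $W^s(\gamma)$ nor $W^u(\gamma)$: if it did, its $\omega$- or $\alpha$-limit would equal $\gamma$, hence $\gamma\subset\overline{\OO_Y(y_n)}\subset K_{c_n}$, and the collinearity relation on $\gamma$ would force $c_n=c_0$. Now fix a small tubular neighbourhood $V_0$ of $\gamma$ on which the Hartman-Grobman normal form for the hyperbolic periodic orbit is valid. For large $n$ the backward $Y_t$-orbit of $y_n$ exits $V_0$ at a first time $T_n^-<0$; set $w_n=Y_{T_n^-}(y_n)\in\partial V_0$. The escape dynamics at the saddle (i.e.\ Hartman-Grobman together with the $\lambda$-lemma applied to the Poincar\'e map $P$ at $z$) shows that, up to a subsequence, $w_n\to w^{\ast}\in W^s_{\mathrm{loc}}(\gamma)\cap\partial V_0$. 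By flow-invariance $w_n\in K_{c_n}$, and by upper semi-continuity of $Z$ at $c_0$ the limit $w^\ast$ lies in $K_{c_0}$. But $w^\ast\in\partial V_0$ is disjoint from $\gamma\subset\Int(V_0)$, so $w^\ast\in (W^s_{\mathrm{loc}}(\gamma)\cap K_{c_0})\setminus\gamma$, contradicting the assumption. Hence $\gamma$ is $\omega$-linked, and the symmetric argument using the forward exit (whose limits fall in $W^u_{\mathrm{loc}}(\gamma)\cap\partial V_0$) shows that $\gamma$ is $\alpha$-linked too.

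\textbf{Conclusion via the tangency proposition.} I apply Proposition~\ref{p.tangent} at the level $c_0$: after replacing $X$ by $X-c_0Y$ (which commutes with $Y$, preserves $\Col_U(X,Y)$, and shifts the distinguished level to $\mu^{-1}(c_0)$), the resulting data is still a prepared triple because $c_0\in\cR$ is simultaneously a regular value of $\mu$ and a continuity point of $Z$. The $\omega$-linkage of $\gamma$ then yields $T_zW^s_{\mathrm{loc}}(\gamma)=T_z\mu^{-1}(c_0)$ and the $\alpha$-linkage yields $T_zW^u_{\mathrm{loc}}(\gamma)=T_z\mu^{-1}(c_0)$; hence $T_zW^s_{\mathrm{loc}}(\gamma)=T_zW^u_{\mathrm{loc}}(\gamma)$, contradicting the transversality established in the first step. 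The main obstacle I foresee is the escape claim of the middle paragraph, namely that backward exits of $K_{c_n}$-points near $\gamma$ accumulate on $W^s_{\mathrm{loc}}(\gamma)\cap\partial V_0$. This is a classical consequence of the hyperbolic linearisation of the Poincar\'e map at $z$, but it has to be combined carefully with the semi-continuity of $c\mapsto K_c$ at $c_0$; everything else in the argument is a direct application of Proposition~\ref{p.tangent}, Lemma~\ref{l.nosinks}, and basic linear algebra in $T_zM$.
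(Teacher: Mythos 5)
Your argument is correct, but it follows a genuinely different route from the paper's. After the common reduction to a hyperbolic saddle (Lemma~\ref{l.nosinks} excludes sinks, sources and complex eigenvalues), the paper goes in the opposite direction from you: it invokes Proposition~\ref{p.tangent} to \emph{exclude} $\gamma\in K_l$, concludes that $z$ is the unique point of $K_{c_0}$ on the transversal $\Sigma(z)$, and then uses Lemma~\ref{orbitesconfinees} together with the Denjoy--Schwartz monotonicity of $(P^n(x))_{n}$ on the arcs $I_c(z)$, for $c$ near $c_0$, to manufacture a second fixed point of $P$ lying on a distinct $\mu$-level, contradicting the isolation of $z$ for a hyperbolic saddle. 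You instead use the $\lambda$-lemma (backward exits from a linearizing neighbourhood of a saddle accumulate on $W^s_{loc}\cap\partial V_0$) combined with the lower semi-continuity of $Z$ at $c_0$ and the invariance of each $K_c$ to produce a point of $K_{c_0}\setminus\gamma$ in $W^s_{loc}(\gamma)$, and symmetrically one in $W^u_{loc}(\gamma)$; this \emph{forces} $\gamma$ to be both $\omega$- and $\alpha$-linked, so Proposition~\ref{p.tangent} applies to both invariant manifolds and gives $T_zW^s_{loc}(\gamma)=T_z\mu^{-1}(c_0)=T_zW^u_{loc}(\gamma)$, contradicting the transverse intersection of the two manifolds along $\gamma$. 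Both proofs ultimately rest on Proposition~\ref{p.tangent}, but yours is more explicit on one point: when the paper writes that ``both the stable and the unstable manifolds'' would be tangent if $\gamma\in K_l$, it is implicitly assuming $\gamma$ linked in both directions, which your $\lambda$-lemma step proves rather than assumes. Two things to keep in mind when tightening the write-up: first, the shift $X\mapsto X-c_0Y$ and the verification that the restricted triple around $\mu^{-1}(c_0)$ is again prepared (using that $c_0\in\cR$ is a regular value of $\mu$, a continuity point of $Z$, and that item 4 of Definition~\ref{d:prepared_triple} persists near $c_0$) should be spelled out, since Proposition~\ref{p.tangent} is only stated at level $0$; second, in extracting $w^{\ast}$ you only need the elementary fact that $w_n\in\Zero(X-c_nY)$ passes to the limit, not the full upper semi-continuity of $Z$.
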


The next lemma is an ingredient for the proof of Proposition~\ref{p.nosaddle}, which will also be useful in the sequel. 

\begin{lem}
	\label{orbitesconfinees}
	Let $c_0\in\cR$ be a continuity point of the map $c\mapsto K_c$. Assume that $\gamma\subset K_{c_0}$ is isolated in $K_{c_0}$, and let $\cU$ be an isolating neighbourhood.
	For $c$ close enough to $c_0$ the following properties hold true
	\begin{enumerate}
		\item $K_c\cap\cU\neq\vide$.
		\item For every $x\in K_c\cap\cU$ the complete orbit $\OO_Y(x)$ is included inside $\cU$.
	\end{enumerate}
\end{lem}

\begin{proof}
	The first item of the lemma holds when $c$ is close enough to $c_0$ due to the lower semi-continuity at $c_0$ of the map $Z:c\mapsto K_c$, and to the fact that $\gamma\dans K_{c_0}\cap \cU$.
	
	By our choice of $\cU$ we have $K_{c_0}\cap\partial\cU=\vide$ so, by compactness of $\partial\cU$, when $c$ close enough to $c_0$, we have $K_{c_0}\cap\partial\cU=\vide$.
	
	Consider now $c$, with $|c-c_0|$ small enough such that $K_c\cap\cU\neq\vide$. Take a point $x\in K_c\cap\cU$. By $Y_t$-invariance of $K_c$, we conclude that the orbit $\OO_Y(x)$ does not meet $\partial\cU$. Hence it must be entirely included in $\cU$, proving the second item.
\end{proof}

\begin{proof}[Proof of Proposition~\ref{p.nosaddle}]
Let $\gamma\subset K_{c_0}$ be a $Y_t$-periodic orbit. Let $\cU$ be a nice tubular neighbourhood of $\gamma$. Assume by contradiction that the Poincar\'e map $P:\Sigma_{\eta}(z)\to\Sigma(z)$ at a point $z\in\gamma$ has no eigenvalue of modulus $1$. By Lemma~\ref{l.nosinks}, $\gamma$ is hyperbolic of saddle type. We can assume that $z$ is the only fixed point of $P$ inside $\Sigma(z)$.

We cannot have $\gamma\subset K_l$, since otherwise Proposition~\ref{p.tangent} would imply that both the stable and unstable manifolds of $\gamma$ are tangent to the level set $\mu^{-1}(c_0)$, which is absurd. Therefore, $\gamma\subset K_{nl}$.  

We claim that $\{z\}=K_{c_0}\cap\Sigma(z)=K_{c_0}\cap I_{c_0}$. Indeed, assume by contradiction the existence of another $x\in K_{c_0}\cap I_{c_0}$. By our assumption on $\cU$, the point $x$ is not fixed by $P$. Since the orbit of $x$ under $Y$ and $Y'$ coincide we know that the successive intersection points of the orbit of $x$ with $I_{c_0}(z)$ are monotone. Without loss of generality we can suppose that $x>z$ and that for every $k\geq 0$
\begin{equation}
\label{e.zet}
z\leq P^{k+1}(x)\leq P^k(x).
\end{equation}
This sequence of elements of $K_{c_0}$ must converge to a fixed point $y$ of $P$. So it must be equal to $z$, which then has to belong to $K_l$. As we showed above, this is absurd.

Now we can use Lemma \ref{orbitesconfinees}. If $c$ is close enough to $c_0$ then there exists $x\in K_c\cap\cU$, and its full orbit $\OO_Y(x)$ is contained inside $\cU$. Using one more time the vector field $Y'$ we see that the sequence $(P^n(x))_{n\in\N}$ is monotone in $I_c(z)$ and thus accumulates to a fixed point $y$ of $P$. This contradicts the fact that $z$ is the unique fixed point of $P$ inside $\Sigma(z)$.
\end{proof}

\subsubsection{Finiteness lemma and index at the linked component}

Before stating our next result we need the following theorem stated below, which is a consequence of a general result about codimension $1$ foliations due to Haefliger (see \cite[Proposition 3.1, Th\'eor\`eme 3.2 ]{Haef}).

\begin{thm}[Haefliger]
	\label{l.haef}
	Let $Y$ be a non-vanishing vector field over a compact surface $S$, possibly with boundary. Then, for every $x\in S$ there exists a transverse segment which cuts every periodic orbit of $Y$ in at most one point. Moreover,
	one has that $\overline{\per(Y)}=\per(Y)$.
\end{thm}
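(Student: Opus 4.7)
The plan is to translate the statement into the language of codimension-one foliations and invoke Haefliger's theorems directly. Since $Y$ has no zeros, its orbits constitute a $C^r$ one-dimensional foliation $\mathcal{F}$ of $S$, which is codimension one in dimension two. Under this correspondence, periodic orbits of $Y$ are precisely the compact leaves of $\mathcal{F}$. The boundary case is easily handled: if $Y$ is tangent to $\partial S$, each boundary component is already a periodic orbit which fits into $\mathcal{F}$ as a compact leaf; otherwise one extends $Y$ to a non-vanishing vector field on a slightly enlarged open surface $S'\supset S$ by collaring, and works there before restricting.

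First I would prove the existence of the transverse segment. The key local fact is that a periodic orbit $\gamma$ passing twice through an arc $T$ at $x$ induces, by flowing from one intersection point to the other and back, a non-trivial holonomy diffeomorphism of $T$ fixing the point $\gamma \cap T$. This is exactly the setting of Haefliger's Proposition~3.1, which produces, for any point of a $C^r$ codimension-one foliation on a manifold, a transverse segment through that point meeting every compact leaf at most once. Transferring back to $Y$, this delivers the transverse segment we want.

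For the closedness statement $\overline{\per(Y)}=\per(Y)$, I would apply Haefliger's Th\'eor\`eme~3.2, which says that the union of compact leaves of a codimension-one foliation (of sufficient regularity) is closed in the ambient manifold. Translating back, this is exactly the statement that accumulation points of periodic orbits are again periodic. Alternatively, one can deduce it directly from the first part: if $\gamma_n$ are periodic orbits with $\gamma_n\to z$, take $T$ as in the first assertion through $z$; each $\gamma_n$ meets $T$ at a unique point $z_n\to z$, which is a fixed point of the corresponding first return map. A compactness and continuity argument on return times, using $C^2$-regularity to rule out Denjoy-type pathologies, yields periodicity of the orbit of $z$.

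The main obstacle is the regularity hypothesis: Haefliger's results, and the Denjoy-Schwartz phenomenon that lurks behind them, fundamentally need at least $C^2$ regularity to preclude exceptional minimal sets and runaway return times. In our setting this is granted by the $C^3$ assumption on $Y$ throughout the paper, so no issue arises. The only other mild subtlety is ensuring that Haefliger's local statement can be made uniform enough to be useful in the forthcoming arguments; but the statement as phrased here asks only for one segment through one point $x$, which is exactly what Proposition~3.1 provides.
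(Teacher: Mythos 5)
Your proposal matches the paper's own treatment: the paper gives no independent proof of this statement but derives it directly from Haefliger's Proposition~3.1 and Th\'eor\`eme~3.2 in~\cite{Haef}, after reading the non-vanishing vector field as a codimension-one foliation of the surface, exactly as you do. Your additional remarks --- handling a boundary by collaring, and the alternative deduction of $\overline{\per(Y)}=\per(Y)$ from the transverse-arc statement via a continuity argument on first return maps --- are correct and harmless elaborations of the same route, and the regularity caveat you raise is indeed absorbed by the paper's standing $C^3$ hypothesis.
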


\begin{lem}[Finiteness lemma]
  \label{l.lesorbitesliessonfini}
Let $c\in\cR$. Then, there are only finitely many periodic orbits of $Y$ which: 
\begin{enumerate}
 \item are included in $K_c$;
 \item have a stable (or unstable) manifold everywhere tangent to the level set $\mu^{-1}(c)$.
\end{enumerate}
\end{lem}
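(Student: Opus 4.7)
The plan is to argue by contradiction. Suppose there exists an infinite sequence of distinct periodic orbits $\gamma_n \subset K_c$ of $Y$, each whose stable manifold is everywhere tangent to $\mu^{-1}(c)$; the case of unstable manifolds is entirely symmetric. Since a $C^r$ two-dimensional submanifold tangent at every one of its points to the integrable plane field $T\mu^{-1}(c)$ must be contained in a single leaf, we in fact have $W^s_{\mathrm{loc}}(\gamma_n) \subset \mu^{-1}(c)$. Consequently each $\gamma_n$ is an attracting hyperbolic periodic orbit of the projected vector field $Y'$ on the compact surface $\mu^{-1}(c)$, and because $Y$ is tangent to $W^s_{\mathrm{loc}}(\gamma_n) \subset \mu^{-1}(c)$, one has $Y = Y'$ along this stable manifold; in particular the $Y'$-Poincar\'e multiplier of $\gamma_n$ coincides with the multiplier $\lambda_n \in (0,1)$ of the full three-dimensional $Y$-Poincar\'e map in the direction of $I_c$.

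Applying Haefliger's Theorem~\ref{l.haef} to $Y'$ on $\mu^{-1}(c)$, the set $\per(Y')$ is closed; passing to a subsequence, points $z_n \in \gamma_n$ converge to some $z_\infty$ lying on a periodic orbit $\gamma_\infty$ of $Y'$, and $\gamma_\infty \subset K_c \subset \Col_U(X,Y)$ by closedness of $K_c$. Haefliger's theorem also supplies a transverse segment $T \subset \mu^{-1}(c)$ through $z_\infty$ which meets every $Y'$-periodic orbit in at most one point; for $n$ large enough $T \cap \gamma_n = \{z_n\}$, and the first return map $P' \colon T \to T$ of $Y'$ is well defined near $z_\infty$, with $(P')'(z_n) = \lambda_n$ and $P'(z_\infty) = z_\infty$.

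Since the fixed points $z_n$ of $P'$ accumulate at the fixed point $z_\infty$, item~3 of Lemma~\ref{invariantspace} applied in dimension one (or equivalently Rolle's theorem applied to $g(z) = P'(z) - z$) yields $(P')'(z_\infty) = 1$, and hence $\lambda_n \to 1$ by continuity of $(P')'$. On the other hand, applying hypothesis $(\ast)$ together with Proposition~\ref{p.nosaddle} to the periodic orbit $\gamma_\infty \subset \Col_U(X,Y)$, the spectrum of the full three-dimensional $Y$-Poincar\'e map $P$ at $z_\infty$ is $\{1, \beta\}$ with $|\beta| \neq 1$, whereas the spectrum at each $z_n$ is $\{1, \lambda_n\}$. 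Since $DP$ is continuous, these spectra converge as multisets; combined with $\lambda_n \to 1$, the limit multiset is $\{1,1\}$, so $\beta = 1$, contradicting $|\beta| \neq 1$.

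The delicate point of the plan is the identification $(P')'(z_n) = \lambda_n$, which relies critically on the containment $W^s_{\mathrm{loc}}(\gamma_n) \subset \mu^{-1}(c)$: this forces $Y = Y'$ along $W^s_{\mathrm{loc}}(\gamma_n)$, so that the two flows share exactly the same dynamics on the invariant surface $W^s_{\mathrm{loc}}(\gamma_n)$ and the one-dimensional and two-dimensional Poincar\'e multipliers agree. Without this identification, the one-dimensional Rolle argument forcing $\lambda_n \to 1$ could not be invoked, and the spectral contradiction would collapse.
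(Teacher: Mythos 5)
Your overall outline tracks the paper's route (argue by contradiction, extract an accumulation orbit via Haefliger's Theorem~\ref{l.haef}, pin down the eigenvalue $1$ along $I_c$ at the limit orbit using item 3 of Lemma~\ref{invariantspace}, then combine $(\ast)$, Proposition~\ref{p.nosaddle} and continuity of $D_zP$ to force a spectral contradiction), but two of your intermediate steps do not hold as stated. The containment $W^s_{loc}(\gamma_n)\subset\mu^{-1}(c)$ does not follow from the hypothesis. In the paper, ``everywhere tangent to $\mu^{-1}(c)$'' means that the stable eigenspace $E^s(z)$ of the $Y$-Poincar\'e map lies in $T_z\mu^{-1}(c)$ at every point $z\in\gamma_n$ (this is the dichotomy tangent/transverse extracted in Lemma~\ref{simtransv}(3)); it does \emph{not} assert that the $2$-dimensional stable manifold is tangent to the plane field $T\mu^{-1}$ at all of its own points. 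Since the level sets of $\mu$ are $Y'$-invariant but, under hypothesis $(\ast)$, not $Y$-invariant, the $Y$-stable manifold of $\gamma_n$ will in general cross nearby levels. Your identification $(P')'(z_n)=\lambda_n$, which you single out as the delicate step and justify precisely via this containment, therefore remains unproved. (The analogous identification in the Morse--Smale case, Lemma~\ref{l.specpoincouille}, is obtained only by exploiting accumulation by nonperiodic orbits of $K_c$, which is not available for a general $\gamma_n$ here.) The detour through $Y'$ is in fact unnecessary: the line $T_zI_c$ is $D_zP$-invariant both at $z_n$ (by the tangency hypothesis) and at the limit point (by Lemma~\ref{invariantspace}(3)), so putting $D_zP$ in the upper-triangular form adapted to $(e_1,e_2)$ and using continuity of $D_zP$ already gives $\lambda_n\to1$ directly for the $2$-dimensional return map of $Y$, as in the paper.

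The second gap is the unproved assertion that the spectrum of $D_{z_n}P$ equals $\{1,\lambda_n\}$. Proposition~\ref{p.nosaddle} (applicable since $c\in\cR$ and $\gamma_n\subset K_c$) yields an eigenvalue of modulus $1$, but nothing in your argument pins it to $+1$. The repair is short and actually simplifies the proof: since $|\lambda_n|<1$ is the eigenvalue along $T_{z_n}I_c$, Proposition~\ref{p.nosaddle} forces the transverse eigenvalue $\beta_n$ to satisfy $|\beta_n|=1$; continuity in the adapted basis gives $\beta_n\to\beta$, hence $|\beta|=1$, contradicting $(\ast)$. The paper runs the same continuity argument in the opposite direction: from $\beta_n\to\beta$ and $|\beta|\neq1$ it concludes $|\beta_n|\neq1$ for large $n$, so $\gamma_n$ is hyperbolic, contradicting Lemma~\ref{l.nosinks} or Proposition~\ref{p.nosaddle}. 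Either direction closes the argument; your version as written does not, because it rests on the unjustified equality $\beta_n=1$ (and on the unjustified reduction of $\lambda_n$ to a one-dimensional $Y'$-multiplier discussed above).
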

\begin{proof}[Proof of Lemma~\ref{l.lesorbitesliessonfini}]
We argue by contradiction and assume the existence, for $c\in\cR$, of an infinite sequence of periodic orbits for $Y$, $\gamma_n\subset K_c$ which have (say) a stable manifold everywhere tangent to the level set $\mu^{-1}(c)$. The case where all the $\gamma_n$
have an unstable manifold tangent to the level sets is analogous. 

Pick a sequence $x_n\in\gamma_n$. By compactness of $K_c$, and passing to a subsequence if necessary, we can suppose $x_n\to x$ for some $x\in K_c$. Let $\Sigma$ be a transverse section of $Y$ containing $x$ and let $\gamma$ denote $\OO_Y(x)$. Since $K_c$ is $Y_t$-invariant, we have $\gamma\subset K_c$. Note that, when restricted to $K_c$, the two vector fields $Y$ and $Y'$ coincide, so in particular $\gamma_n\dans\per(Y')$ for every $n$.

Applying Theorem \ref{l.haef} to the vector field $Y'|_{\mu^{-1}(c)}$ we deduce two things. Firstly, $\gamma$ is a periodic orbit for $Y'$. Secondly there exists an arc $I_c\dans\mu^{-1}(c)$ such that for $n$ large enough $I_c\cap \gamma_n$ is reduced to a point $y_n$, which is a fixed point for the first return map $P'$ of $Y'$ to $I_c$. We may  assume $I_c\dans\Sigma$. Since $\gamma_n,\gamma\dans K_c$ we deduce that $\gamma$ is a periodic orbit for $Y$ and that for $n$ large enough, $y_n$ are fixed points of $P$, a first return map of $Y$ to $\Sigma$.

By reducing $\Sigma$ if necessary we can take a trivialization $(e_1,e_2)$ of the tangent bundle of $\Sigma$, such that the vector fields $e_1$ and $e_2$
have the following properties:
\begin{itemize}
\item $e_1$ is everywhere tangent to the intervals $I_c$
\item $e_2$ is everywhere orthogonal to the intervals $I_c$
\end{itemize} 

We deduce from the third item of Lemma~\ref{invariantspace} that $D_xP$ leaves invariant $T_xI_c$ and induces the identity on that space. By our assumption ($\ast$)
$\gamma$ must have a hyperbolic (stable or unstable) invariant subspace for $D_xP$ which is transverse to $T_x I_c\dans T_x\mu^{-1}(c)$.

From these remarks we deduce that the matrix of the linear map $D_xP:T_x\Sigma\to T_x\Sigma$ when written in the basis $(e_1(x),e_2(x))$ has the form
$$\left( \begin{array}{cc}
1 & c \\ 
0 & \beta
\end{array} \right).$$
By our main assumption ($\ast$) we must have $|\beta|\neq 1$. 
On the other hand, the matrix of $D_{x_n}P:T_{x_n}\Sigma\to T_{x_n}\Sigma$ has the form
$$\left( \begin{array}{cc}
\lambda_n & c_n \\ 
0 & \beta_n
\end{array} \right),$$
with $|\lambda_n|<1$ because the periodic orbits $\gamma_n$ have a stable manifold tangent to the level set. 

By continuity we have $\beta_n\to\beta$ and $c_n\to c$. In particular for $n$ large enough the linear map $D_{x_n}P$ have two eigenvalues $\lambda_n$ and $\beta_n$,
both of them with modulus different from $1$.  

This implies that $\gamma_n$ for $n$ large is, either a sink, which contradicts Lemma \ref{l.nosinks}, or a saddle type
hyperbolic periodic orbit, which is in contradiction with Proposition~\ref{p.nosaddle}. This concludes the proof.
\end{proof}

\paragraph{Structure of linked periodic orbits --}
{From the Finiteness Lemma together with Proposition~\ref{p.tangent} we deduce that $K_{ms}$ is isolated from $K_{nl}$.

\begin{cor}
\label{cor.isolados}
There are neighbourhoods $U$ of $K_{ms}$ and $V$ of $K_{nl}$ such that $U\cap V=\emptyset$.
\end{cor}

\begin{figure}[h!]
	\centering
	\includegraphics[width=200pt,height=200pt]{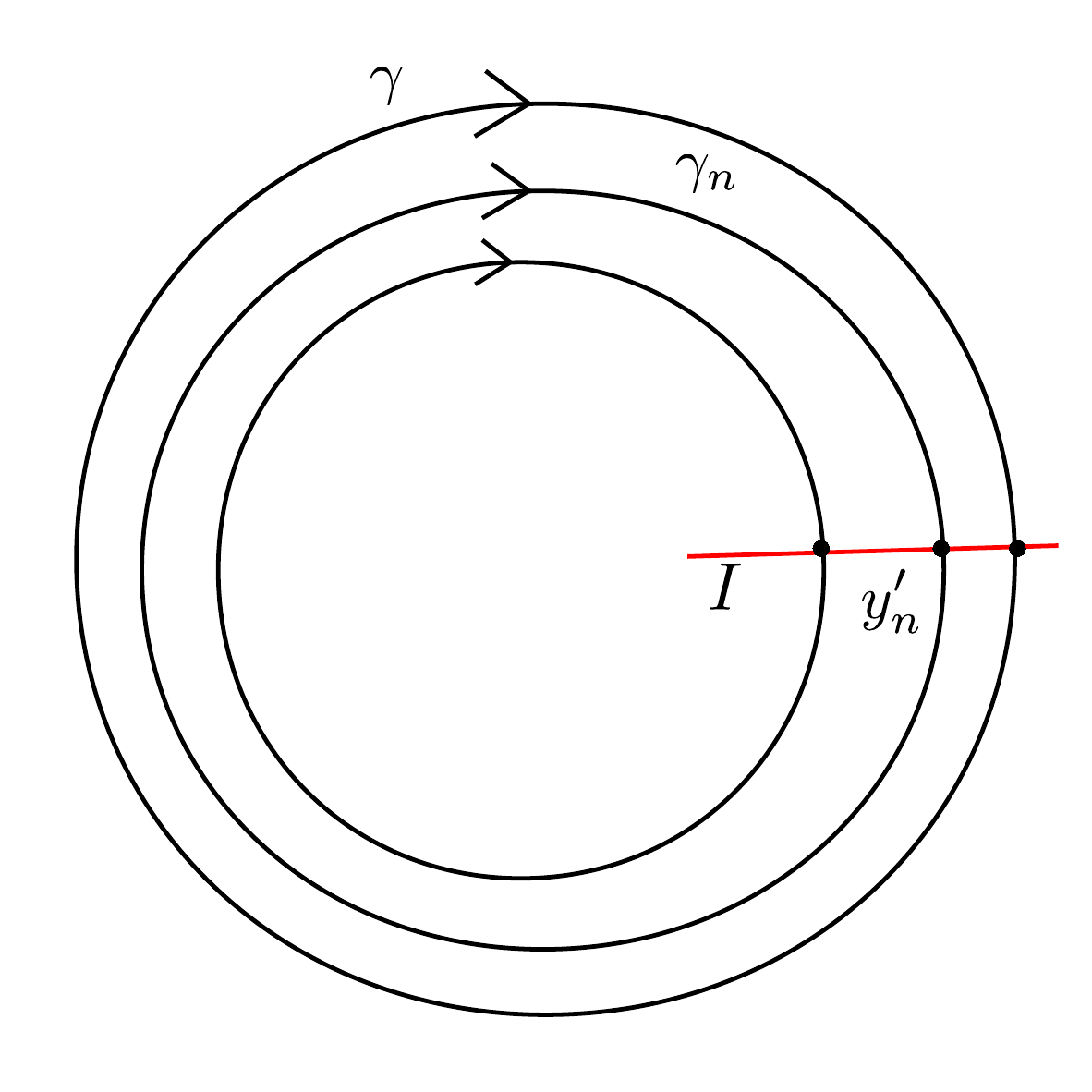}
	\caption{Proposition~\ref{p.tangent} and Lemma~\ref{l.lesorbitesliessonfini} imply the isolation of the sets $K_{ms}$ and $K_{nl}$.}
	\label{f.isol}
\end{figure}

\begin{proof}
From the Finiteness Lemma together with Proposition~\ref{p.tangent} we deduce that $K_{ms}$  is a union of finitely many \emph{linked} periodic orbits and non-periodic orbits linking them. In particular, there is no sequence $x_n\in K_{ms}$ such that $x_n\to x\in K_{nl}$. Assume by contradiction that the result is not true. Then, we necessarily must have a sequence $y_n\in K_{nl}$ converging to a point in $K_{ms}$. By Theorem~\ref{l.haef} the periodic orbit $\gamma_n=\OO(y_n)$ accumulates on some linked periodic orbit $\gamma$. Now, consider a small arc $I\subset\mu^{-1}(0)$ everywhere transverse to $Y'$ intersecting transversally $\gamma$ and infinitely many $\gamma_n$ at points $y^{\prime}_n$, which are fixed points for the first return map. By Lemma~\ref{invariantspace} one deduces that the derivative of the first return map in the direction of $I$ is the identity. But this contradicts Proposition~\ref{p.tangent}, because $\gamma$ is linked.  
\end{proof}

Since $K_{nl}$ is entirely formed by periodic orbits, by combining Theorem~\ref{l.haef} with Corollary~\ref{cor.isolados} we conclude that it is compact. As we have the decomposition 
$$K=K_{ms}\sqcup K_{nl}$$
by definition, it follows that $\ind(X,K)=\ind(X,K_{ms})+\ind(X,K_{nl})$. }

The goal of this section is to establish the result below.

\begin{prop}
	\label{p.linkedcase}
	There exists a neighbourhood $U$ of $K_{ms}$ such that $Y^{\prime}|_{\mu^{-1}(0)\cap U}$
	is Morse-Smale.
\end{prop}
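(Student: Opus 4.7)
} The plan is to exhibit Morse-Smale structure for $Y'|_{\mu^{-1}(0)}$ in a small enough neighborhood of $K_{ms}$: finitely many hyperbolic periodic orbits, joined by heteroclinic orbits, with no other recurrence. First I would combine Proposition~\ref{p.tangent} (every $\gamma\subset K_l$ has a stable or unstable manifold tangent to $\mu^{-1}(0)$) with the Finiteness Lemma~\ref{l.lesorbitesliessonfini} to obtain that $K_l$ consists of only finitely many periodic orbits, which I split as $\gamma_1^s,\ldots,\gamma_{k_s}^s$ (tangent stable manifold) and $\gamma_1^u,\ldots,\gamma_{k_u}^u$ (tangent unstable manifold). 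Applying Lemma~\ref{invariantspace}(2) to the successive returns of an orbit of $K\setminus\per(Y)$ accumulating on $\gamma\subset K_l$, exactly as in Lemma~\ref{l.specpoincouille}, I would identify the derivative of the $Y'$-Poincar\'e map along the level-set direction with the hyperbolic eigenvalue of the $Y$-Poincar\'e map, yielding that each $\gamma_j^s$ (resp.\ $\gamma_i^u$) is a hyperbolic sink (resp.\ source) for $Y'|_{\mu^{-1}(0)}$.

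Next I would prove that $K_{ms}$ is compact, equivalently that $K_{nl}$ is open in $K$. Given $\gamma\subset K_{nl}$, I take a nice tubular neighborhood $\cV$ of $\gamma$ in $M$ disjoint from the finitely many other orbits of $K_l$, and aim to show that no orbit of $K\setminus\per(Y)$ meets $\cV$. Under $(\ast)$, Lemma~\ref{l.nosinks} and Proposition~\ref{p.nosaddle} ensure that $D_zP$ has a real eigenvalue $\lambda$ with $|\lambda|\neq 1$, while Lemma~\ref{invariantspace}(1) ensures that the level-set direction $T_zI_0$ is $D_zP$-invariant. The argument then splits on whether the $\lambda$-eigendirection is tangent or transverse to $T_zI_0$. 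In the tangent case, $\gamma$ is already hyperbolic for $Y'|_{\mu^{-1}(0)}$, so any sufficiently close point of $K$ has $\gamma$ as its forward or backward $Y$-limit, forcing $\gamma\in K_l$, a contradiction. In the transverse case, applying the Center Manifold Theorem to $P$ at $z$ yields a $C^3$ center surface tangent to $\mu^{-1}(0)$ at $\gamma$; combined with $K\subset\mu^{-1}(0)$ and the exponential expansion/contraction transverse to $\mu^{-1}(0)$, this forces $K\cap\cV$ to be a union of periodic orbits, all of which must be non-linked, so $K\cap\cV\subset K_{nl}$.

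With $K_{ms}$ compact and disjoint from the compact set $K_{nl}$, I would construct $U$ as follows: inside $\mu^{-1}(0)$ take the union of small attracting (resp.\ repelling) annular neighborhoods of each $\gamma_j^s$ (resp.\ $\gamma_i^u$) together with thin flowbox neighborhoods of the finitely many heteroclinic connections constituting $K_{ms}\setminus\per(Y)$, yielding an open set $V\supset K_{ms}$; thicken $V$ to $U\subset M$ and shrink so that $U\cap K_{nl}=\emptyset$. Since the $K_l$-orbits are hyperbolic for $Y'|_{\mu^{-1}(0)}$ they are isolated in $\per(Y'|_{\mu^{-1}(0)})$, and the flowbox construction excludes periodic orbits of $Y'$ along the heteroclinic tubes, so after shrinking, the only periodic orbits of $Y'|_{\mu^{-1}(0)\cap U}$ are the $\gamma_j^s$ and $\gamma_i^u$. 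By construction, every other orbit in $V$ either stays in an annular basin or traverses a flowbox from some $\gamma_i^u$ to some $\gamma_j^s$, which is precisely the Morse-Smale property.

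The main obstacle is the transverse subcase in the compactness step: ruling out non-periodic $K$-orbits close to a non-linked periodic orbit $\gamma$ whose hyperbolic eigendirection is transverse to $\mu^{-1}(0)$. This demands a careful interplay between the Center Manifold Theorem for the Poincar\'e map $P$ and the rigid constraint that $K$ lies in the single level set $\mu^{-1}(0)$, in order to conclude that $K\cap\cV$ is entirely periodic and thereby separate $K_{ms}$ from $K_{nl}$.
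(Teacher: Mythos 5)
Your proposal follows essentially the same route as the paper: finiteness of the linked orbits (Proposition~\ref{p.tangent} plus the Finiteness Lemma~\ref{l.lesorbitesliessonfini}, which is the paper's Proposition~\ref{linked}), identification of the $Y$- and $Y'$-Poincar\'e derivatives along $I_0$ via Lemma~\ref{invariantspace}, so that each linked orbit is a hyperbolic sink or source for $Y'|_{\mu^{-1}(0)}$ (the paper's Lemma~\ref{l.hiperbolicidade}), and finally a cover of $K_{ms}$ by attracting/repelling annular basins together with flowbox tubes around the connecting orbits (the paper's Lemma~\ref{l.tubes}).

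The one place where you go beyond the paper is the explicit verification that $K_{ms}$ is compact; the paper just asserts the decomposition $K=K_{ms}\sqcup K_{nl}$ after the Finiteness Lemma and distributes the supporting arguments across Lemma~\ref{l.structurenonlinked} and the proofs in the non-linked section. Your tangent subcase is fine: if the hyperbolic eigendirection of $\gamma\subset K_{nl}$ is along $I_0$, then $\gamma$ is a hyperbolic sink/source for $Y'|_{\mu^{-1}(0)}$, hence isolated in $K$ (any nearby $K$-orbit would converge to $\gamma$ and make it linked). But your transverse subcase does not quite work as written. Invoking \emph{exponential expansion/contraction transverse to $\mu^{-1}(0)$} buys nothing here, because $K$ lies entirely inside the single level $\mu^{-1}(0)$: the $P$-orbit of any point of $K\cap\Sigma$ never leaves the curve $I_0=\Sigma\cap\mu^{-1}(0)$, so the transverse hyperbolic rate is invisible to it, and the center manifold of $P$ (a $C^1$ curve, not $C^3$, and not equal to $I_0$) does not by itself confine $K\cap\cV$ to periodic orbits. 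The effective mechanism is instead the one the paper uses in Lemma~\ref{structurethm}: by lower semi-continuity of $c\mapsto K_c\cap\Sigma$ one shows that $K\cap I_0$ has empty interior in $I_0$, so the nice tubular neighbourhood $\cV$ of $\gamma$ can be chosen with $K\cap\partial\cV=\vide$; then any non-periodic $x\in K\cap\cV$ has its entire $Y$-orbit trapped in $\cV$, forcing $\alpha_Y(x),\omega_Y(x)$ to be \emph{linked} periodic orbits inside $\cV$, which contradicts $K_l\cap\cV=\vide$. Also, a small imprecision in the last step: $K_{ms}\moins\per(Y)$ need not consist of finitely many heteroclinic \emph{orbits}; what is finite is the cover of $K_{ms}\moins\per(Y)$ by flowbox tubes, which is what compactness of $K_{ms}$ and the long tubular flow theorem provide, exactly as in Lemma~\ref{l.tubes}.
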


By Theorem~\ref{Th.MS} we obtain

\begin{cor}
	\label{c.casoms}
	$\ind(X,K_{ms})=0$
\end{cor}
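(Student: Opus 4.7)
The plan is to exhibit the $Y'|_{\mu^{-1}(0)}$-dynamics on $K_{ms}\cap\mu^{-1}(0)$ as a Morse--Smale configuration consisting only of sinks and sources linked by heteroclinic orbits, and then to fatten it into a three-dimensional neighbourhood of $K_{ms}$. My first step is to show that each of the finitely many linked periodic orbits $\gamma_1,\ldots,\gamma_p$ (finite by the Finiteness Lemma~\ref{l.lesorbitesliessonfini}) is a hyperbolic sink or source for $Y'|_{\mu^{-1}(0)}$. Fix such a $\gamma_i$, a point $z\in\gamma_i$, a section $\Sigma$ transverse to $Y$ at $z$, and its slice $I_0=\Sigma\cap\mu^{-1}(0)$, which is transverse to $Y'|_{\mu^{-1}(0)}$ at $z$. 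Let $P\colon\Sigma\to\Sigma$ and $P'\colon I_0\to I_0$ denote the corresponding Poincar\'e maps; both fix $z$. Because $\gamma_i$ is linked, I may pick $x\in K\setminus\per(Y)$ with $\omega_Y(x)=\gamma_i$; since $Y=Y'$ on the invariant set $K$, the monotone sequence $x_n:=P^n(x)$ lies in $I_0$, equals $(P'^n(x))_{n}$, and converges to $z$. Applying Lemma~\ref{invariantspace} to $P$ along the arc $I_0$ produces a number $\lambda_i=\lim t_{n+1}/t_n$ (with $t_n$ the parameter of $x_n$ on $I_0$) that is the eigenvalue of $D_zP$ in the direction $T_zI_0$; the same limit is the one-dimensional derivative $D_zP'$, so $D_zP'=\lambda_i$. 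By Proposition~\ref{p.tangent} the hyperbolic invariant direction of $D_zP$ is precisely $T_zI_0$, so $\lambda_i$ is the multiplier supplied by hypothesis~$(\ast)$ and $|\lambda_i|\neq 1$. Hence $\gamma_i$ is a hyperbolic sink ($|\lambda_i|<1$) or source ($|\lambda_i|>1$) of $Y'|_{\mu^{-1}(0)}$.

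Using the definition of $K_{ms}$ together with the identity $Y=Y'$ on $K$, the set $K_{ms}\cap\mu^{-1}(0)$ is then a compact $Y'|_{\mu^{-1}(0)}$-invariant set made of the $\gamma_i$ together with heteroclinic orbits running from sources to sinks, with no saddles at all. I would cover it by an open set $V\subset\mu^{-1}(0)$ of the form $V=\bigcup_i A_i\cup\bigcup_l B_l$, where each $A_i$ is a small attracting (resp.\ repelling) annular basin around the sink (resp.\ source) $\gamma_i$ and each $B_l$ is a flow-box around a heteroclinic orbit of $K_{ms}$; finitely many flow-boxes suffice by compactness of $K_{ms}\cap\mu^{-1}(0)\setminus\bigcup_i A_i$. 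Shrinking if necessary, local hyperbolicity excludes any other periodic orbit of $Y'|_{\mu^{-1}(0)}$ from $V$, and a Poincar\'e--Bendixson argument forces every orbit of $Y'|_V$ to limit onto some $\gamma_i$. Taking $U$ to be a thin three-dimensional tubular neighbourhood of $V$ inside $\mu^{-1}(-\delta,\delta)$ then gives $\mu^{-1}(0)\cap U=V$, and $Y'|_V$ meets all the Morse--Smale requirements: finitely many hyperbolic periodic orbits, no saddles (hence trivially no saddle connections), and trivial recurrence.

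The main obstacle lies in the first step: since $\mu^{-1}(0)$ is not $Y$-invariant, hyperbolicity of the one-dimensional return map $P'$ cannot simply be read off from that of the two-dimensional map $P$. The decisive idea is to use a non-periodic orbit inside $K$—on which $Y$ and $Y'$ genuinely coincide—as a common dynamical witness for both return maps, so that Lemma~\ref{invariantspace} identifies the one-dimensional multiplier of $P'$ with the eigenvalue of $P$ along $T_zI_0$; the tangency of the local stable/unstable manifold provided by Proposition~\ref{p.tangent} is precisely what ensures this matching lands on the hyperbolic eigenvalue rather than on the unit one.
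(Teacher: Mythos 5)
Your proposal is correct and takes essentially the same route as the paper: you establish hyperbolicity of each linked periodic orbit for $Y'|_{\mu^{-1}(0)}$ by matching the eigenvalues of the two return maps via Lemma~\ref{invariantspace} along a non-periodic witness orbit in $K$ (this is Lemma~\ref{l.hiperbolicidade}), you build the Morse--Smale neighbourhood from attracting/repelling annuli and flow-boxes (this is Lemma~\ref{l.tubes}, hence Proposition~\ref{p.linkedcase}), and you then fatten and invoke Theorem~\ref{Th.MS} to conclude. The only cosmetic difference is that the paper packages the two steps as Proposition~\ref{p.linkedcase} before applying Theorem~\ref{Th.MS}.
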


The finiteness results of previous section will play a crucial role.

\begin{prop}
\label{linked}
The set $K_l$ of linked periodic orbits is a finite union of periodic orbits $\{\gamma_1,...,\gamma_n\}$ of $Y$.
\end{prop}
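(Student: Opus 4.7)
The plan is to assemble three results already proved earlier in the paper: Proposition \ref{p.tangent} (tangency of invariant manifolds of linked periodic orbits), the combination of Hypothesis $(\ast)$ with Proposition \ref{p.nosaddle} (partial hyperbolicity of every periodic orbit in $K$), and the Finiteness Lemma \ref{l.lesorbitesliessonfini}.

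First I would observe that by definition every $\gamma \in K_l$ is a periodic orbit of $Y$ contained in $K = K_0$, and since $(U,X,Y)$ is prepared we have $0 \in \cR$ (see \eqref{eq.residual}). Applying Hypothesis $(\ast)$ at such a $\gamma$, the Poincaré map $P$ at a point $z \in \gamma$ has at least one eigenvalue of modulus different from $1$. On the other hand, since $0 \in \cR$, Proposition \ref{p.nosaddle} yields that $P$ also has an eigenvalue of modulus $1$, and Lemma \ref{l.nosinks} rules out sinks and sources at the continuity point $0$. Therefore $\gamma$ is partially hyperbolic of saddle-node type: it admits exactly one two-dimensional local invariant manifold, either stable or unstable, but not both.

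The second step is to use Proposition \ref{p.tangent}: since $\gamma \in K_l$, there is some $x \in K \setminus \per(Y)$ with $\omega_Y(x) = \gamma$ or $\alpha_Y(x) = \gamma$, and the proposition then forces the (unique) local stable or unstable manifold of $\gamma$ to be everywhere tangent to the level set $\mu^{-1}(0)$. This allows me to split
\[
K_l = K_l^s \sqcup K_l^u,
\]
where $K_l^s$ (resp.\ $K_l^u$) consists of those $\gamma \in K_l$ whose local stable (resp.\ unstable) manifold is tangent to $\mu^{-1}(0)$.

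Finally, I would invoke the Finiteness Lemma \ref{l.lesorbitesliessonfini} at $c = 0 \in \cR$: each of $K_l^s$ and $K_l^u$ fits precisely the hypothesis of that lemma (periodic orbits of $Y$ contained in $K_0$ whose stable, respectively unstable, manifold is everywhere tangent to $\mu^{-1}(0)$), so each is a finite union of periodic orbits. Consequently $K_l$ itself is a finite union of periodic orbits $\{\gamma_1,\dots,\gamma_n\}$, as claimed.

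The whole argument is essentially an assembly step; the substantive obstacles (tangency of invariant manifolds of linked orbits, and the finiteness statement) have been resolved in the preceding subsections, so there is no remaining serious difficulty here.
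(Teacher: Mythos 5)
Your proof is correct and follows exactly the paper's route: the paper proves Proposition \ref{linked} in one line as an immediate consequence of Proposition \ref{p.tangent} and the Finiteness Lemma \ref{l.lesorbitesliessonfini}, which is precisely the skeleton of your argument. The extra scaffolding you add (invoking Proposition \ref{p.nosaddle} and Lemma \ref{l.nosinks} to pin down the partially hyperbolic type, and the split $K_l=K_l^s\sqcup K_l^u$) is harmless but unnecessary, since Hypothesis $(\ast)$ already guarantees the existence of a stable or unstable manifold and the Finiteness Lemma's statement already covers both cases at once without requiring you to separate them.
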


\begin{proof}
This is an immediate consequence of Proposition \ref{p.tangent} and Lemma \ref{l.lesorbitesliessonfini}.
\end{proof}

Notice that $K_l\subset\per(Y^{\prime})$. From this remark, the proof of Proposition~\ref{p.linkedcase} will be achieved from the two subsequent lemmas below.

\begin{lem}
	\label{l.hiperbolicidade}
	Every $\gamma_j\dans K_l$ is hyperbolic for $Y^{\prime}|_{\mu^{-1}(0)}$.
\end{lem}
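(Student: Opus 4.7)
The plan is to relate the derivative $R'(z)$ of the one-dimensional Poincar\'e map $R$ of $Y'|_{\mu^{-1}(0)}$ at a point $z\in\gamma_j$ to the eigenvalue of the two-dimensional Poincar\'e map $P$ of $Y$ along the tangential direction to the level set. Hypothesis~($\ast$) furnishes a hyperbolic eigenvalue of $D_zP$, and Proposition~\ref{p.tangent} will force this eigenvalue to lie in the tangential direction; a semi-continuity argument in the spirit of Lemma~\ref{l.specpoincouille} will then transfer it to $R'(z)$.

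To set things up, I fix $\gamma_j\in K_l$: by definition of $K_l$ there exists $x\in K\moins\per(Y)$ with $\omega_Y(x)=\gamma_j$ or $\alpha_Y(x)=\gamma_j$, and I may assume the first. Picking $z\in\gamma_j$ inside a nice tubular neighbourhood (Definition~\ref{nicenbd}), I denote by $P:\Sigma(z)\to\Sigma(z)$ the Poincar\'e map of $Y$ on the two-dimensional transverse section $\Sigma(z)$, and by $R$ the Poincar\'e map of $Y'|_{\mu^{-1}(0)}$ on the transverse arc $I_0(z)=\Sigma(z)\cap\mu^{-1}(0)$; the goal reduces to $|R'(z)|\neq 1$. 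Item~1 of Lemma~\ref{simtransv} ensures that $D_zP$ preserves $T_zI_0(z)$; call $\lambda\in\R$ its eigenvalue on this invariant line. By hypothesis~($\ast$), $D_zP$ has an eigenvalue of modulus different from $1$, and by Lemma~\ref{l.nosinks} the orbit $\gamma_j$ is neither a sink nor a source, so the hyperbolic eigenvalue of $D_zP$ produces a local stable or unstable manifold of $\gamma_j$. By Proposition~\ref{p.tangent} this invariant manifold is tangent to the level set $\mu^{-1}(0)$ along $\gamma_j$; intersecting with $\Sigma(z)$, its tangent line at $z$ is therefore $T_zI_0(z)$. Hence the hyperbolic eigendirection of $D_zP$ is precisely $T_zI_0(z)$, and $|\lambda|\neq 1$.

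It remains to identify $R'(z)$ with $\lambda$. Because $K_0$ is $Y_t$-invariant, the whole orbit $\OO_Y(x)$ lies in $K_0\subset\mu^{-1}(0)$, so $Y$ is tangent to $\mu^{-1}(0)$ along this orbit and coincides there with $Y'$. The successive intersections of this orbit with $\Sigma(z)$ form a sequence $x_n\to z$ inside $I_0(z)$ satisfying $P(x_n)=x_{n+1}=R(x_n)$ for every $n$. Item~2 of Lemma~\ref{invariantspace}, applied once to $P$ on $\Sigma(z)$ with the arc $I_0(z)$ and once directly to the one-dimensional $R$ on $I_0(z)$, produces the same limiting ratio $\lim_{n\to\infty} s_{n+1}/s_n$ (in any $C^1$ parameter $s$ on $I_0(z)$); this equals $\lambda$ on the $P$ side and $R'(z)$ on the $R$ side. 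Therefore $R'(z)=\lambda$, and $|R'(z)|\neq 1$, as required.

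The main obstacle is the first step: locating the hyperbolic eigendirection of $D_zP$ along the tangential line $T_zI_0(z)$ rather than transverse to it. This is exactly what Proposition~\ref{p.tangent} provides; without it, the hyperbolic eigenvalue could belong to the direction transverse to the level set, in which case $\lambda$ (and hence $R'(z)$) could have modulus $1$ and $\gamma_j$ would fail to be hyperbolic for $Y'|_{\mu^{-1}(0)}$.
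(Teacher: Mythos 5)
Your argument is correct and follows essentially the same route as the paper: locate the hyperbolic eigendirection of the $Y$-Poincaré map along $T_zI_0(z)$ using Proposition~\ref{p.tangent}, then transfer its eigenvalue to the $Y'$-return map via the ratio argument of Lemma~\ref{invariantspace}. The paper phrases the last step by comparing $D_zP_j$ with $D_zP'_j$ (the full two-dimensional return map of $Y'$ to $\Sigma$, which leaves $I_0$ invariant) rather than invoking Lemma~\ref{invariantspace} directly for the one-dimensional map $R$, but this is only a cosmetic difference.
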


\begin{proof}

	Without loss of generality we may assume that $\gamma_j$ has a stable manifold for $Y$. Choose $\cU$, a nice tubular neighbourhood of $\gamma_j$, and $z\in\gamma_j$. Consider the associated fiber $\Sigma=\Sigma(z)$ and arc $I_0=I_0(z)$. Let $P_j$ and $P'_j$ be respectively the first return maps of $Y$ and $Y'$ to $\Sigma$. Note that the arc $I_0$ is $P_j'$-invariant (recall that $Y'$ is tangent to $\mu^{-1}(0)$).
	
	By hypothesis $\gamma_j\dans K_l$ so (assuming for example that it has a stable manifold) there exists a sequence of points $x_n\in K\cap I_0$ converging to $z$ with $x_{n+1}=P_j(x_n)=P_j'(x_n)$. We deduce two things. By Proposition \ref{p.tangent} $P_j$ has a stable manifold tangent to $I_0$. By Lemma  \ref{invariantspace} $D_zP_j$ and $D_zP_j'$ have the same eigenvalue in the direction of $I_0$. This implies that $\gamma_j$ is stable for $Y'_{\mu^{-1}(0)}$.

\end{proof}

\begin{lem}
	\label{l.tubes}
There exists a neighbourhood $U$ of $K_{ms}$ such that for every $x\in\mu^{-1}(0)\cap U$ there exists $i=i(x)$ and $j=j(x)\in\{1,...,n\}$
such that $\alpha_{Y'}(x)=\gamma_i$ and $\omega_{Y'}(x)=\gamma_j$.	
\end{lem}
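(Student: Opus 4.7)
The strategy is to upgrade Lemma \ref{l.hiperbolicidade} so that each $\gamma_j$ is a hyperbolic sink or source of $Y'|_{\mu^{-1}(0)}$, and then use trapping neighborhoods together with continuous dependence of the flow on initial conditions to build $U$.

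First I would refine Lemma \ref{l.hiperbolicidade}: by Proposition \ref{p.tangent}, whichever of $W^s(\gamma_j)$, $W^u(\gamma_j)$ realizes the linking is tangent to $\mu^{-1}(0)$, and the complementary $Y$-manifold (when present) must be transverse to $\mu^{-1}(0)$, since two $2$-submanifolds of $M$ transverse to each other cannot be simultaneously tangent to the same $2$-surface. Hence the Poincar\'e map of $Y'|_{\mu^{-1}(0)}$ at $\gamma_j$ has a single hyperbolic direction, making $\gamma_j$ a hyperbolic sink or source of $Y'|_{\mu^{-1}(0)}$. For each sink I would pick a trapping annular neighborhood $V_j^{-}\subset\mathcal{B}^s(\gamma_j)$ in $\mu^{-1}(0)$ and for each source a repelling $V_j^{+}\subset\mathcal{B}^u(\gamma_j)$.

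For each non-periodic $x_0\in K_{ms}$, using that $Y=Y'$ on the $Y$-invariant set $K$, one has $\omega_{Y'}(x_0)=\gamma_{j(x_0)}$ (a sink) and $\alpha_{Y'}(x_0)=\gamma_{i(x_0)}$ (a source). I would pick finite times $t_{x_0}^{\pm}$ with $Y'_{t_{x_0}^{+}}(x_0)\in V_{j(x_0)}^{-}$ and $Y'_{t_{x_0}^{-}}(x_0)\in V_{i(x_0)}^{+}$. Continuous dependence of the flow over the compact interval $[t_{x_0}^{-},t_{x_0}^{+}]$, together with openness of the traps, yields an open $M$-neighborhood $N_{x_0}$ of $x_0$ in which these inclusions persist for every $y\in N_{x_0}\cap\mu^{-1}(0)$; forward invariance of $V_{j(x_0)}^{-}$ and backward invariance of $V_{i(x_0)}^{+}$ then force $\omega_{Y'}(y)=\gamma_{j(x_0)}$ and $\alpha_{Y'}(y)=\gamma_{i(x_0)}$. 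For periodic $x_0\in\gamma_j$ one picks instead a small tubular $M$-neighborhood $T_j$ of $\gamma_j$ with $T_j\cap\mu^{-1}(0)\subset V_j^{\pm}$. I would then set $U:=\big(\bigcup_j T_j\big)\cup\big(\bigcup_{x_0\in K_{ms}\setminus K_l}N_{x_0}\big)$.

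The main obstacle lies in the tubes $T_j$ around periodic orbits: for $y\in T_j\cap\mu^{-1}(0)$ off of $\gamma_j$ (say a sink), the forward orbit converges to $\gamma_j$ by trapping, but the $\alpha$-limit is controlled only by the basin-boundary dynamics, a priori involving periodic orbits in $K_{nl}$. Proposition \ref{p.nosaddle} combined with hypothesis $(\ast)$ forbids saddles for $Y'|_{\mu^{-1}(0)}$ among periodic orbits of $Y$ in $K$, which together with the Finiteness Lemma \ref{l.lesorbitesliessonfini} constrains $\partial\mathcal{B}^s(\gamma_j)$ enough that $T_j$ can be shrunk to avoid the residual bad set. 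Making this shrinking rigorous is the technical heart of the proof; once it is done, the remainder of the argument is a standard application of continuous dependence on finite intervals and requires no compactness of $K_{ms}$.
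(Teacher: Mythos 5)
Your approach is essentially the paper's: use Lemma~\ref{l.hiperbolicidade} to see that each $\gamma_j$ is a sink or source of $Y'|_{\mu^{-1}(0)}$ (on a surface, a hyperbolic periodic orbit with no eigenvalue of modulus one in the transverse direction is automatically a sink or a source, so your ``upgrade'' step is not really an extra argument), then cover $K_{ms}$ by trapping annuli around the $\gamma_j$ together with holonomy tubes along the heteroclinic connections, and conclude by continuous dependence. The paper packages the tube step as one application of the long tubular flow theorem on a compact exhaustion; your pointwise continuous-dependence argument around each $x_0\in K_{ms}\setminus K_l$ is morally the same, though you still need compactness of $K_{ms}$ at the end to extract a finite cover, so your closing remark that the argument ``requires no compactness of $K_{ms}$'' is a small slip.

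The issue you flag --- controlling $\alpha_{Y'}$ for a non-periodic point $y\in T_j\cap\mu^{-1}(0)$ near a sink $\gamma_j$ (and dually $\omega_{Y'}$ near a source) --- is a genuine subtlety that the paper's one-line ``The lemma follows'' also glosses over. However, the tools you invoke to close it are not the right ones: Proposition~\ref{p.nosaddle} and the Finiteness Lemma~\ref{l.lesorbitesliessonfini} constrain periodic orbits of $Y$ that lie in $K_c$, whereas the basin boundary $\partial\mathcal{B}^s(\gamma_j)$ of the \emph{surface flow} $Y'|_{\mu^{-1}(0)}$ may well contain periodic orbits of $Y'$ that are not periodic orbits of $Y$ and are not in $K$ at all; the backward orbit of $y$ could even exit $\mu^{-1}(0)$ through its boundary without accumulating on any $\gamma_i$. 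A cleaner route is the one implicit in the paper's ordering of the construction: build the tubes $V_{i,j}$ first, thick enough that, as a heteroclinic connection spirals into $\gamma_j$, the tube eventually covers a full punctured annular sector of $\gamma_j$ on the side(s) of $\mu^{-1}(0)$ where $K$ accumulates; then choose the trapping annulus $U_j$ so small that $(U_j\cap\mu^{-1}(0))\setminus\gamma_j\subset\bigcup_i V_{i,j}$. On a side of $\gamma_j$ that $K$ does not approach, one must either accept that orbits there escape $U$ (so the statement must really be read for orbits whose full $Y'$-orbit stays in $U\cap\mu^{-1}(0)$, which is all that the Morse--Smale conclusion of Proposition~\ref{p.linkedcase} actually requires) or argue directly that no other periodic orbit of $Y'$ survives in a sufficiently small $U$. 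This is the piece your sketch leaves open, and the citation of Proposition~\ref{p.nosaddle} and the Finiteness Lemma does not fill it.
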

\begin{proof}
It follows from the previous lemma that every 
$\gamma_j$ is either a sink or a source for $Y^{\prime}|_{\mu^{-1}(0)}$. As a consequence, for every $j$ there exists a neighbourhood $U_j$ of $\gamma_j$ such that $U_j\cap\mu^{-1}(0)$ is either an attracting or repelling neighbourhood of $\gamma_j$ for $Y'|_{\mu^{-1}(0)}$

{If $x$ belongs to a periodic orbit there noting left to prove. So, assume that $x$ is a non-periodic point. By Theorem~\ref{t.diegosalgado}} for every $x\in K_{ms}\moins K_l$ there exists $\gamma_i$ and $\gamma_j$ such that $\alpha_{Y'}(x)=\gamma_i$ and $\omega_{Y'}(x)=\gamma_j$. By compactness and the long tubular flow theorem, there are finitely many open sets $V_{i,j}$ such that 
$$K_l\subset\bigcup_{i,j}(V_{i,j}\cap\mu^{-1}(0))\bigcup_j(U_j\cap\mu^{-1}(0)),$$ 
and for every $x\in V_{i,j}\cap\mu^{-1}(0)$ one has $\alpha_{Y'}(x)=\gamma_i$ and $\omega_{Y'}(x)=\alpha_j$. The lemma follows.   
\end{proof}

\subsubsection{Structure of the non-linked component}

\paragraph{Decomposition of $K_{nl}$ --} Let $\Kta$ be the subset of $K_{nl}$ consisting of those periodic orbits having a local stable/unstable manifold everywhere tangent to $\mu^{-1}(0)$. Let $\Ktr=K_{nl}\moins\Kta$.

\begin{lem}
\label{l.structurenonlinked}
The following properties hold true.
\begin{enumerate}
\item $\Kta$ is a finite union of periodic orbits;
\item $\Kta$ and $\Ktr$ are compact;
\item $K_{nl}=\Kta\sqcup\Ktr$.
\end{enumerate}
\end{lem}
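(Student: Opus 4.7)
The plan is to establish the three properties in order, with the bulk of the work going into the compactness of $\Ktr$. Property (3) is immediate from the definition $\Ktr=K_{nl}\setminus\Kta$. Property (1) follows from the Finiteness Lemma \ref{l.lesorbitesliessonfini} applied at the continuity point $c=0\in\cR$: the elements of $\Kta$ are periodic orbits of $Y$ included in $K$ whose stable or unstable manifold is tangent to $\mu^{-1}(0)$, and there are only finitely many such. Hence $\Kta$ is compact as a finite disjoint union of circles, and to prove (2) it remains to show that $\Ktr$ is closed (being contained in the compact set $K$).

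The next step is to show that $K_{nl}$ itself is closed. Given a sequence $(x_n)\subset K_{nl}$ converging to $x\in K$, each $x_n$ lies on a periodic orbit of $Y|_K=Y'|_{\mu^{-1}(0)}$, so Haefliger's Theorem \ref{l.haef} applied on the compact surface $\mu^{-1}(0)$ places $x$ on a periodic orbit $\gamma$ of $Y'|_{\mu^{-1}(0)}$, hence of $Y$. If $\gamma$ were linked, Proposition \ref{linked} together with Lemma \ref{l.hiperbolicidade} would make $\gamma$ a hyperbolic sink or source of $Y'|_{\mu^{-1}(0)}$; its attracting (or repelling) neighbourhood $V$ would eventually contain $x_n$, which would then lie on a periodic orbit of $Y'$ distinct from $\gamma$ in its basin, a contradiction. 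Therefore $\gamma\subset K_{nl}$ and $K_{nl}$ is closed.

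The main step is then to prove that every $\gamma\in\Kta$ is isolated in $K_{nl}$. I would fix $z\in\gamma$ with, say, tangent stable direction and work in a nice tubular neighbourhood (Definition \ref{nicenbd}) of $\gamma$ with fiber $\Sigma=\Sigma(z)$ and arc $I_0=\Sigma\cap\mu^{-1}(0)$. In a basis $(e_1,e_2)$ of $T_z\Sigma$ with $e_1\in T_zI_0$, the assumption $\gamma\in\Kta$ says that $e_1$ is an eigenvector of the Poincar\'e map $D_zP$ with eigenvalue $\lambda$ of modulus $<1$. The second eigenvalue $\alpha$ is real (the sum of eigenvalues equals the real trace) and of modulus $1$ by Proposition \ref{p.nosaddle}, so $\alpha\in\{-1,+1\}$, with an eigenvector $w$ linearly independent from $e_1$. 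For every $k\geq 1$ the eigenvalues $\lambda^k$ and $\alpha^k$ of $D_zP^k$ remain distinct (since $|\lambda^k|\neq 1$), so the germ at $z$ of $\fix(P^k)$ is either the singleton $\{z\}$ (when $\alpha^k=-1$) or a smooth curve tangent to $w$ (when $\alpha^k=1$); in the latter case the transversality $w\notin T_zI_0$ forces that curve to meet $I_0$ only at $z$. In both cases $\fix(P^k)\cap I_0$ reduces to $\{z\}$ near $z$.

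To conclude, shrinking the tubular neighbourhood $V$ of $\gamma$ if necessary so that the above analysis is uniform, any periodic orbit $\gamma'\subset K_{nl}\cap V$ lies in $\mu^{-1}(0)\cap V$; all of its intersections with $\Sigma$ then sit in $I_0$ near $z$ and are fixed points of some iterate of $P$, so the previous paragraph forces them to coincide with $z$ and hence $\gamma'=\gamma$. This gives the desired isolating neighbourhood, and since $\Kta$ is finite it is therefore open in $K_{nl}$; so $\Ktr=K_{nl}\setminus\Kta$ is closed in the closed set $K_{nl}$ and hence compact. I expect the eigenvalue and eigenvector analysis in the third paragraph to be the main obstacle: the identification $\alpha\in\{\pm 1\}$ combines hypothesis $(\ast)$ with Proposition \ref{p.nosaddle}, and controlling the fixed-point curve of $P^k$ relies on carefully tracking the tangent direction $e_1$ against the neutral direction $w$ in the geometry of the level set.
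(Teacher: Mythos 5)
Your treatment of items (1) and (3) matches the paper, and your observation that $\Kta$ is compact as a finite union of circles is correct; the divergence is entirely in the proof that $\Ktr$ is closed, which is the substantial content of item (2).

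The paper shows $\Ktr$ is closed directly: if $\gamma_n\subset\Ktr$ and $x_n\in\gamma_n\to x$, then Haefliger's theorem produces an arc cutting each $\gamma_n$ once, and Lemma~\ref{invariantspace}~(3) forces the first return map at $x$ to act as the identity on $T_x I_0$. Hypothesis $(\ast)$ then puts the hyperbolic eigendirection transverse to $\mu^{-1}(0)$, so the stable/unstable manifold of the limit orbit is transverse; by Proposition~\ref{p.tangent} the limit orbit cannot be linked, so it lies in $\Ktr$. That is the whole argument — no center manifolds are needed.

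Your route is genuinely different: you first prove $K_{nl}$ is closed (via Haefliger plus Lemma~\ref{l.hiperbolicidade}, which is correct and a nice observation not made in the paper), then argue that each orbit of $\Kta$ is isolated in $K_{nl}$ through an eigenvalue analysis and the Center Manifold Theorem. The weak point is the sentence ``shrinking the tubular neighbourhood $V$ of $\gamma$ if necessary so that the above analysis is uniform.'' For each iterate $P^k$ your analysis gives a neighbourhood $V_k$ of $z$ in which $\fix(P^k)\cap I_0=\{z\}$, but there is no a priori reason the $V_k$ do not shrink to $\{z\}$ as $k\to\infty$, and Theorem~\ref{t.center} as stated only controls $\fix(P)$, not $\fix(P^k)$ uniformly. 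The gap is patchable: any periodic orbit $\gamma'\subset K_{nl}$ near $\gamma$ lies in $\mu^{-1}(0)$, and inside the annulus $\mu^{-1}(0)\cap\cU$ a periodic orbit of the nonvanishing $Y'$ is an essential simple closed curve, hence winds once and meets $I_0$ in a single fixed point of $P$ itself; so only $k=1$ is ever needed, and the Center Manifold Theorem for $P$ suffices. As written, though, the uniformity claim is unjustified. Morally your argument and the paper's rest on the same phenomenon — the $1$-eigenvalue of the return map must lie in the transverse direction at any accumulation point of non-linked orbits — but the paper extracts this directly from Lemma~\ref{invariantspace}~(3), which is cheaper than invoking center manifolds.
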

\begin{proof}
	The third item is automatic from the definition of the sets $\Kta$ and $\Ktr$, while the first item and (consequently) the compactness of $\Kta$ follows from the finiteness lemma (Lemma~\ref{l.lesorbitesliessonfini}). 
	
	Finally, to see that $\Ktr$ is compact we argue as in the proof of Lemma~\ref{l.lesorbitesliessonfini}.  
	Indeed, take a sequence $x_n\in\gamma_n$, with $\gamma_n\subset\Ktr$. If $x=\lim_{n\to\infty}x_n$, by Haefliger's theorem there exists a periodic orbit $\gamma\dans K$ such that $x\in\gamma$. 
	
	As in the proof of Lemma~\ref{l.lesorbitesliessonfini} one obtains from Lemma~\ref{invariantspace} that the first return map induces the identity tangentially to the level $\mu^{-1}(0)$. Therefore, the stable/unstable manifold of $\gamma$ must be transverse to the level set, and so $x\in\Ktr$. This completes the proof.	
\end{proof}

The remaining sections of the paper will be devoted to proving the following proposition.

\begin{prop}
\label{Knouille}
\begin{enumerate}
\item We have $\Ind(X,\Kta)=0$.
\item We have $\Ind(X,\Ktr)=0$.
\end{enumerate}
\end{prop}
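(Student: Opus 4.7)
The plan is to prove both items by the same strategy: cover the compact set by finitely many disjoint open sets whose closures meet no zero of $X$ on their boundaries and in which $\Col_U(X,Y)$ is contained in a $Y$-invariant two-dimensional submanifold, then invoke Bonatti-Santiago's Theorem~\ref{t:BS} and the additivity of the Poincar\'e-Hopf index (Remark~\ref{r.aditive}). Combined with Corollary~\ref{c.casoms}, this closes the proof of Theorem~\ref{t.valeurpropre}.

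For item~(1), $K_{nl}^T$ is a finite union of periodic orbits $\gamma_1,\dots,\gamma_k$ by Lemma~\ref{l.structurenonlinked}. Fix $\gamma_j$: by Proposition~\ref{p.nosaddle} and hypothesis~$(\ast)$ the Poincar\'e map at a point $z_j\in\gamma_j$ has eigenvalues $\{1,\lambda_j\}$ with $|\lambda_j|\neq 1$, and since the stable (or unstable) manifold of $\gamma_j$ is tangent to $\mu^{-1}(0)$ the $\lambda_j$-eigendirection is tangent to $I_0$ while the center direction is transverse to the level set foliation. The Center Manifold Theorem applied to the $C^3$ flow $Y$ at $\gamma_j$ delivers a $C^2$ $Y$-invariant annulus $W^c_j\supset\gamma_j$ transverse to $\mu^{-1}(0)$. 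Up to shrinking, $W^c_j\cap\mu^{-1}(c)$ is a smooth periodic orbit $\gamma_j^c$ of $Y$ for every small $|c|$, and the argument of Lemma~\ref{l.orbitsofY}---lower semi-continuity of $Z$ at $0$ together with the normal hyperbolicity of $\gamma_j^c$ within its level set---forces $\gamma_j^c\subset K_c$. In a small tubular neighbourhood $\cU_j$ of $\gamma_j$ we therefore have $\Col_U(X,Y)\cap\cU_j=W^c_j\cap\cU_j$, and Theorem~\ref{t:BS} gives $\Ind(X,\cU_j)=0$; summing yields $\Ind(X,K_{nl}^T)=0$.

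For item~(2), each $\gamma\in K_{nl}^\perp$ has a Poincar\'e map with eigenvalues $\{1,\lambda\}$ and $|\lambda|\neq 1$ whose $\lambda$-eigendirection is transverse to $I_0$, so the center direction is tangent to $\mu^{-1}(0)$. Thus $\gamma$ is normally hyperbolic, with hyperbolic splitting transverse to the level set foliation. The Center Manifold Theorem yields a $C^2$ $Y$-invariant surface $W^c(\gamma)$ tangent to $\mu^{-1}(0)$ along $\gamma$, with the characterizing property that every complete $Y$-orbit remaining in a small tubular neighbourhood $\cU_\gamma$ of $\gamma$ lies in $W^c(\gamma)$. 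By Lemma~\ref{orbitesconfinees}, after shrinking $\cU_\gamma$ every orbit of any $K_c$ meeting $\cU_\gamma$ is complete inside $\cU_\gamma$, so $\Col_U(X,Y)\cap\cU_\gamma\subset W^c(\gamma)$. By compactness of $K_{nl}^\perp$, a finite subcover $\cU_{\gamma_1},\dots,\cU_{\gamma_p}$ is extracted, and a standard refinement produces pairwise disjoint open sets $V_1,\dots,V_q\subset U$ covering $K_{nl}^\perp$, each contained in some $\cU_{\gamma_{i(j)}}$ and with $\Zero(X)\cap\partial V_j=\emptyset$. Theorem~\ref{t:BS} applied in each $V_j$ gives $\Ind(X,V_j)=0$, and additivity yields $\Ind(X,K_{nl}^\perp)=0$.

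The principal obstacle is item~(2), where $K_{nl}^\perp$ may contain an accumulating (possibly Cantor-like) family of periodic orbits, so that the center manifolds produced orbit-by-orbit need not patch into a single global surface. The uniformity required for the finite-covering step is guaranteed by hypothesis~$(\ast)$ together with Lemma~\ref{l.lesorbitesliessonfini}: the hyperbolic eigenvalue $\lambda$ varies continuously along $K_{nl}^\perp$ and stays bounded away from the unit circle, so the local center manifolds have uniformly comparable size, permitting their assembly via compactness.
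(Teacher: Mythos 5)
Your strategy is broadly correct, and for item~(1) it tracks the paper's argument closely, but there are two substantive gaps you should be aware of, and your item~(2) takes a genuinely different route from the paper's.

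For item~(1): you assert that ``up to shrinking, $W^c_j\cap\mu^{-1}(c)$ is a smooth periodic orbit of $Y$,'' but this does not follow merely from invariance of $W^c_j$ and its transversality to the level sets. The center manifold is $Y$-invariant, yet $Y$ is only tangent to $\mu^{-1}(c)$ \emph{on the collinearity locus}, so the circle $W^c_j\cap\mu^{-1}(c)$ is not automatically $Y$-invariant. The missing step is exactly the paper's Lemma~\ref{cletangent}: using the confinement and monotonicity argument (via $Y'$ and the order on the arcs $I_c$), one shows each $I_c$ meets the collinearity locus at a unique point, that this point is the unique fixed point of the Poincar\'e map in $I_c$, and hence lies on the center manifold --- so the center manifold \emph{consists entirely} of fixed points of $P$ and is foliated by periodic orbits. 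Citing Lemma~\ref{l.orbitsofY} only handles the easier implication (that the continuation is in $K_c$); it does not produce the foliation by periodic orbits.

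For item~(2): you replace the paper's argument with a direct one. The paper applies Sard's theorem to the restriction $\mu|_{W^c_{loc}(z)}$ to locate a regular value $c$ that is also a continuity point of $Z$; at that level the center manifold is transverse to $I_c$, the set $K_c\cap\cU$ becomes a finite union of periodic orbits, and one is reduced to the tangential situation of Lemma~\ref{cletangent}, all at the cost of replacing $X$ by $X-cY$. You instead invoke the stronger, shadowing-type characterization of the center manifold --- that every complete $P$-orbit (equivalently, complete $Y$-orbit) staying in a small neighbourhood of $\gamma$ must lie on $W^c(\gamma)$ --- and combine it with the confinement of $K_c$-orbits inside $\cU_\gamma$ to conclude $\Col_U(X,Y)\cap\cU_\gamma\subset W^c(\gamma)$. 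This is a legitimate and arguably cleaner argument, but it needs two amendments. First, the Center Manifold Theorem as stated in the paper (Theorem~\ref{t.center}) gives only $\fix(P)\cap\Sigma_\eta(z)\subset W^c_{loc}(z)$; the ``bounded orbits lie on the center manifold'' property is a stronger statement (the reduction principle for partially hyperbolic fixed points, available e.g.\ in \cite{HPS} or \cite{bonatti_crovisier2015center}) and must be invoked explicitly, since the paper's Sard trick was designed precisely to avoid needing it. Second, Lemma~\ref{orbitesconfinees} as stated assumes $\gamma$ is isolated in $K_0$, which fails for $\gamma\in K_{nl}^\perp$ when periodic orbits accumulate; what actually makes the confinement work is the condition $K\cap\partial\cU_\gamma=\vide$ together with upper semi-continuity of $Z$, which is exactly what Lemma~\ref{structurethm} provides (via the empty-interior claim for $I_0\cap K$). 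You should derive the confinement from those ingredients rather than cite Lemma~\ref{orbitesconfinees} out of its hypotheses. With these corrections your approach works, and its advantage is that it treats items~(1) and~(2) symmetrically and avoids the change of level $c$; the cost is reliance on the full strength of center manifold theory rather than the weaker version the paper uses.
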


As the Poincar\'e-Hopf index is additive we obtain 

\begin{cor}
	\label{c.ultimoindice}
	$\Ind(X,K_{nl})=0$.
\end{cor}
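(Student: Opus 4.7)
The overall plan, as outlined just before the statement, is to cover each of $\Kta$ and $\Ktr$ by finitely many open tubular neighborhoods $\cU_1,\dots,\cU_m$ of $Y$-periodic orbits with $\Zero(X)\cap\partial\cU_i=\vide$ and such that $\Col_U(X,Y)\cap\cU_i$ is an annulus foliated by periodic orbits of $Y$. The annular case of Bonatti--Santiago's Theorem \ref{t:BS}, already invoked in Lemma \ref{l.isolated}, then forces $\Ind(X,\cU_i)=0$ for each $i$, and additivity of the Poincar\'e--Hopf index concludes.

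For Part 1, the set $\Kta$ is finite by Lemma \ref{l.structurenonlinked}, so I would work orbit by orbit. Fix $\gamma\in\Kta$ and let $P$ be the first return map of $Y$ on a section at $z\in\gamma$. By Proposition \ref{p.tangent} and hypothesis $(\ast)$, $P$ has an eigenvalue of modulus $\neq 1$ whose eigenspace is tangent to $\mu^{-1}(0)$; Proposition \ref{p.nosaddle} forces the second eigenvalue to have modulus one, in the direction transverse to $\mu^{-1}(0)$. The $C^3$ Center Manifold Theorem provides a $2$-dimensional center manifold $W^c\ni\gamma$ transverse to $\mu^{-1}(0)$. A Picard/implicit-function argument inside $W^c$, patterned on Lemma \ref{l.continuation}, produces a smooth family $\gamma^c\subset\mu^{-1}(c)$ of $Y$-periodic orbits with $\gamma^0=\gamma$ foliating $W^c$; lower semi-continuity of $c\mapsto K_c$ combined with $[X,Y]=0$ then places every $\gamma^c$ in $\Zero(X-cY)$, exactly as in Lemma \ref{l.orbitsofY}. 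A continuity argument on the hyperbolic eigendirection shows that $\Kta$-orbits cannot be accumulated by $\Ktr$-orbits (a limit of transverse hyperbolic eigenspaces is transverse, contradicting $\gamma\in\Kta$), and $K_{ms}$ is closed and disjoint from $K_{nl}$. Hence a small tubular neighborhood $\cU_\gamma$ of $\gamma$ realizes $\Col_U(X,Y)\cap\cU_\gamma=W^c\cap\cU_\gamma$ and $K\cap\cU_\gamma=\gamma$, and Theorem \ref{t:BS} gives $\Ind(X,\gamma)=0$.

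For Part 2, $\Ktr$ is compact (Lemma \ref{l.structurenonlinked}) but possibly uncountable. For $\gamma\in\Ktr$ the eigenvalue setup is reversed: the hyperbolic eigenvalue acts transversally to $\mu^{-1}(0)$ and the center direction is tangent. The Center Manifold Theorem now produces a $2$-dimensional $C^2$ center manifold $W^c_\gamma$ tangent to $\mu^{-1}(0)$ along $\gamma$. I would then argue that, in a sufficiently small tubular neighborhood $\cU_\gamma$, any $x\in\Col_U(X,Y)\cap\cU_\gamma$ must lie on $W^c_\gamma$: the full $Y_t$-orbit of $x$ is contained in $\Col_U(X,Y)$ and stays close to $\gamma$, so the partially hyperbolic characterization of $W^c_\gamma$ (as the set of points whose $P$-iterates do not diverge in the transverse hyperbolic direction) applies. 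Haefliger's Theorem \ref{l.haef} applied to $Y'|_{W^c_\gamma}$, together with the derivative-comparison argument of Lemma \ref{l.specpoincouille}, then promotes $W^c_\gamma\cap\cU_\gamma$ to an annulus foliated by $Y$-periodic orbits inside $\Col_U(X,Y)$. Compactness of $\Ktr$ extracts a finite subcover $\cU_1,\dots,\cU_m$; after a harmless shrinking ensuring $\Zero(X)\cap\partial\cU_i=\vide$ and disjointness from $K_{ms}\cup\Kta$, Theorem \ref{t:BS} applies to each $\cU_i$ and additivity finishes the proof.

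The hard part is the argument in Part 2 pinning $\Col_U(X,Y)\cap\cU_\gamma$ to a single center manifold, since center manifolds are non-unique and only finitely smooth. The correct choice must be dictated by the $Y_t$-invariance of the collinearity locus, which traps the $Y_t$-orbits of $\Col$-points inside $\cU_\gamma$ and forces them onto $W^c_\gamma$ via the hyperbolic characterization. A secondary subtlety is the separation of $\Kta$ from $\Ktr$ used in Part 1, handled by continuity of the hyperbolic eigendirection along periodic orbits of $K$.
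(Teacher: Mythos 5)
Your global plan -- decompose $K_{nl}=K_{nl}^T\sqcup K_{nl}^{\perp}$, cover each piece by nice tubular neighbourhoods, show the collinearity locus there is an annulus foliated by $Y$-periodic orbits, and then invoke the annular case of Theorem~\ref{t:BS} plus additivity -- is the paper's plan. But the details of both parts have genuine problems, and the transverse case is missing the key idea.

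In Part~1 you propose a ``Picard/implicit-function argument inside $W^c$, patterned on Lemma~\ref{l.continuation}.'' That lemma constructs continuations of periodic orbits that are \emph{hyperbolic} (attracting) for $Y'|_{\mu^{-1}(c)}$, and the contraction/IFT step uses precisely that hyperbolicity. For $\gamma\subset K_{nl}^T$ the $Y$-Poincar\'e map has one eigenvalue of modulus $1$; you have no information on hyperbolicity of the $Y'$-Poincar\'e map on $\mu^{-1}(0)$ at $\gamma$ (Lemma~\ref{l.specpoincouille}, which identifies the two derivatives, applies to \emph{linked} orbits, not to $K_{nl}$). So Picard does not give you a well-defined family $\gamma^c$, and Lemma~\ref{l.orbitsofY} (which also concerns $Y'$-attracting orbits) does not apply. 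What the paper does instead (Lemma~\ref{cletangent}) is: the one-dimensional center manifold $W^c_{loc}(z)$ is transverse to $I_0$, hence meets each $I_c$ in a single point; combined with the confinement Lemma~\ref{orbitesconfinees}, monotonicity of the $P$-orbit on $I_c$, and the fact that $W^c_{loc}(z)$ contains all nearby fixed points of $P$, one deduces $K_c\cap\Sigma$ is exactly that single point. No contraction argument is used or available. (Also a small slip: the tangency of $W^s_{loc}(\gamma)$ to $\mu^{-1}(0)$ for $\gamma\in K_{nl}^T$ is the \emph{definition} of $K_{nl}^T$, not Proposition~\ref{p.tangent}, which concerns $K_l$.)

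In Part~2 the gap is more serious. You assert that the $2$-dimensional center manifold $W^c_\gamma$ is tangent to $\mu^{-1}(0)$ along $\gamma$ and then that ``Haefliger's Theorem~\ref{l.haef} applied to $Y'|_{W^c_\gamma}$ \dots\ promotes $W^c_\gamma\cap\cU_\gamma$ to an annulus foliated by $Y$-periodic orbits inside $\Col_U(X,Y)$.'' Two problems. First, $Y'$ is the orthogonal projection of $Y$ onto $T\mu^{-1}(c)$; it is tangent to the level sets of $\mu$, not to $W^c_\gamma$, and $W^c_\gamma$ is not a level set (it is transverse to them near $\gamma$ if the center direction is transverse to $I_0$, and even if the center \emph{direction} equals $T_zI_0$ along $\gamma$ the surface $W^c_\gamma$ need not coincide with $\mu^{-1}(0)$). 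So $Y'|_{W^c_\gamma}$ is not a vector field on $W^c_\gamma$ and Haefliger's theorem cannot be applied there. Second, and decisively, even replacing $Y'$ by $Y$ (which \emph{is} tangent to $W^c_\gamma$), nothing forces $W^c_\gamma\cap\cU_\gamma$ to be foliated by periodic orbits. The paper explicitly flags that in the transverse case ``the center manifold does not need to be everywhere transverse to the level sets of $\mu$,'' so one cannot replay Lemma~\ref{cletangent} directly. The actual proof of Lemma~\ref{transversecase} inserts a Sard step: restrict $\mu$ to the one-dimensional $W^c_{loc}(z)$, pick a regular value $c$ that is also a continuity point of $Z$, replace $X$ by $X-cY$ (same index on $\cU$), and \emph{then} $K_c\cap\cU$ becomes finitely many periodic orbits whose local picture \emph{is} the transverse, tangential-case picture, so Lemma~\ref{cletangent} and \cite{BS} apply around each of them. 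That Sard/$X\mapsto X-cY$ step is the missing idea: without it the claim that $\Col_U(X,Y)\cap\cU_\gamma$ is an annulus of periodic orbits is unjustified, and Theorem~\ref{t:BS} cannot be invoked.
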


Since  $\ind(X,K)=\ind(X,K_{ms})+\ind(X,K_{nl})$, by Corollaries~\ref{c.casoms} and \ref{c.ultimoindice}  we have that the proof of Theorem~\ref{t.valeurpropre} is reduced to that of Proposition~\ref{Knouille}.

\subsection{Index at the non-linked component}

The objective of this section is to prove Proposition \ref{Knouille}. Recall that we have obtained a decomposition of the zeros of $X$
$$K=K_{ms}\sqcup\Kta\sqcup\Ktr.$$
In particular, $\Kta$ and $\Ktr$ are isolated compact sets of zeros of $X$. Our strategy will be to show that these
sets are included in some $C^1$ surface, and then to apply the results of \cite{BS}. The main ingredient of our argument is the Center Manifold Theorem. 

\subsubsection{The Center Manifold Theorem}

In this section we state a version of the classical Center Manifold Theorem \cite{HPS},\cite{wiggins2013normally} which is suitable for our purposes. 

Before giving the statement, let us give the general context. Let $Y$ be a $C^1$ vector field on a $3$-manifold $M$ and let $\gamma$ be a periodic orbit of $Y$. Let $z\in\gamma$ and consider the first return map $P:\Sigma_\eta(z)\to\Sigma(z)$, to some
section $\Sigma(z)$ everywhere transverse to $Y$. Assume that the derivative $D_zP:T_z\Sigma(z)\to T_z\Sigma(z)$ has a center unstable partially hyperbolic splitting $E^c\oplus E^{u}$, with the respective eigenvalues being $1$ and $\lambda$ with $|\lambda|>1$, or a center stable partially hyperbolic splitting $E^{s}\oplus E^c$ with respective eigenvalues being $\lambda$ and $1$ with $|\lambda|<1$. With these notations, one has the following result.

\begin{thm}[Center Manifold Theorem]
\label{t.center}
There exists a $C^1$ curve $W^c_{loc}(z)$ contained in $\Sigma(z)$ (called the center manifold), which is tangent to $E^c$ at $z$ and
enjoys the following dynamical property: there exists $\eta>0$ such that 
$$\fix(P)\cap\Sigma_{\eta}(z)\subset W^c_{loc}(z).$$  
\end{thm}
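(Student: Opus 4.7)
The statement is the classical Center Manifold Theorem specialized to the first return map $P$ at the fixed point $z$, and my plan is to construct $W^c_{loc}(z)$ via a standard graph transform and then verify the dynamical property by recognizing $W^c_{loc}(z)$ as the set of points with bounded $P$-orbit. I would focus the write-up on (i) the construction, (ii) the characterization, (iii) the conclusion for fixed points.

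\emph{Construction of $W^c_{loc}(z)$.} I would choose $C^3$ coordinates on $\Sigma(z)$ centered at $z$ in which the splitting $E^c\oplus E^h$ (with $h=u$ or $h=s$) becomes the coordinate decomposition, and write $P$ in the normal form $P(x^c,x^h)=(x^c+f(x^c,x^h),\lambda x^h+g(x^c,x^h))$ with $f$, $g$ of class $C^3$ vanishing to first order at the origin. Multiplying $f$ and $g$ by a bump function supported in a small ball $B_\eta$ around $0$, I obtain a diffeomorphism $\widetilde{P}$ of $\mathbf{R}^2$ that agrees with $P$ on $B_\eta$ and whose nonlinear part has arbitrarily small $C^1$-norm on the whole plane. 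The graph-transform argument of Hirsch-Pugh-Shub (or the Lyapunov-Perron fixed-point method) applied to $\widetilde{P}$ then produces a globally $\widetilde{P}$-invariant $C^1$-graph $\widetilde{W}^c=\{x^h=h(x^c)\}$ with $h(0)=0$ and $Dh(0)=0$. I set $W^c_{loc}(z):=\widetilde{W}^c\cap B_\eta$; this is a $C^1$ curve through $z$ tangent to $E^c$, and it is locally invariant under $P$ since $P=\widetilde{P}$ on $B_\eta$.

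\emph{Dynamical characterization and conclusion.} The key output of the construction, which I would record as a standard fact from \cite{HPS}, is that $\widetilde{W}^c$ coincides with the set of points whose complete $\widetilde{P}$-orbit stays in $B_\eta$: in the center-unstable case $E^c\oplus E^u$, off-graph points are expelled to infinity by forward iteration along the expanding direction $E^u$; in the symmetric center-stable case $E^s\oplus E^c$ the same happens under backward iteration. Now if $x\in\fix(P)\cap\Sigma_\eta(z)\subset B_\eta$, then $x$ is also a fixed point of $\widetilde{P}$, and its entire orbit is the singleton $\{x\}\subset B_\eta$. Hence $x\in\widetilde{W}^c\cap B_\eta=W^c_{loc}(z)$, establishing the inclusion $\fix(P)\cap\Sigma_\eta(z)\subset W^c_{loc}(z)$.

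\emph{Main obstacle.} The chief delicacies of center-manifold theory, namely non-uniqueness of the invariant graph and finite loss of regularity, play no role here since only $C^1$-regularity is claimed and any locally invariant $C^1$-curve tangent to $E^c$ will do. The one point requiring attention is the cutoff step: the bump function must be chosen so that both $\widetilde{P}-D_0P$ and $D\widetilde{P}-D_0P$ are globally $C^0$-small, which is a standard exercise once the coordinates are adapted to the splitting. With that taken care of, the graph transform is a contraction on a suitable Banach space of Lipschitz graphs, and the rest of the argument proceeds as above.
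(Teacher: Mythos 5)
The paper does not prove Theorem~\ref{t.center}: it is stated as a citation to \cite{HPS}, \cite{wiggins2013normally}, and, most precisely, to the main theorem of \cite{bonatti_crovisier2015center}, of which it is declared to be a particular case. Your proposal supplies a self-contained proof, and it is essentially the standard textbook argument that those references encode: adapt coordinates to the splitting $E^c\oplus E^h$, cut off the nonlinear part of $P$ to obtain a globally defined $\widetilde{P}$ that is $C^1$-close to $D_zP$, run the graph transform (or Lyapunov--Perron) to produce an invariant $C^1$-graph $\widetilde{W}^c$ over $E^c$, set $W^c_{loc}(z)=\widetilde{W}^c\cap B_\eta$, and deduce the fixed-point containment from the fact that any bounded $\widetilde{P}$-orbit in $B_\eta$ must lie on $\widetilde{W}^c$ --- fixed points being the most degenerate case of a bounded orbit. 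This is correct and gives a genuine proof where the paper offers only a pointer to the literature; what it buys is self-containment at the cost of length.

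Two small imprecisions are worth flagging, neither of which affects the validity of your argument. First, you assert that $\widetilde{W}^c$ \emph{coincides with} the set of points whose complete $\widetilde{P}$-orbit stays in $B_\eta$; what is true (and what you actually use) is only the inclusion that this set is \emph{contained in} $\widetilde{W}^c$ --- the converse fails, since a point of $\widetilde{W}^c\cap B_\eta$ can drift out of $B_\eta$ along the center direction, where the linearization is neutral. Second, the remark that ``any locally invariant $C^1$-curve tangent to $E^c$ will do'' overstates the robustness: the fixed-point containment is a consequence of the specific graph-transform construction (via the bounded-orbit characterization), not a formal consequence of local invariance and tangency to $E^c$ alone. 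Since your proof works with the explicitly constructed $\widetilde{W}^c$ and invokes the correct one-sided inclusion, the argument as written is sound; you should simply phrase the characterization as an inclusion and drop or soften the uniqueness remark.
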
	  

See also the main theorem in \cite{bonatti_crovisier2015center}, from which the above statement follows as a particular case.

\subsubsection{The tangential case}

By Lemma \ref{l.structurenonlinked} $\Kta$ is the disjoint union of finitely many periodic orbits. Those periodic orbits are isolated inside $K$. Hence $\Ind(X,\gamma)$ is well defined for every $Y_t$-periodic orbit included in $\Kta$. Hence, item (1) of Proposition~\ref{Knouille} follows from the lemma below.
 
\begin{lem}
\label{tangentialcase}
Let $\gamma$ be a periodic orbit of $Y$ included in $\Kta$. Then
$$\Ind(X,\gamma)=0.$$
\end{lem}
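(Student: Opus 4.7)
The strategy is to reduce to the annular collinearity-locus case of Bonatti--Santiago's Theorem~\ref{t:BS} by producing a small isolating tubular neighborhood $V$ of $\gamma$ in which $\Col_V(X,Y)$ is a $C^1$ annulus foliated by periodic orbits of $Y$, each admitting a stable (resp.\ unstable) manifold.

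\emph{Step 1: partial hyperbolicity of $\gamma$.} Let $z\in\gamma$ and let $P:\Sigma_\eta(z)\to\Sigma(z)$ denote the first return map of $Y$ to a nice section. Hypothesis $(\ast)$ supplies an eigenvalue $\lambda$ of $D_zP$ with $|\lambda|\neq 1$, while Proposition~\ref{p.nosaddle} (applicable since $0\in\cR$) supplies an eigenvalue of modulus $1$; since complex-conjugate eigenvalues would share the same modulus, both eigenvalues are real. Thus $\Spec(D_zP)=\{\lambda,\epsilon\}$ with $\epsilon\in\{+1,-1\}$ and $|\lambda|\neq 1$. By definition of $\Kta$, $\gamma$ carries a local stable or unstable manifold everywhere tangent to $\mu^{-1}(0)$; replacing $Y$ by $-Y$ if needed, we assume a stable one. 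Then the strong stable direction of $D_zP$ (the $\lambda$-eigenspace) coincides with $T_z I_0$, where $I_0=\Sigma(z)\cap\mu^{-1}(0)$, and consequently $|\lambda|<1$.

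\emph{Step 2: center manifold and continuation of $\gamma$.} Apply Theorem~\ref{t.center} to obtain a $C^1$ curve $W^c_{loc}(z)\subset\Sigma(z)$, tangent at $z$ to the $\epsilon$-eigenspace (hence transverse to $T_zI_0$) and containing every fixed point of $P$ sufficiently close to $z$. Saturating $W^c_{loc}(z)$ by the flow of $Y$ yields a $C^1$ two-dimensional $Y_t$-invariant surface $\mathcal{C}$ in a tubular neighborhood of $\gamma$. The transversality of $W^c_{loc}(z)$ to $T_zI_0$ implies that $\mu|_{W^c_{loc}(z)}$ is a local diffeomorphism onto a neighborhood of $0$. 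The non-linked character of $\gamma$, the finiteness of $\Kta$ (Lemma~\ref{l.lesorbitesliessonfini}), and the compactness of $\Ktr$ together show that $\gamma$ is isolated in $K$; fix a small nice isolating tubular neighborhood $V$. For each $c$ close to $0$, the lower semi-continuity of $Z$ at $0$ produces a point of $K_c\cap V$; since $Y=Y'$ on $\Col_U(X,Y)$, the Poincar\'e--Bendixson/Denjoy--Schwartz structure of $Y'$ on $\mu^{-1}(c)$ forces its orbit to accumulate at a periodic orbit $\gamma_c\subset K_c\cap V$, which meets $\Sigma(z)$ at a fixed point of $P$ lying in $W^c_{loc}(z)\cap\mu^{-1}(c)$. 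By the diffeomorphism property of $\mu|_{W^c_{loc}(z)}$, this intersection is unique, so $\gamma_c$ is uniquely determined, and the family $\{\gamma_c\}_{|c|<\delta}$ sweeps out a $C^1$ annulus $\mathcal{A}\subset\mathcal{C}$.

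\emph{Step 3: $\Col_V(X,Y)=\mathcal{A}$ and conclusion.} For every continuity point $c\in\cR$ close to $0$, Hypothesis $(\ast)$ and Proposition~\ref{p.nosaddle} force $\gamma_c$ to be partially hyperbolic, hence to admit a local stable manifold; by continuity in $c$ these fit together into a $C^1$ foliation of a neighborhood of $\mathcal{A}$. Shrinking $V$ if necessary, any orbit of $K_c\cap V$ not contained in $\gamma_c$ would be non-periodic and, by the uniqueness of the periodic continuation inside $V$, would have $\gamma_c$ as both its alpha and omega limit, producing a homoclinic loop; the partial hyperbolicity of $\gamma_c$ together with the existence of its stable manifold rules this out. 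Therefore $\Col_V(X,Y)=\mathcal{A}$, an annulus of periodic orbits of $Y$ with stable manifolds, and the annular collinearity-locus case of Theorem~\ref{t:BS} yields $\Ind(X,V)=0$; since $V$ isolates $\gamma$ in $\Zero(X)$, this gives $\Ind(X,\gamma)=0$. The main obstacle will be this last step: precisely verifying that $\Col_V(X,Y)$ reduces to the annulus $\mathcal{A}$ without stray non-periodic orbits in $K_c\cap V$ for $c\neq 0$. It is the combination of $(\ast)$, the absence of saddles at generic levels (Proposition~\ref{p.nosaddle}), and the non-linked isolation of $\gamma$ that precludes homoclinic or heteroclinic connections inside $V$.
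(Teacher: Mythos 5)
Your overall strategy is the same as the paper's: use the Center Manifold Theorem to produce a $C^1$ curve $W^c_{loc}(z)$ transverse to the levels $I_c$, show that the collinearity locus in a small tubular neighbourhood reduces to the annulus swept by the fixed points on $W^c_{loc}(z)$, and then invoke the annular case of Theorem~\ref{t:BS}. Steps~1 and~2 are essentially correct and match the paper's setup for Lemma~\ref{cletangent}. The problem is your Step~3.

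The claim that ``the partial hyperbolicity of $\gamma_c$ together with the existence of its stable manifold rules out'' a homoclinic connection in $K_c\cap V$ is not justified as written, and it silently replaces the argument that actually does the work. First, Proposition~\ref{p.nosaddle} applies only at parameters $c\in\cR$, so the statement that $\gamma_c$ is partially hyperbolic is, a priori, available only on a residual set of $c$; passing ``by continuity'' to a $C^1$ foliation of stable manifolds over all $c$ near $0$ needs more care (one must argue that the contracting eigenvalue $\lambda_c$ persists for \emph{all} nearby $c$, not only generic $c$). Second, and more importantly, even granting partial hyperbolicity, the mere existence of a strong stable manifold for $\gamma_c$ does not by itself forbid a non-periodic orbit $x\in K_c\cap V$ with $\alpha_{Y'}(x)=\omega_{Y'}(x)=\gamma_c$; you would at least have to argue that backward convergence to $\gamma_c$ forces $x$ onto the (local) center manifold of $\gamma_c$, that this center manifold is transverse to $I_c$, and that it meets $I_c$ only at $\gamma_c\cap\Sigma$. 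None of this is spelled out.

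The paper's proof of Lemma~\ref{cletangent} sidesteps all of this with a purely one-dimensional Poincar\'e--Bendixson monotonicity argument that does not invoke partial hyperbolicity of $\gamma_c$ at all. Since $W^c_{loc}(z)$ is transverse to the levels, one may shrink $\cV$ so that $W^c_{loc}(z)$ meets each arc $I_c$ at exactly one point; that point is the unique fixed point of $P$ in $I_c$ because all fixed points near $z$ lie on $W^c_{loc}(z)$ by Theorem~\ref{t.center}. Then for $x\in K_c\cap\Sigma$, Lemma~\ref{orbitesconfinees} guarantees $P^n(x)\in I_c$ for all $n\in\Z$, and the Poincar\'e--Bendixson monotonicity of the returns to the transverse arc $I_c$ shows $(P^n(x))_{n\in\Z}$ is monotone. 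Its forward and backward limits are both fixed points of $P$ in $I_c$, hence equal; strict monotonicity of a non-fixed orbit would make those two limits distinct. Therefore $x$ is itself that fixed point, $K_c\cap I_c$ is a single periodic point, and $\Col_\cV(X,Y)\cap\Sigma=W^c_{loc}(z)\cap\cV$. This is the step you need to make precise before invoking Theorem~\ref{t:BS} (specifically its normally hyperbolic annular case).
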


The rest of this paragraph is devoted to proving Lemma~\ref{tangentialcase}. Thus, let $\gamma$ be a periodic orbit of $Y$ included in $\Kta$ and take a nice tubular neighbourhood $\cU$ of $\gamma$ (see Definition \ref{nicenbd}). Since $\gamma$ is isolated in $K$, it is possible to choose $\cU$ so that $K\cap\overline{\cU}=\gamma$.

We will assume that $\gamma$ has a local stable manifold $W^s_{loc}(\gamma)$ which is tangent to $\mu^{-1}(0)$. The case of an unstable manifold follows from a symmetric argument.

We will now assume that $\cU$ is small enough so that for every $c$ such that $\mu^{-1}(c)\cap\cU\neq\vide$, Properties (1) and (2) of Lemma \ref{orbitesconfinees} hold. { By reducing it if necessary, we can assume that it is a nice tubular neighbourhood of $\gamma$ (recall Lemma~\ref{l.nicetub} above). In particular, $\cU$ comes equipped with a fibration by disks $\Sigma$ over $\gamma$ and a trivial foliation by annuli $\cU_c$}. Choose $z\in\gamma$ and consider the fiber $\Sigma=\Sigma(z)$ which is foliated by embedded intervals $I_c=\cU_c\cap\Sigma$. Consider $P$, the first return map to $\Sigma$ defined in a neighbourhood of $z$.

By hypothesis $T_{z}\Sigma=E^{s}(z)\oplus E^c(z)$ where $E^{s}(z)$ is tangent at $z$ to  $I_0=\cU_0\cap\Sigma$ and $E^c(z)$ is the eigenspace of $D_{z}P$ corresponding to the eigenvalue $1$. By the Center Manifold Theorem there exists a $C^1$-embedded interval $W^c_{loc}(z)\dans\Sigma$ tangent at $z$ to $E^c(z)$ satisfying the dynamical property (for some $\eta>0$ small enough) 
$$\fix(P)\cap\Sigma_\eta(z)\subset W^c_{loc}(z).$$

\begin{lem}
\label{cletangent}
There exists a smaller nice tubular neighbourhood $\cV\dans\cU$ satisfying the same properties as $\cU$ (the fiber containing $z$ is still denoted by $\Sigma$), as well as the following properties.

\begin{enumerate}
\item $\Col_{\cV}(X,Y)\cap\Sigma=W^c_{loc}(z)\cap\cV$.
\item All points of $W^c_{loc}(z)\cap\cV$ are fixed points of the Poincar\'e map $P$.
\end{enumerate}
\end{lem}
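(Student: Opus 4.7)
The plan is to show that, for every $c$ in a sufficiently small neighborhood of $0$, the intersection $K_c \cap \Sigma \cap \cV$ reduces to the single point $z_c := W^c_{loc}(z) \cap I_c$, and that $z_c$ is a fixed point of $P$. Taking the union over $c$ will yield both items of the lemma.

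First I would shrink $\cV \subset \cU$ so that: $K \cap \overline{\cV} = \gamma$ (isolation of $\gamma$ in $K$); Lemma~\ref{orbitesconfinees} applies, so that for $|c|$ small enough every $x \in K_c \cap \cV$ has its full $Y$-orbit contained in $\cU$; and $W^c_{loc}(z)$ is transverse to every level arc $I_c = \Sigma \cap \mu^{-1}(c)$. The transversality at $z$ comes from $T_z W^c_{loc}(z) = E^c$ being transverse to $T_z I_0 = E^s$ (a consequence of the tangentiality hypothesis $\gamma \subset \Kta$), and propagates to nearby values of $c$ by continuity. Then $z_c = W^c_{loc}(z) \cap I_c$ is well-defined, and $c \mapsto z_c$ provides a $C^1$ parametrization of $W^c_{loc}(z) \cap \cV$ by the level function.

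The core step is to show that every $x \in K_c \cap \Sigma \cap \cV$ coincides with $z_c$. Since $Y = Y'$ on $K_c$, the orbit $\OO_Y(x) = \OO_{Y'}(x)$ lies in $K_c \subset \mu^{-1}(c) \cap \cU$; upper semi-continuity of $Z$ at $0$ forces $K_c$ to stay close to $\gamma$ for $|c|$ small, hence in the region where $Y'$ is non-vanishing by Lemma~\ref{l.anglouille}. Applying Poincar\'e--Bendixson to $Y'|_{\mu^{-1}(c)}$ in both forward and backward time yields that $\omega_Y(x)$ and $\alpha_Y(x)$ are periodic orbits of $Y$ contained in $K_c$; each meets $\Sigma$ at a fixed point of $P$ belonging to $K_c \cap I_c$, and by the Center Manifold Theorem such a fixed point must lie in $W^c_{loc}(z) \cap I_c = \{z_c\}$. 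Hence $\omega_Y(x) = \alpha_Y(x) =: \tilde\gamma_c$, the unique periodic orbit of $Y$ passing through $z_c$. The eigenvalues of the Poincar\'e map of $\tilde\gamma_c$ are close to those of $\gamma$, namely $1$ and $\lambda$ with $|\lambda| < 1$; the contracting direction, inherited from the tangentiality of the stable manifold of $\gamma$, is tangent to the level set, so $\tilde\gamma_c$ is a stable periodic orbit of $Y'|_{\mu^{-1}(c)}$. A point off a stable periodic orbit cannot also have it as $\alpha$-limit, hence $x \in \tilde\gamma_c$ and consequently $x = \tilde\gamma_c \cap \Sigma = z_c$.

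This yields $K_c \cap \Sigma \cap \cV \subseteq \{z_c\}$; lower semi-continuity of $Z$ at $0$ (using $0 \in \cR$) provides non-emptiness, hence equality, and shows in particular that $z_c \in \Col_\cV(X,Y) \cap \Sigma$ and that $z_c$ is fixed by $P$. Taking the union over $c$ in a small interval around $0$ then gives $\Col_\cV(X,Y) \cap \Sigma = W^c_{loc}(z) \cap \cV$, which is item (1), and that every point of $W^c_{loc}(z) \cap \cV$ is fixed by $P$, which is item (2). The main technical obstacle is ensuring Poincar\'e--Bendixson applies in backward time: this requires $\OO_Y(x)$ to remain in an annular region of $\mu^{-1}(c)$ where $Y'$ is non-vanishing, which is guaranteed precisely by the conjunction of Lemmas~\ref{orbitesconfinees} and~\ref{l.anglouille} with the upper semi-continuity of $Z$ at $0$.
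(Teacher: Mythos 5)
Your overall structure matches the paper's: shrink to a nice tubular neighbourhood where Lemma~\ref{orbitesconfinees} applies and $W^c_{loc}(z)$ is transverse to all arcs $I_c$, so that $I_c$ contains at most one fixed point of $P$; take $x\in K_c\cap\Sigma\cap\cV$; conclude via Poincar\'e--Bendixson that $\omega_Y(x)$ and $\alpha_Y(x)$ are periodic orbits meeting $\Sigma$ at fixed points of $P$, which must both be $z_c$ by the Center Manifold Theorem; then argue that $x$ itself must be $z_c$. Up to the last step this is exactly the paper's argument.

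The last step, however, has a real gap. You assert that the contracting eigendirection of $D_{z_c}P$, ``inherited from the tangentiality of the stable manifold of $\gamma$,'' is tangent to the level set $I_c$, and deduce that $\tilde\gamma_c$ is a stable periodic orbit of $Y'|_{\mu^{-1}(c)}$. This inheritance does not hold: the tangency of $E^s$ to the level set is a closed, codimension-one condition that is hypothesized only at $c=0$ (that is what puts $\gamma$ in $\Kta$), and there is no reason for it to persist to nearby levels $c\neq 0$. Indeed, the very distinction between $\Kta$ and $\Ktr$ exists because this tangency is a genuine constraint, and the treatment of $\Ktr$ in Lemma~\ref{transversecase} has to invoke Sard's theorem precisely because the angle between $E^s$ and the level sets varies with $c$. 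Continuity of eigenspaces only gives that $E^s(z_c)$ is \emph{close} to $T_{z_c}I_c$, not equal to it, and the sign of the displacement controls whether the restriction of the $Y'$-return map to $I_c$ is contracting, expanding, or neutral; none of these is ruled out. Moreover, the eigenvalue relevant to stability of $\tilde\gamma_c$ for $Y'|_{\mu^{-1}(c)}$ is that of the $Y'$-return map, and the argument identifying it with that of the $Y$-return map (Proposition~\ref{l.specpoincouille}) is only available under (MS$'$), i.e.\ when the fixed point is accumulated by non-periodic collinearity orbits in $I_c$ --- which is exactly what is not the case for a non-linked orbit.

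The fix is simple, and is what the paper does: you do not need any stability statement at all. The bilateral sequence $(P^n(x))_{n\in\Z}$ lies in $I_c$ (because $K_c$ is $Y$-invariant and contained in $\mu^{-1}(c)$), and it is \emph{monotone} for the coherent order on $I_c$ by the standard Poincar\'e--Bendixson flow-box argument applied to $Y'|_{\mu^{-1}(c)}$. A bilateral monotone sequence whose forward and backward limits both equal $z_c$ is necessarily constant, so $x=z_c$. This closes the argument without any appeal to the position of $E^s(z_c)$ relative to $I_c$. The rest of your proposal --- using lower semi-continuity of $Z$ at $0$ for non-emptiness, then taking the union over $c$ to obtain both items --- is correct.
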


\begin{proof}
We start the proof by making an elementary observation. The center manifold $W^c_{loc}(z)$ is an embedded {interval} transverse to $I_0$. As a consequence, we can shrink $\cU$ so that it keeps the properties of Lemma \ref{orbitesconfinees} (note that $\cU\cap K=\gamma$) and $W^c_{loc}(z)$ is { an interval} embedded in $\cU$ that crosses \emph{transversally} all level sets $I_c$ contained in $\cU$. In particular $W^c_{loc}(z)$ is a {transverse} section of the trivial foliation defined by intervals $I_c$, and intersects every such interval at exactly one point. Since $W^c_{loc}(z)$ contains every fixed point of $P$ this implies that for $|c|$ small enough, $I_c$ contains at most one fixed point of $P$.

\begin{figure}[!h]
\centering
\includegraphics[scale=0.5]{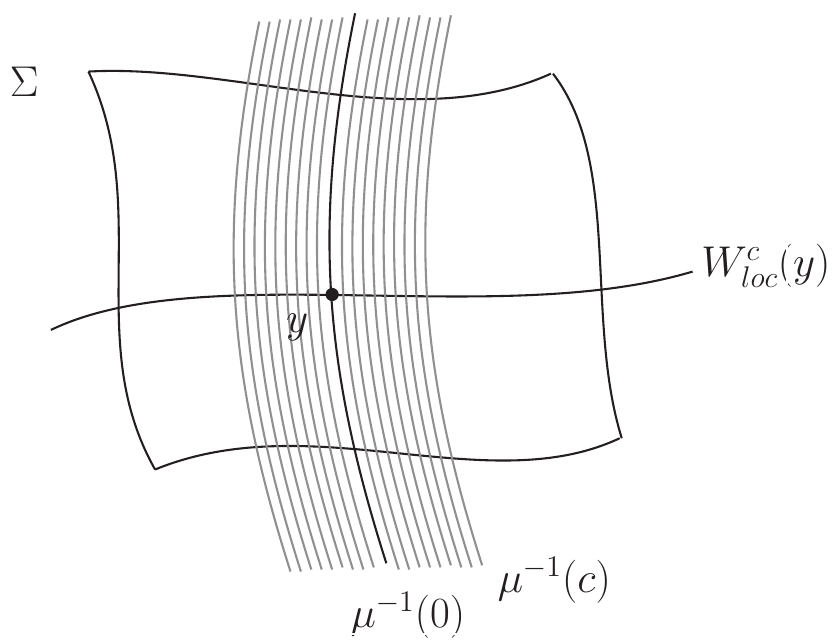}
\caption{The center manifold is transverse to the level sets}
\label{Bis-eps-converted-to}
\end{figure}

Consider an interval $I_c$ intersecting $\cV$. Since $\cV$ enjoys the properties of Lemma \ref{orbitesconfinees} there exists $x\in\cV\cap K_c$. Using the second property of Lemma \ref{orbitesconfinees} we see that $P^n(x)$ is well defined for every $n\in\Z$ and belongs to $I_c$. Hence by compactness, the points $q_c=\lim_{n\to-\infty}P^n(x)$ and $p_c=\lim_{n\to\infty}P^n(x)$ exist, belong to $I_c$ and are fixed points of $P$. 

Since $I_c$ contains at most one fixed point we must have $p_c=q_c$. Recall that intervals $I_c$ are endowed with a coherent order. Using the vector field $Y'$ we see that the sequence $(P^n(x))_{n\in\Z}$ is monotone. So we must have $q_c=x=p_c$.

Consequently, every level set $I_c$ intersects the collinearity set at a unique point, which must be a fixed point of $P$, and therefore must belong to $W_{loc}^c(z)$. Since $W^c_{loc}(z)$ meets such a level set at a unique point, and since these level sets foliate $\cV$, the lemma is proven.
\end{proof}
{ To complete the proof of Lemma~\ref{tangentialcase} we shall need a technical version of the main result in \cite{BS}. To avoid introducing more heavy notation we shall not state it with the exact same words as in \cite{BS} - for that we refer the reader to Definition 4.2 of that paper, which contains the assumptions of the result below with a very detailed presentation. 
\begin{thm}[Theorem A of \cite{BS} - Technical version]
	\label{t.bsdenovo}
Let $(U,X,Y)$ be a prepared triple satisfying the following additional assumptions
\begin{enumerate}
	\item $\Zero(X)\cap U=\gamma$ a $Y_t$-periodic orbit.
	\item $U$ is a nice tubular neighbourhood of $\gamma$.
	\item There exists an embedded annulus $S$ which is trivially foliated by periodic orbits of $Y$ and so that $\Col_{U}(X,Y)=S$, with $\Zero(X-cY)$ being a $Y_t$-periodic orbit
\end{enumerate}
Then, $\ind(X,U)=0$.
\end{thm}

This result, although not precisely stated in \cite{BS} is the main step in the course of proving Theorem~\ref{t:BS}. Indeed, in that paper one assumes that there exists a counterexample to Theorem~\ref{t:BS} (the main theorem in \cite{BS}). Then, Lemma 4.3 of that reference proves that there exists another counter-example satisfying precisely the assumptions of Theorem~\ref{t.bsdenovo}, but with $\ind(X,U)\neq 0$. The rest of the paper then shows that this leads to a contradiction. Thus, from the proof in \cite{BS} one can easily extract Theorem~\ref{t.bsdenovo}.

\begin{figure}[!h]
	\centering
	\includegraphics[width=200pt,height=200pt]{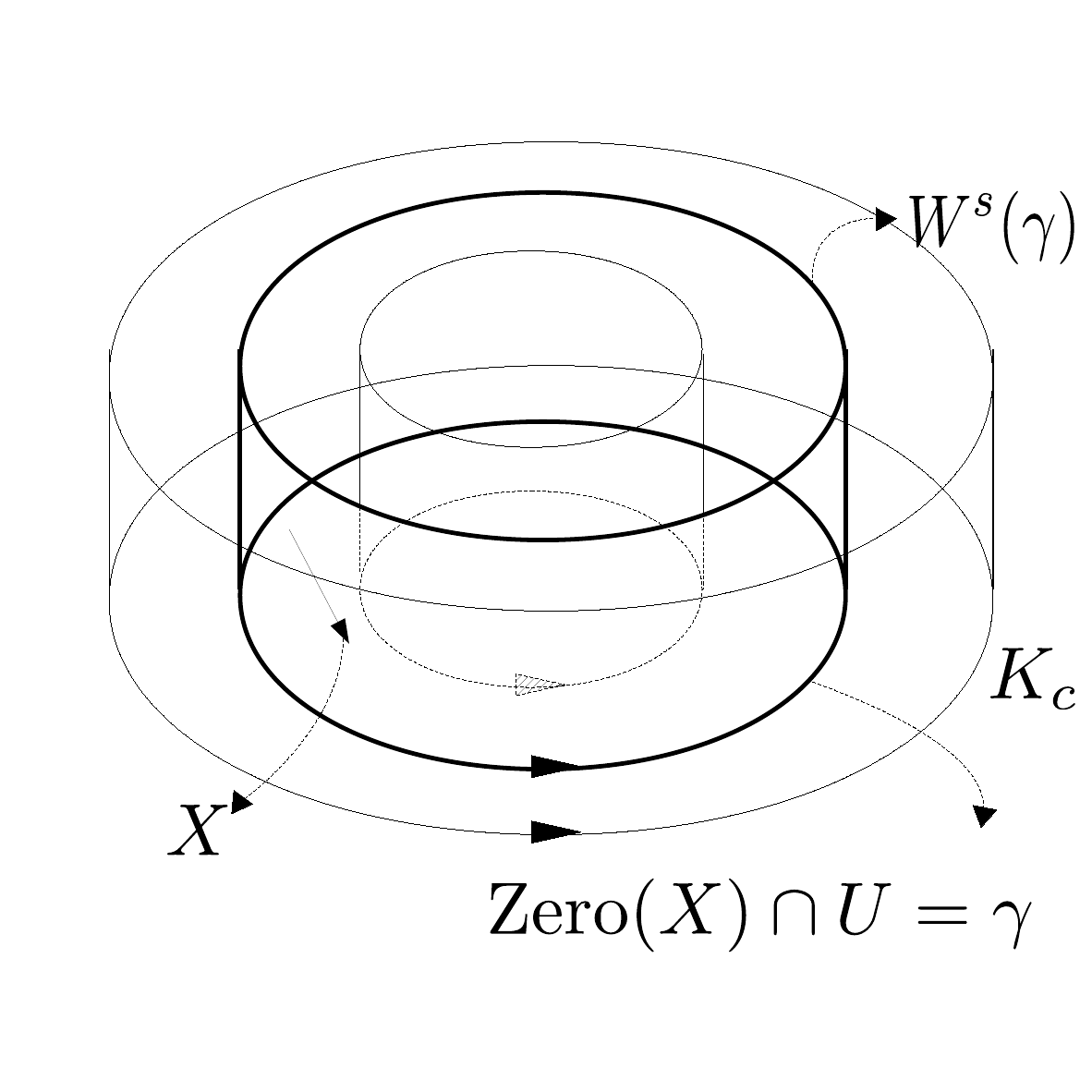}
	\caption{A particular case of Theorem~\ref{t.bsdenovo}: the collinearity locus is an annulus foliated by $Y$ periodic orbits, all of them which are \emph{partially hyperbolic}. The commutation obliges $X$ to be tangent to the foliation by stable manifolds, leaving no room for $X$ to turn, forcing the vanishing of the index. In Lemma~\ref{tangentialcase} we are in fact in this particular case.}  
	\label{f.anneau}
\end{figure}

\begin{proof}[Proof of Lemma \ref{tangentialcase}] Consider $\cV$, the tubular neighbourhood of $\gamma$ obtained in Lemma \ref{cletangent}. The surface $S$ obtained by pushing $W^c_{loc}(z)\cap\Sigma_0$ is an annulus trivially foliated by periodic orbits of $Y$. We can modify slightly $\cV$ so that $\cV\cap\Col_{U}(X,Y)=S$.
Lemma \ref{tangentialcase} now follows from Theorem~\ref{t.bsdenovo}.
\end{proof}
}
\subsubsection{The transverse case}

Recall that $\Ktr$ consists of periodic orbits of $Y$ which are not accumulated by non-periodic orbits included in $K$, and whose stable manifolds are transverse to the level set $\mu^{-1}(0)$. 

\paragraph{Nice decomposition of $\Ktr$ --}We want to prove that $\Ind(X,\Ktr)=0$. Here the difficulty is that there is no reason why this compact set should consist of finitely many periodic orbits of $Y$. To overcome this difficulty, we will need the next lemma, which gives a nice decomposition of $\Ktr.$

\begin{lem}
\label{structurethm}
There exists a finite collection of periodic orbits $\gamma_1,...,\gamma_k$ contained in $\Ktr$ and of open sets $\cU_1,...\cU_k$ satisfying the following properties.
\begin{enumerate}
\item For every $i=1,...k$, $\cU_i$ is a nice tubular neighbourhood of $\gamma_i$.
\item For $i\neq j$, $\cU_i\cap\cU_j\cap K=\vide$.
\item For every $i=1,...k$, $K\cap\partial\cU_i=\vide$.
\item $\Ktr\dans\bigcup_{i=1}^k\cU_i$.
\end{enumerate}
\end{lem}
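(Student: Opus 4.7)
I would prove Lemma~\ref{structurethm} by a direct compactness argument: build one good tubular neighbourhood per point of $\Ktr$, extract a finite subcover, then perform a final one-dimensional refinement to decouple the neighbourhoods on $K$. The starting point is the decomposition $K=K_{ms}\sqcup\Kta\sqcup\Ktr$ as a disjoint union of compact sets (Lemma~\ref{l.structurenonlinked}): every $x\in\Ktr$ has an open neighbourhood disjoint from $K_{ms}\cup\Kta$, so any sufficiently small tube around a periodic orbit in $\Ktr$ will meet $K$ only inside $\Ktr$, which simplifies the bookkeeping.

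Fix $x\in\Ktr$ and let $\gamma=\OO_Y(x)$. By hypothesis $(\ast)$ together with Proposition~\ref{p.nosaddle} and Lemma~\ref{simtransv}, the Poincar\'e map of $\gamma$ at a point $z\in\gamma$ has one eigenvalue of modulus one preserving $T_z I_0$ (where $I_0=\Sigma(z)\cap\mu^{-1}(0)$) and one eigenvalue of modulus different from one corresponding to a stable or unstable direction transverse to $\mu^{-1}(0)$. Choose a nice tubular neighbourhood $\cV$ of $\gamma$ in the sense of Definition~\ref{nicenbd}, with cross section $\Sigma=\Sigma(z)$. Since $K\subset\mu^{-1}(0)$, we have $K\cap\Sigma\subset I_0$. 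The heart of the construction is to pick a sub-arc $\widetilde I\subset I_0$ containing $z$ in its interior with $\partial\widetilde I\cap K=\vide$: thickening $\widetilde I$ transversally to the level set foliation to obtain a rectangular cross-section $\widetilde\Sigma$ and saturating by $Y$ over one period produces a nice tubular neighbourhood $\cU(x)$ of $\gamma$. Because $K\subset\mu^{-1}(0)$ and $K$ is $Y_t$-invariant, $\partial\cU(x)\cap K$ is contained in the $Y$-saturation of $\partial\widetilde I\times\{0\}$, which is disjoint from $K$ by the choice of $\widetilde I$.

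Once this local construction is available, compactness of $\Ktr$ gives a finite subcover $\cU_1=\cU(x_1),\dots,\cU_k=\cU(x_k)$ with $\gamma_i=\OO_Y(x_i)$; properties (1), (3), and (4) are immediate. For property (2) I rely on Haefliger's theorem~\ref{l.haef} applied to $Y'|_{\mu^{-1}(0)}$: the relevant orbit space is locally homeomorphic to a one-dimensional arc, and $\cU_i\cap\cU_j\cap K$ corresponds in this orbit space to the overlap of the images of $\widetilde I_i\cap K$ and $\widetilde I_j\cap K$. Shrinking the $\widetilde I_i$ greedily (keeping the endpoints in $I_0\setminus K$) so that these one-dimensional sections have pairwise disjoint $K$-images is a routine one-dimensional refinement. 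The nice tubular neighbourhood structure is preserved throughout, since the thickening in the $\mu$-direction remains untouched.

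\textbf{Main obstacle.} The delicate step is the existence of the sub-arc $\widetilde I$ with $\partial\widetilde I\cap K=\vide$. Nothing \emph{a priori} prevents $K\cap I_0$ from containing an entire interval around $z$: this is compatible with a Poincar\'e map that is the identity along $I_0$ at $z$ and partially contracting transversally, which is exactly what hypothesis $(\ast)$ allows for orbits of $\Ktr$. To overcome this I would extend $I_0$ inside $\mu^{-1}(0)$ using the underlying transverse arc of Haefliger's theorem until it reaches the complement of the compact set $K$, which is non-empty on both sides of $z$ because $K\subsetneq\mu^{-1}(0)\cap U$ (for instance, $K$ stays away from $\partial U$). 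This extension is the only place where the global topology of $U$ and the compactness of $K$ enter the argument, and making sure this extended $I_0$ still yields a nice tubular neighbourhood (in particular, that it is holonomy-free for the return map) is the technical point requiring the most care.
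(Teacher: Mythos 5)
You correctly pinpoint the crux: one must be able to choose the boundary of the cross-sectional arc outside $K$, and nothing is automatic if $K\cap I_0$ were to contain an interval around $z$. However, your proposed resolution — to lengthen $I_0$ inside $\mu^{-1}(0)$ until it leaves $K$ — is not the right fix, and you yourself flag the unresolved difficulty (ensuring the extended arc still yields a nice tubular neighbourhood, with a well-defined return map, staying inside $U$, not colliding with $K_{ms}\sqcup\Kta$). Those difficulties are not cosmetic. The paper's route is different and essential: it \emph{rules out} the bad scenario altogether, proving that $K\cap I_0$ has empty interior in $I_0$. The argument is short but uses two things your sketch does not invoke: (i) every point of a hypothetical interval $I\subset K\cap I_0$ lies in $\Ktr$ and hence carries a local stable manifold \emph{transverse} to $I_0$, so the union of these stable manifolds fills an open neighbourhood $V$ of $y$ in $\Sigma$; (ii) lower semi-continuity of $Z:c\mapsto K_c\cap\Sigma$ at $0$ then forces $K_c\cap V\neq\vide$ for some $c\neq 0$, and such a point would lie on the stable manifold of some $w\in K\cap\mu^{-1}(0)$ while its entire forward orbit stays in $\mu^{-1}(c)$ — a contradiction, since $c\neq 0$. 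Once the empty-interior claim is in hand, one can choose $\widetilde I$ with endpoints in $I_0\setminus K$ arbitrarily close to $z$, and there is no need to extend anything.

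This missing step is the genuine gap: without it you have not shown that the desired sub-arc $\widetilde I$ exists, and the extension strategy you propose does not substitute for it, because it trades a pointwise local problem for uncontrolled global ones. Your treatment of property (2) via shrinking arcs and one-dimensional bookkeeping in the orbit space is in the same spirit as the paper's (the paper modifies the tubes so that the two boundary circles $S_{\cU}^\pm$ avoid $K$ and then cuts one tube along the endpoints of the other), and the final compactness argument matches. So the overall architecture is right, but the central transversality/semicontinuity argument proving that $K\cap I_0$ is nowhere dense in $I_0$ must be supplied.
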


\begin{proof}
We consider a periodic orbit $\gamma$ included in $\Ktr$ and a nice tubular neighbourhood of $\gamma$ as in Definition \ref{nicenbd}. Let $z\in\gamma$ and $\Sigma=\Sigma(z)$ be the fiber through $z$. As in the proof
of Lemma~\ref{l.lesorbitesliessonfini} we have that the only elements of $K$ which accumulate on $z$ belong to $\Ktr$. Thus we can choose $\cU$ small enough so that $\Sigma\cap K\dans\Ktr$. 

\begin{figure}[!h]
\centering
\includegraphics[scale=0.6]{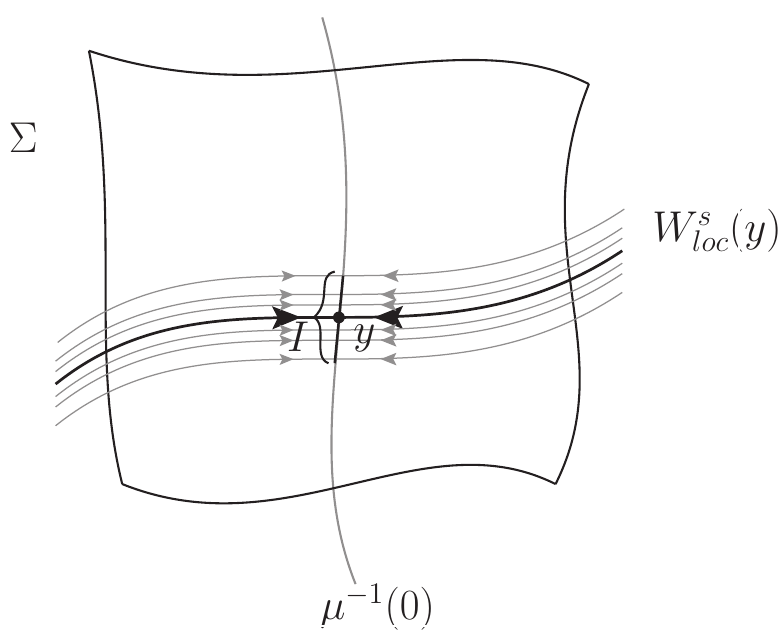}
\label{Empty_interior}
\caption{$I_0\cap K$ has empty interior}
\end{figure}

\emph{We claim that no point $y\in \Sigma\cap K=I_0\cap K$ is an interior point of $I_0\cap K$}. Indeed assume that there exists an interval $I\dans I_0$ containing $y$ which consists entirely of elements of $K$ (and so, all of them must be elements of $\Ktr$). Thus every point of $I$ has a local stable manifold, which is transverse to $I_0$. The union of these local stable manifolds is therefore a neighbourhood of $y$ in $\Sigma$, that we denote by $V$.

By the lower semi-continuity at $0$ of $Z:c\mapsto K_c\cap\Sigma$ there must exist $c\neq 0$ such that $K_c\cap V\neq\vide$. A point of this set must belong to the stable manifold of some point of $I\subset I_0\cap K$. This contradicts the fact that the orbit of any point $x\in K_c\cap V$ under the Poincar\'e map $P$ must stay inside the level set $I_c$. The claim is proven.

It follows that we can choose the nice tubular neighbourhood $\cU$ so that $K\cap\partial\cU=\vide$. Moreover, if two such open sets $\cU$ and $\cV$ intersect, then we can modify one of them such that $\cU\cap\cV\cap K=\vide$. 

Indeed, since $\cU_0$ is an annulus and $K$ is compact, we can modify $\cU$ so that $\partial\cU\cap\cU_0$ has
two connected components (say $S_{\cU}^+$ and $S_{\cU}^-$), each one of them being an embedded circle disjoint from $K$. Now, if some $\cV$ intersects $\cU$ we modify $\cV$ by requiring that for every $z\in\gamma$ and every $x\in\cV\cap I_0(z)$, if $a_z=S_{\cU}^+\cap I_0(z)$ and $b_z=S_{\cU}^-\cap I_0(z)$ then 
$x>a_z\:\:\textrm{or}\:\:b_z>x.$ This implies that $\cU\cap\cV\cap K=\vide$, as claimed.   

These open sets cover $\Ktr$ by definition. Since $\Ktr$ is compact it can be covered by finitely many such open sets. This achieves the proof of the lemma.
\end{proof}

\paragraph{End of the proof of Proposition \ref{Knouille}--} By Lemma \ref{structurethm} we must have
$$\Ind(X,\Ktr)=\sum_{i=1}^k\Ind(X,\cU_i).$$
Hence we are reduced to prove the following lemma.

\begin{lem}
\label{transversecase}
If $\cU$ is a sufficiently small nice tubular neighbourhood of a periodic orbit included in $\Ktr$, satisfying $K\cap\partial\cU=0$, then $\Ind(X,\cU)=0$.
\end{lem}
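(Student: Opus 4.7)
The plan is to mirror the proof of Lemma \ref{tangentialcase}, replacing the tangential use of the Center Manifold Theorem by an argument that exploits the transversality hypothesis $\gamma\in\Ktr$. Let $\gamma\subset\Ktr$ and let $\cU$ be a nice tubular neighborhood of $\gamma$ as provided by Lemma \ref{structurethm}, so that $K\cap\partial\cU=\vide$. Pick $z\in\gamma$, set $\Sigma=\Sigma(z)$, $I_c=\Sigma\cap\mu^{-1}(c)$, and let $P:\Sigma_{\eta}(z)\to\Sigma$ denote the first return map of $Y_t$. By hypothesis $(\ast)$, Proposition \ref{p.nosaddle}, and Lemma \ref{l.nosinks}, $D_zP$ has one eigenvalue $\lambda$ with $|\lambda|\neq 1$ and one eigenvalue of modulus $1$; both are real (by Lemma \ref{simtransv}, or after passing to a double cover as in Proposition \ref{p:orientable}), yielding a partially hyperbolic splitting $T_z\Sigma=E^s\oplus E^c$ (treating without loss of generality the case $|\lambda|<1$). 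Applying the Center Manifold Theorem~\ref{t.center}, I obtain a $C^1$ curve $W^c_{loc}(z)\subset\Sigma$ tangent to $E^c$ at $z$ and containing every $P$-fixed point in $\Sigma_\eta(z)$.

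The main step is to establish the inclusion $\Col_\cU(X,Y)\cap\Sigma\subset W^c_{loc}(z)$. Take $x\in\Col_\cU(X,Y)\cap\Sigma$; there exists $c$ with $x\in K_c$. By upper semi-continuity of $Z$ (Lemma \ref{l.scinferiously}) together with $K\cap\partial\cU=\vide$, for $|c|$ sufficiently small the whole $Y_t$-orbit of $x$ remains in $\cU$; hence all iterates $P^n(x)$ are defined and lie in $I_c$ (since $K_c\subset\mu^{-1}(c)$ is $Y_t$-invariant and $Y=Y'$ on $K_c$). A Poincar\'e-Bendixson-type argument on the surface $\mu^{-1}(c)$ shows that $(P^n(x))_n$ is monotone along $I_c$, and contraction along $E^s$ forces $P^n(x)\to p_c$, where $p_c\in K_c$ is a $P$-fixed point in $W^c_{loc}(z)\cap I_c$. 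Now comes the crucial use of the hypothesis $\gamma\in\Ktr$: since the strong stable manifold $W^s_{loc}(z)$ is transverse to $\mu^{-1}(0)$, by continuity of the stable foliation $W^s_{loc}(p_c)$ is transverse to $\mu^{-1}(c)$ (hence to $I_c$) for $p_c$ close enough to $z$, so locally $W^s_{loc}(p_c)\cap I_c=\{p_c\}$. For $n$ large, exponential convergence places $P^n(x)\in W^s_{loc}(p_c)$; combined with $P^n(x)\in I_c$ this forces $P^n(x)=p_c$, and injectivity of $P$ yields $x=p_c\in W^c_{loc}(z)$.

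Saturating $W^c_{loc}(z)$ by $Y_t$ produces a $C^1$ $Y_t$-invariant annulus $S\subset\cU$ containing $\gamma$ and $\Col_\cU(X,Y)$. Moreover, by the step above, every point of $\Col_\cU(X,Y)$ lies on a $Y_t$-periodic orbit of $Y$ (since $x=p_c$ is a $P$-fixed point). The annulus $S$ is normally hyperbolic, with $E^s$ as its contracting normal direction. After slightly shrinking $\cU$ so that $\Col_U(X,Y)\cap\cU$ is compactly contained in $S$ and modifying as in the end of the proof of Lemma \ref{tangentialcase}, an application of Lemma 4.7 of \cite{BS} to the normally hyperbolic annulus $S$ yields $\Ind(X,\cU)=0$.

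The main obstacle I expect is the inclusion $\Col_\cU(X,Y)\cap\Sigma\subset W^c_{loc}(z)$, which is the precise place where the transversality hypothesis $\gamma\in\Ktr$ enters: without it, $W^s_{loc}(p_c)$ could be tangent to $I_c$ and the argument collapsing exponential convergence into an actual equality $x=p_c$ would fail. A secondary subtlety, absent from the tangential case handled by Lemma \ref{cletangent}, is that $W^c_{loc}(z)$ need not be pointwise fixed by $P$; nevertheless, since the $Y_t$-saturation of the actual $P$-fixed points in $W^c_{loc}(z)$ exhausts $\Col_\cU(X,Y)$ and sits inside the $C^1$ annulus $S$, the Bonatti-Santiago machinery for normally hyperbolic annuli still applies and closes the argument.
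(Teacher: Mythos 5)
Your overall plan — trap $\Col_\cU(X,Y)$ inside the saturation $S$ of a center manifold and then invoke Lemma~4.7 of \cite{BS} for the normally hyperbolic annulus — is the same as the paper's, but the central step has a genuine gap, and it is precisely at the spot you flagged as the crux. The claim ``for $n$ large, exponential convergence places $P^n(x)\in W^s_{loc}(p_c)$'' is not justified and is false in general: convergence $P^n(x)\to p_c$ at a partially hyperbolic fixed point can be slow along the center direction, with no iterate ever landing on the strong stable leaf of $p_c$. What your transversality hypothesis actually excludes is $T_{p_c}I_c=E^s(p_c)$; it does \emph{not} exclude $T_{p_c}I_c=E^c(p_c)$, i.e.\ the level curve $I_c$ being tangent to the center manifold at $p_c$, and that is exactly the configuration in which the argument ``$P^n(x)\in I_c\cap W^s_{loc}(p_c)=\{p_c\}$'' collapses. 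Indeed, applying Lemma~\ref{invariantspace}(2) to the monotone sequence $P^n(x)\to p_c$ in $I_c$ (when $x\neq p_c$) only shows that $T_{p_c}I_c$ is an eigendirection of $D_{p_c}P$, hence lies in $\{E^s(p_c),E^c(p_c)\}$; transversality kills the first branch but leaves the second open.

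The paper handles this by \emph{not} arguing simultaneously at all small levels: it first applies Sard's theorem to the $C^3$ function $\mu|_{W^c_{loc}(z)}$ to select a regular value $c$ that is also a continuity point of $Z$, then replaces $X$ by $X-cY$ (leaving $\Ind(X,\cU)$ unchanged). At such a $c$ the center manifold is transverse to $\mu^{-1}(c)$ at its intersection points, which both eliminates the $E^c$-branch above and makes $W^c_{loc}(z)\cap I_c$ finite; the continuity of $Z$ at $c$ is what allows the Proposition~\ref{p.tangent}-type argument showing that a non-fixed $x\in K_c\cap\Sigma$ would force tangency of a stable manifold to $I_c$, a contradiction. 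One is then reduced to finitely many periodic orbits $\gamma_1,\dots,\gamma_m$ in $K_c$, each handled by Lemma~\ref{cletangent} and \cite{BS}. Your proposal omits the Sard step and the change of level entirely and asserts the inclusion $\Col_\cU(X,Y)\cap\Sigma\subset W^c_{loc}(z)$ at \emph{every} small level $c$, which is both stronger than what the paper proves and not established by the argument you gave; to repair it you should pick a regular value $c\in\cR$ of $\mu|_{W^c_{loc}(z)}$, pass to $X-cY$, and run the finiteness and tangency arguments at that single level.
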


Here again the idea is to use the Center Manifold Theorem, in order to get down to a case already treated in \cite{BS}. The problem now is that the center manifold does not need to be everywhere transverse to the level sets of $\mu$. We will perform a small modification of $X$ in order to get down to a situation similar to that of the tangential case.

\begin{proof}[Proof of Lemma~\ref{transversecase}]
Let $z\in\gamma$, $\Sigma=\Sigma(z)$ and $W^c_{loc}(z)$ be a center manifold included in $\Sigma$. By reducing $\cU$ if necessary we can assume that every fixed point of the first return map $P$ to $\Sigma$ has a stable manifold, which is transverse to all the intervals $I_c$ it meets.

Look at the restriction of $\mu$ to $W^c_{loc}(z)$. We first note that it is not constant. This is because $W^c_{loc}(z)$ contain all fixed points of $P$ in a neighbourhood of $z$ and we can prove, as in the proof of Lemma \ref{cletangent}, that close to $z$ there are fixed points $z_c$ of $P$ contained inside $K_c$.

Now we observe that this is a function of class $C^1$ (in fact of class $C^3$) between $1$-dimensional manifolds. By Sard's theorem, there exists an interval $[\eps_1,\eps_2]$ of regular values of this function. Let $c\in[\eps_1,\eps_2]$ be a continuity point of the map $Z:c\mapsto K_c\cap\Sigma$.

\begin{figure}[!h]
\label{sardonique}
\centering
\includegraphics[scale=0.4]{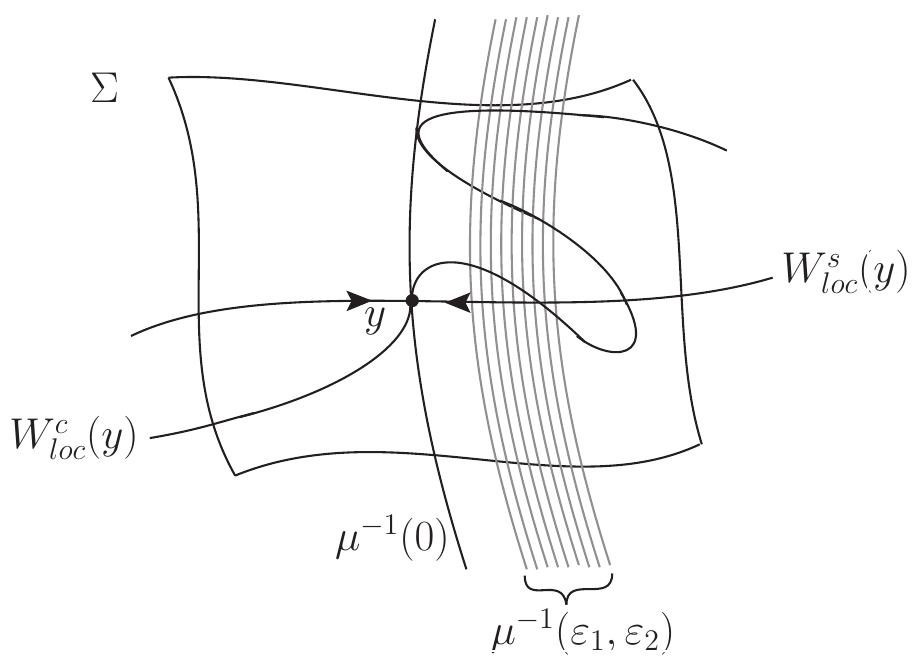}
\caption{Using Sard's theorem}
\end{figure}

The numbers $\eps_1$ and $\eps_2$ can be chosen arbitrarily close to $0$, so we can assume that $\Zero(X-cY)\cap\partial\cU=\vide$ and that
$$\Ind(X,\cU)=\Ind(X-cY,\cU).$$

The set $K_c\cap\Sigma$ must consist of fixed points of $P$. Indeed, since $c$ is a continuity point of $Z$ we can show as in the proof of Lemma \ref{cletangent} that the forward orbit of a point $x\in K_c$ is entirely included inside $I_c$. Therefore it must accumulate to some fixed point of $P$. By Proposition~\ref{confined} we have that the local stable manifold of this fixed point is tangent to $I_c$, which is a contradiction

Since $W^c_{loc}(z)$ contains the fixed points of $P$ close enough to $z$ we deduce that $K_c\cap\Sigma\dans I_c\cap W^c_{loc}(z)$. This set must be finite since it is the transverse intersection of two relatively compact embedded sub-manifolds of $\Sigma$. We deduce that $K_c\cap\cU$ is a finite union of periodic orbits of $Y$, denoted $\gamma_1,...,\gamma_m$. We can consider  $\cV_i\dans\cU$, a nice tubular neighbourhood of $\gamma_i$ such that $\Zero(X-cY)\cap\cV_i=\gamma_i$ and $\Zero(X-cY)\cap\partial\cV_i=\vide$. We then have
$$\Ind(X-cY,\cU)=\sum_{i=1}^m\Ind(X-cY,\cV_i).$$

We need to prove that for every $i=1,...,m$, $\Ind(X-cY,\cV_i)=0$. The situation is now the following. There is a embedded disc included in $\cV_i$ that contains all fixed points of $P$ (this is $W_{loc}^c(z)\cap\cV_i$), and there is no point of collinearity inside $\partial\cV_i$. Thus we are in condition to apply Lemma \ref{cletangent}. Once again Theorem~\ref{t.bsdenovo} allows us to conclude that $\Ind(X-cY,\cV_i)=0$. The proof of the lemma is now complete.
\end{proof}

\section{Conclusion}\label{s.conclusion}

As a conclusion, we propose to explain how the present paper inscribes inside a wider program to prove the $C^3$-case of the main Conjecture in its alternative form (see the introduction). We give below the main steps of a strategy aiming at achieving that. We use the notations and terminology introduced in the present work.

\paragraph{Warning --} We describe below the ideas of an ongoing work in order to illustrate that the present paper and \cite{BS} should be the two legs on which a proof of the Conjecture would stand. Even if we feel that our strategy is mature enough to be described below, we don't announce the resolution of the Conjecture. However we hope to answer the natural question: \emph{how far are we, with \cite{BS} and the present paper, from the resolution to the conjecture?}

\paragraph{Prepared counterexamples --} As in both this paper and \cite{BS} the strategy consists in showing that there is no prepared counterexample to the conjecture. The proof should then be a long argument by contradiction, supposing that there is one. The difference is that we don't make any hypothesis on the geometric configuration of the collinearity locus, nor on the dynamics close to periodic orbits included in that set. First, let us explain how to organize the collinearity locus.

\paragraph{Organization of the collinearity locus --} As explained in the present paper, for prepared counterexamples, the collinearity locus consists of periodic orbits of $Y$ and of connexions between them. Note that a periodic orbit of $Y$ inside $\Col_U(X,Y)$ is homotopically non-trivial in the corresponding level set of $\mu$ since otherwise it would enclose a zero of $Y$. It is possible to define an equivalence relation of such orbits saying that two of them are equivalent if they bound an annulus. Equivalence classes are included in essential annuli whose boundary components are periodic orbits. 

Since level sets are compact there are finitely many equivalence classes, which may be linked to others, in the sense that extremal periodic orbits are connected by non-periodic orbits of $Y$. This defines the \emph{combinatorics of equivalence classes} inside level sets of $\mu$.  As in the present paper we can assume that the intersection of $\Col_U(X,Y)$ with each level set is connected (by additivity of index). And using a (non-trivial) genericity argument, we can assume that the combinatorics of equivalence classes of periodic orbits is the same in all level sets. Compare with \S \ref{s.topo_col_locus} where the same situation was obtained by using tools from hyperbolic dynamics.

\paragraph{Using the index formula --} In order to prove that, in the context described above, $\Ind(X,U)=0$, our strategy is to use the index formula (see Theorem \ref{t:indexf}) and to prove that $l(\gamma)=0$ for every boundary component $\gamma$ of $\mu^{-1}(0)$. Using the same arguments as in this paper, we obtain some flexibility and we can ask that $\gamma\dans \mu^{-1}(c)$ for $|c|$ small enough (and the parameter $c$ might not be the same for all $\gamma$). There are two cases that will be treated by completely different methods and that correspond to the two ideal cases identified in the introduction.
\begin{enumerate}
\item $\gamma$ is isotopic to a periodic orbit of $Y$ included inside $\Col_U(X,Y)$;
\item $\gamma$ can be retracted to a cycle of periodic orbits and non-periodic orbits of $Y$ linking them contained in $\Col_U(X,Y)$.
\end{enumerate}

\emph{Case 1} can be treated with an adaptation of the arguments developped in \cite{BS}.

\emph{Case 2} is much more similar to the spirit of the present paper. We cannot hope to find a foliation by surfaces to which $X$ and $Y$ are tangent because we are not allowed to use tools from hyperbolic dynamics. However using a very careful dynamical study close to periodic orbits of the cycle, we can hope to build stable/unstable manifolds for periodic orbits of the cycle corresponding to the level set $\mu^{-1}(c)$ \emph{for a generic set of parameters $c$}. Obviously if, for example, a non-periodic orbit of $Y$ inside $K_c$ accumulates to a periodic orbit of the cycle in the future then it must belong to the stable manifold of this periodic orbit.

Using an adaptation of our glueing lemma we can build, for a generic set of parameters $c$, topological surfaces to which both $X$ and $Y$ are tangent, obtained by glueing stable and unstable manifolds of periodic orbits of the cycle (that is why we need to use the genericity of our set of parameters). These surfaces, denoted by $S_c$, satisfy the following properties for every parameter $c$ belonging to this generic set
\begin{itemize}
\item $S_c$ contain the cycle corresponding to $\mu^{-1}(c)$;
\item $X$, $Y$ and thus $N$ are tangent to $S_c$;
\item $S_c$ is homotopic to $\mu^{-1}(c)$ \emph{modulo the collinearity locus}.
\end{itemize}

\begin{figure}[!h]
\label{conclusion}
\centering
\includegraphics[scale=0.14]{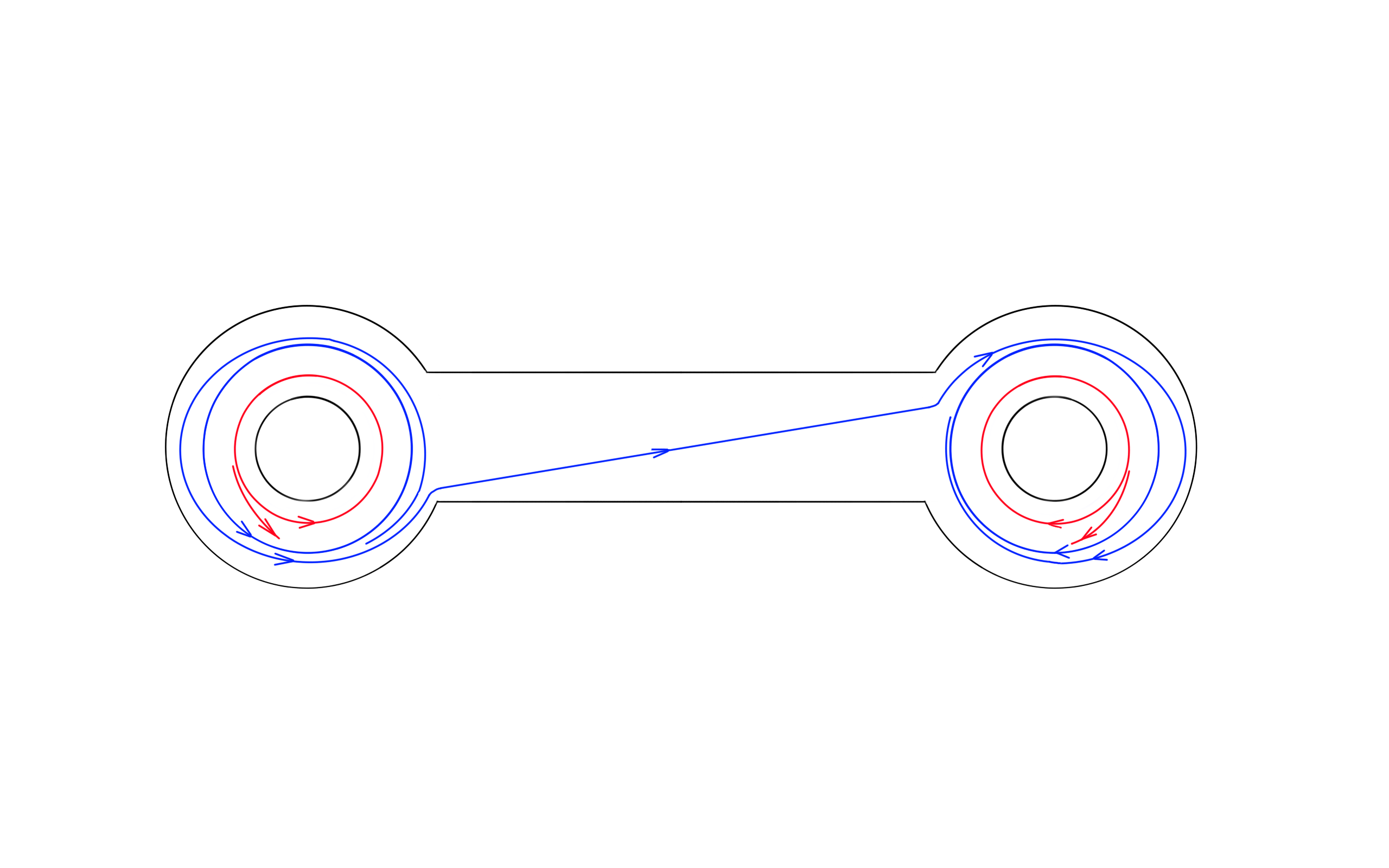}
\caption{In the example above, the level set is a pair of pants and the collinearity locus consists of two linked equivalence classes of periodic orbits. Two boundary components are isotopic to the two red periodic orbits: they are treated by Case 1. The last one can be retracted to the blue cycle and will be treated by Case 2. The blue periodic orbit on the left hand side possesses an unstable manifold for $Y$ and the blue periodic on the right hand side possesses a stable manifold. Both contain the connexion and will be glued just as in \S \ref{s.calc-index}.}
\end{figure}

Here again, the reader can compare with the situation described in \S\ref{s.calc-index} to which we came down using tools from hyperbolic dynamics. Proving this is the most difficult and technical step and the heart of our strategy. This is where we need new ideas. Let us however note that creating stable manifolds without using hyperbolic dynamics, but only the fact that $X$ and $Y$ commute, was also the technical heart of \cite{BS}.

The surfaces $S_c$ don't form a foliation as in \S\ref{s.calc-index}. But given the boundary component $\gamma$ of $\mu^{-1}(c)$, they will simplify the computation of $l(\gamma)$.  Indeed we can use a homotopy from $S_c$ to $\mu^{-1}(c)$, modulo collinearity, to lift $\gamma$ to a closed curve $\gamma'$ on $S_c$ satisfying $l(\gamma)=l(\gamma')$. Since the vector field $N$ along $\gamma'$ is tangent to $S_c$ it will be easy to show that $l(\gamma')=0$.

After these steps we can deduce that $\Ind(X,U)=0$. We hope to be able to carry on a proof based on this strategy in a forthcoming paper.

\paragraph{Acknowledgments} During the preparation of this work, S.A was supported by a post-doctoral grant financed by CAPES (Brazil) and by the Programa Contrataci\'on de Acad\'emicos Provenientes del Exterior of CSIC (Uruguay). He was also partially supported by the project \emph{Geometric theory of dynamical systems and France-Brazil cooperation in mathematics}, sponsored by Marcelo Viana's prix Louis D. He also acknowledges the support of  ANII via the project FCE-1-2017-1-135352.

C.B.was partially supported by IFUM and Ci\^encia sem Fronteiras CAPES.

B.S was supported by Marco Brunella's post-doctoral fellowship at Universit\'e de Bourgogne. Thanks to the generosity of Brunella's family. 

We acknowledge the kind hospitality of IMPA, PUC (Rio de Janeiro), UB (Dijon), UdelaR (Montevideo) and UFF (Niteroi), where this work was carried on.

{ Last but not least we wish to thank the referees for their many valuable comments that improved the presentation, and for their interest that led us to write a conclusion to put our work in the context of a global strategy.}

\bibliographystyle{plain}

\begin{flushleft}
{\scshape S\'ebastien Alvarez}\\
CMAT, Facultad de Ciencias, Universidad de la Rep\'ublica\\
Igua 4225 esq. Mataojo. 11400 Montevideo, Uruguay.\\
email: \texttt{salvarez@cmat.edu.uy}

\smallskip
{\scshape Christian Bonatti}\\
CNRS, Institut de Math\'ematiques de Bourgogne (IMB, UMR 5584)\\
9 av.~Alain Savary, 21000 Dijon, France\\
email: \texttt{bonatti@u-bourgogne.fr}

\smallskip
{\scshape Bruno Santiago}\\
IME, Universidade Federal Fluminense\\
Rua Prof. Marcos Waldemar de Freitas Reis, S/N -- Bloco G, 4o Andar\\
Gabinete 72. Campus do Gragoatá, Niterói-RJ CEP 24210-201, Brasil\\
email: \texttt{brunosantiago@id.uff.br}
\end{flushleft}

\end{document}